\renewcommand{\a}{\alpha}
\renewcommand{\b}{\beta}
\newcommand{\g}{\gamma}
\newcommand{\e}{\varepsilon}
\newcommand{\normeq}{\trianglelefteqslant}
\newcommand{\la}{\langle}
\newcommand{\ra}{\rangle}
\renewcommand{\to}{\rightarrow}
\newcommand{\leqs}{\leqslant}
\renewcommand{\le}{\leqslant}
\renewcommand{\leq}{\leqslant}
\newcommand{\geqs}{\geqslant}
\renewcommand{\ge}{\geqslant}
\renewcommand{\geq}{\geqslant}
\newcommand{\vs}{\vspace{2mm}}
\newcommand{\fpr}{\mbox{{\rm fpr}}}
\newcommand{\Aut}{\mathrm{Aut}}
\newcommand{\Inn}{\mathrm{Inn}}
\newcommand{\Out}{\mathrm{Out}}
\newcommand{\soc}{\mathrm{soc}}
\newcommand{\PSL}{\mathrm{PSL}}
\newcommand{\GL}{\mathrm{GL}}
\newcommand{\PGL}{\mathrm{PGL}}
\newcommand{\SL}{\mathrm{SL}}
\newcommand{\PGO}{\mathrm{PGO}}
\newcommand{\PGammaL}{\mathrm{P\Gamma L}}
\newcommand{\PSigmaL}{\mathrm{P\Sigma L}}
\newcommand{\val}{\mathrm{val}}
\newcommand{\Sp}{\mathrm{Sp}}
\newcommand{\LL}{\mathrm{L}}
\newcommand{\UU}{\mathrm{U}}
\newcommand{\PGU}{\mathrm{PGU}}
\newcommand{\GU}{\mathrm{GU}}
\newcommand{\reg}{\mathrm{reg}}
\newcommand{\AGL}{\mathrm{AGL}}
\newcommand{\Hol}{\mathrm{Hol}}
\newcommand\sd{\mkern1.5mu{:}\mkern1.5mu}
\definecolor{lblue}{RGB}{50,90,250}
\newcommand{\imod}[1]{\allowbreak\mkern4mu({\operator@font mod}\,\,#1)}
\newtheorem*{conj*}{Conjecture}
\newtheorem{thm}{Theorem}[section] 
\newtheorem{prop}[thm]{Proposition} 
\newtheorem{lem}[thm]{Lemma}
\newtheorem{cor}[thm]{Corollary} 
\newtheorem{con}[thm]{Conjecture}
\theoremstyle{definition}
\newtheorem{rem}[thm]{Remark}
\newtheorem{defn}[thm]{Definition}
\numberwithin{thm}{section}
\numberwithin{theorem}{section}
\begin{document}
	
	\title[On the generalised Saxl graphs of permutation groups]{On the generalised Saxl graphs of\\ permutation groups} 
	\begin{abstract}
	     A base for a finite permutation group $G \le \mathrm{Sym}(\Omega)$ is a subset of $\Omega$ with trivial pointwise stabiliser in $G$, and the base size of $G$ is the smallest size of a base for $G$. Motivated by the interest in groups of base size two, Burness and Giudici introduced the notion of the Saxl graph. This graph has vertex set $\Omega$, with edges between elements if they form a base for $G$. We define a generalisation of this graph that encodes useful information about $G$ whenever $b(G) \ge 2$: here, the edges are the pairs of elements of $\Omega$ that can be extended to bases of size $b(G)$. In particular, for primitive groups, we investigate the completeness and arc-transitivity of the generalised graph, and the generalisation of Burness and Giudici's Common Neighbour Conjecture on the original Saxl graph.
	\end{abstract}

	\date{\today}
	\author{Saul D. Freedman}
        \address{S.D. Freedman, School of Mathematics, Monash University, Clayton VIC 3800, Australia}
        \curraddr{{\sc Department of Mathematics, Colorado State University, Fort Collins, CO 80523, USA}}
        \email{saul.freedman@colostate.edu}
 
	\author{Hong Yi Huang}
        \address{H.Y. Huang, School of Mathematics, University of Bristol, Bristol BS8 1UG, UK}
        \curraddr{{\sc Department of Mathematics, Southern University of Science and Technology, Shenzhen 518055, Guangdong, P. R. China}}
        \email{11612012@mail.sustech.edu.cn}

	\author{Melissa Lee}
        \address{M. Lee, School of Mathematics, Monash University, Clayton VIC 3800, Australia}
        \email{melissa.lee@monash.edu}
 
	\author{Kamilla Rekvényi}
        \address{K. Rekvényi, Department of Mathematics, University of Manchester, Manchester, M13 9PL, and Heilbronn Institute for Mathematical Research, Bristol, UK}
        \email{kamilla.rekvenyi@manchester.ac.uk}
 
	\maketitle
	
\section{Introduction}

\label{s:intro}

Let $\Omega$ be a finite set, and let $G$ be a group of permutations of $\Omega$, so that $G\leqslant\mathrm{Sym}(\Omega)$. A subset $\Delta \subseteq \Omega$ is called a \emph{base} for $G$ if its pointwise stabiliser is trivial. The smallest cardinality of a base for $G$ is called the \emph{base size} of $G$, denoted $b(G)$. Equivalently, if $G$ is transitive with point stabiliser $H$, then $b(G)$ is the smallest number $b$ such that the intersection of some $b$ conjugates of $H$ in $G$ is trivial. The study of bases has a long history, finding a number of connections to other areas of mathematics, from relational complexity \cite{GLS_RC} to computational group theory \cite{sims}.

Historically, there has been a particular interest in the study of base sizes for primitive groups (recall that a transitive group $G\leqslant\mathrm{Sym}(\Omega)$ is called \emph{primitive} if its point stabilisers are maximal subgroups, or equivalently, the only $G$-invariant partitions of $\Omega$ are $\Omega$ itself and the partition into singletons). In terms of the structure and action of the socle of the group (recall that the \emph{socle} $\soc(X)$ of a group $X$ is the product of its minimal normal subgroups), the finite primitive groups are divided into five families by the O'Nan--Scott theorem. Following \cite{LPS_O'Nan-Scott}, these families are:
\begin{equation*}
    \mbox{affine, almost simple, diagonal type, product type, and twisted wreath products.}
\end{equation*}
Questions related to base sizes have been considered for all five of these families; for example, see \cite{B_sol,BH_prod,F_tw,F_diag,H_diag,L_quasi_1,L_quasi_2}. In particular, 
a significant quantity of research has been conducted on classifying primitive groups of small base size. A project initiated by Saxl in the 1990s to classify primitive base-two groups (i.e.~groups of base size $2$) is one of the most intense research areas in this vein, however the problem remains open in general. 
	
For base-two groups $G\leq \mathrm{Sym}(\Omega)$, Burness and Giudici \cite{BG_Saxl} defined the \emph{Saxl graph} $\Sigma(G)$, whose vertices are the points of $\Omega$, and where two vertices are adjacent if and only if they form a base for $G$. This  graph satisfies a number of interesting properties. For example, if $G$ is transitive, then $\Sigma(G)$ is $G$-vertex-transitive, and if $G$ is primitive, then $\Sigma(G)$ is connected. Burness and Giudici also made a rather striking conjecture that if $G$ is primitive, then any two vertices in $\Sigma(G)$ have a common neighbour (see \cite[Conjecture 4.5]{BG_Saxl}). Moreover, they proved this conjecture in a number of cases, including families of sufficiently large twisted wreath and diagonal type groups, as well as all primitive groups of degree at most 4095. Later on, Burness and Huang \cite{BH_Saxl} proved the conjecture for almost simple primitive groups with soluble point stabilisers. They also proved that the conjecture is valid for the primitive groups with socle $\LL_2(q)$, extending earlier work of Chen and Du \cite{CD_Saxl}. More recently, Lee and Popiel \cite{LP_Saxl} proved the conjecture for most affine type groups with sporadic point stabilisers.

It is evident from the existing literature on base size that a significant number of groups have base size at least $3,$ and for these the Saxl graph is not defined. Hence, in this paper, we generalise the definition of the Saxl graph in order to study groups of arbitrary base size at least $2$, as follows.

\begin{defn}
	Let $G\leqs\mathrm{Sym}(\Omega)$ be a permutation group with $b(G)\geqs 2$. Then the \emph{generalised Saxl graph} of $G$, denoted $\Sigma(G)$, is the graph with vertex set $\Omega$, with two vertices $\alpha$ and $\beta$ adjacent if and only if $\{\alpha,\beta\}$ is a subset of a base for $G$ of size $b(G)$.
\end{defn}

From now on, we will write $\Sigma(G)$ to denote the \emph{generalised Saxl graph} of $G$. Observe that this graph coincides with the Saxl graph when $b(G) = 2$. Note also that our generalisation of the Saxl graph is analogous to the generalisation of the generating graph of a group to the \emph{rank graph} (see \cite{rankgraph}). In Section~\ref{s:irredundant}, we shall consider a different generalisation of the Saxl graph, related to \emph{irredundant bases} for $G$.

A large proportion of this paper is devoted to studying the generalised Saxl graphs $\Sigma(G)$ of primitive groups $G$, in the context of three important problems, which we now describe in detail.

\subsection{Common Neighbour Conjecture}

Our generalisation of the definition of $\Sigma(G)$ yields the following natural generalisation of the aforementioned conjecture of Burness and Giudici.

\begin{con}[Common Neighbour Conjecture]
	\label{conj:BG}
	Let $G$ be a primitive permutation group with $b(G)\geqs 2$. Then any two vertices of $\Sigma(G)$ have a common neighbour.
\end{con}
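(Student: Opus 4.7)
Since the $b(G)=2$ case of this conjecture (the original Common Neighbour Conjecture of Burness and Giudici) remains open, the plan is not a uniform proof but a programme that mirrors the successful partial attacks in the base-two setting. The first step is to reformulate adjacency: writing $b=b(G)$ and fixing $\alpha\in\Omega$ with stabiliser $H=G_\alpha$, a point $\beta\in\Omega$ is a neighbour of $\alpha$ in $\Sigma(G)$ if and only if $G_{\alpha,\beta}$ admits a base of size at most $b-2$ in its action on $\Omega$. In particular, the neighbourhood $N(\alpha)$ is a union of $H$-orbits, and by the $G$-vertex-transitivity of $\Sigma(G)$ it suffices to show that for every $\beta\in\Omega$ the set $N(\alpha)\cap N(\beta)$ is non-empty.

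The plan is then to divide by O'Nan--Scott type. For almost simple groups, I would generalise the fixed-point-ratio approach of \cite{BG_Saxl,BH_Saxl}: the standard inclusion--exclusion bound
\[
Q(G,\alpha)\;\leq\;\sum_{x}\fpr(x,\Omega)^{b-1},
\]
with $x$ ranging over representatives of the $H$-conjugacy classes of prime-order elements of $H$, controls the probability that $\alpha$ together with a random $(b-1)$-tuple from $\Omega$ fails to be a base, and thus yields a lower bound on the valency $|N(\alpha)|$. Vertex-transitivity then gives the clean implication that whenever $|N(\alpha)|>|\Omega|/2$ one has $|N(\alpha)\cap N(\beta)|\geq 2|N(\alpha)|-|\Omega|>0$ for every $\beta$, exactly as in the base-two case. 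I would feed in the known fixed-point-ratio estimates (Liebeck--Shalev, Guralnick--Kantor, and Burness) to push this through for large almost simple classical and exceptional groups, and then invoke the methods of Burness--Huang and Lee--Popiel for almost simple groups with soluble point stabilisers and for affine groups with sporadic stabilisers. For product, diagonal, and twisted wreath types, I would attempt the natural generalisation of the arguments in \cite{BG_Saxl}, reducing to base conditions on the socle factors and assembling common neighbours coordinatewise.

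The main obstacle is that for $b\geq 3$, adjacency in $\Sigma(G)$ is no longer the event $G_{\alpha,\beta}=1$ but the strictly more intricate event $b(G_{\alpha,\beta})\leq b-2$. The probabilistic decomposition available at $b=2$ consequently loses some of its sharpness, and a fully general double-counting argument requires uniform control of the number of $(b-2)$-tuples completing a given pair to a base, rather than a single fixed-point-ratio bound. This is where I expect the main difficulty to lie, and accordingly I would regard the realistic first target as partial results — for instance, the conjecture restricted to almost simple groups with soluble point stabilisers, or to primitive groups of degree below some explicit bound — rather than a uniform proof of Conjecture~\ref{conj:BG} in full.
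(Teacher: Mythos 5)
The statement you are addressing is a conjecture, and the paper does not (and does not claim to) prove it in general; it only establishes partial cases, so there is no "paper proof" to match your argument against line by line. That said, your programme is essentially the one the paper follows: the reformulation of adjacency as $b(G_{\a,\b})\leq b-2$ in the action on $\Omega$, the observation that $N(\a)$ is a union of $H$-orbits, the reduction to $\mathrm{val}(G)/|\Omega|>1/2$, the fixed-point-ratio bound on $Q(G,b)$, and the O'Nan--Scott case division all appear in Sections~2--6, and your proposed first targets (soluble stabilisers, small degree, sporadic socles) are exactly the cases settled in Theorems~\ref{thm:asconj} and \ref{thm:sporcnc}.

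Two technical points are worth flagging. First, your displayed bound omits the class-size weight: summing over $H$-class representatives $x$ of prime order, the correct estimate is $\sum_x |x^H|\cdot\fpr(x,\Omega)^{b-1}$ (equivalently, the paper's $\widehat{Q}(G,b)=\sum_{x\in R(G)}|x^G\cap H|^{b}/|x^G|^{b-1}$ over $G$-class representatives). Second, the passage from $Q(G,b)$ to the valency is precisely where the generalisation is non-trivial, and the paper's Proposition~\ref{prop:val} supplies the exact form: $1-Q(G,b)\leq(\mathrm{val}(G)/|\Omega|)^{b-1}$. A useful consequence you do not quite reach is that the threshold for the common-neighbour argument is therefore $Q(G,b)<1-2^{-b+1}$, which is \emph{weaker} than $Q<1/2$ when $b\geq 3$; so the loss of sharpness you worry about in your final paragraph partly works in your favour. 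Your identification of the genuinely hard point --- that adjacency is no longer the event $G_{\a,\b}=1$ and one needs control of the completing $(b-2)$-tuples --- is accurate, and is why the conjecture remains open (e.g.\ for the subfield case of $\LL_2(q)$ and for general affine groups, as the paper records).
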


Notice that if $b(G)\geqs 3$, then the definition of $\Sigma(G)$ implies that any two adjacent vertices have a common neighbour. Therefore, the above conjecture holds for $G$ if and only if $\Sigma(G)$ has diameter at most two.

In this paper, we verify this conjecture for a number of cases. In Section~\ref{s:as} we consider the case where $G$ is an almost simple group, and prove the following theorem. 

\begin{thm}
\label{thm:asconj}
    Let $G$ be an almost simple primitive group. Further suppose that either:
    \begin{itemize}\addtolength{\itemsep}{0.2\baselineskip}
        \item[{\rm (i)}] $b(G) \ge 3$ and $\soc(G)$ is sporadic; or
        
        \item[{\rm (ii)}] $G$ has soluble point stabilisers.
    \end{itemize} 
Then Conjecture \ref{conj:BG} holds.
\end{thm}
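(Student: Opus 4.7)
The starting observation is that when $b(G) \ge 3$, every edge $\{\alpha,\beta\}$ of $\Sigma(G)$ already lies in a $b(G)$-subset $\{\alpha,\beta,\gamma_3,\ldots,\gamma_{b(G)}\}$ forming a base, so $\gamma_3$ is automatically a common neighbour of $\alpha$ and $\beta$. Therefore, in both parts of the theorem the Common Neighbour Conjecture is equivalent to the statement that $\Sigma(G)$ has diameter at most two, as already noted in the excerpt. For part (ii) with $b(G) = 2$ this is exactly the theorem of Burness and Huang cited in the introduction, so the substantive content in both (i) and (ii) lies in the regime $b(G) \ge 3$, where only non-adjacent pairs remain to be handled.

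For part (i), the plan is to combine the known determinations of base sizes for almost simple primitive groups with sporadic socle (due to Burness, O'Brien, Wilson and subsequent authors) with a case analysis over the finite list of actions satisfying $b(G) \ge 3$. Using the $G$-vertex-transitivity of $\Sigma(G)$, fix $\alpha \in \Omega$ with stabiliser $H$, and for each $H$-orbit $\beta^H$ with $\beta$ non-adjacent to $\alpha$, I would seek a point $\gamma$ (typically in a well-chosen $H$-orbit) that extends to a base of size $b(G)$ with both $\alpha$ and $\beta$. For the small sporadic groups this is a direct computation on the primitive action in \textsf{GAP} or \textsf{Magma}; for the remaining groups one exploits stored character tables and maximal subgroup information, together with fixed-point-ratio bounds, to argue via a counting estimate.

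For part (ii) in the regime $b(G) \ge 3$, I would invoke the classification of almost simple primitive groups with soluble point stabilisers to reduce to a tractable list of infinite families and exceptions. The workhorse is the standard probabilistic approach underpinning much of the base-size literature: for fixed non-adjacent $\alpha,\beta \in \Omega$, let $Q(\alpha,\beta)$ denote the proportion of $\gamma \in \Omega$ failing to be a common neighbour of both $\alpha$ and $\beta$, and note that it suffices to prove $Q(\alpha,\beta) < 1$. Expanding $Q$ via the orbit structure of $G_\alpha \cap G_\beta$ and applying fixed-point-ratio bounds specific to the soluble-stabiliser setting (as developed by Burness and collaborators), one reduces to finitely many exceptional cases that can be dispatched either by explicit construction of a suitable $\gamma$ or by direct computer verification.

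The hardest step is likely to be the borderline configurations in part (ii) with $b(G) = 3$ and small permutation degree, where the soluble stabiliser $H$ is large relative to $G$ and the crude probabilistic bounds fail. There one must produce $\gamma$ directly, either by identifying a specific $H$-orbit that meets the joint regularity condition with $\alpha$ and $\beta$, or by refining the estimate using detailed structural information about the action of $H$ on $\Omega$. A similar difficulty can arise in part (i) for the few sporadic actions of relatively low degree with $b(G) = 3$, although here the finiteness of the list makes a computational resolution always available as a fallback.
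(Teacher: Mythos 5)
Your proposal follows essentially the same route as the paper: reduce via the known base-size classifications (\cite{BOW_spor,NNOW_spor} for sporadic socles, \cite{B_sol} for soluble stabilisers), note that the $b(G)=2$ case of (ii) is \cite[Theorem 1.1]{BH_Saxl} and that $2$-transitive/complete cases are immediate, and then settle the remaining actions by a mix of fixed-point-ratio estimates (the paper bounds $\widehat{Q}(G,b(G))<1/2$, which forces $\mathrm{val}(G)>|\Omega|/2$ so that any two neighbourhoods meet) together with direct {\sc Magma}/{\sf GAP} verification on orbit representatives for the borderline cases. The only organisational difference is that the paper first shows most sporadic-socle groups with $b(G)\geqs 3$ are in fact semi-Frobenius (Proposition \ref{prop:sporsemi}), which disposes of those cases outright, but your case analysis would uncover the same facts.
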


Note also that, together with \cite[Theorem 4.22]{BH_Saxl} on base-two groups, Theorem~\ref{thm:PSL2_semi} below directly implies that Conjecture \ref{conj:BG} holds for all almost simple primitive groups with $\soc(G) = \LL_2(q)$ with $q$ a prime power, other than a difficult case where the point stabilisers have a particular structure, see Theorem \ref{thm:PSL2_semi}(ii). We leave open the problem of proving the conjecture in that case. Since that case only occurs when $G > \soc(G)$, Conjecture \ref{conj:BG} holds for all primitive actions of $\LL_2(q).$

In Section~\ref{s:affine}, we further list some examples of primitive affine groups that satisfy Conjecture \ref{conj:BG}, and discuss additional ideas that we hope will inspire further research on this conjecture. Finally, in Section~\ref{s:wr}, we investigate primitive groups of product type, and in particular prove that Conjecture \ref{conj:BG} is equivalent to a certain condition on orbits of point stabilisers. For diagonal type groups and twisted wreath products, analogous work has been undertaken by the second author \cite[Theorems 5.6 \& 5.9.1]{H_thesis}. 

\subsection{Complete graphs}
It turns out that in many instances, the graph $\Sigma(G)$ not only satisfies the Common Neighbour Conjecture, but is also complete. 

Note that if $G$ is transitive and $b(G) = 2$, then $\Sigma(G)$ is complete if and only if $G$ is a Frobenius group. However, determining the completeness of $\Sigma(G)$ when $b(G)>2$ is not as straightforward. With this in mind, we introduce the following definition.

\begin{defn}
A permutation group $G\leqs\mathrm{Sym}(\Omega)$ with $b(G)\geqs 2$ is \emph{semi-Frobenius} if any two elements of $\Omega$ lie in a common base for $G$ of size $b(G)$.
\end{defn}

Thus $\Sigma(G)$ is complete if and only if $G$ is semi-Frobenius, and $G$ is Frobenius if and only if $G$ is semi-Frobenius and $b(G) = 2$. As a straightforward example, the natural action of $\GL_n(q)$ on the set of non-zero vectors of $\mathbb{F}_q^n$ has base size $n$, and is semi-Frobenius if and only if $q = 2$. We also note that any $2$-transitive permutation group is semi-Frobenius.

We prove the following theorem for diagonal type primitive groups in Section~\ref{ss:diag_semi-Frob}.

\begin{thm}
\label{thm:diag_semi}
    Let $G$ be a diagonal type primitive group with socle $T^k$ and top group $P\leq S_k$. If $k = 2$, then $G$ is semi-Frobenius if one of the following holds:
        \begin{itemize}\addtolength{\itemsep}{0.2\baselineskip}
            \item[{\rm (i)}] $P = 1$;
            \item[{\rm (ii)}] $T = A_n$ for $n\geqs 7$; or
            \item[{\rm (iii)}] $T$ is a sporadic group.
        \end{itemize}
        If instead $k\geq 3,$ then the following statements hold.
    \begin{itemize}\addtolength{\itemsep}{0.2\baselineskip}
        \item[{\rm (i)}] If $P\notin\{A_k,S_k\}$ then $b(G) = 2$ and $G$ is not semi-Frobenius.
        \item[{\rm (ii)}] Suppose $\vert T\vert ^{\ell-1}\leq k \leq \vert T\vert ^\ell$ with $\ell\geq 1.$ If $b(G)=\ell+2,$ then $G$ is semi-Frobenius.
        \item[{\rm (iii)}] Suppose $P\in \{A_k,S_k\}$ and $\vert T\vert ^{\ell-1}+3\leq k \leq \vert T\vert ^\ell$ for some $\ell\geq 1$ with $b(G)=\ell+1.$ Then $G$ is not semi-Frobenius.
    \end{itemize}
\end{thm}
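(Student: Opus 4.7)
The approach uses the standard parametrisation of diagonal type groups: identify $\Omega$ with the right coset space $T^k/D$, where $D = \{(t, \ldots, t) : t \in T\}$ is the diagonal subgroup, and, using transitivity of $G$, fix $\omega_1 = D$. Every other point then admits a representative of the form $(1, t_2, \ldots, t_k)$, so a candidate base $\{\omega_1, \omega_2, \ldots, \omega_m\}$ is encoded by the $(k-1)\times(m-1)$ matrix $M = (t_i^{(j)})$ with entries in $T$. The set is a base for $G$ if and only if (a) $\bigcap_{i, j} C_T(M_{ij}) = 1$, trivialising the intersection of conjugates of $D$, and (b) no nontrivial element of $P$, possibly combined with an outer automorphism, preserves $M$ up to the permutation action on row indices. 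The value of $b(G)$ in each regime is taken as input from prior work on base sizes of diagonal type groups.

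For $k = 2$, $M$ collapses to a single row, so $\Omega$ identifies with $T$. The three listed sub-cases are handled separately. When $P = 1$, the point stabiliser is essentially $T \cdot \mathrm{Out}(T)$, and one combines known base-size formulae with the existence of abundant regular orbits of $\mathrm{Aut}(T)$ on $T$ to extend any prescribed pair to a base. For $T = A_n$ with $n \geq 7$, explicit cycle-type arguments exhibit base extensions compatible with any prescribed pair. For sporadic $T$, existing computational classifications and \textsc{Atlas} data finish the argument case by case.

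For $k \geq 3$ I handle the three parts in turn. In (i), the equality $b(G) = 2$ is known; to exhibit a non-edge, I use that since $P$ does not contain $A_k$, some position $i \in \{1, \ldots, k\}$ is fixed by a nontrivial $\pi \in P$, and placing a single $t \in T$ with $C_T(t) \neq 1$ at position $i$ (identity elsewhere) yields $\omega$ such that $\{D, \omega\}$ is stabilised by $(s, \ldots, s)\pi$ for any $s \in C_T(t)\setminus\{1\}$. Part (iii) rests on a pigeonhole argument: choose $\omega_2$ so that the enforced column of $M$ is constant on at least $|T|^{\ell-1} + 2$ coordinates (possible since $k \geq |T|^{\ell-1} + 3$); then any $\ell - 1$ further columns produce at most $|T|^{\ell-1}$ distinct row-suffixes, and the resulting row-collisions in $M$ force a 3-cycle or double transposition in $A_k \leq P$ that fixes $M$, so no extension to a base of size $\ell + 1$ exists. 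For (ii), the extra column ($b(G) = \ell + 2$) supplies enough slack: for any prescribed $\omega_2$, the $|T|^{\ell+1}$ possible column tuples in $T^{k-1}$ comfortably dominate $k$, so $\ell$ further columns can be chosen that simultaneously separate all row indices and trivialise the centraliser intersection.

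The main obstacle will be part (ii) for $k \geq 3$, where the extension must be uniform in $\omega_2$: one must simultaneously satisfy the centraliser-triviality and row-separation conditions for every worst-case starting column, requiring a careful inclusion--exclusion or probabilistic counting argument. The sporadic sub-case of $k = 2$ will also demand attention, though the reduction is essentially routine given the existing computational data.
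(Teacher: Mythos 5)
Your overall architecture (coset parametrisation, encoding candidate bases as arrays over $T$, pigeonhole for part (iii), case split for $k=2$) matches the paper's, but several steps are genuinely incomplete or incorrect. The most serious gap is part (ii) for $k\ge 3$: a counting or probabilistic argument of the kind you propose will not close the worst cases, because the numbers are tight. For instance, when $k=|T|$ and the prescribed second point is $D(1,\dots,1,\varphi_x,\dots,\varphi_x)$ with each value occurring $|T|/2$ times, there is a genuine ``flip'' symmetry to defeat, and the paper's proof does this with a bespoke construction: it uses the Guralnick--Kantor $3/2$-generation theorem to pick $y$ with $\langle x,y\rangle = T$, arranges the remaining column so that its value-multiset has trivial setwise stabiliser forcing $\a\in C_{\Aut(T)}(y)$, and adds an extra conjugacy-class condition to kill the coordinate swap. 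The other subcases of (ii) likewise rely on the existence of subsets $S\subseteq T\setminus\{1\}$ with trivial setwise stabiliser in $\Aut(T)$ and on a reduction (via the $m$-cycle exclusion lemma) to finding $\ell$ columns whose joint stabiliser is a single transposition; none of this is captured by ``the column tuples comfortably dominate $k$.''

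Two further points. First, for $k=2$ with $P=1$, your appeal to ``abundant regular orbits of $\Aut(T)$ on $T$'' cannot work: $\Aut(T)$ has no regular orbit on $T\setminus\{1\}$, since $\varphi_y$ always centralises $y$. The correct input is again $3/2$-generation: for each $x\ne 1$ choose $y$ with $\langle x,y\rangle=T$, so that $C_{\Aut(T)}(x)\cap C_{\Aut(T)}(y)\le C_{\Aut(T)}(T)=1$. (The $A_n$ and sporadic subcases are also far from routine --- for $P=S_2$ one must additionally rule out a simultaneous inversion of $x$ and $y$ by an automorphism, which for $A_n$ occupies a full page of explicit coset computations in the paper and for the sporadic groups requires fresh searches, not merely ``existing classifications.'') Second, two smaller issues in $k\ge 3$: in (i), your claim that some coordinate is fixed by a nontrivial element of $P$ is false for regular primitive top groups such as $C_p$; the fix is that $(\varphi_s,\dots,\varphi_s)$ with $1\ne s\in C_T(t)$ already stabilises both $D$ and $D(1,\dots,1,\varphi_t)$ with no permutation part, so no pair containing $D$ is a base and $G$ is never Frobenius. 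In (iii), a $3$-cycle or double transposition in the top group $P$ is not automatically an element of $G$; you need $A_k\le G$ as a subgroup of $T^k.(\Out(T)\times S_k)$, which holds here because $k\ge |T|^{\ell-1}+3\ge |T|-3$ and $P\in\{A_k,S_k\}$, but this must be invoked explicitly.
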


We note that the only remaining cases for a complete classification of semi-Frobenius primitive groups of diagonal type are the case where $k\in\{|T|^{\ell-1}+1,|T|^{\ell-1}+2\}$ for $\ell\geqs 2$, and the case where $P = S_2$. In the former case, a partial result is given in Lemma \ref{l:diag_k=+1+2_Sk_incomplete}, which shows that $G$ is not semi-Frobenius if $S_k\leqs G$ (this implies that $P = S_k$, but the reverse implication does not hold). For the case where $P = S_2,$ we show that $G$ is not semi-Frobenius if $G = T^2.(\Out(T)\times S_2)$ with $T = \LL_2(q)$ and $q\geqs 11$ (see Proposition \ref{p:diag_star_psl2}).

Additionally, in Section~\ref{s:as}, we provide a partial classification of semi-Frobenius almost simple primitive groups with sporadic socle. We further give a complete classification for groups whose socle is $\LL_2(q)$, as summarised in the following theorem. Here and throughout the paper, we follow the notation of \cite{B_class,kleidmanliebeck} when referring to the \emph{type} of a maximal subgroup of a classical group, which provides a rough description of its group structure.
\begin{thm}\label{thm:PSL2_semi}
    Let $G$ be an almost simple primitive group with socle $G_0=\LL_2(q)$ and point stabiliser $H.$ Then $G$ is not semi-Frobenius if and only if either:
    \begin{itemize}\addtolength{\itemsep}{0.2\baselineskip}
        \item[{\rm (i)}] $b(G)=2$; or
        \item[{\rm (ii)}] $H$ is of type $\GL_2(q^{1/2})$ with $q\geq 16$ and $\vert G:G_0\vert$ even.
    \end{itemize}
\end{thm}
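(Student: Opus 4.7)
The plan is to run through the primitive actions of almost simple groups with socle $G_0=\LL_2(q)$ using Dickson's classification of the maximal subgroups of $\LL_2(q)$: the point stabiliser $H\cap G_0$ is a Borel subgroup, a dihedral group $D_{2(q\pm 1)/\gcd(2,q-1)}$, one of $A_4,S_4,A_5$, or a subfield subgroup of type $\LL_2(q_0)$ or $\PGL_2(q_0)$ with $q=q_0^r$ for a prime $r$. For each action I will read off $b(G)$ from the base-size analysis of $\LL_2(q)$ groups carried out in \cite{BH_Saxl} (and references therein), and then decide semi-Frobeniusness directly. The implication ``$b(G)=2\Rightarrow G$ not semi-Frobenius'' is immediate: for a transitive group of base size two, being semi-Frobenius is equivalent (by the definition) to being a Frobenius group, and almost simple groups are never Frobenius because the Frobenius kernel is a regular nilpotent normal subgroup, whereas the socle of an almost simple group is nonabelian simple. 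This settles case (i) entirely, and reduces the theorem to showing that, among the actions with $b(G)\geqs 3$, non-semi-Frobeniusness is equivalent to $H$ being of type $\GL_2(q^{1/2})$ with $q\geqs 16$ and $|G:G_0|$ even.

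Consulting the base-size tables, the only infinite family with $b(G)\geqs 3$ is the one where $H$ is of type $\GL_2(q^{1/2})$ (i.e.\ $q=q_0^2$ and $H\cap G_0=\PGL_2(q_0)$); the remaining $b(G)\geqs 3$ cases are a short list of $q$-bounded exceptions which I will dispatch using \textsc{Magma}. For the main family, the argument splits along the parity of $|G:G_0|$. When $|G:G_0|$ is odd (or $q<16$), I show semi-Frobeniusness by proving that for every pair $\{\alpha,\beta\}\subseteq\Omega$ the joint stabiliser $G_\alpha\cap G_\beta$ admits a regular orbit on $\Omega$, which is precisely the criterion for extending $\{\alpha,\beta\}$ to a base of size $b(G)$. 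Using Dickson's explicit generators for $\PGL_2(q_0)\leqs\PSL_2(q_0^2)$, one computes $H\cap H^g$ over a set of $(H,H)$-double coset representatives and bounds $|G_\alpha\cap G_\beta|$ uniformly; an orbit-counting comparison with the fixed-point contributions of the elements of $G_\alpha\cap G_\beta$ then produces the regular orbit.

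The substantive content lies in the exceptional sub-case $q\geqs 16$ with $|G:G_0|$ even. Here $G$ contains an involutory automorphism of $G_0$ (a field involution when the field-auto contribution to $|G:G_0|$ is even, or otherwise the diagonal involution) which normalises a chosen subfield realisation and therefore lies in some conjugate $H^{g_0}$; equivalently, there is a point $\alpha\in\Omega$ with $\phi\in G_\alpha$. I will construct a second $\phi$-fixed point $\beta$ such that $\langle\phi\rangle\leqs G_\alpha\cap G_\beta$ has no regular orbit on $\Omega$: any candidate completion $\gamma_3,\ldots,\gamma_{b(G)}$ would have to include a $\gamma_i$ not fixed by $\phi$, but a count of the $\phi$-fixed cosets of $H$ in $G$ combined with the internal structure of $\PGL_2(q_0).\langle\phi\rangle$ will force the entire intersection $G_\alpha\cap G_\beta$ into a subgroup fixing every such $\gamma_i$. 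Hence $\{\alpha,\beta\}$ cannot be extended to a base of size $b(G)$, so $G$ is not semi-Frobenius. The threshold $q\geqs 16$ is what ensures $q_0\geqs 4$, so that $\PGL_2(q_0)$ is ``generic'' enough for this counting to succeed (the smaller-$q$ cases are checked by computer and behave as in case~(i) of the theorem, i.e.\ already fail via $b(G)=2$).

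The main obstacle is this final step: identifying the right $\phi$-fixed pair $(\alpha,\beta)$ and rigorously ruling out every possible completion. The delicate ingredients are the action of $\phi$ on the set of $G_0$-conjugates of $\PGL_2(q_0)$ (which controls the $\phi$-fixed cosets of $H$) and the description of the regular orbits of subgroups of $H$ containing $\phi$ acting on $\Omega$; everything else — the Dickson enumeration, the Frobenius obstruction for $b(G)=2$, the \textsc{Magma} verification of small-$q$ exceptions, and the regular-orbit argument in the $|G:G_0|$ odd sub-case — I expect to be routine by comparison.
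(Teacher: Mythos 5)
Your overall skeleton (the Dickson/maximal-subgroup enumeration, the observation that $b(G)=2$ forces non-semi-Frobeniusness because an almost simple group is never Frobenius, computer checks for the bounded exceptions, and the isolation of the subfield case of type $\GL_2(q^{1/2})$ as the decisive infinite family) matches the paper's, and case (i) is handled correctly. However, there are two genuine gaps. The first is that you cannot simply ``read off'' $b(G)$ from the existing literature in precisely the case that matters: the exact base size when $H$ has type $\GL_2(q_0)$ with $q=q_0^2$ was not previously determined, and the paper must prove $b(G)=3$ (resp.\ $4$ for $\PSigmaL_2(9)$) itself, via a page-long argument identifying the action with that of $\langle \Omega_4^-(q_0),\sigma\rangle$ on nonsingular $1$-spaces of square type. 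Without that value in hand, your criterion ``$\{\a,\b\}$ extends to a base of size $b(G)$ iff $G_\a\cap G_\b$ has a regular orbit'' has no fixed target.

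The second and more serious gap is in the subcase $q\geq 16$ with $|G:G_0|$ even. Choosing $\a,\b$ to be two points fixed by an involutory field automorphism $\phi$ only guarantees $\langle\phi\rangle\leq G_\a\cap G_\b$; since $\phi$ acts nontrivially on $\Omega$, the group $\langle\phi\rangle$ by itself has many regular orbits, so every point $\g$ moved by $\phi$ is a candidate third base point unless some \emph{other} non-identity element of $G_\a\cap G_\b$ fixes it. Your sketch supplies no mechanism for producing such an element for each such $\g$ --- counting the $\phi$-fixed cosets of $H$ addresses only the $\phi$-fixed candidates, which are not the obstruction. What is actually required, and what the paper does, is to choose $\a,\b$ so that the two-point stabiliser is large enough that for \emph{every} $\g\in\Omega$ one can exhibit a non-identity element of $G_\a\cap G_\b\cap G_\g$: concretely, the paper realises $G$ as $\PGO_4^-(q_0)$ acting on square-type nonsingular $1$-spaces, takes $\a=\langle v_1\rangle$, $\b=\langle v_2\rangle$ for suitable $v_1,v_2$, and writes down for each $u\notin\langle v_1,v_2\rangle$ an explicit non-scalar isometry fixing $u$ and fixing $\langle v_1,v_2\rangle$ pointwise. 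Two further slips: by maximality of $H$ one has $G\leq\PSigmaL_2(q)$, so there is no ``diagonal involution'' alternative to consider; and for $q=9$ the groups with $|G:G_0|$ even have $b(G)\in\{3,4\}$ and \emph{are} semi-Frobenius, so they do not ``fail via $b(G)=2$'' as your parenthetical asserts. Your double-coset/fixed-point-counting plan for the odd-$|G:G_0|$ subcase is a legitimately different route from the paper's (which instead reads off the suborbit structure from Farad\v{z}ev--Ivanov and pairs up the cyclic two-point stabilisers $C_{q_0\pm1}$ explicitly), and could plausibly be made to work, but it is not the source of the difficulty.
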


We refer the reader to Theorem \ref{t:PSL2_semi} for a more detailed statement of Theorem \ref{thm:PSL2_semi}. A key step in our proof of this result is Theorem~\ref{t:PSL2_b(G)}, which extends results in \cite{B_sol,B_class} by determining the exact base size of every primitive group with socle $\LL_2(q)$.

\subsection{Regular orbits and arc-transitivity}
\label{subsec:regintro}

Suppose that $G$ is transitive, and write $\reg(G)$  for the number of regular orbits in the componentwise action of $G$ on $\Omega^{b(G)}$. It is clear that in this case $\Sigma(G)$ is $G$-vertex-transitive since $G$ itself is transitive on $\Omega$. By the Orbit-Stabiliser Theorem, the ordered bases for $G$ of minimal size are precisely the elements of $\Omega^{b(G)}$ lying in regular orbits. Hence, $\Sigma(G)$ is $G$-arc-transitive if $\reg(G) = 1$, as in this case all ordered bases of minimal size lie in the same $G$-orbit. The converse statement, that $\reg(G) = 1$ if $\Sigma(G)$ is $G$-arc-transitive, clearly holds if $b(G) = 2$, but not in general if $b(G) > 2$. For example, if $G$ is the primitive group $\LL_3(4)$ of degree $56$, then $b(G) = 3$ and $\Sigma(G)$ is $G$-arc-transitive, while $\reg(G) = 4$.

Our aim hence is to classify primitive groups $G$ with $\reg(G)=1$ and, more broadly, primitive groups $G$ such that $\Sigma(G)$ is $G$-arc-transitive.
In this direction, Burness and Huang determined the almost simple primitive groups $G$ with soluble stabiliser and $\reg(G) = 1$ (see \cite[Theorem 1]{BH_prod}). Furthermore, the diagonal type primitive groups $G$ with $b(G) = 2$ and $\reg(G) = 1$ were determined in \cite[Theorem 2]{H_diag}. In Section~\ref{ss:diag_arc}, we generalise that result as follows, and show that the groups $G$ from \cite[Theorem 2]{H_diag} with $b(G) = 2$ are the only diagonal type groups for which $\Sigma(G)$ is $G$-arc-transitive.

\begin{thm}
	\label{thm:diag_arc-trans}
	Let $G$ be a diagonal type primitive group with socle $T^k$. Then $\Sigma(G)$ is $G$-arc-transitive if and only if $T = A_5$, $k\in\{3,57\}$ and $G = T^k.(\Out(T)\times S_k)$, so that $b(G) = 2$.
\end{thm}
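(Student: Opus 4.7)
The plan is to prove both directions.

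For sufficiency, apply \cite[Theorem 2]{H_diag}: for the two groups in the statement, $b(G)=2$ and $\reg(G)=1$, and as noted in Subsection~\ref{subsec:regintro}, $\reg(G)=1$ forces $\Sigma(G)$ to be $G$-arc-transitive.

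For necessity, suppose $\Sigma(G)$ is $G$-arc-transitive. If $b(G)=2$ then arc-transitivity is equivalent to $\reg(G)=1$ (see Subsection~\ref{subsec:regintro}), and \cite[Theorem 2]{H_diag} pins down $G$. The remaining task is to rule out $b(G)\geq 3$. Suppose first that $k\geq 3$ and $b(G)\geq 3$: Theorem~\ref{thm:diag_semi}(i) forces $P\in\{A_k,S_k\}$, and the base-size analysis in~\cite{F_diag} gives $b(G)\in\{\ell+1,\ell+2\}$, where $|T|^{\ell-1}\leq k\leq |T|^\ell$. If $b(G)=\ell+2$, then Theorem~\ref{thm:diag_semi}(ii) makes $G$ semi-Frobenius, so $\Sigma(G)$ is complete; arc-transitivity on a complete graph is equivalent to $2$-transitivity of $G$ on $\Omega$, which fails for every diagonal-type primitive group since the point stabiliser preserves the multiset of $\Aut(T)$-conjugacy classes of the coordinates of a representative tuple in $T^{k-1}$ — a non-trivial invariant because $T\setminus\{1\}$ meets at least two $\Aut(T)$-classes. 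If $b(G)=\ell+1$, I would exhibit two neighbours $\beta_1,\beta_2$ of a fixed $\alpha$ in distinct $G_\alpha$-orbits: identifying $\Omega$ with $T^{k-1}$ in the standard way, take $\beta_1$ with a single non-identity coordinate and $\beta_2$ with several, so that the support size modulo the $P$-action distinguishes their $G_\alpha$-orbits, then verify by explicit construction that each $\{\alpha,\beta_i\}$ extends to a base of size $\ell+1$.

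For $k=2$, $b(G)\leq 3$, so $b(G)\geq 3$ forces $b(G)=3$, and the short list of such groups can be handled case-by-case, by checking directly that the neighbourhood of a fixed point decomposes into at least two $H$-orbits under $H\cong T.\Out(T).P$ acting on $T$. The principal obstacle is the sub-case $k\geq 3$ with $b(G)=\ell+1$: verifying that $\{\alpha,\beta_1\}$ extends to a minimum-size base requires a constructive strengthening of the base-size arguments in~\cite{F_diag}, producing a minimum base that passes through a specified partial pair. The edge cases $k\in\{|T|^{\ell-1}+1,|T|^{\ell-1}+2\}$, which are not covered by Theorem~\ref{thm:diag_semi}(iii), need a parallel direct check.
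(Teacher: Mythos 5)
Your skeleton matches the paper's: handle $b(G)=2$ via \cite[Theorem 2]{H_diag}, kill the $b(G)=\ell+2$ cases by completeness plus failure of $2$-transitivity (this is exactly what the paper does at the start of the proof of Proposition~\ref{p:diag_arc-trans_|T|ell-2-1-0}), and for the remaining cases exhibit two neighbours of a fixed point lying in distinct suborbits. However, two of your concrete steps fail. First, in the $b(G)=\ell+1$ sub-case your proposed neighbour $\beta_1$ ``with a single non-identity coordinate'' is not a neighbour at all: Proposition~\ref{p:diag_l+1_incomplete} --- which is precisely how the paper proves these groups are not semi-Frobenius --- shows that for $\mathbf{x}=(1,\dots,1,\varphi_x)$ the pair $\{D,D\mathbf{x}\}$ never extends to a base of size $\ell+1$ once $k\geqs |T|^{\ell-1}+3$, by a pigeonhole argument producing a $3$-cycle or a product of two transpositions in the pointwise stabiliser of any candidate extension. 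The paper instead must construct neighbours whose associated partitions of $[k]$ have all parts of size close to $|T|^{\ell-1}$ (Lemma~\ref{l:H_diag_l:5.5}, properties (P1)--(P5)), and it separates their $G_D$-orbits using the multiset of part sizes via Lemma~\ref{l:diag_subaction}; note also that the raw ``support size'' you propose is not a $G_D$-invariant, since the left translation by $g\in T$ appearing in Lemma~\ref{l:diag_subaction} moves the identity part of the partition onto another part.

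Second, the $k=2$, $b(G)=3$ case is not a ``short list'': by Theorem~\ref{t:H_diag}(ii), every diagonal type primitive group with $k=2$ has $b(G)\in\{3,4\}$, and $b(G)=3$ for all $T\notin\{A_5,A_6\}$, so this is an infinite family indexed by all non-abelian finite simple groups and all admissible $O\leqs\Out(T)$. (Your claim that $b(G)\leqs 3$ when $k=2$ is also false: $b(G)=4$ for $T\in\{A_5,A_6\}$ with $G=T^2.(\Out(T)\times S_2)$, though those groups are semi-Frobenius and are dispatched by the non-$2$-transitivity argument.) The paper devotes Lemma~\ref{l:diag_arc-trans_star} to this infinite family: for every $T$ and every relevant $O$ one must produce a pair $x,y\in T$ of distinct orders satisfying the base criterion of Lemma~\ref{l:diag_k=2_equiv_complete}, which relies on \cite[Theorem 1.1]{LL_gen} for generic $T$ and on bespoke matrix computations together with fixed point ratio estimates for $\LL_2(q)$ and $\LL_3^{\e}(q)$. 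This is the bulk of the work in the theorem and cannot be replaced by a finite case check.
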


Thus we obtain the following corollary. 

\begin{cor}
	\label{cor:diag_r=1}
	Let $G$ be a diagonal type primitive group with socle $T^k$. Then $\reg(G) = 1$ if and only if $T = A_5$, $k\in\{3,57\}$ and $G = T^k.(\Out(T)\times S_k)$, so that $b(G) = 2$.
\end{cor}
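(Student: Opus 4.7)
The plan is to derive this corollary directly from Theorem~\ref{thm:diag_arc-trans}, using the observation recorded in Section~\ref{subsec:regintro} that links the quantity $\reg(G)$ to the arc-transitivity of $\Sigma(G)$. Recall from the introduction that for any transitive permutation group $G$, the condition $\reg(G) = 1$ forces all bases of minimum size to lie in a single $G$-orbit, hence $\Sigma(G)$ is automatically $G$-arc-transitive; moreover, the converse implication is valid whenever $b(G) = 2$, since in that case arcs of $\Sigma(G)$ correspond bijectively with $G$-orbits of ordered bases.

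For the forward direction, suppose $G$ is a diagonal type primitive group with socle $T^k$ satisfying $\reg(G) = 1$. Then $\Sigma(G)$ is $G$-arc-transitive by the remark above, and Theorem~\ref{thm:diag_arc-trans} immediately tells us that $T = A_5$, $k \in \{3,57\}$, $G = T^k.(\Out(T) \times S_k)$, and $b(G) = 2$.

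Conversely, suppose $G$ is one of the groups listed in Theorem~\ref{thm:diag_arc-trans}. Then $b(G) = 2$ and $\Sigma(G)$ is $G$-arc-transitive by that theorem. Since $b(G) = 2$, arc-transitivity of $\Sigma(G)$ is equivalent to having a single regular orbit on $\Omega^2$, so $\reg(G) = 1$, completing the proof.

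There is essentially no obstacle here: all the substantive work, in particular the classification of diagonal type groups whose generalised Saxl graph is $G$-arc-transitive and the verification that $b(G) = 2$ for those groups, is carried out in Theorem~\ref{thm:diag_arc-trans}. The corollary merely packages that theorem in terms of regular orbits by invoking the (standard) equivalence between arc-transitivity of $\Sigma(G)$ and $\reg(G) = 1$ in the base-two regime.
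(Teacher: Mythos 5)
Your proof is correct and follows exactly the route the paper intends: the corollary is deduced from Theorem~\ref{thm:diag_arc-trans} via the observation in Section~\ref{subsec:regintro} that $\reg(G)=1$ implies $G$-arc-transitivity of $\Sigma(G)$, with the converse holding when $b(G)=2$. Nothing is missing.
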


Additionally, in Section~\ref{subsec:l2q}, we are able to classify the almost simple primitive groups $G$ with socle $\LL_2(q)$ such that $\Sigma(G)$ is $G$-arc-transitive, excluding one difficult case.

\begin{thm}
    \label{introthm:PSL2_arc}
    Let $G$ be a primitive group with socle $G_0 = \LL_2(q)$ and point stabiliser $H$, and if $H$ is of type $\GL_2(q^{1/2})$ with $q \ge 16$, then assume that $|G:G_0|$ is odd. Then $\Sigma(G)$ is $G$-arc-transitive if and only if one of the following holds:
    \begin{itemize}\addtolength{\itemsep}{0.2\baselineskip}
		\item[{\rm (i)}] $H$ is of type $P_1$;
        \item[{\rm (ii)}] $G$ is $2$-transitive and $(G,H) = (\LL_2(11),A_5)$, $(\PGammaL_2(8),D_{18}.3)$, $(\LL_2(7),S_4)$, $(S_6,S_5)$, $(A_6,A_5)$, $(S_5,S_4)$ or $(A_5,A_4)$.
		\item[{\rm (iii)}] $b(G) = 2$, $G = \PGL_2(q)$, $H$ has type $\GL_1(q)\wr S_2$, and $5 \ne q\geqs 4$; or
		\item[{\rm (iv)}] $b(G) = 2$ and $(G,H) = (\LL_2(29),A_5)$, $(\PGL_2(11),S_4)$, $(\mathrm{M}_{10},5{:}4)$  or $(A_5,S_3)$.
	\end{itemize}
    In particular, $\reg(G) = 1$ if and only if $G$ is one of the groups in parts (iii) and (iv), or $H$ is of type $P_1$ and $G$ is sharply $3$-transitive, or $(G,H) \in \{(S_6,S_5), (A_6,A_5),(S_5,S_4)\}$.
\end{thm}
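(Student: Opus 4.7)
The plan is to combine Theorems~\ref{thm:PSL2_semi} and~\ref{t:PSL2_b(G)} with a case analysis on the type of the point stabiliser $H$. Under the hypothesis on $|G:G_0|$, Theorem~\ref{thm:PSL2_semi} reads simply as: $G$ is semi-Frobenius if and only if $b(G)\geqs 3$. Hence whenever $b(G)\geqs 3$, the graph $\Sigma(G)$ is the complete graph on $\Omega$ and so trivially $G$-arc-transitive. For $b(G)=2$, the observation in Section~\ref{subsec:regintro} gives the equivalence: $\Sigma(G)$ is $G$-arc-transitive if and only if $\reg(G)=1$. Thus the proof splits cleanly into these two regimes.

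For the first regime ($b(G)\geqs 3$), I would use Theorem~\ref{t:PSL2_b(G)} to enumerate the corresponding actions. The $P_1$-action of $G$ is always $2$-transitive and produces case~(i); the remaining $b(G)\geqs 3$ examples in that table coincide with the sporadic $2$-transitive actions with socle $\LL_2(q)$ in part~(ii), a fact that can be checked by matching degrees against Burnside's classification of finite $2$-transitive groups. For all of these, $\Sigma(G)=K_{|\Omega|}$, so arc-transitivity is immediate.

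For the second regime ($b(G)=2$), I would classify the actions with $\reg(G)=1$ by going through the maximal subgroups of $\LL_2(q)$ via Dickson's theorem. The dihedral type $\GL_1(q)\wr S_2$ contributes the infinite family: a direct analysis of the $H$-orbits on $\Omega\setminus\{\a\}$, taking into account the fusion of $\PSL_2(q)$-classes of split tori under $\PGL_2(q)$ and the action of field automorphisms, shows that $\reg(G)=1$ precisely when $G=\PGL_2(q)$ with $q\geqs 4$ and $q\neq 5$, giving case~(iii). Each of the remaining maximal subgroup types with $b(G)=2$ (the non-split dihedral $\GL_1(q^2)$, the subfield types $\PSL_2(q_0)$ and $\PGL_2(q^{1/2})$ with $|G:G_0|$ odd, and the exceptional $A_4,S_4,A_5$) arises only for small $q$; handling these by a mixture of the fixed-point-ratio estimates from \cite{B_sol,B_class} and explicit \textsc{Magma} computations yields the four pairs of case~(iv). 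The final ``in particular'' statement then follows by picking out, among the arc-transitive examples of (i) and (ii), those on which $G$ is regular on ordered $b(G)$-tuples of distinct points: for the $P_1$-action this is precisely the sharply $3$-transitive subcase, and within the sporadic list in (ii) only $(S_6,S_5)$, $(A_6,A_5)$ and $(S_5,S_4)$ survive (the remaining pairs having a non-trivial pointwise stabiliser of $b(G)$ points).

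The main technical obstacle is the $\reg(G)=1$ analysis in the dihedral $\GL_1(q)\wr S_2$ family, where the orbit count depends delicately on the parity of $q-1$, on $|G:G_0|$, and on which overgroup of $\PSL_2(q)$ we are in; separating the genuinely regular $H$-orbit from several non-regular ones, and isolating the stray exclusion $q=5$, will require careful book-keeping. A secondary difficulty is confirming that no further example arises from the type $\GL_2(q^{1/2})$ subfield subgroups when $|G:G_0|$ is odd, which is precisely the reason for the careful wording of the hypothesis on $|G:G_0|$.
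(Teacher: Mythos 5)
There is a genuine gap at the very first step of your argument, and it is fatal to the classification. You assert that once $b(G)\geqs 3$ the graph $\Sigma(G)$ is complete ``and so trivially $G$-arc-transitive''. Completeness of $\Sigma(G)$ does \emph{not} imply $G$-arc-transitivity: arc-transitivity is a property of the $G$-action on the arcs, and for a complete graph on $\Omega$ it is equivalent to $G$ being $2$-transitive on $\Omega$ (this is noted explicitly after Lemma~\ref{l:pre_complete_2-trans}, and it is exactly the reduction the paper uses: for $b(G)>2$ the group is semi-Frobenius by Theorem~\ref{thm:PSL2_semi}, hence $\Sigma(G)$ is $G$-arc-transitive if and only if $G$ is $2$-transitive). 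Your subsequent claim that the remaining $b(G)\geqs 3$ cases of Theorem~\ref{t:PSL2_b(G)} ``coincide with the sporadic $2$-transitive actions'' is also false: parts (ii)--(iv) of that theorem are infinite families of non-$2$-transitive actions (e.g.\ $H$ of type $\GL_1(q)\wr S_2$ with $\PGL_2(q)<G$, of degree $q(q+1)/2$, or type $\GL_1(q^2)$ of degree $q(q-1)/2$), and $(\LL_2(19),A_5)$ of degree $57$ is another. For all of these, $\Sigma(G)$ is complete but \emph{not} $G$-arc-transitive, so your proof would wrongly add infinitely many groups to the list in (i)--(ii). The missing idea is precisely the equivalence ``complete and arc-transitive $\Leftrightarrow$ $2$-transitive'', followed by extracting from Theorem~\ref{t:PSL2_b(G)} the (finitely many) $2$-transitive actions with $b(G)\geqs 3$.

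The $b(G)=2$ regime of your plan is essentially sound and close to the paper's: the paper handles soluble stabilisers by citing \cite[Theorem 1.6]{BH_Saxl} rather than redoing the dihedral orbit analysis, and disposes of the type $A_5$ and odd-degree subfield cases with the bound $\widehat{Q}(G,2)<1/2$ together with $2|H|^2<|G|$ and {\sc Magma} for small $q$ (isolating $q=29$), so your proposed mixture of fixed-point-ratio estimates and computation is the right shape. One smaller point on the ``in particular'' clause: your criterion for discarding the $2$-transitive pairs other than $(S_6,S_5)$, $(A_6,A_5)$, $(S_5,S_4)$ --- that they have ``a non-trivial pointwise stabiliser of $b(G)$ points'' --- is not the right test, since every group with $b(G)=b$ has some $b$-tuple with non-trivial stabiliser unless it is sharply $b$-transitive on that orbit; what is needed (and what the paper uses) is that for a $2$-transitive group $\reg(G)=1$ is governed by sharp $b(G)$-transitivity, i.e.\ by counting the regular orbits on $\Omega^{b(G)}$ rather than by the existence of a degenerate tuple.
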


Finally, in Section~\ref{s:wr}, we investigate whether primitive groups $G$ of product type could satisfy $\reg(G)=1.$

\subsection*{Organisation}

We begin our study of the generalised Saxl graph in Section~\ref{s:pre} by noting several basic observations and probabilistic results. We then consider the above problems on primitive permutation groups, for four of the five O'Nan--Scott families: diagonal type groups in Section~\ref{s:diag}, almost simple groups in Section~\ref{s:as}, affine groups in Section~\ref{s:affine}, and finally product type groups in Section~\ref{s:wr}. In Section~\ref{s:irredundant}, we briefly explore an alternative generalisation of the Saxl graph.
\subsection*{Acknowledgements}

The first and fourth authors were partially supported by EPSRC grant numbers EP/W522422/1 and EP/W522673/1, respectively. The second author thanks the China Scholarship Council for
supporting his doctoral studies at the University of Bristol. The third author acknowledges the support of an Australian Research Council Discovery Early Career Researcher Award (project number DE230100579).

\section{Preliminaries}

\label{s:pre}

\subsection{Basic observations}

\label{ss:pre_obs}
Let $G\leqs\mathrm{Sym}(\Omega)$ be a finite permutation group with base size $b(G)$ and generalised Saxl graph $\Sigma(G)$. We begin with some elementary observations. Note that $G$ is a subgroup of the automorphism group of $\Sigma(G)$, and if $G$ is transitive, then $\Sigma(G)$ is $G$-vertex-transitive.
\begin{lem}
	\label{l:pre_complete_2-trans}
	Suppose that $G$ is $2$-transitive, with $b(G) \ge 2$. Then $\Sigma(G)$ is complete.
\end{lem}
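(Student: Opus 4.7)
The plan is to use $2$-transitivity to transport a single witnessing base of minimum size onto any pair of vertices. First I would fix a base $\Delta = \{\gamma_1, \gamma_2, \ldots, \gamma_{b(G)}\}$ of size $b(G)$, which exists by definition of the base size, and note that since $b(G)\geqs 2$, the set $\Delta$ contains (at least) two distinct points $\gamma_1,\gamma_2$.

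Next, given any two distinct vertices $\alpha,\beta\in\Omega$, I would invoke $2$-transitivity of $G$ on $\Omega$ to produce some $g\in G$ with $\gamma_1^g=\alpha$ and $\gamma_2^g=\beta$. Since bases are preserved under the $G$-action (if $\Delta$ has trivial pointwise stabiliser, then so does $\Delta^g$), the image $\Delta^g=\{\alpha,\beta,\gamma_3^g,\ldots,\gamma_{b(G)}^g\}$ is still a base of size $b(G)$. Hence $\{\alpha,\beta\}$ is contained in a base of size $b(G)$, so $\alpha$ and $\beta$ are adjacent in $\Sigma(G)$. As $\alpha,\beta$ were arbitrary distinct vertices, $\Sigma(G)$ is complete.

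There is essentially no obstacle here: the argument is a direct application of the definitions together with the fact that $2$-transitive actions act transitively on ordered pairs of distinct points, so any potential edge can be obtained as the translate of one fixed edge lying inside a single known base.
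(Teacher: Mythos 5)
Your proof is correct and is essentially the same argument as the paper's: the paper notes that $\Sigma(G)$ is non-empty (i.e.\ some pair of distinct points lies in a base of size $b(G)\geqs 2$) and then applies $2$-transitivity to transport that edge onto an arbitrary pair, exactly as you do. Your write-up simply makes the implicit steps (existence of the base, preservation of bases under the $G$-action) explicit.
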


\begin{proof}
	Note that $\Sigma(G)$ is clearly non-empty. The $2$-transitivity of $G$ therefore implies that $\{\a,\b\}$ is an edge for all $\a,\b\in\Omega$.
\end{proof}

Note also that if $b(G)\geqs 2,$ then $\Sigma(G)$ is complete and $G$-arc-transitive if and only if $G$ is $2$-transitive.

\begin{lem}
    Suppose that $G$ is transitive with $b(G) \ge 3$, and let $\alpha \in \Omega$. Then $\Sigma(G)$ is complete if and only if $\a$ is the unique isolated vertex of $\Sigma(G_\alpha)$.
\end{lem}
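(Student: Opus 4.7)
The plan rests on a clean correspondence between bases of $G_\a$ (acting on $\Omega$) and bases of $G$ containing $\a$. Since $G_\a$ fixes $\a$, for any $\Delta' \subseteq \Omega$ one has $(G_\a)_{\Delta'} = G_{\{\a\} \cup \Delta'}$, so $\Delta'$ is a base for $G_\a$ if and only if $\{\a\} \cup \Delta'$ is a base for $G$. First I would use this, together with the $G$-transitivity on $\Omega$ (which ensures that some min base of $G$ contains $\a$, obtained by translating any fixed min base by a suitable $g \in G$), to deduce $b(G_\a) = b(G) - 1 \geqs 2$; in particular $\Sigma(G_\a)$ is well defined. The same identity immediately shows that $\a$ is isolated in $\Sigma(G_\a)$, since any min base of $G_\a$ containing $\a$ would, upon deleting $\a$, still be a base of $G_\a$, violating minimality.

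Next I would translate both sides of the biconditional into statements about min bases of $G$. By the $G$-vertex-transitivity of $\Sigma(G)$, $\Sigma(G)$ is complete if and only if every $\b \in \Omega \setminus \{\a\}$ is adjacent to $\a$ in $\Sigma(G)$, i.e.~$\{\a, \b\}$ extends to some min base of $G$. On the other hand, by the correspondence above, an edge $\{\b, \g\}$ of $\Sigma(G_\a)$ corresponds exactly to a base of $G$ of size $b(G)$ containing $\{\a, \b, \g\}$. Hence $\a$ being the unique isolated vertex of $\Sigma(G_\a)$ is equivalent to the statement that every $\b \in \Omega \setminus \{\a\}$ admits some $\g \in \Omega \setminus \{\a, \b\}$ with $\{\a, \b, \g\}$ lying in a min base of $G$.

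The biconditional between these two reformulated conditions is then straightforward. The ``unique isolated'' condition trivially implies completeness by forgetting $\g$. For the converse, the hypothesis $b(G) \geqs 3$ is used essentially: given a min base of $G$ containing $\{\a, \b\}$, its size is at least $3$, so we may pick a third point $\g$ from it, which plays the required role. No real obstacle is anticipated; the argument is essentially bookkeeping around the stabiliser identity $(G_\a)_{\Delta'} = G_{\{\a\} \cup \Delta'}$, with the hypothesis $b(G) \geqs 3$ serving precisely to ensure the existence of that third element $\g$.
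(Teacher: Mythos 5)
Your proposal is correct and follows essentially the same route as the paper: both rest on the identity $(G_\a)_{\Delta'} = G_{\{\a\}\cup\Delta'}$, which identifies the non-isolated vertices of $\Sigma(G_\a)$ with the neighbours of $\a$ in $\Sigma(G)$, and then invoke vertex-transitivity. Your version merely spells out the bookkeeping (that $b(G_\a)=b(G)-1$ and that $\a$ is automatically isolated in $\Sigma(G_\a)$) which the paper leaves implicit.
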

\begin{proof}
    Observe that a subset $\Gamma$ of $\Omega \setminus \{\alpha\}$ is a base for $H:=G_\alpha$ of size $b(H) = b(G)-1$ if and only if $\Gamma \cup \{\a\}$ is a base for $G$ of size $b(G)$. Hence the non-isolated vertices of $\Sigma(H)$ are precisely the neighbours of $\alpha$ in $\Sigma(G)$. The transitivity of $G$ now yields the result.
\end{proof}

A graph is called \emph{locally faithful} if the stabiliser of any vertex acts faithfully on its neighbours.

\begin{lem}
	\label{l:pre_loc_faith}
	Suppose that $G$ is transitive with $b(G)\geqs 2$. Then $\Sigma(G)$ is locally faithful.
\end{lem}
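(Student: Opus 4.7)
The plan is to show directly that for any vertex $\alpha \in \Omega$, the kernel of the action of $G_\alpha$ on the neighbourhood $N(\alpha)$ in $\Sigma(G)$ is trivial. The key observation is that if $\alpha$ lies in some base of size $b(G)$, then the other $b(G) - 1$ elements of that base automatically sit inside $N(\alpha)$, and together with $\alpha$ they form a set whose pointwise stabiliser is trivial.

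More precisely, I would first use transitivity to guarantee that every vertex of $\Sigma(G)$ lies in at least one base of size $b(G)$. Indeed, choose any such base $B_0 = \{\gamma_1, \ldots, \gamma_{b(G)}\}$ of $G$, which exists because $b(G) \geq 2$, and for arbitrary $\alpha \in \Omega$ pick $g \in G$ with $\gamma_1^g = \alpha$; then $B_0^g$ is again a base of size $b(G)$, and it contains $\alpha$. Write this base as $B = \{\alpha, \beta_1, \ldots, \beta_{b(G)-1}\}$.

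Now suppose $h \in G_\alpha$ fixes every vertex in $N(\alpha)$. By the definition of $\Sigma(G)$, every $\beta_i$ lies in $N(\alpha)$, since the pair $\{\alpha, \beta_i\}$ extends (via $B$ itself) to a base of size $b(G)$. Therefore $h$ fixes each $\beta_i$ and also fixes $\alpha$, so $h$ lies in the pointwise stabiliser of $B$. Since $B$ is a base of $G$, this pointwise stabiliser is trivial, so $h = 1$. Hence $G_\alpha$ acts faithfully on $N(\alpha)$, which is exactly the assertion that $\Sigma(G)$ is locally faithful.

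There is essentially no obstacle here: the argument is a direct unwinding of the definitions of base and of edge in $\Sigma(G)$, combined with the standard transitivity trick to place $\alpha$ inside a minimum-size base. The only mild subtlety is noting that transitivity (rather than the stronger primitivity) is sufficient to ensure that $N(\alpha)$ is non-empty and, more importantly, contains the remaining members of some minimum base through $\alpha$.
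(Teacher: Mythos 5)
Your proof is correct and follows essentially the same argument as the paper: place $\alpha$ in a base of minimum size (using transitivity), observe that the remaining base points lie in $N(\alpha)$, and conclude that the kernel of the $G_\alpha$-action on $N(\alpha)$ is contained in the trivial pointwise stabiliser of that base. The only cosmetic difference is that the paper treats a single vertex and then invokes vertex-transitivity of $\Sigma(G)$, whereas you conjugate the base directly to an arbitrary $\alpha$; these are the same idea.
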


\begin{proof}
	Let $\a\in\Omega$, and let $N(\a)$ be the set of neighbours of $\a$ in $\Sigma(G)$. Then there exist $\a_2,\dots,\a_{b(G)}$ such that $\{\a,\a_2,\dots,\a_{b(G)}\}$ is a base for $G$. In particular, each $\a_i$ is adjacent to $\a$ in $\Sigma(G)$. This implies that
	\begin{equation*}
	(G_\a)_{(N(\a))} \leqs (G_\a)_{\a_2,\dots,\a_{b(G)}} = G_{\a,\a_2,\dots,\a_{b(G)}} = 1,
	\end{equation*}
	and the result follows from the vertex-transitivity of $\Sigma(G)$.
\end{proof}

We now point out elementary relationships between the generalised Saxl graph of $G\leq \mathrm{Sym}(\Omega)$ and its orbital graphs. Recall that the orbitals of $G$ are the orbits of $G$ on unordered pairs of elements of $\Omega$. An orbital of $G$ is \emph{diagonal} if each of its elements is of the form $(\alpha,\alpha)$ with $\alpha \in \Omega$, and if $G$ is transitive, then it has a unique diagonal orbital. For a non-diagonal orbital $\Gamma$, we define the corresponding orbital graph to be the graph with vertex set $\Omega$ and edge set $\Gamma.$ Note that this is an edge-transitive graph by definition. 

\begin{lem}\label{lem:orbgraphs}
    The graph $\Sigma(G)$ is a union of orbital graphs of $G.$
\end{lem}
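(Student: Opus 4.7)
The plan is to show that the edge set of $\Sigma(G)$ is invariant under the natural $G$-action on unordered pairs from $\Omega$; the result then follows immediately, since a non-diagonal orbital graph is by definition the graph whose edge set is a single $G$-orbit on unordered pairs, and any $G$-invariant set of unordered pairs decomposes as a disjoint union of such orbits.

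The main step is to verify $G$-invariance of edges. I would take an arbitrary edge $\{\a,\b\}$ of $\Sigma(G)$ and, by definition, extend it to a base $\Delta=\{\a,\b,\a_3,\dots,\a_{b(G)}\}$ for $G$ of size $b(G)$. For any $g\in G$, a standard conjugation argument (namely $G_{(\Delta^g)}=g^{-1}G_{(\Delta)}g=1$) shows that $\Delta^g$ is also a base of size $b(G)$, so in particular $\{\a^g,\b^g\}\subseteq\Delta^g$ is an edge of $\Sigma(G)$. Note that $\a^g\ne\b^g$ since $\a\ne\b$, so this really is an edge (an unordered pair of distinct vertices), not a loop. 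Hence the edge set $E(\Sigma(G))$ is a union of $G$-orbits on the set of unordered pairs of distinct elements of $\Omega$.

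Finally, each such orbit is, by the definition recalled just before the lemma, precisely a non-diagonal orbital of $G$, and the orbital graph associated to it has edge set exactly that orbit. Taking the union of the orbital graphs corresponding to the orbits appearing in $E(\Sigma(G))$ then reconstructs $\Sigma(G)$ on the nose, which is what we want.

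There is no real obstacle here: the content is essentially just the observation that $G$ permutes its own bases, so the argument is a couple of lines. The only thing to be a little careful about is to record that edges of $\Sigma(G)$ are unordered pairs of \emph{distinct} points, so that each $G$-orbit of edges genuinely corresponds to a non-diagonal orbital and the invocation of the preceding definitions is clean.
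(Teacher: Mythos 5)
Your proposal is correct and follows essentially the same route as the paper: the paper's one-line proof simply notes that if a pair is contained in a base then so is every pair in its orbital, which is exactly the $G$-invariance of the edge set that you verify via the conjugation argument $G_{(\Delta^g)}=g^{-1}G_{(\Delta)}g$. Your version just spells out the details (including the harmless point about distinctness excluding the diagonal orbital) that the paper leaves implicit.
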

\begin{proof}
  Note that if a pair $\{\alpha,\beta\}$ of elements of $\Omega$ is contained in a base for $G$, then each pair in the orbital $\{\alpha,\beta\}^G$ is contained in a base. The result follows. 
\end{proof}
 
By \cite{higmanorb}, all orbital graphs of $G$ are connected if and only if $G$ is primitive on $\Omega.$ Hence we can define the \emph{orbital diameter} of a primitive group $G$ to be the supremum of the diameters of its orbital graphs.
 
 \begin{lem} \label{orbdiam2}
    Suppose that $G$ is primitive. Then $\Sigma(G)$ is connected. Additionally, if $b(G) \ge 3$ and the orbital diameter of $G$ is at most two, then Conjecture~\ref{conj:BG} holds.
\end{lem}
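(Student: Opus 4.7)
The plan is to derive both statements directly from Lemma~\ref{lem:orbgraphs}, Higman's theorem (recalled just before the lemma in the text), and the observation made immediately after Conjecture~\ref{conj:BG} that, when $b(G)\geqs 3$, the conjecture is equivalent to $\Sigma(G)$ having diameter at most two.

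For the connectedness, I would first note that since $b(G)\geqs 2$, there exists at least one base $\{\alpha_1,\dots,\alpha_{b(G)}\}$ of $G$ of size $b(G)$, and any two of its elements form an edge of $\Sigma(G)$. Hence $\Sigma(G)$ has at least one edge, and by Lemma~\ref{lem:orbgraphs} its edge set is a union of non-diagonal $G$-orbitals; in particular $\Sigma(G)$ contains at least one orbital graph of $G$ as a spanning subgraph. The primitivity of $G$ together with Higman's theorem then guarantees that this orbital graph is connected on the vertex set $\Omega$, and therefore $\Sigma(G)$ is connected.

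For the second assertion, the spanning orbital graph identified above has diameter at most two by the hypothesis that the orbital diameter of $G$ is at most two. Since adding edges to a graph cannot increase its diameter, $\Sigma(G)$ itself has diameter at most two. As $b(G)\geqs 3$, the observation recalled above then implies that Conjecture~\ref{conj:BG} holds for $G$.

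There is no real obstacle here: the entire argument reduces to the observation that $\Sigma(G)$ contains at least one orbital graph as a spanning subgraph, which in turn follows from the non-emptiness of $\Sigma(G)$ (guaranteed by $b(G)\geqs 2$). No case analysis or further group-theoretic input is required.
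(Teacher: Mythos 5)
Your proposal is correct. For the second assertion it is essentially the paper's argument: both proofs combine Lemma~\ref{lem:orbgraphs} with the connectivity of orbital graphs of primitive groups and the remark following Conjecture~\ref{conj:BG} that, for $b(G)\geqs 3$, the conjecture is equivalent to $\mathrm{diam}(\Sigma(G))\leqs 2$. The only genuine difference is in the connectedness claim: the paper observes that the connected components of $\Sigma(G)$ form a block system, so primitivity forces a single component, whereas you instead note that $\Sigma(G)$ is non-empty (since a minimal base exists and $b(G)\geqs 2$) and therefore contains a connected orbital graph as a spanning subgraph. Both routes are valid and of comparable length; yours has the mild advantage of reusing the same orbital-graph machinery for both halves of the lemma, while the paper's block argument does not need Lemma~\ref{lem:orbgraphs} at all for connectedness. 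Your observation that adding edges cannot increase diameter is the correct justification for the step the paper attributes, somewhat loosely, to ``the transitivity of $G$''.
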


\begin{proof}
    Each connected component of $\Sigma(G)$ is a block for $G$, so the primitivity of $G$ implies that $\Sigma(G)$ is connected. Now,  $\Sigma(G)$ is a union of orbital graphs by Lemma \ref{lem:orbgraphs}, and since $G$ is primitive, each of its orbital graphs is connected \cite[Theorem 3.2A]{DM} (this again shows that $\Sigma(G)$ is connected). Hence, if the diameter of each orbital graph is at most $2,$ then by the transitivity of $G$, the diameter of $\Sigma(G)$ is also at most $2.$ Since $b(G)\geq 3,$ any two adjacent vertices have a common neighbour, and so Conjecture~\ref{conj:BG} holds in this case.
\end{proof}
\begin{rem}
We note that Lemma \ref{orbdiam2} does not always hold for imprimitive groups, as illustrated by the following examples.
\begin{itemize}\addtolength{\itemsep}{0.2\baselineskip}
   \item[{\rm (i)}] Let $G = \GL(V)$ acting on non-zero vectors, where $V$ has dimension at least two and is defined over a field containing at least three elements. Then $b(G) = \dim(V)$ and $\Sigma(G)$ is a complete multipartite graph, with each part being a $1$-space. Even though $G$ is imprimitive, it is worth noting that $\Sigma(G)$ still satisfies the property that any two vertices have a common neighbour.
   \item[{\rm (ii)}] Let $G = \PGL_2(7)$ and consider its faithful transitive action on a set $\Omega$ of size $14$. Then $G_\a\cong S_4$ and a computation using {\sc Magma} \cite{magma} shows that $b(G) = 3$, and that $\Sigma(G)$ has two connected components. For each positive integer $k$, the base size of $G^k$ with its product action on $\Omega^k$ is also $3$, and we see that $\Sigma(G^k)$ is a direct product of $k$ copies of $\Sigma(G)$. In particular, $\Sigma(G^k)$ has $2^k$ connected components, and so the number of connected components in a generalised Saxl graph can be arbitrarily large.
    \item[{\rm (iii)}] Let $L$ be the primitive group labelled (27,8) in {\sc Magma}'s Primitive  Groups Library \cite{primgps}, so that $L = 3^3{:}(2\times S_4)$. Let $W := L\wr S_2$ be an imprimitive group of degree $54$. With the aid of {\sc Magma}, one can check that $W$ contains (imprimitive) transitive subgroups $G_1$ of order $5832$ and $G_2$ of order $34992$ such that $b(G_1) = 3$, $b(G_2) = 4$ and $\mathrm{diam}(\Sigma(G_1)) = \mathrm{diam}(\Sigma(G_2)) = 3$. Thus, the statement of Conjecture \ref{conj:BG} does not hold if we remove the condition of primitivity (even for groups with connected generalised Saxl graphs).
\end{itemize}
\end{rem}
	
\subsection{Probabilistic methods}
\label{ss:pre_prob}

Probabilistic methods were first used to bound the base size of a permutation group by Liebeck and Shalev \cite{LS_prob}, and such methods have since had broad application in the area (see e.g. \cite{B_class,BOW_spor}). In this approach, for a permutation group $G \leq \mathrm{Sym}(\Omega)$ and integer $k\geq 2$, one aims to bound the probability
\[
Q(G,k) = \frac{|\{(\a_1,\dots,\a_k)\in\Omega^k:\bigcap G_{\a_i} \neq 1\}|}{|\Omega|^k}
\]
that a random $k$-tuple of elements from $\Omega$ does not form a base for $G$. If it can be shown that $Q(G,k)<1$, then there must be a base of size $k$ for $G$.

Burness and Giudici \cite[\S 3.3]{BG_Saxl} observed that for transitive groups with base size 2, there is a link between $Q(G,2)$ and the valency $\mathrm{val}(G)$ of the corresponding Saxl graph $\Sigma(G)$. We now generalise this connection to transitive groups of arbitrary base size $b(G)\geq 2$.

\begin{prop}
\label{prop:val}
Let $G \leq \mathrm{Sym}(\Omega)$ be a transitive group with $k := b(G)\geqs 2$. Then
\begin{equation*}
1-Q(G,k)  \le \frac{{\rm{val}}(G)^{k-1}}{|\Omega|^{k-1}}
= \left( \frac{{\rm{val}}(G)}{|\Omega|} \right)^{k-1}.
\end{equation*}
\end{prop}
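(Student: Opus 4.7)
The plan is to use a straightforward counting argument that generalises the one Burness and Giudici use for $k=2$. Note that $1-Q(G,k)$ equals $N/|\Omega|^k$, where $N$ is the number of $k$-tuples $(\a_1,\dots,\a_k)\in\Omega^k$ with $\bigcap_{i=1}^{k} G_{\a_i} = 1$, i.e.\ the number of tuples whose underlying set is a base for $G$. So the goal is to bound $N$ above by $|\Omega|\cdot\mathrm{val}(G)^{k-1}$.

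The key observation is that any such base tuple must consist of pairwise distinct points (as $b(G)=k$ forces at least $k$ distinct coordinates), and in particular for every $i\in\{2,\dots,k\}$ the pair $\{\a_1,\a_i\}$ is contained in the base $\{\a_1,\dots,\a_k\}$ of size $b(G)$. By the definition of the generalised Saxl graph, this means $\a_i$ is a neighbour of $\a_1$ in $\Sigma(G)$ for each $i\geqs 2$. Since $G$ is transitive, $\Sigma(G)$ is $G$-vertex-transitive, so every vertex has the same valency $\mathrm{val}(G)$, and the number of tuples of the form $(\a_1,\a_2,\dots,\a_k)$ with each $\a_i$ ($i\geqs 2$) a neighbour of $\a_1$ in $\Sigma(G)$ is exactly $|\Omega|\cdot\mathrm{val}(G)^{k-1}$.

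Thus $N\leqs |\Omega|\cdot\mathrm{val}(G)^{k-1}$, and dividing by $|\Omega|^{k}$ gives the claimed inequality. There is no genuine obstacle here; the only point requiring care is the reduction from the stabiliser-intersection formulation of $Q(G,k)$ to a statement about unordered pairs and edges of $\Sigma(G)$, which is immediate from the definition of the generalised Saxl graph together with the constraint $k=b(G)$. The proof specialises to the bound in \cite[\S 3.3]{BG_Saxl} when $k=2$.
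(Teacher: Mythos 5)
Your proof is correct and follows essentially the same route as the paper: both arguments observe that in any ordered base $(\a_1,\dots,\a_k)$ of size $k=b(G)$, each $\a_i$ with $i\geqs 2$ must be a neighbour of $\a_1$ in $\Sigma(G)$, so the number of such tuples with $\a_1$ fixed is at most $\mathrm{val}(G)^{k-1}$, and vertex-transitivity makes this count uniform over $\a_1$. The only cosmetic difference is that the paper fixes $\a_1$ via transitivity before counting, whereas you count all $k$-tuples and divide by $|\Omega|^{k}$ at the end.
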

\begin{proof}
Since $G$ acts transitively, every element of $\Omega$ is contained in the same number of bases of size $k=b(G)$. Hence 
\begin{align*}
1-Q(G,k)  & =   \frac{|\{(\a_1,\dots,\a_k)\in\Omega^k:\bigcap G_{\a_i} = 1\}|}{|\Omega|^k}\\
& = \frac{|\{(\a_2,\dots,\a_k)\in\Omega^{k-1}:\bigcap_{i=1}^{k} G_{\a_i} = 1\}|}{|\Omega|^{k-1}},
\end{align*}
with $\alpha_1 \in \Omega$ fixed in this final expression.
Now, the set of elements of $\Omega$ that appear in a tuple in $\{(\a_2,\dots,\a_k)\in\Omega^{k-1}:\bigcap_{i=1}^{k} G_{\a_i} = 1\}$ has 
size ${\rm{val}}(G)$.
Therefore, 
\begin{equation*}
1-Q(G,k)  \le \frac{{\rm{val}}(G)^{k-1}}{|\Omega|^{k-1}}
= \left( \frac{{\rm{val}}(G)}{|\Omega|} \right)^{k-1}. \qedhere
\end{equation*}
\end{proof}
This allows us to extend \cite[Lemma 3.6]{BG_Saxl} as follows.

\begin{lem}
	\label{l:pre_prob}
	Let $G$ be a transitive group with $k:=b(G) > 1$, and set
	\begin{equation*}
	t:=\max\{m\in\mathbb{N} : Q(G,k) < 1/m\}.
	\end{equation*}
 Additionally, let $r:=\max\{t,(k-1)(t-1)\}$. If $r \geqs 2$, then the following properties hold.
	\begin{itemize}\addtolength{\itemsep}{0.2\baselineskip}
		\item[{\rm (i)}] Any $r$ vertices in $\Sigma(G)$ have a common neighbour.
		\item[{\rm (ii)}] Every edge in $\Sigma(G)$ is contained in a complete subgraph of size $r+1$.
		\item[{\rm (iii)}] The clique number of $\Sigma(G)$ is at least $r+1$.
		\item[{\rm (iv)}] $\Sigma(G)$ is connected with diameter at most $2$.
		\item[{\rm (v)}] $\Sigma(G)$ is Hamiltonian.
	\end{itemize}
\end{lem}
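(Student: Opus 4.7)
The plan is to generalise the argument of Burness and Giudici \cite[Lemma 3.6]{BG_Saxl} by combining Proposition~\ref{prop:val} with elementary counting. By definition of $t$, we have $Q(G,k)<1/t$, hence $1-Q(G,k)>(t-1)/t$, and Proposition~\ref{prop:val} gives
\[
\frac{\val(G)}{|\Omega|}\geqs (1-Q(G,k))^{1/(k-1)}>\left(\frac{t-1}{t}\right)^{1/(k-1)}.
\]
The crucial technical step is to verify that
\[
\left(\frac{t-1}{t}\right)^{1/(k-1)}\geqs\frac{r-1}{r}
\]
for both candidate values $r=t$ and $r=(k-1)(t-1)$. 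For $r=t$ this is immediate, since $(t-1)/t<1$ and $1/(k-1)\leqs 1$. For $r=(k-1)(t-1)$, rearranging reduces the task to $(1-1/r)^{k-1}\leqs (t-1)/t$; writing $(1-1/r)^{k-1}=((1-1/r)^{r})^{1/(t-1)}$ and using the standard bound $(1-1/r)^{r}\leqs 1/e$, it suffices to show $e^{-1/(t-1)}\leqs (t-1)/t$, which is equivalent to $e^{1/(t-1)}\geqs 1+1/(t-1)$, a case of $e^{x}\geqs 1+x$.

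For (i), let $\a_1,\dots,\a_r\in\Omega$ and write $N(\a)$ for the neighbourhood of $\a$ in $\Sigma(G)$, so that $|N(\a)|=\val(G)$ by vertex-transitivity. A direct complementation argument gives
\[
\left|\bigcap_{i=1}^{r}N(\a_i)\right|\geqs|\Omega|-\sum_{i=1}^{r}(|\Omega|-|N(\a_i)|)=r\cdot\val(G)-(r-1)|\Omega|,
\]
which is strictly positive by the bounds established in the first paragraph; in particular, these bounds force $|\Omega|\geqs r+1$. Part (ii) follows by induction on clique size: given a clique of size $j$ with $2\leqs j\leqs r$ containing the prescribed edge, the same counting argument (which only improves when applied to $j\leqs r$ vertices) yields a common neighbour lying outside the clique, since $\Sigma(G)$ is loopless, extending it to size $j+1$. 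Iterating from the initial edge produces a clique of size $r+1$, from which (iii) is immediate.

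For (iv), applying (i) to any two vertices provides a common neighbour, so $\Sigma(G)$ is connected with diameter at most $2$. For (v), observe that $r\geqs 2$ forces $t\geqs 2$, so the first displayed inequality gives $\val(G)/|\Omega|>(1/2)^{1/(k-1)}\geqs 1/2$ for $k\geqs 2$; hence $\val(G)>|\Omega|/2$, and since $|\Omega|\geqs r+1\geqs 3$, Dirac's theorem implies that $\Sigma(G)$ is Hamiltonian. The main obstacle is establishing the key inequality in the first paragraph for $r=(k-1)(t-1)$; once this is in hand, the remainder is a fairly mechanical adaptation of the Burness--Giudici argument.
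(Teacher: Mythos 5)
Your proof is correct and follows essentially the same route as the paper's: both deduce $\val(G) > |\Omega|(1-1/r)$ from Proposition~\ref{prop:val} via the key inequality $\bigl(1-\tfrac{1}{(k-1)(t-1)}\bigr)^{k-1} \le e^{-1/(t-1)} \le 1-\tfrac{1}{t}$, and then run the standard neighbourhood-counting and Dirac arguments for (i)--(v). Your handling of the middle step via $e^{x}\ge 1+x$ is in fact marginally cleaner than the paper's use of the second-order bound $e^{-x}\le 1-x+\tfrac{x^2}{2}$, but this is a cosmetic difference rather than a different approach.
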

\begin{proof}
We proceed as in the proof of \cite[Lemma 3.6]{BG_Saxl}. If $Q(G,k) < 1/t$, then $|\Omega|(1-1/t)^{1/(k-1)} < \mathrm{val}(G)$ by Proposition \ref{prop:val}. Now $(1-1/t)^{1/(k-1)}\geq 1-\frac{1}{(k-1)(t-1)}$ since, for $t\geq 2$, 
\[
\left(1-\frac{1}{(k-1)(t-1)}\right)^{k-1} \leq e^{-(t-1)} \leq 1-\frac{1}{t-1}+ \frac{1}{2(t-1)^2}\leq 1-\frac{1}{t}.\]
Hence $\mathrm{val}(G) > |\Omega|(1-\frac{1}{r})$. This implies that, for any $r$ vertices in $\Sigma(G)$, the neighbourhoods of those vertices have a non-trivial common intersection. That is, those vertices have a common neighbour. This gives (i), and since $r \ge 2$, parts (ii) -- (iv) follow directly. Finally, since $\mathrm{val}(G)>|\Omega|/2$, Dirac's Theorem \cite[Theorem 3]{D_hamiltonian} directly yields (v).
\end{proof}

A direct application of Proposition \ref{prop:val} and Lemma \ref{l:pre_prob} yield the following.
\begin{cor}
Let $G$ be a transitive group with $b(G)\geqs 2$. If $Q(G,b(G)) < 1-2^{-b(G)+1}$, then 
${\rm{val}}(G)/|\Omega| > 1/2$. In particular, every pair of vertices in $\Sigma(G)$ has a common neighbour. 
\end{cor}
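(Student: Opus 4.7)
The plan is to derive the valency bound directly from Proposition~\ref{prop:val}, and then apply an elementary inclusion--exclusion argument for the common-neighbour conclusion. The statement is short and follows essentially in two lines, so I would present it as a direct consequence rather than invoking the machinery of Lemma~\ref{l:pre_prob}.

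First, I would rewrite the inequality in Proposition~\ref{prop:val} as
\[
\left(\frac{\mathrm{val}(G)}{|\Omega|}\right)^{b(G)-1} \geqs 1 - Q(G,b(G)).
\]
The hypothesis $Q(G,b(G)) < 1 - 2^{-b(G)+1}$ immediately gives $1 - Q(G,b(G)) > (1/2)^{b(G)-1}$. Chaining these two inequalities yields
\[
\left(\frac{\mathrm{val}(G)}{|\Omega|}\right)^{b(G)-1} > \left(\frac{1}{2}\right)^{b(G)-1}.
\]
Since both sides are positive and the map $x \mapsto x^{b(G)-1}$ is strictly increasing on positive reals (using $b(G) \geqs 2$), taking $(b(G)-1)$-th roots gives $\mathrm{val}(G)/|\Omega| > 1/2$, as required.

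For the second assertion, I would fix arbitrary $\alpha,\beta \in \Omega$. By the vertex-transitivity of $\Sigma(G)$ (noted at the start of Section~\ref{ss:pre_obs} and following from the transitivity of $G$), the neighbourhoods $N(\alpha)$ and $N(\beta)$ in $\Sigma(G)$ each have size $\mathrm{val}(G) > |\Omega|/2$. The elementary bound
\[
|N(\alpha) \cap N(\beta)| \geqs |N(\alpha)| + |N(\beta)| - |\Omega| > 0
\]
then produces a common neighbour of $\alpha$ and $\beta$.

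I do not anticipate any genuine obstacle: the corollary is a one-step consequence of Proposition~\ref{prop:val} together with a standard pigeonhole observation. The only subtlety worth flagging is that the extraction of $(b(G)-1)$-th roots implicitly uses $b(G) \geqs 2$, which is part of the standing hypothesis of the statement, so no case analysis is required.
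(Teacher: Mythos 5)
Your proof is correct and matches the paper's intended argument: the paper states this corollary as a direct consequence of Proposition~\ref{prop:val} (giving $\mathrm{val}(G)/|\Omega| > 1/2$ exactly as you derive it) together with the same neighbourhood-counting observation that appears in the proof of Lemma~\ref{l:pre_prob}. Your write-up simply makes the "direct application" explicit, so there is nothing to correct.
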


Now, the probability that a random element in $\Omega$ is fixed by $x\in G$ is given by the \emph{fixed point ratio}
\begin{equation*}
\fpr(x,\Omega) = \frac{|\mathrm{fix}(x,\Omega)|}{|\Omega|},
\end{equation*}
where $\mathrm{fix}(x,\Omega)$ denotes the set of points fixed by $x$. In particular, if $G$ is transitive, then
\begin{equation*}
\fpr(x,\Omega) = \frac{|x^G\cap H|}{|x^G|},
\end{equation*}
where $H$ is a point stabiliser of $G$. A powerful probabilistic approach to study $b(G)$ and $\Sigma(G)$, originally introduced by Liebeck and Shalev \cite{LS_prob}, is to use fixed point ratios to bound $Q(G,k)$ from above. Note that if $\{\a_1,\dots,\a_k\}\subseteq \Omega$ is not a base for $G$, then there exists $x\in G_{\a_1,\dots,\a_k}$ of prime order. Therefore, if $G$ is transitive, then
\begin{equation*}
Q(G,k)\leqs \sum_{x\in R(G)}|x^G|\cdot \fpr(x,\Omega)^k = \sum_{x\in R(G)}\frac{|x^G\cap H|^k}{|x^G|^{k-1}} =: \widehat{Q}(G,k),
\end{equation*}
where $R(G)$ is a set of representatives for the $G$-conjugacy classes of elements in $H$ which have prime order. In particular, if $\widehat{Q}(G,b(G)) < 1/t$ for some integer $t\geqs 2$, then each property listed in Lemma \ref{l:pre_prob} holds for $\Sigma(G)$.
\section{Diagonal type groups}

\label{s:diag}

In this section we consider primitive groups of diagonal type. We first recall their construction and list some preliminary results that we will use later. We then consider completeness and arc-transitivity of the generalised Saxl graphs of these groups.

Here we follow the notation in \cite{F_diag}. Let $k\geqs 2$ be an integer and let $T$ be a non-abelian simple group. Then
\begin{equation*}
    X := \{(t,\dots,t) : t\in T\}
\end{equation*}
is a core-free subgroup of $T^k$, and defining $\Omega := [T^k:X]$, we see that $T^k$ is a transitive subgroup of $\mathrm{Sym}(\Omega)$ (via the usual action on right cosets). Let $W(k,T) := N_{\mathrm{Sym}(\Omega)}(T^k)$, noting that
$$W(k,T) = T^k.(\Out(T)\times S_k)$$
has socle $T^k$. A group $G\leqs\mathrm{Sym}(\Omega)$ is of \emph{diagonal type} if 
\begin{equation*}
T^k\normeq G\leqs W(k,T).
\end{equation*}
Note that $G$ induces a subgroup $P_G$ of $S_k$ on the set of factors of $T^k$ by the conjugation action. More precisely,
\begin{equation*}
P_G = \{\pi\in S_k:(\a_1,\dots,\a_k)\pi\in G\text{ for some }\a_1,\dots,\a_k\in\Aut(T)\}.
\end{equation*}
The group $P_G$ is called the \emph{top group} of $G$, and naturally we have
\begin{equation*}
T^k\normeq G\leqs T^k.(\Out(T)\times P_G).
\end{equation*}

Throughout the rest of this section, we will assume that $G\leqs\mathrm{Sym}(\Omega)$ is a diagonal type group. Additionally, if $G$ is clear from the context, we shall write $P := P_G$ and $W := T^k.(\Out(T)\times P)$. Observe also that
\begin{equation*}
D:=\{(\alpha,\dots,\alpha)\pi:\alpha\in\Aut(T),\pi\in P\}
\end{equation*}
is a point stabiliser of $W$ (note that $W\leqs \Aut(T)\wr S_k$). Thus, we make the identification
\begin{equation*}
    \Omega= [W:D] = \{D(\varphi_{t_1},\dots,\varphi_{t_k}):t_1,\dots,t_k\in T\},
\end{equation*}
where $\varphi_t\in\Inn(T)$ denotes the inner automorphism of $T$ such that $x^{\varphi_t} = t^{-1}xt$ for all $x\in T$.
Moreover, the action of $G$ on $\Omega$ is given by
\begin{equation*}
D(\varphi_{t_1},\dots,\varphi_{t_k})^{(\alpha_1,\dots,\a_k)\pi} = D(\varphi_{t_{1^{\pi^{-1}}}}\a_{1^{\pi^{-1}}},\dots,\varphi_{t_{k^{\pi^{-1}}}}\a_{k^{\pi^{-1}}}),
\end{equation*}
and so the stabiliser of $D\in\Omega$ in $W$ is $D$ itself. Note that $\a^{-1}\varphi_t\a = \varphi_{t^\a}$ for all $t\in T$ and $\alpha \in \Aut(T)$, which implies that
\begin{equation*}
D(\varphi_{t_1},\dots,\varphi_{t_k})^{(\alpha,\dots,\a)\pi} = D(\varphi_{t_{1^{\pi^{-1}}}^{\a}},\dots,\varphi_{t_{k^{\pi^{-1}}}^{\a}})
\end{equation*}
for any element $(\a,\dots,\a)\in D$. 

Moreover, $G$ is primitive if and only if either $P$ is primitive on $[k]:=\{1,2,\ldots,k\}$, or $k = 2$ and $P = 1$.

The precise base sizes of primitive groups of diagonal type have been computed in \cite[Theorem 3]{H_diag} as follows, extending an earlier result of Fawcett \cite{F_diag}.
\begin{thm}{\cite[Theorem 3]{H_diag}}
	\label{t:H_diag}
	Let $G$ be a diagonal type primitive group with socle $T^k$ and top group $P\leqs S_k$.
	\begin{itemize}\addtolength{\itemsep}{0.2\baselineskip}
		\item[{\rm (i)}] If $P\notin\{A_k,S_k\}$, then $b(G) = 2$.
		\item[{\rm (ii)}] If $k = 2$, then $b(G)\in\{3,4\}$, with $b(G) = 4$ if and only if $T\in\{A_5,A_6\}$ and $G = T^2.(\Out(T)\times S_2)$.
		\item[{\rm (iii)}] If $k\geqs 3$, $P\in\{A_k,S_k\}$ and $|T|^{\ell-1} < k\leqs |T|^{\ell}$ with $\ell\geqs 1$, then $b(G)\in\{\ell+1,\ell+2\}$. Moreover, $b(G) = \ell+2$ if and only if one of the following holds:
		\vspace{1mm}
		\begin{itemize}\addtolength{\itemsep}{0.2\baselineskip}
			\item[{\rm (a)}] $k = |T|$.
			\item[{\rm (b)}] $k\in\{|T|-2,|T|^\ell-1,|T|^\ell\}$ and $S_k\leqs G$.
			\item[{\rm (c)}] $k = |T|^2-2$, $T\in\{A_5,A_6\}$ and $G = T^k.(\Out(T)\times S_k)$.
		\end{itemize}
	\end{itemize}
\end{thm}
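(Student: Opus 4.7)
The plan is to rephrase the base condition as a combinatorial constraint on ``columns'' of group elements. Since $T^k$ is transitive on $\Omega$, a minimum base may be chosen to contain the point $D$, so $b(G)$ equals one more than the minimum number of points $\omega_r = D(\varphi_{t_{r,1}},\dots,\varphi_{t_{r,k}})$ whose joint stabiliser in $D$ is trivial. From the action formula, an element $(\alpha,\dots,\alpha)\pi \in D$ stabilises $\omega_r$ if and only if there exists $s_r \in T$ with $t_{r,i^{\pi^{-1}}}^\alpha = s_r t_{r,i}$ for all $i \in [k]$; normalising $t_{r,1}=1$ kills the coset representative freedom and reduces the condition on the columns $c_i := (t_{1,i},\dots,t_{b-1,i}) \in T^{b-1}$ to $c_{i^{\pi^{-1}}}^\alpha = c_{1^{\pi^{-1}}}^\alpha \cdot c_i$ (componentwise) for every $i$. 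A base of size $b$ therefore exists precisely when the columns can be chosen so that only the trivial pair $(\alpha,\pi) \in G/T^k$ satisfies this equation.

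For the lower bound in part (iii), with $A_k \leqs P$, any two columns equivalent under left $T$-translation combined with a common $\alpha$ would allow a $3$-cycle in $P$ to stabilise the configuration, so the $k$ columns must be pairwise inequivalent; this forces $k \leqs |T|^{b-1}$ and hence $b(G) \geqs \ell + 1$. The three exceptional cases (a)--(c) push the bound up by one: in (a), $k = |T|$ forces the columns to exhaust $T$, leaving a residual inner automorphism acting non-trivially; in (b), the parity constraint from $S_k \leqs G$ cannot be killed with only $\ell$ columns when $k \in \{|T|-2, |T|^\ell - 1, |T|^\ell\}$; in (c), the large $\Out(T)$ of $A_5$ or $A_6$ combined with the arithmetic condition $k = |T|^2 - 2$ obstructs $\ell$-column solutions. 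For the upper bounds, I would exhibit explicit bases: in (iii), select $\ell$ points realising $k$ distinct columns in $T^\ell$, and if necessary append one further point whose entry has trivial $\Aut(T)$-centraliser. In part (i), since $P \not\supseteq A_k$ is primitive of bounded order with regular orbits on sufficiently long tuples, a single well-chosen point simultaneously kills $P$ and $\Out(T)$, yielding $b(G) = 2$. In part (ii), columns lie directly in $T$, and the standard construction gives $b(G) \leqs 3$ except when $T \in \{A_5, A_6\}$ with $G = W(2,T)$, where $|\Out(T)|$ is too large relative to the available elements of $T$ to admit fewer than three non-trivial columns.

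The principal obstacle will be the equality analysis distinguishing $b(G) = \ell + 1$ from $b(G) = \ell + 2$ in cases (a)--(c) of (iii). Ruling out $\ell$-column solutions there requires delicate counting that combines pigeonhole with orbit counting under $\Out(T)$, while carefully tracking the distinction between $A_k$ and $S_k$ as top group. The sporadic subcase (c) is the trickiest, as $\Out(A_5)$ and $\Out(A_6)$ have relatively large order and interact non-trivially with $k = |T|^2 - 2$; I would follow the framework of Fawcett \cite{F_diag} and refine it using orbit counting as in \cite{H_diag}, with {\sc Magma} verification for the remaining small cases.
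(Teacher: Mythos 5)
The first thing to note is that the paper does not prove this statement: Theorem~\ref{t:H_diag} is quoted verbatim from \cite[Theorem 3]{H_diag}, so there is no in-paper proof to compare against. Judged on its own terms, your proposal correctly identifies the array/column framework of \cite{F_diag} and \cite{H_diag} (a base $\{D,D\mathbf{a}_1,\dots,D\mathbf{a}_{b-1}\}$ is governed by the $k$ columns $\mathbf{c}_i\in T^{b-1}$, the action of $\Aut(T)$ and translations on them, and the induced partition of $[k]$), but it remains an outline, and the one step you do argue in detail contains an error. Two coincident columns produce only a transposition in the stabiliser, not a $3$-cycle; for $P=A_k$ an even permutation requires either three coincident columns or two disjoint coincident pairs. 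Pigeonhole therefore yields the lower bound $b(G)\geqs\ell+1$ only when $k\geqs |T|^{\ell-1}+2$. In the boundary case $k=|T|^{\ell-1}+1$ with $P=A_k$ the forced single collision gives an odd permutation, and one must instead argue (compare the proof of Lemma~\ref{l:diag_k=+1+2_Sk_incomplete} in this paper) that if the columns are pairwise distinct then they exhaust $T^{\ell-1}$, whence a suitable inner automorphism $\varphi_x$ permutes them and an element of the form $(\varphi_x,\dots,\varphi_x)\pi$ lies in the pointwise stabiliser. Your sketch misses this case entirely, and your stated conclusion ``the $k$ columns must be pairwise inequivalent'' is simply false for $A_k$.

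Beyond that, everything genuinely difficult is deferred. The forward implications in (iii)(a)--(c) are described only impressionistically (``parity constraint'', ``large $\Out(T)$''), and the reverse implication --- constructing bases of size $\ell+1$ in \emph{all} non-exceptional cases, which is what makes the statement an equivalence --- is reduced to ``select $\ell$ points realising $k$ distinct columns''. That construction is the heart of \cite{H_diag}: one must choose the multiset of columns so that its stabiliser under the combined action of left translation, $\Aut(T)$ and $S_k$ is trivial, which is where regular orbits of $\Hol(T)$ on subsets of $T$ (e.g.~\cite[Corollary 5]{H_diag}, used repeatedly in Section~\ref{ss:diag_semi-Frob} of this paper) enter; nothing in your proposal engages with this. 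Similarly, part (i) requires order bounds and a probabilistic count for primitive $P\not\geqs A_k$, and part (ii) a case analysis over $\Out(T)$; both are asserted rather than argued. As a plan the proposal points in the right direction, but it is not yet a proof.
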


We will also need the following lemma from \cite{H_diag} for our later analysis.

\begin{lem}{\cite[Corollary 2.6]{H_diag}}
	\label{l:H_diag_c:2.6}
Let $G$ be a diagonal type primitive group with socle $T^k$ and top group $P\leqs S_k$.	If $P\in\{A_k,S_k\}$ and $k\geqs |T|-3$, then $G$ contains $A_k$.
\end{lem}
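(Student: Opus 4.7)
The plan is to pass to the quotient $\bar{G} := G/T^k$ and study it as a subgroup of $\Out(T) \times P$ via the natural isomorphism $W/T^k \cong \Out(T)\times P$, with the goal of showing that $\{1\}\times A_k \leqs \bar{G}$. Since $W \leqs \Aut(T) \wr S_k$, the top $S_k$ of the wreath product is a complement to $T^k$ lying inside $W$, and its image in $W/T^k$ is $\{1\}\times S_k$. Hence once $\{1\}\times A_k \leqs \bar{G}$ is established, the preimage of this subgroup in $G$ contains the natural $A_k$ coming from the wreath product complement, which is precisely the required conclusion.

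To prove $\{1\}\times A_k \leqs \bar{G}$, I would set $N := \bar{G}\cap (\{1\}\times P)$, the kernel of the first projection $\pi_1 \colon \bar{G} \to \Out(T)$. Then $\bar{G}/N$ embeds into $\Out(T)$, which is soluble by Schreier's conjecture (a consequence of the CFSG); so $\bar{G}/N$ is soluble. Let $N' \leqs P$ denote the image of $N$ under the second projection $\pi_2 \colon \bar{G} \to P$; this projection is injective on $N$, so $N \cong N'$. Using that $\pi_2(\bar{G}) = P$ and $N \normeq \bar{G}$, one checks that $N' \normeq P$. Writing $M := \ker \pi_2$, the quotient $P/N' \cong \bar{G}/(MN)$ is then a quotient of $\bar{G}/N$, and so is soluble.

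Now the hypothesis $k \geqs |T| - 3 \geqs |A_5| - 3 = 57$ gives $k \geqs 5$, so $A_k$ is non-abelian simple; the only normal subgroups of $P \in \{A_k, S_k\}$ are $1$, $A_k$ and $P$, and of these only $A_k$ and $P$ yield soluble quotients. Hence $A_k \leqs N'$, so $\{1\}\times A_k \leqs N \leqs \bar{G}$, and the reduction in the first paragraph finishes the proof. The main work is really the initial set-up -- translating ``$G$ contains $A_k$'' into a statement about subgroups of $\Out(T)\times P$ via the wreath product embedding of $W$ -- after which the essential input is Schreier's conjecture, which provides the solubility of $\Out(T)$ needed to force $N'$ to be large. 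Without this input one could not rule out a ``twist'' of $A_k$ into $\Out(T)$ obstructing the required complement.
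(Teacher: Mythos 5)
Your argument is correct, and it is worth noting at the outset that the paper offers no proof of this statement to compare against: Lemma~\ref{l:H_diag_c:2.6} is quoted directly from \cite[Corollary 2.6]{H_diag}. Your derivation is a sound, self-contained justification. The reduction to $\bar G := G/T^k \leqs \Out(T)\times P$ is legitimate (the paper records $T^k \normeq G \leqs T^k.(\Out(T)\times P_G)$, and the natural complement $S_k$ of $\Aut(T)\wr S_k$ does lie in $W$ and maps onto $\{1\}\times S_k$ in the quotient); the subgroup $N' = \pi_2(N)$ is indeed normal in $P = \pi_2(\bar G)$ with $P/N' \cong \bar G/MN$ a quotient of the soluble group $\bar G/N \hookrightarrow \Out(T)$; and since $k \geqs |T|-3 \geqs 57$, the only normal subgroups of $P$ with soluble quotient contain $A_k$, giving $\{1\}\times A_k \leqs \bar G$ and hence $T^k{\rtimes}A_k \leqs G$. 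This last containment is exactly the sense in which the paper uses the conclusion (cf.\ the deduction $T^k \sd A_k\leqs G$ in the proof of Proposition~\ref{p:diag_l+1_incomplete}). One remark: your proof uses the hypothesis $k \geqs |T|-3$ only to extract $k\geqs 5$, so you have in fact established the stronger assertion that every diagonal type group with top group $A_k$ or $S_k$ and $k \geqs 5$ contains $A_k$; the numerical bound is simply the form in which the corollary is packaged in the cited source, where it is deduced alongside constraints on which primitive top groups can occur for such large $k$.
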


For any $\mathbf{x} := (\varphi_{t_1},\dots,\varphi_{t_k})\in \Inn(T)^k$, there is an associated partition
\[\mathcal{P}^{\mathbf{x}} := \{\mathcal{P}_t^{\mathbf{x}}:t\in T\}\]
of $[k]$ such that $j\in\mathcal{P}_t^{\mathbf{x}}$ if $t_j = t$, noting that some parts of $\mathcal{P}^{\mathbf{x}}$ might be empty. The following lemma involves elements $(\a,\dots,\a)\pi\in G_D \leqs D$ 
that also lie in other point stabilisers.

\begin{lem}
	\label{l:H_diag_l:2.2}
	Let $\mathbf{x} = (\varphi_{t_1},\dots,\varphi_{t_k})\in \Inn(T)^k$ so that $D\mathbf{x}\in\Omega$, and let $\mathcal{P}^{\mathbf{x}} = \{\mathcal{P}_t^{\mathbf{x}}:t\in T\}$ be the associated partition of $[k]$ as above. Suppose also that $(\a,\dots,\a)\pi\in G_{D\mathbf{x}}$. Then the following properties hold.
	\begin{itemize}\addtolength{\itemsep}{0.2\baselineskip}
		\item[{\rm (i)}] $\pi\in P_{\{\mathcal{P}^{\mathbf{x}}\}}$.
		\item[{\rm (ii)}] Suppose that $0<|\mathcal{P}_1^{\mathbf{x}}|\ne |\mathcal{P}_t^{\mathbf{x}}|$ for all $t \in T \setminus \{1\}$. Then $t_j^\alpha = t_{j^\pi}$ for all $j$.
        \item[{\rm (iii)}] Under the same assumption as (ii), $\a$ lies in the setwise stabiliser of
		\begin{equation*}
		\{g:|\mathcal{P}_g^{\mathbf{x}}| = |\mathcal{P}_t^{\mathbf{x}}|\}
		\end{equation*}
		for any $t\in T$.
	\end{itemize}
\end{lem}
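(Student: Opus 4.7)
The strategy is to unpack the explicit action formula for $(\a,\dots,\a)\pi$ on $D\mathbf{x}$ and reduce the stabiliser condition to a single equation in $T$, from which all three parts follow with only bookkeeping.

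First, I would apply the action formula with $\a_i=\a$ for every $i$ and use the identity $\a^{-1}\varphi_t\a=\varphi_{t^\a}$ recorded in the preamble to commute $\a$ past each $\varphi_{t_j}$; the resulting diagonal element $(\a,\dots,\a)$ then lies in $D$ and can be absorbed on the left, yielding
\[ D\mathbf{x}^{(\a,\dots,\a)\pi} = D(\varphi_{t_{1^{\pi^{-1}}}^\a},\dots,\varphi_{t_{k^{\pi^{-1}}}^\a}). \]
Since both tuples lie in $\Inn(T)^k$, the equation $D\mathbf{x}^{(\a,\dots,\a)\pi}=D\mathbf{x}$ forces the witness in $D$ to have trivial $S_k$-part and to lie in the diagonal of $\Inn(T)^k$; because $Z(T)=1$, that witness has the form $(\varphi_c,\dots,\varphi_c)$ for a unique $c\in T$. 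Injectivity of $t\mapsto\varphi_t$ together with the substitution $\ell=j^{\pi^{-1}}$ then yields the key relation
\[ t_\ell^\a = c\,t_{\ell^\pi} \qquad \text{for all }\ell\in[k]. \qquad(\ast) \]

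Parts (i)--(iii) are now almost immediate from $(\ast)$. For (i), if $t_\ell=t$ then $t_{\ell^\pi}=c^{-1}t^\a$ is determined, so $\pi$ sends $\mathcal{P}_t^{\mathbf{x}}$ into $\mathcal{P}_{c^{-1}t^\a}^{\mathbf{x}}$; running the same argument on the inverse $(\a^{-1},\dots,\a^{-1})\pi^{-1}\in G_{D\mathbf{x}}$ gives the reverse containment, hence $\pi\in P_{\{\mathcal{P}^{\mathbf{x}}\}}$. For (ii), specialising $(\ast)$ to $\ell\in\mathcal{P}_1^{\mathbf{x}}$ gives $(\mathcal{P}_1^{\mathbf{x}})^\pi\subseteq\mathcal{P}_{c^{-1}}^{\mathbf{x}}$, and by (i) this inclusion is an equality; the hypothesis that $|\mathcal{P}_1^{\mathbf{x}}|>0$ and differs from $|\mathcal{P}_t^{\mathbf{x}}|$ for every $t\ne 1$ then forces $c^{-1}=1$, whence $(\ast)$ becomes $t_j^\a=t_{j^\pi}$. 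For (iii), with $c=1$ in hand the argument of (i) upgrades to $(\mathcal{P}_t^{\mathbf{x}})^\pi=\mathcal{P}_{t^\a}^{\mathbf{x}}$ for all $t\in T$, and since $\pi$ is a bijection on $[k]$ we deduce $|\mathcal{P}_t^{\mathbf{x}}|=|\mathcal{P}_{t^\a}^{\mathbf{x}}|$, which is exactly the statement that $\a$ permutes $\{g\in T:|\mathcal{P}_g^{\mathbf{x}}|=|\mathcal{P}_t^{\mathbf{x}}|\}$ for each $t$.

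The only step that requires real care is the derivation of $(\ast)$: the $\pi^{-1}$-twists in the action formula, the commutation of $\a$ past $\varphi_{t_j}$, and the extraction of the constant $c\in T$ from $D\cap\Inn(T)^k$ all need to be handled in the right order and with consistent left/right conventions. Once $(\ast)$ is in place there is no further obstacle, and (i)--(iii) are essentially formal.
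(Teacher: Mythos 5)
Your proof is correct, but it takes a different route from the paper: for parts (i) and (ii) the paper simply cites \cite[Lemma 2.2]{H_diag} and supplies no argument of its own, whereas you give a self-contained derivation from the explicit coset action. Your key relation $(\ast)$ is obtained exactly as one would hope: the displayed action formula for diagonal elements reduces the stabiliser condition to $\mathbf{u}\mathbf{x}^{-1}\in D\cap\Inn(T)^k=\{(\varphi_c,\dots,\varphi_c):c\in T\}$, and injectivity of $t\mapsto\varphi_t$ (from $Z(T)=1$) extracts $t_\ell^\a=c\,t_{\ell^\pi}$. The bookkeeping for (i) and (ii) is sound --- one small remark is that the equality $(\mathcal{P}_t^{\mathbf{x}})^\pi=\mathcal{P}_{c^{-1}t^\a}^{\mathbf{x}}$ follows directly from the fact that $t\mapsto c^{-1}t^\a$ is a bijection of $T$ and the parts partition $[k]$, so the appeal to the inverse element is not strictly needed, though it is harmless. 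Your derivation of (iii) from (ii) coincides with the paper's: with $c=1$ one gets $(\mathcal{P}_t^{\mathbf{x}})^\pi=\mathcal{P}_{t^\a}^{\mathbf{x}}$, hence $|\mathcal{P}_t^{\mathbf{x}}|=|\mathcal{P}_{t^\a}^{\mathbf{x}}|$ for all $t$, which is the claimed setwise stabilisation. The trade-off is that your version is longer but makes the lemma independent of the external reference, which a reader may well prefer.
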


\begin{proof}
    Parts (i) and (ii) follow from \cite[Lemma 2.2]{H_diag}. For part (iii), note that for any $t\in T$, we have $|\mathcal{P}_t^{\mathbf{x}}| = |\mathcal{P}_{t^\a}^{\mathbf{x}}|$ by applying (ii).
\end{proof}

\begin{lem}
	\label{l:diag_m-cycle}
	Let $\pi = (j_1,\dots,j_m)\in S_k$ be an $m$-cycle with $m<k$, and let $\mathbf{x} = (\varphi_{t_1},\dots,\varphi_{t_k})\in\Inn(T)^k$ such that $t_{j_1},\dots,t_{j_m}$ are not all equal. Then $\pi\notin G_D\cap G_{D\mathbf{x}}$.
\end{lem}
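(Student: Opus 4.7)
The plan is to argue by contradiction via a direct computation with the coset action. Suppose that $g := (1,\dots,1)\pi$ lies in $G_D \cap G_{D\mathbf{x}}$; note that the condition $g \in G_D$ is automatic, since $D^g = D$ by the given action formula. Applying that formula to $D\mathbf{x}$ gives
\begin{equation*}
    D\mathbf{x} = (D\mathbf{x})^g = D(\varphi_{t_{1^{\pi^{-1}}}},\dots,\varphi_{t_{k^{\pi^{-1}}}}),
\end{equation*}
so there must exist $(\a,\dots,\a)\s \in D$ with
\begin{equation*}
(\varphi_{t_{1^{\pi^{-1}}}},\dots,\varphi_{t_{k^{\pi^{-1}}}}) = (\a,\dots,\a)\s \cdot (\varphi_{t_1},\dots,\varphi_{t_k}).
\end{equation*}
Comparing the $S_k$-components forces $\s = 1$, and comparing each coordinate then yields $\varphi_{t_{j^{\pi^{-1}}}} = \a\,\varphi_{t_j}$ for every $j \in [k]$.

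Using the identity $\varphi_s\varphi_t = \varphi_{ts}$ and the injectivity of the map $t\mapsto \varphi_t$ (recall that $T$ is centreless), I would rearrange to conclude that $\a = \varphi_b$ is inner for some $b \in T$ satisfying the key relation
\begin{equation*}
t_{j^{\pi^{-1}}} = t_j\, b \qquad \text{for every } j \in [k].
\end{equation*}
Since $m < k$, there exists some $j_0 \in [k]\setminus \{j_1,\dots,j_m\}$, and any such $j_0$ is fixed by $\pi^{-1}$; substituting $j=j_0$ into the key relation immediately gives $b = 1$. The key relation then reduces to $t_{j^{\pi^{-1}}} = t_j$ for all $j$, and applying it to $j = j_2, j_3, \dots, j_m$ (using $j_\ell^{\pi^{-1}} = j_{\ell-1}$) forces $t_{j_1} = t_{j_2} = \dots = t_{j_m}$, contradicting the hypothesis on $\mathbf{x}$.

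No step is conceptually hard; the only delicate point is getting the semidirect-product bookkeeping right when expanding $(\a,\dots,\a)\s \cdot \mathbf{x}$ and interpreting the resulting equations inside $\Inn(T)$. As an alternative, one could invoke Lemma \ref{l:H_diag_l:2.2}(i) to deduce that $\pi$ must preserve the partition $\mathcal{P}^{\mathbf{x}}$ and then argue at the level of parts, but this approach requires a case split according to whether the parts meeting $\{j_1,\dots,j_m\}$ also contain fixed points of $\pi$, so the direct computation above is cleaner.
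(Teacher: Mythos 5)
Your proposal is correct and is essentially the paper's own argument: both reduce the coset equality to a relation of the form $t_{j^{\pi^{-1}}} = b\,t_j$ (the paper writes $t_j = g\,t_{j^\pi}$) for a single $b \in T$, then substitute a fixed point $j_0$ of $\pi$ to force $b = 1$ and contradict the hypothesis that $t_{j_1},\dots,t_{j_m}$ are not all equal. The only cosmetic discrepancy is your identity $\varphi_s\varphi_t = \varphi_{ts}$, which under the paper's convention $x^{\varphi_t} = t^{-1}xt$ with right actions should read $\varphi_s\varphi_t = \varphi_{st}$; this only swaps the side on which $b$ multiplies and does not affect the conclusion.
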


\begin{proof}
Suppose for a contradiction that $\pi\in G_D\cap G_{D\mathbf{x}}$. Then there exists $g\in T$ such that $t_j = gt_{j^\pi}$ for all $j\in [k]$. Since $m<k$, there exists $j_0 \in [k] \setminus \{j_1,\dots,j_m\}$, so by taking $j = j_0$ we see that $g = 1$. However, this yields $t_{j_1} = \cdots = t_{j_m}$, which is incompatible with our assumption.
\end{proof}

In the remainder of this section, we shall determine in various cases whether or not $G$ is semi-Frobenius, and then prove Theorem~\ref{thm:diag_arc-trans} by determining precisely when $\Sigma(G)$ is $G$-arc-transitive. 
We also note from Lemma~\ref{orbdiam2} that all primitive groups of diagonal type with orbital diameter two satisfy Conjecture~\ref{conj:BG}. These groups were classified in \cite{kaladiag}.

\subsection{Semi-Frobenius groups}

\label{ss:diag_semi-Frob}

\par

We begin with the case where $k\geq 3.$ The cases that we need to consider are the ones where $b(G)\geq 3,$ which are listed in Theorem \ref{t:H_diag}(iii). We start with a result that characterises a family of semi-Frobenius groups.
\begin{prop}
	\label{p:diag_l+2_complete}
	Suppose that $k\geqs 3$, $P\in\{A_k,S_k\}$, $|T|^{\ell-1}<k\leqs |T|^\ell$ for some $\ell\geqs 1$, and $b(G) = \ell+2$. Then $G$ is semi-Frobenius.
\end{prop}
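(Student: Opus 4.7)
The plan is to exploit the $G$-vertex-transitivity of $\Sigma(G)$, which holds because the normal subgroup $T^k \leqs G$ already acts transitively on $\Omega$. In a vertex-transitive graph, completeness is equivalent to some (equivalently every) vertex having full degree, so I need only show that the base point $D$ is adjacent in $\Sigma(G)$ to every other vertex. Thus it suffices to prove that for every $\mathbf{y} = (\varphi_{t_1},\ldots,\varphi_{t_k}) \in \Inn(T)^k$ with $D\mathbf{y} \neq D$, one can produce $\mathbf{x}_1,\ldots,\mathbf{x}_\ell \in \Inn(T)^k$ such that $\{D, D\mathbf{y}, D\mathbf{x}_1, \ldots, D\mathbf{x}_\ell\}$ is a base for $G$ of size $\ell+2$.

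To analyse this, I would use the same combinatorial encoding as in the proof of Theorem~\ref{t:H_diag} in \cite{F_diag, H_diag}: to any tuple $(\mathbf{y}, \mathbf{x}_1, \ldots, \mathbf{x}_\ell)$ I associate the map $F : [k] \to T^{\ell+1}$ sending $j$ to $(t_j, x^{(1)}_j, \ldots, x^{(\ell)}_j)$. Iterating Lemma~\ref{l:H_diag_l:2.2}, an element $(\alpha,\ldots,\alpha)\pi \in G_D$ stabilises each of $D\mathbf{y}$ and the $D\mathbf{x}_i$ precisely when $\pi$ preserves the common refinement of the associated partitions of $[k]$ (equivalently, the partition induced by the fibres of $F$) and $\alpha$ acts compatibly with $\pi$ on all entries. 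In particular, once $F$ is injective (so the fibre partition is discrete) and the image of $F$ separates the diagonal $\Aut(T)$-action, the stabiliser is trivial and we have a base.

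Given the bound $|T|^{\ell-1} < k \leqs |T|^\ell$, the target $T^{\ell+1}$ has ample room to accommodate $k$ distinct tuples with a prescribed first coordinate. The strategy is therefore to choose $\mathbf{x}_1,\ldots,\mathbf{x}_\ell$ inductively, first to refine $\mathcal{P}^{\mathbf{y}}$ all the way down to singletons and then to kill residual $\Aut(T)$-symmetries. Essentially, this adapts the explicit base constructions of \cite{F_diag, H_diag} to the situation where the first tuple is forced to be $\mathbf{y}$; the hypothesis $b(G) = \ell+2$ (rather than $\ell+1$) provides the one extra tuple of slack needed to absorb this constraint. Lemma~\ref{l:H_diag_c:2.6} allows me to assume $A_k \leqs G$ where needed, and the case analysis would follow the subcases (a)--(c) of Theorem~\ref{t:H_diag}(iii), namely $k = |T|$, $k \in \{|T|-2, |T|^\ell - 1, |T|^\ell\}$ with $S_k \leqs G$, and $k = |T|^2 - 2$ with $T \in \{A_5, A_6\}$.

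The main obstacle is the most degenerate configuration for $\mathbf{y}$, for instance when $|\mathcal{P}_1^{\mathbf{y}}| = k-1$ and $|\mathcal{P}_t^{\mathbf{y}}| = 1$ for a single $t \neq 1$. There the first coordinate of $F$ contributes almost no separation, and $G_D \cap G_{D\mathbf{y}}$ remains nearly as large as $G_D$, so the burden of killing symmetries falls almost entirely on $\mathbf{x}_1,\ldots,\mathbf{x}_\ell$. I expect that a careful explicit choice of $\mathbf{x}_1$, designed so that the two-coordinate projection $j \mapsto (t_j, x_j^{(1)})$ already distinguishes most indices in $[k]$ and exploits Lemma~\ref{l:diag_m-cycle} to rule out short cycles in any stabilising $\pi$, followed by the standard inductive construction for $\mathbf{x}_2, \ldots, \mathbf{x}_\ell$, will handle this situation as well as the boundary cases $k = |T|^\ell$ and $k = |T|^2 - 2$ that force $b(G) = \ell + 2$ in the first place.
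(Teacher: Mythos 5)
Your outline correctly identifies the reduction to showing that $D$ is adjacent to every other vertex, and it works with the right machinery (the partition $\mathcal{P}^{\mathbf{x}}$, Lemma~\ref{l:H_diag_l:2.2}, Lemma~\ref{l:diag_m-cycle}). But as written it has genuine gaps. First, a technical error: you assert that once the fibre partition of $F$ is discrete, the permutation part $\pi$ of a stabilising element is controlled because $\pi$ must ``preserve the common refinement''. Every permutation of $[k]$ preserves the discrete partition setwise (it permutes the singleton parts), so injectivity of $F$ alone kills nothing; what actually constrains $\pi$ is the entrywise compatibility $t_j^\alpha = t_{j^\pi}$, and even that only holds after the left-translation ambiguity $g_it_{i,j}^\alpha = t_{i,j^\pi}$ has been removed, which is why Lemma~\ref{l:H_diag_l:2.2}(ii) needs the hypothesis that $|\mathcal{P}_1^{\mathbf{x}}|$ is distinguished among the part sizes. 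Second, and more seriously, the heuristic that ``$T^{\ell+1}$ has ample room'' is at odds with the setting: the hypothesis $b(G)=\ell+2$ means precisely that $\ell+1$ points \emph{never} form a base, so the slack is exactly one point and the boundary cases $k\in\{|T|,|T|^\ell-1,|T|^\ell,|T|^2-2\}$ are tight. Deferring these to ``a careful explicit choice'' leaves the entire substance of the proof unwritten.

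The paper's argument avoids most of the dependence on the arbitrary vertex $D\mathbf{x}$ by a trick you do not use: it constructs a \emph{fixed} $(\ell+1)$-set $\Delta=\{D,D\mathbf{a}_1,\dots,D\mathbf{a}_\ell\}$ whose pointwise stabiliser in $T^k.(\Out(T)\times S_k)$ is exactly $\langle (j,j')\rangle$ for a single transposition (using \cite[Corollary 5]{H_diag} to get a subset $S\subseteq T\setminus\{1\}$ with trivial setwise stabiliser in $\Aut(T)$, and the constructions from the proof of \cite[Lemma 5.10]{H_diag} when $\ell\geqs 2$). Since Lemma~\ref{l:diag_m-cycle} provides, for any $D\mathbf{x}\ne D$, a transposition not lying in $G_D\cap G_{D\mathbf{x}}$, relabelling indices makes $\Delta\cup\{D\mathbf{x}\}$ a base. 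The one case where this fails and the construction genuinely must depend on $\mathbf{x}$ is $k=|T|$ (there is no $(|T|-2)$-subset of $T\setminus\{1\}$ with trivial setwise stabiliser, since its complement in $T\setminus\{1\}$ is a single point); the paper handles this with a case division on the number of distinct entries of $\mathbf{x}$, and the two-entry subcase requires the Guralnick--Kantor $3/2$-generation theorem \cite{GK_3/2} to produce $y$ with $\langle x,y\rangle = T$, plus a conjugacy-class condition to break a possible swap of the two blocks when they have equal size $|T|/2$. None of these ingredients appears in your proposal, so the plan cannot be completed as stated without essentially rediscovering them.
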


\begin{proof}
	Since $b(G) = \ell+2$, we see that $G$ is one of the groups listed in cases (iii)(a)--(c) of Theorem \ref{t:H_diag}. Let $\mathbf{x}\in\Inn(T)^k$ so that $D\mathbf{x}\in\Omega\setminus\{D\}$. We need to show that $\{D,D\mathbf{x}\}$ can be extended to a base for $G$ of size $\ell+2$. By Lemma \ref{l:diag_m-cycle}, there exists $(j,j')\in S_k$ such that $(j,j')\notin G_D\cap G_{D\mathbf{x}}$. Hence, without loss of generality, it suffices to find $\mathbf{a}_1,\dots,\mathbf{a}_{\ell}\in \Inn(T)^k$ such that the pointwise stabiliser of $\Delta:=\{D,D\mathbf{a}_1,\dots,D\mathbf{a}_{\ell}\}$ in $T^k.(\Out(T)\times S_k)$ is generated by a transposition in $S_k$. Write $\mathbf{a}_i = (\varphi_{t_{i,1}},\dots,\varphi_{t_{i,k}})$.
	
	First assume that $k\in\{|T|-2,|T|-1\}$ and $S_k\leqs G$, so that $b(G) = 3$ by Theorem \ref{t:H_diag}. Note that \cite[Corollary 5]{H_diag} implies that there exists a subset $S\subseteq T\setminus\{1\}$ such that $|S| = k-2$ and the setwise stabiliser of $S$ in $\Aut(T)$ is trivial. Let $\{t_{1,3},\dots,t_{1,k}\} = S$ and $t_{1,1} = t_{1,2} = 1$. By Lemma \ref{l:H_diag_l:2.2}(iii), if $(\a,\dots,\a)\pi\in G_{D\mathbf{a}_1}$ then $S^\a = S$ and so $\a = 1$, which yields $G_D\cap G_{D\mathbf{a}_1}\leqs\la (1,2)\ra$. Thus, $G$ is semi-Frobenius as noted above.
	
	Next, assume that $k\in\{|T|^\ell-1,|T|^\ell\}$ with $\ell\geqs 2$ and $S_k\leqs G$, or that $k = |T|^2-2$, $T\in\{A_5,A_6\}$ and $G = T^k.(\Out(T)\times S_k)$. In the proof of Lemma 5.10 in \cite{H_diag}, we see that there exists an $(\ell+1)$-subset of $\Omega$ containing $D$ whose pointwise stabiliser in $T^k.(\Out(T)\times S_k)$ is generated by a transposition in $S_k$, and so $G$ is semi-Frobenius by arguing as above.
	
	By Theorem~\ref{t:H_diag}, it remains to assume that $k = |T|$ and $b(G) = 3$. Write $\mathbf{x} = (\varphi_{x_1},\dots,\varphi_{x_k})$. We consider the following two cases in turn and show that $\{D,D\mathbf{x}\}$ can be extended to a base for $G$ of size $3$ in each case.
	
	\vs
	
	\noindent \emph{Case 1. There are at least three distinct entries in $\mathbf{x}$.}
	
	\vs
	
	Without loss of generality, we may assume that $x_1,x_2,x_3$ are distinct. In view of Lemma \ref{l:diag_m-cycle}, it suffices to find $\mathbf{a} = (\varphi_{t_1},\dots,\varphi_{t_k})\in\Inn(T)^k$ such that $G_D\cap G_{D\mathbf{a}}\leqs \la (1,2,3),(1,2)\ra$. Once again, \cite[Corollary 5]{H_diag} implies that there exists a subset $S\subseteq T\setminus\{1\}$ of size $|T|-3$ with trivial setwise stabiliser in $\Aut(T)$. Let $\{t_4,\dots,t_k\} = S$ and let $t_1 = t_2 = t_3 = 1$. Suppose $(\a,\dots,\a)\pi\in G_{D\mathbf{a}}$. Then by Lemma \ref{l:H_diag_l:2.2}(iii) we have $S^\a = S$, which forces $\a = 1$. Thus, $G_D\cap G_{D\mathbf{a}}\leqs \la (1,2,3),(1,2)\ra$ as desired.
	
	\vs

    \noindent \emph{Case 2. There are exactly two distinct entries in $\mathbf{x}$.}

    We may assume that $\mathbf{x} = (1,\dots,1,\varphi_x,\dots,\varphi_x)$ for some non-trivial $x\in T$, with $1$ appearing exactly $m$ times for some $m \le |T|/2$. By the main theorem of \cite{GK_3/2}, there exists $y\in T$ such that $\la x,y\ra = T$. Define $\mathbf{a} = (\varphi_{t_1},\dots,\varphi_{t_k})\in \Inn(T)^k$ such that the following conditions hold:
	\begin{itemize}\addtolength{\itemsep}{0.2\baselineskip}
		\item[{\rm (a)}] $t_m = t_{m+1} = 1$;
		\item[{\rm (b)}] $\{t_1,\dots,t_k\}\setminus\{t_m,t_{m+1}\} = T\setminus\{1,y\}$; and
		\item[{\rm (c)}] if $m = |T|/2$, then some $\Aut(T)$-conjugacy class $C$ is equal to $\{t_1,\dots,t_{|C|}\}$ (note that $|C|<m$).
	\end{itemize}
    Note that since $k = |T|$, each element of $T \setminus \{1,y\}$ is equal to $t_j$ for exactly one $j$. By Lemma~\ref{l:H_diag_l:2.2}(iii), if $(\a,\dots,\a)\pi\in G_{D\mathbf{a}}$, then $\a\in C_{\Aut(T)}(y)$.

    Suppose now that $(\a,\dots,\a)\pi$ also lies in $G_{D\mathbf{x}}$. We claim that $\a \in  C_{\Aut(T)}(x)$. If $m < |T|/2$, then this is immediate from Lemma~\ref{l:H_diag_l:2.2}(iii). Otherwise, the same lemma yields $t_j^\a = t_{j^\pi}$ for all $j\in [k]$ and
	\begin{equation*}
	\{1,\dots,m\}^\pi = \{1,\dots,m\}\mbox{ or }\{m+1,\dots,k\}.
	\end{equation*}
	If the latter holds, then for each $j<m$, there exists $j'\geqs m+1$ such that $t_j^\a = t_{j'}$, contradicting condition (c). Hence, $\{1,\dots,m\}$ is fixed by $\pi$, and $x_j^\a = x_{j^\pi}$ for all $j\in[k]$. This implies that $\a\in C_{\Aut(T)}(x)$, as claimed.

    It now follows that $\a \in C_{\Aut(T)}(\la x, y \ra) = C_{\Aut(T)}(T) = 1$. Moreover, if $x_j = x_{j'}$, then $t_j\ne t_{j'}$, and therefore $\pi = 1$. Thus $\{D,D\mathbf{x},D\mathbf{a}\}$ is a base for $G$.
\end{proof}

The following two results are partial converses to Proposition \ref{p:diag_l+2_complete}.

\begin{prop}
	\label{p:diag_l+1_incomplete}
	Suppose that $P\in\{A_k,S_k\}$, $|T|^{\ell-1}+3\leqs k\leqs |T|^\ell$ for some $\ell\geqs 1$, and $b(G) = \ell+1$. Then $G$ is not semi-Frobenius.
\end{prop}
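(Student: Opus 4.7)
The plan is to exhibit a specific pair $\{D, D\mathbf{x}\}$ that cannot be extended to a base of size $\ell+1$, using the same flavor of construction as in Proposition~\ref{p:diag_l+2_complete} but with insufficient "room" to kill the residual symmetries. First dispose of the trivial case $\ell = 1$: here $b(G) = 2$, so semi-Frobenius is equivalent to Frobenius, and this is impossible because every normal subgroup of a Frobenius group is either contained in or contains the (nilpotent) Frobenius kernel, whereas the socle $T^k$ of $G$ is non-abelian. So I assume $\ell \ge 2$ from now on, which forces $k \ge |T|^{\ell-1}+3 \ge |T|+3 > |T|-3$, and hence Lemma~\ref{l:H_diag_c:2.6} gives $A_k \le G$, in the sense that $(1,\dots,1)\sigma \in G$ for every $\sigma \in A_k$.

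Fix any non-trivial $x \in T$ and set $\mathbf{x} := (1,\dots,1,\varphi_x) \in \Inn(T)^k$, so that the entries of $\mathbf{x}$ are not all equal and thus $D\mathbf{x} \ne D$. Since $|\mathcal{P}_1^{\mathbf{x}}| = k-1 \ne 1 = |\mathcal{P}_x^{\mathbf{x}}|$, Lemma~\ref{l:H_diag_l:2.2}(ii) forces any $(\alpha,\dots,\alpha)\pi \in G_D \cap G_{D\mathbf{x}}$ to satisfy $\pi(k)=k$ and $x^{\alpha}=x$. Suppose, for a contradiction, that tuples $\mathbf{a}_i = (\varphi_{t_{i,1}},\dots,\varphi_{t_{i,k}})$ for $i = 1,\dots,\ell-1$ extend $\{D,D\mathbf{x}\}$ to a base for $G$.

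Consider the map $\Phi \colon \{1,\dots,k-1\} \to T^{\ell-1}$ defined by $j \mapsto (t_{1,j},\dots,t_{\ell-1,j})$, and let $n_s$ denote the number of fibers of $\Phi$ of size $s$. Then, since the image has size at most $|T|^{\ell-1}$,
\[
\sum_{s \geq 1}(s-1)n_s = (k-1) - \sum_{s \geq 1} n_s \ge (|T|^{\ell-1}+2) - |T|^{\ell-1} = 2,
\]
so either some fiber of $\Phi$ has size at least $3$, or at least two fibers have size exactly $2$. In the first case I take $\pi$ to be a $3$-cycle supported on a fiber of size $\ge 3$; in the second, $\pi$ is the product of two disjoint transpositions, one in each of two size-$2$ fibers. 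Either way, $\pi \in A_{k-1} \subseteq A_k$ is a non-trivial element fixing $k$ that preserves every fiber of $\Phi$, giving $t_{i,j^{\pi}} = t_{i,j}$ for all $i$ and all $j \in \{1,\dots,k-1\}$.

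Since $(1,\dots,1)\pi \in G$ by Lemma~\ref{l:H_diag_c:2.6}, a direct verification against the action formula shows that $(1,\dots,1)\pi$ fixes $D$, $D\mathbf{x}$ and each $D\mathbf{a}_i$, with the required left-translate $g \in T$ equal to $1$ in each case: the fiber-preservation of $\pi$ forces $t_{i,j^{\pi}} = t_{i,j}$ at each $j$, and for $\mathbf{x}$ we simply use $\pi(k)=k$. This produces a non-trivial element of the pointwise stabiliser of $\{D,D\mathbf{x},D\mathbf{a}_1,\dots,D\mathbf{a}_{\ell-1}\}$, contradicting that this set is a base. The key technical point is the pigeonhole calculation: the hypothesis $k \ge |T|^{\ell-1}+3$ gives exactly two units of slack, which is precisely what is needed to produce an \emph{even} symmetry (rather than just an arbitrary transposition) that survives in $A_k \le G$.
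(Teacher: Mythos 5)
Your proof is correct and follows essentially the same route as the paper's: the same witness $\mathbf{x} = (1,\dots,1,\varphi_x)$, and the same pigeonhole argument on the $(\ell-1)$-tuples $(t_{1,j},\dots,t_{\ell-1,j})$ over $j \in [k-1]$, producing either a $3$-cycle or a product of two disjoint transpositions in $A_k \leqs G$ that fixes $\Delta$ pointwise. The extra details you supply (the Frobenius-kernel argument for $\ell=1$, which the paper dismisses as clear, and the appeal to Lemma~\ref{l:H_diag_l:2.2}(ii)) are harmless but not needed.
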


\begin{proof}
	If $\ell = 1$, then $b(G) = 2$, so $G$ is clearly not semi-Frobenius. Thus, we may assume that $\ell\geqs 2$. Then $T^k \sd A_k\leqs G$ by Lemma \ref{l:H_diag_c:2.6}. Let $\mathbf{x} := (1,\dots,1,\varphi_x)\in \Inn(T)^k$ for some $x\in T \setminus \{1\}$. It suffices to show that $\Delta:=\{D,D\mathbf{x},D\mathbf{a}_1,\dots,D\mathbf{a}_{\ell-1}\}$ is not a base for any choices of $\mathbf{a}_i := (\varphi_{t_{i,1}},\dots,\varphi_{t_{i,k}})\in\Inn(T)^k$.
	
	For each $j \in [k]$, let $\mathbf{c}_j := (t_{1,j},\dots,t_{\ell-1,j})\in T^{\ell-1}$. Note that $k-1\geqs |T|^{\ell-1}+2$. This implies that either there exist $j_1,j_2,j_3 \in [k-1]$ such that $\mathbf{c}_{j_1} = \mathbf{c}_{j_2} = \mathbf{c}_{j_3}$, or there exist distinct $j_1,j_2 \in [k-1]$ and distinct $j_1',j_2' \in [k-1]$ such that $\mathbf{c}_{j_1} = \mathbf{c}_{j_2}\ne \mathbf{c}_{j_1'} = \mathbf{c}_{j_2'}$. In the former case, $(j_1,j_2,j_3)\in G_{(\Delta)}$, while $(j_1,j_2)(j_1',j_2')\in G_{(\Delta)}$ in the latter case.
\end{proof}

In the following lemma, we assume that $S_k \le G$. As we mentioned in Section~\ref{s:intro}, this implies that $P = S_k$, but the reverse implication does not hold.

\begin{lem}
	\label{l:diag_k=+1+2_Sk_incomplete}
	Suppose that $S_k\leqs G$ and $k\in \{|T|^{\ell-1}+1,|T|^{\ell-1}+2\}$ for some $\ell\geqs 2$. Then $G$ is not semi-Frobenius.
\end{lem}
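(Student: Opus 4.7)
The plan is to exhibit, for each $k\in\{|T|^{\ell-1}+1,|T|^{\ell-1}+2\}$, a single $\mathbf{x}\in\Inn(T)^k$ such that for every choice of $\mathbf{a}_1,\dots,\mathbf{a}_{\ell-1}\in\Inn(T)^k$, the pointwise stabiliser in $G$ of $\{D,D\mathbf{x},D\mathbf{a}_1,\dots,D\mathbf{a}_{\ell-1}\}$ is nontrivial. Neither choice of $k$ satisfies any of conditions (a)--(c) of Theorem~\ref{t:H_diag}(iii) (using $\ell\geqs 2$ and $|T|\geqs 60$), so $b(G)=\ell+1$, and such nontriviality will force that $\{D,D\mathbf{x}\}$ extends to no base of size $b(G)$, proving $G$ is not semi-Frobenius.

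I would take $\mathbf{x}=(1,\dots,1,\varphi_x)$ for a fixed $x\in T\setminus\{1\}$, so that $D\mathbf{x}\neq D$ since $Z(T)=1$. Writing $\mathbf{a}_i=(\varphi_{t_{i,1}},\dots,\varphi_{t_{i,k}})$ and setting
\[
\mathbf{s}_j:=(t_{i,k}^{-1}t_{i,j})_{i=1}^{\ell-1}\in T^{\ell-1},\qquad j\in[k-1],
\]
a routine computation with the formula for the $W$-action on $\Omega$ shows that, for $\alpha\in\Aut(T)$ and $\pi\in S_k$ with $\pi(k)=k$, the element $(\alpha,\dots,\alpha)\pi$ lies in $G_{D\mathbf{a}_i}$ precisely when $\mathbf{s}_{j^\pi}=\mathbf{s}_j^\alpha$ (coordinatewise, with $\alpha$ acting diagonally on $T^{\ell-1}$) for all $j\in[k-1]$; the requisite $g_i\in T$ is forced to equal $t_{i,k}t_{i,k}^{-\alpha}$. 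Furthermore, Lemma~\ref{l:H_diag_l:2.2} ensures that every $(\alpha,\dots,\alpha)\pi\in G_{D\mathbf{x}}$ automatically satisfies $\pi(k)=k$ and $\alpha\in C_{\Aut(T)}(x)$.

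If $k=|T|^{\ell-1}+2$, then the $|T|^{\ell-1}+1$ tuples $\mathbf{s}_j$ lie in a set of size $|T|^{\ell-1}$, so pigeonhole gives distinct $j_1,j_2\in[k-1]$ with $\mathbf{s}_{j_1}=\mathbf{s}_{j_2}$; combined with $\alpha=1$, the transposition $(j_1,j_2)\in S_{k-1}\leqs S_k\leqs G$ is the desired nontrivial stabiliser element. The same argument handles $k=|T|^{\ell-1}+1$ whenever any two of the $\mathbf{s}_j$ coincide. In the remaining extremal sub-case $k=|T|^{\ell-1}+1$ with all $\mathbf{s}_j$ distinct, the map $j\mapsto\mathbf{s}_j$ is a bijection $[k-1]\to T^{\ell-1}$, and I would take $\alpha:=\varphi_x$, which is nontrivial (as $Z(T)=1$) and centralises $x$; since $\alpha$ preserves the whole set $T^{\ell-1}$ under its diagonal action, defining $j^\pi$ to be the unique $j'\in[k-1]$ with $\mathbf{s}_{j'}=\mathbf{s}_j^\alpha$ yields a nontrivial $\pi\in S_{k-1}$. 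The element $(\varphi_x,\dots,\varphi_x)\pi$ lies in $G$ because $(\varphi_x,\dots,\varphi_x)\in T^k\leqs G$ and $\pi\in S_k\leqs G$, and by construction is a nontrivial element of $G$ fixing $D$, $D\mathbf{x}$, and every $D\mathbf{a}_i$.

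The main obstacle is this last extremal sub-case: the pigeonhole argument of Proposition~\ref{p:diag_l+1_incomplete} just fails by one, so no top-group transposition need fix every $D\mathbf{a}_i$. The stronger hypothesis $S_k\leqs G$, together with the fact that $(\varphi_x,\dots,\varphi_x)\in T^k\leqs G$, is exactly what allows the upgrade from a transposition to a nontrivial diagonal-inner-automorphism-times-permutation element, exploiting the canonical coordinatewise action of $\varphi_x$ on $T^{\ell-1}$ to produce the required $\pi$.
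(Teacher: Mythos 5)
Your proposal is correct and follows essentially the same route as the paper: the same witness $\mathbf{x}=(1,\dots,1,\varphi_x)$, the pigeonhole argument on the tuples $\mathbf{c}_j$ (your $\mathbf{s}_j$ are just the explicitly normalised version of the paper's WLOG assumption $t_{i,k}=1$) producing a transposition when $k=|T|^{\ell-1}+2$ or when two tuples coincide, and the element $(\varphi_x,\dots,\varphi_x)\pi$ with $\pi$ the permutation of $[k-1]$ induced by the diagonal action of $\varphi_x$ in the remaining extremal case. The only cosmetic difference is your unnecessary claim that $\pi$ itself is nontrivial; what matters (and what you also correctly note) is that $(\varphi_x,\dots,\varphi_x)\pi\neq 1$ since $Z(T)=1$.
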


\begin{proof}
	Note that $b(G) = \ell+1$ by Theorem \ref{t:H_diag}. Let $\mathbf{x} := (1,\dots,1,\varphi_x)\in\Inn(T)^k$ for some $x\in T \setminus \{1\}$. Again, it suffices to show that $\Delta:=\{D,D\mathbf{x},D\mathbf{a}_1,\dots,D\mathbf{a}_{\ell-1}\}$ is not a base for any choices of $\mathbf{a}_i := (\varphi_{t_{i,1}},\dots,\varphi_{t_{i,k}})\in\Inn(T)^k$, and without loss of generality, we may assume that $t_{i,k} = 1$ for all $i$. Let $\mathbf{c}_j:=(t_{1,j},\dots,t_{\ell-1,j})\in T^{\ell-1}$ for each $j \in [k]$.
	
	It is easy to see that if $k = |T|^{\ell-1}+2$, then there exist distinct $j,j' \in [k-1]$ such that $\mathbf{c}_j = \mathbf{c}_{j'}$. Thus $(j,j')\in G_{(\Delta)}$, and $\Delta$ is not a base for $G$.
	
	To complete the proof, we assume that $k = |T|^{\ell-1}+1$. Arguing as above, we may also assume that $\mathbf{c}_1,\dots,\mathbf{c}_{k-1}$ are all distinct, otherwise there is a transposition in $G_{(\Delta)}$. In particular, for each $j\in [k-1]$, we have $\mathbf{c}_j^{\varphi_x} = \mathbf{c}_m$ (acting componentwise) for some $1\leqs m\leqs k-1$, and clearly $\mathbf{c}_k^{\varphi_x} = \mathbf{c}_k$ as $\mathbf{c}_k = (1,\dots,1)$ by our assumption above. Thus $\varphi_x$ induces a permutation $\pi\in S_k$, where
	\begin{equation*}
	j^\pi = m\mbox{ if }\mathbf{c}_j^{\varphi_x} = \mathbf{c}_m.
	\end{equation*}
	That is, $t_{i,j}^{\varphi_x} = t_{i,j^\pi}$ for each $i\in\{1,\dots,\ell-1\}$ and $j\in [k]$. Now
	\begin{equation*}
	\begin{aligned}
	D\mathbf{a}_i^{(\varphi_x,\dots,\varphi_x)\pi} &= D(\varphi_{t_{i,1}},\dots,\varphi_{t_{i,k}})^{(\varphi_x,\dots,\varphi_x)\pi} \\&= D(\varphi_{t_{i,1}^{\varphi_x}},\dots,\varphi_{t_{i,k}^{\varphi_x}})^\pi \\&= D(\varphi_{t_{i,1^\pi}},\dots,\varphi_{t_{i,k^\pi}})^\pi \\&= D(\varphi_{t_{i,1}},\dots,\varphi_{t_{i,k}}) = D\mathbf{a}_i.
	\end{aligned}
	\end{equation*}
	Therefore, $(\varphi_x,\dots,\varphi_x)\pi\in G_{(\Delta)}$, which concludes the proof.
\end{proof}

Now let us turn to the groups with $k = 2$.

\begin{prop}
	\label{p:diag_k=2_P=1}
	Suppose that $k = 2$ and $P = 1$. Then $G$ is semi-Frobenius.
\end{prop}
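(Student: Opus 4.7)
The plan is as follows. By Theorem~\ref{t:H_diag}(ii), $b(G) = 3$, since the exceptional case there requires $P = S_2$. Hence the task is to show that every pair of distinct points of $\Omega$ extends to a base of size $3$. Since $T^2 \leq G$ acts transitively on $\Omega$, it suffices to show that $D$ is adjacent in $\Sigma(G)$ to every other point $D\mathbf{y}$. Writing $\mathbf{y} = (\varphi_{y_1}, \varphi_{y_2}) \in \Inn(T)^2$, the condition $D\mathbf{y} \ne D$ forces $y_1 \ne y_2$. The element $(\varphi_{y_1^{-1}}, \varphi_{y_1^{-1}})$ lies in $T^2 \cap D$, fixes $D$ under the action of $G$ on $\Omega$, and sends $D\mathbf{y}$ to $D(1, \varphi_{y_2 y_1^{-1}})$, so I may assume $\mathbf{y} = (1, \varphi_y)$ for some $y \in T \setminus \{1\}$.

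Next I would compute the stabiliser condition. Since $P = 1$, every element of $G_D$ has the form $h = (\alpha, \alpha)$ for some $\alpha \in \Aut(T)$. A direct calculation using the identity $\alpha \varphi_t \alpha^{-1} = \varphi_{\alpha(t)}$ together with $Z(T) = 1$ shows that, for any $\mathbf{w} = (\varphi_{w_1}, \varphi_{w_2}) \in \Inn(T)^2$, the coset $D\mathbf{w}$ is fixed by $h$ if and only if $\alpha(w_2^{-1} w_1) = w_2^{-1} w_1$. Specialising to $\mathbf{w} = \mathbf{y}$ this reads $\alpha(y) = y$, while for $\mathbf{w} = \mathbf{z} := (\varphi_z, 1)$ with $z \in T$ it reads $\alpha(z) = z$.

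To conclude, I would invoke the $3/2$-generation theorem of Guralnick and Kantor \cite{GK_3/2} applied to the nonabelian simple group $T$ and the nontrivial element $y$: this yields $z \in T$ with $\la y, z \ra = T$. With this choice of $\mathbf{z}$, any $h = (\alpha, \alpha) \in G_D \cap G_{D\mathbf{y}} \cap G_{D\mathbf{z}}$ satisfies $\alpha(y) = y$ and $\alpha(z) = z$, and so $\alpha$ fixes all of $\la y, z \ra = T$ pointwise, forcing $\alpha = 1$. A quick check confirms that $D$, $D\mathbf{y}$, $D\mathbf{z}$ are pairwise distinct (for example, $D\mathbf{z} = D\mathbf{y}$ would force $z = y^{-1}$, whence $\la y, z \ra$ would be cyclic, contradicting $\la y, z \ra = T$), so $\{D, D\mathbf{y}, D\mathbf{z}\}$ is a base for $G$ of size $3$ extending $\{D, D\mathbf{y}\}$, and $G$ is semi-Frobenius. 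The only substantive step is the stabiliser calculation; once that is in place, the $3/2$-generation theorem delivers the conclusion immediately, and I do not foresee any significant obstacle beyond routine verification.
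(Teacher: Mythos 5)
Your proposal is correct and follows essentially the same route as the paper: reduce by transitivity to a pair $\{D, D(1,\varphi_y)\}$, invoke the Guralnick--Kantor $3/2$-generation theorem to find $z$ with $\la y,z\ra = T$, and use the fact that elements of $G_D$ have the form $(\a,\a)$ (since $P=1$) to force the pointwise stabiliser of the triple to be trivial. Your third point $D(\varphi_z,1)$ equals $D(1,\varphi_{z^{-1}})$, so it coincides with the paper's choice up to relabelling, and your explicit stabiliser computation is accurate.
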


\begin{proof}
	By Theorem \ref{t:H_diag}, $b(G) = 3$. Let $x \in T \setminus \{1\}$. By \cite{GK_3/2}, there exists $y\in T$ such that $\la x,y\ra = T$. Let $\Delta := \{D,D(1,\varphi_x),D(1,\varphi_y)\}$ and suppose that $g\in G_{(\Delta)}$. Then $g = (\a,\a)$ for some $\a\in\Aut(T)$ as $P = 1$. It follows that $x^\a = x$ and $y^\a = y$, and since $\la x,y\ra = T$, we obtain $\a = 1$. Therefore, $\Delta$ is a base for $G$ and so $G$ is semi-Frobenius.
\end{proof}

\begin{lem}
	\label{l:diag_k=2_equiv_complete}
	Let $O\leqs\Out(T)$, $K := \Inn(T).O$, and $x, y \in T$. Suppose also that $G = T^2.(O\times S_2)$. Then $\{D,D(1,\varphi_x),D(1,\varphi_y)\}$ is a base for $G$ if and only if:
	\begin{itemize}\addtolength{\itemsep}{0.2\baselineskip}
		\item[{\rm (a)}] $C_{K}(x)\cap C_{K}(y) = 1$; and
		\item[{\rm (b)}] there is no $\a\in K$ such that $x^\a = x^{-1}$ and $y^\a = y^{-1}$.
	\end{itemize}
\end{lem}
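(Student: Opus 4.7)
The plan is to compute the pointwise stabiliser of $\Delta := \{D, D(1,\varphi_x), D(1,\varphi_y)\}$ in $G$ explicitly, and show that it is trivial if and only if (a) and (b) both hold. Since $G_D = \{(\alpha,\alpha)\pi : \alpha \in K,\ \pi \in S_2\}$, any stabilising element takes one of two forms, and I will handle the cases $\pi = 1$ and $\pi = (1,2)$ separately.

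First I will apply the action formula from Section~\ref{s:diag} to obtain
\begin{equation*}
D(1,\varphi_z)^{(\alpha,\alpha)} = D(\alpha,\varphi_z\alpha), \qquad D(1,\varphi_z)^{(\alpha,\alpha)(1,2)} = D(\varphi_z\alpha,\alpha),
\end{equation*}
for $z \in T$. Then I will use the criterion that $D\mathbf{u} = D\mathbf{v}$ if and only if $\mathbf{u}\mathbf{v}^{-1} \in D$ to rewrite each fixing condition as an equation in $\Aut(T)$, noting that in each case the resulting element has trivial $S_2$-component, so membership in $D$ amounts to the two $\Aut(T)$-coordinates being equal.

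For $\pi = 1$, this reduces the fixing condition to $\alpha\varphi_z = \varphi_z\alpha$; via the identity $\alpha^{-1}\varphi_z\alpha = \varphi_{z^\alpha}$ recorded earlier in the section, and the injectivity of $t \mapsto \varphi_t$ (valid since $Z(T)=1$), this is equivalent to $z^\alpha = z$. Hence $(\alpha,\alpha) \in G_{(\Delta)}$ if and only if $\alpha \in C_K(x) \cap C_K(y)$, and the absence of a non-trivial such $\alpha$ is exactly condition (a). For $\pi = (1,2)$, the parallel computation reduces to $\alpha^{-1}\varphi_{z^{-1}}\alpha = \varphi_z$, equivalently $z^\alpha = z^{-1}$. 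Thus $(\alpha,\alpha)(1,2) \in G_{(\Delta)}$ if and only if $\alpha \in K$ satisfies $x^\alpha = x^{-1}$ and $y^\alpha = y^{-1}$, and the absence of such $\alpha$ is condition (b). Combining the two cases establishes the lemma. The only subtlety lies in carefully tracking the wreath-product conventions and the resulting coset identifications; I do not anticipate any substantive obstacle beyond bookkeeping.
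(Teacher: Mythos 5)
Your proposal is correct and follows essentially the same route as the paper: both arguments observe that any element of the pointwise stabiliser lies in $G_D = \{(\a,\a)\pi : \a \in K, \pi \in S_2\}$, split into the cases $\pi = 1$ and $\pi = (1,2)$, and reduce the fixing conditions via the identity $\a^{-1}\varphi_z\a = \varphi_{z^\a}$ (together with $D(\varphi_{z},1) = D(1,\varphi_{z^{-1}})$) to $z^\a = z$ and $z^\a = z^{-1}$ respectively, which yields exactly conditions (a) and (b). The only cosmetic difference is that the paper presents the forward direction by exhibiting explicit non-trivial stabilising elements when (a) or (b) fails, whereas you package both directions into a single biconditional description of $G_{(\Delta)}$; this is just bookkeeping, as you anticipated.
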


\begin{proof}
    Let $\Delta:=\{D,D(1,\varphi_x),D(1,\varphi_y)\}$. If (a) fails to hold, then it is easy to see that $(\a,\a)\in G_{(\Delta)}$ for some $\a\in K \setminus \{1\}$, and so $\Delta$ is not a base for $G$. Similarly, if (b) fails to hold, then $(\a,\a)(1,2)\in G_{(\Delta)}$ for $\a\in K$ such that $x^\a = x^{-1}$ and $y^\a = y^{-1}$.
	
	Conversely, assume that both (a) and (b) hold, and suppose that $(\a,\a)\pi\in G_{(\Delta)}$, so that $\a\in K$. If $\pi = (1,2)$ then
	\begin{equation*}
	D(1,\varphi_x) = D(1,\varphi_x)^{(\a,\a)(1,2)} = D(1,\varphi_{x^\a})^{(1,2)} = D(\varphi_{x^\a},1) = D(1,\varphi_{(x^\a)^{-1}}),
	\end{equation*}
	so that $x^\a = x^{-1}$, and similarly $y^\a = y^{-1}$. However, this is incompatible with (b), and so $\pi = 1$. It follows that
	\begin{equation*}
	D(1,\varphi_x) = D(1,\varphi_x)^{(\a,\a)} = D(1,\varphi_{x^\a}),
	\end{equation*}
	and thus $\a\in C_{K}(x)$. Similarly, $\a\in C_K(y)$ and hence $\a = 1$ by condition (a). This shows that $\Delta$ is a base for $G$.
\end{proof}

As an immediate corollary, we deduce part of \cite[Lemma 3.5]{LMM_IBIS}, which is also recorded as \cite[Lemma 5.1]{H_diag}.

\begin{cor}
	\label{c:H_diag:l:5.1}
	Suppose that $G = T^2.(\Out(T)\times S_2)$, and let $x,y\in T$. Then the set $\{D,D(1,\varphi_x),D(1,\varphi_y)\}$ is a base for $G$ if and only if:
	\begin{itemize}\addtolength{\itemsep}{0.2\baselineskip}
		\item[{\rm (a)}] $C_{\Aut(T)}(x)\cap C_{\Aut(T)}(y) = 1$; and
		\item[{\rm (b)}] there is no $\a\in\Aut(T)$ such that $x^\a = x^{-1}$ and $y^\a = y^{-1}$.
	\end{itemize}
\end{cor}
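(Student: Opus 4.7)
The plan is to apply Lemma \ref{l:diag_k=2_equiv_complete} directly with the specific choice $O = \Out(T)$, which is the full outer automorphism group. This is precisely the setting of the corollary, since $G = T^2.(\Out(T) \times S_2)$ matches the hypothesis $G = T^2.(O \times S_2)$ of the lemma with $O = \Out(T)$.

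With this choice, we compute $K = \Inn(T).O = \Inn(T).\Out(T)$. Since $T$ is a non-abelian simple group, the natural identification $\Aut(T) = \Inn(T).\Out(T)$ gives $K = \Aut(T)$. Substituting $K = \Aut(T)$ into conditions (a) and (b) of Lemma \ref{l:diag_k=2_equiv_complete} yields exactly conditions (a) and (b) of the corollary. There are no subtleties: every step is a straightforward translation of hypothesis and conclusion.

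Since the corollary is stated as an immediate consequence, the proof should simply consist of one or two sentences invoking Lemma \ref{l:diag_k=2_equiv_complete} with the specialisation $O = \Out(T)$, noting that this forces $K = \Aut(T)$, and concluding that the biconditional of the lemma becomes the biconditional of the corollary. There is no genuine obstacle here, as the real content of the argument already resides in Lemma \ref{l:diag_k=2_equiv_complete}.
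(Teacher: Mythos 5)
Your proposal is correct and matches the paper exactly: the corollary is stated there as an immediate consequence of Lemma \ref{l:diag_k=2_equiv_complete}, obtained by taking $O = \Out(T)$ so that $K = \Inn(T).\Out(T) = \Aut(T)$, whereupon conditions (a) and (b) of the lemma specialise to those of the corollary.
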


\begin{lem}
	\label{l:diag_k=2_A5A6}
	Let $O\leqs \Out(T).$ Suppose that $P = S_2$ and $T\in\{A_5,A_6\}$. Then $G$ is not semi-Frobenius if and only if $G = T^2.(O\times S_2)$ with
	\begin{equation*}
	(T,\Inn(T).O) \in \{(A_5,A_5),(A_6,S_6),(A_6,\PGL_2(9))\}.
	\end{equation*}
\end{lem}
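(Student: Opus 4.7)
By Theorem~\ref{t:H_diag}(ii), $b(G) \in \{3, 4\}$, with $b(G) = 4$ precisely when $G = T^2.(\Out(T) \times S_2)$, i.e., in the two subcases $(T, K) = (A_5, S_5)$ and $(T, K) = (A_6, \PGammaL_2(9))$; the remaining five subcases have $b(G) = 3$. Applying Lemma~\ref{l:diag_k=2_equiv_complete} (in the $b(G) = 3$ cases) or a straightforward analogue for base size $4$, together with the vertex-transitivity of $\Sigma(G)$, semi-Frobenius-ness of $G$ reduces to the following: for every $z \in T \setminus \{1\}$ there exist $y$ (or an appropriate pair $y_1, y_2$ when $b(G) = 4$) in $T$ such that the corresponding simultaneous-centraliser and simultaneous-inverting conditions hold in $K := \Inn(T).O$. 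Since $\Out(A_5) = \Z_2$ and $\Out(A_6) = \Z_2 \times \Z_2$, the overgroups of $\Inn(T)$ in $\Aut(T)$ are exactly the seven groups $K \in \{A_5, S_5\}$ (for $T = A_5$) and $K \in \{A_6, S_6, \PGL_2(9), M_{10}, \PGammaL_2(9)\}$ (for $T = A_6$), so the problem is finite.

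The plan is to check each of these seven possibilities individually. For the three non-semi-Frobenius cases $(A_5, A_5)$, $(A_6, S_6)$, and $(A_6, \PGL_2(9))$, produce a specific $z \in T$ for which no valid $y$ exists. In the illustrative case $(A_5, A_5)$, taking $z$ to be a $5$-cycle, one observes that the five involutions of $A_5$ inverting $z$ are precisely the ``pentagon reflections'' of $\langle z\rangle$, and a direct covering argument shows that each of these reflections simultaneously inverts (with $z$) exactly four $3$-cycles and exactly one other involution, partitioning $A_5 \setminus \langle z\rangle$ so that every nonidentity element is paired with $z$ by at least one common inverter; combined with the failure of condition~(a) for $y \in \langle z \rangle$, this shows no $y$ satisfies both conditions of Lemma~\ref{l:diag_k=2_equiv_complete}. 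Analogous structural arguments -- or direct \textsc{Magma} verification -- handle the $(A_6, S_6)$ and $(A_6, \PGL_2(9))$ subcases, where the failure stems from an insufficient supply of outer automorphisms to break all simultaneous-inversion patterns.

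For the four subcases in which $G$ is claimed to be semi-Frobenius, exhibit a witness $y$ (or pair $y_1, y_2$) for each $K$-conjugacy class representative $z \in T$; since the conditions depend only on the $K$-classes of $(z, y)$ (respectively of $(z, y_1, y_2)$), one witness per class suffices. The two $b(G) = 4$ subcases require a larger search but benefit from a strictly weaker requirement, namely that no single $\a \in K$ centralises or inverts all three of $z, y_1, y_2$ simultaneously. The main technical obstacle throughout is condition~(b) (and its $4$-set analogue): unlike condition~(a), which usually follows from a coprime-order comparison of centralisers, it is sensitive to how $K$-classes fuse under the inversion map, and so is cleanest to verify in practice using \textsc{Magma}.
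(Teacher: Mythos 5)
Your overall strategy matches the paper's: the published proof is precisely a finite {\sc Magma} check, using Lemma~\ref{l:diag_k=2_equiv_complete} to certify the three exceptional groups, so reducing to the simultaneous-centraliser/simultaneous-inverter conditions and verifying finitely many cases is the right idea. However, your case enumeration has a genuine gap. The lemma concerns \emph{every} diagonal type primitive group $G$ with socle $T^2$ and top group $P_G = S_2$, and its ``only if'' direction asserts that every such $G$ \emph{not} equal to one of the three listed split groups is semi-Frobenius. The quotient $G/T^2$ is a subgroup of $\Out(T)\times S_2$ surjecting onto $S_2$, and such a subgroup need not decompose as $O\times S_2$: there is one further ``diagonal'' possibility when $T=A_5$ and there are six when $T=A_6$. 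Your list of seven groups $T^2.(O\times S_2)$ therefore omits seven primitive groups within the scope of the statement. These cannot be dispatched by containment in a split overgroup: for $T=A_5$, for instance, the non-split $G$ has $b(G)=3$ while its unique proper diagonal type overgroup $T^2.(\Out(T)\times S_2)$ has base size $4$. Moreover, Lemma~\ref{l:diag_k=2_equiv_complete} does not apply to them as stated (it assumes $G = T^2.(O\times S_2)$); the correct criterion tests condition (a) only over those $\a$ occurring in $G$ with trivial $S_2$-part and condition (b) only over those occurring with the transposition. So these seven groups need their own (routine, but separate) verification.

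A smaller point: the arithmetic in your illustrative $(A_5,A_5)$ argument does not add up. An involution $\a$ inverts exactly the $16$ elements $t\a$ with $t^2=1$; if $\a$ is one of the five involutions inverting a $5$-cycle $z$, these $16$ elements are all of $\langle z\rangle$, four further $5$-cycles, four $3$-cycles, and the three involutions of $C_{A_5}(\a)$ (including $\a$ itself). That gives $11$ elements outside $\langle z\rangle$ per reflection, and $5\times 11 = 55 = |A_5\setminus\langle z\rangle|$, so the covering is exact. Your figure of ``four $3$-cycles and one other involution'' per reflection would account for only $25$ of the $55$ elements that must be covered, so the partition you describe cannot exist. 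The conclusion — that no $y$ satisfies both conditions for this $z$, whence $A_5^2.S_2$ is not semi-Frobenius — is nevertheless correct once the count is repaired.
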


\begin{proof}
	This can be obtained with the aid of {\sc Magma}, noting that one can use Lemma \ref{l:diag_k=2_equiv_complete} to show that the three groups specified above are not semi-Frobenius.
\end{proof}

From now on we assume that $T\notin\{A_5,A_6\}$, so that $b(G) = 3$ by Theorem \ref{t:H_diag}. Then Corollary \ref{c:H_diag:l:5.1} implies that $T^2.(\Out(T)\times S_2)$ is semi-Frobenius if and only if for each non-identity element $x\in T$,
\begin{equation}
\label{e:star}
\tag{$\star$}
\mbox{there exists $y\in T$ such that Conditions (a) and (b) of Corollary \ref{c:H_diag:l:5.1} hold.}
\end{equation}

\begin{lem}
	\label{l:diag_star_prime}
	Suppose that $T\notin\{A_5,A_6\}$. Then $T^2.(\Out(T)\times S_2)$ is semi-Frobenius if and only if \eqref{e:star} holds for all $x \in T$ of prime order.
\end{lem}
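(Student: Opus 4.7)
The forward direction is immediate since every element of prime order is in particular a non-identity element. For the converse, the plan is to reduce the statement of \eqref{e:star} on an arbitrary non-identity element of $T$ to the same statement applied to a prime-order power of it.

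First I would fix $x\in T\setminus\{1\}$, choose a prime $p$ dividing the order of $x$, and set $x_0:=x^{|x|/p}$, which is an element of order $p$. By hypothesis there exists $y\in T$ such that conditions (a) and (b) of Corollary~\ref{c:H_diag:l:5.1} hold for the pair $(x_0,y)$. The next step is then to verify that this same $y$ witnesses \eqref{e:star} for $x$.

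Condition (a) for $(x,y)$ is straightforward: any $\a\in\Aut(T)$ centralising $x$ also centralises each power of $x$, so in particular $C_{\Aut(T)}(x)\leqs C_{\Aut(T)}(x_0)$, and hence
\[
C_{\Aut(T)}(x)\cap C_{\Aut(T)}(y) \leqs C_{\Aut(T)}(x_0)\cap C_{\Aut(T)}(y) = 1.
\]
For condition (b), the observation is that inversion commutes with taking powers: if some $\a\in\Aut(T)$ satisfied $x^\a=x^{-1}$ and $y^\a=y^{-1}$, then raising the first equation to the power $|x|/p$ would yield $x_0^\a=x_0^{-1}$, and together with $y^\a=y^{-1}$ this would contradict condition (b) for $(x_0,y)$. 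Hence \eqref{e:star} holds for $x$, completing the argument.

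There is no substantive obstacle; the proof is a one-line observation in each direction, and the main point of the lemma lies not in its difficulty but in its utility, namely reducing the verification of \eqref{e:star} (over the infinite family of all non-identity elements of $T$) to the finitely many $\Aut(T)$-classes of prime-order elements. This reduction will underpin the subsequent case analyses applied to specific families of simple groups $T$.
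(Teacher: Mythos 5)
Your proof is correct and follows essentially the same route as the paper: the paper's (one-line) argument rests on exactly the two facts you isolate, namely $C_{\Aut(T)}(t)\leqslant C_{\Aut(T)}(t^m)$ and $(t^m)^\a=(t^\a)^m$, applied to a prime-order power of an arbitrary non-identity element. Your write-up just spells out the reduction more explicitly.
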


\begin{proof}
	Let $m$ be an integer, and let $t \in T$ and $\a \in \Aut(T)$. Since $C_{\Aut(T)}(t)\leqs C_{\Aut(T)}(t^m)$ and $(t^m)^\a = (t^\a)^m$, the result follows.
\end{proof}

\begin{lem}
	\label{l:diag_star_centraliser_prime}
	Suppose that $T$ contains an element $y$ that is not $\Aut(T)$-conjugate to $y^{-1}$, with $|C_{\Aut(T)}(y)|$ prime. Then $T^2.(\Out(T)\times S_2)$ is semi-Frobenius.
\end{lem}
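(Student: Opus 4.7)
By Lemma~\ref{l:diag_star_prime}, it suffices to verify $(\star)$ for each element $x \in T$ of prime order. Fix such an $x$, let $y$ be the element supplied by the hypothesis, and set $C := C_{\Aut(T)}(y)$, so $|C| = p$ is prime. The plan is to exhibit a $T$-conjugate $y^g$ of $y$ that serves as the required witness in $(\star)$; note that $C_{\Aut(T)}(y^g) = C^{\varphi_g}$ again has order $p$, and that $y^g$ is not $\Aut(T)$-conjugate to its inverse, since $y \sim y^g$ and $y^{-1} \sim (y^g)^{-1}$ in $T$ and hence in $\Aut(T)$.

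First observe that condition (b) of Corollary~\ref{c:H_diag:l:5.1} holds for \emph{every} choice of $g$: if some $\alpha \in \Aut(T)$ inverted both $x$ and $y^g$, then $y^g$ would be $\Aut(T)$-conjugate to $(y^g)^{-1}$, and therefore $y$ would be $\Aut(T)$-conjugate to $y^{-1}$, contradicting the hypothesis.

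The main step is verifying condition (a): I aim to find $g \in T$ with $C_{\Aut(T)}(x) \cap C^{\varphi_g} = 1$. Since $|C^{\varphi_g}| = p$ is prime, this intersection is either trivial or is all of $C^{\varphi_g}$. Suppose for a contradiction that $C^{\varphi_g} \leqs C_{\Aut(T)}(x)$ for every $g \in T$, and let $\alpha$ generate $C$. Then $\alpha^{\varphi_g}$ centralises $x$ for every $g \in T$, which a brief calculation shows is equivalent to $\alpha$ centralising the conjugate $g x g^{-1}$ of $x$. As $T$ is non-abelian simple and $x \ne 1$, the normal closure of $x$ in $T$ equals $T$, so $\alpha$ fixes $T$ pointwise, forcing $\alpha = 1$ and contradicting $|C| = p$.

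Having established both (a) and (b) for the witness $y^g$, condition $(\star)$ holds for $x$, and hence $T^2.(\Out(T) \times S_2)$ is semi-Frobenius. The only non-trivial point I anticipate is the bookkeeping showing that $\alpha^{\varphi_g}$ centralises $x$ precisely when $\alpha$ centralises $g x g^{-1}$; once this identification is in place, the simplicity of $T$ closes the argument immediately.
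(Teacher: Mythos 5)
Your proof is correct, but it takes a different route from the paper's for the key condition (a). The paper's argument is a two-line observation: since $\varphi_y\in C_{\Aut(T)}(y)$ and the latter has prime order, $C_{\Aut(T)}(y)=\langle \varphi_y\rangle$, so for any $x\notin\langle y\rangle$ the intersection $C_{\Aut(T)}(x)\cap C_{\Aut(T)}(y)$ is a proper subgroup of a group of prime order, hence trivial; the fixed element $y$ itself is then the witness in \eqref{e:star} (with the case $x\in\langle y\rangle\setminus\{1\}$ left implicit, where one must swap roles and pick a partner outside $C_T(y)$). You instead keep $C:=C_{\Aut(T)}(y)$ abstract, run over the $T$-conjugates $y^g$, and use the normal-closure argument: if $C^{\varphi_g}\leqs C_{\Aut(T)}(x)$ for every $g$, then a generator of $C$ centralises the whole conjugacy class $x^T$, hence all of $T=\langle x^T\rangle$, forcing it to be trivial. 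This costs a little more work but buys uniformity — your witness-finding argument covers every prime-order $x$ at once, including those in $\langle y\rangle$, and never needs the identification $C_{\Aut(T)}(y)=\langle y\rangle$. Both arguments are sound; the only cosmetic point is that your appeal to Lemma~\ref{l:diag_star_prime} presupposes $T\notin\{A_5,A_6\}$, which (as the paper's proof notes in passing) is guaranteed by the hypothesis on $y$, so you may wish to record that observation explicitly.
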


\begin{proof}
	Note that $T\notin\{A_5,A_6\}$. Additionally, $C_{\Aut(T)}(y) = \la y\ra$ is of prime order, and so $C_{\Aut(T)}(y)\cap C_{\Aut(T)}(x) = 1$ for all $x\in T\setminus\la y\ra$. Thus \eqref{e:star} holds for all $x \in T \setminus \{1\}$.
\end{proof}

\begin{prop}
	\label{p:diag_star_sporadic}
	Suppose that $k = 2$, and that $T$ is a sporadic simple group. Then $G$ is semi-Frobenius.
\end{prop}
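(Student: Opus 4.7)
The plan is to reduce the claim to a finite, computer-checkable condition. By Theorem~\ref{t:H_diag}(ii), every diagonal type primitive group $G$ with socle $T^2$ and $T$ sporadic has $b(G)=3$, and its top group $P$ is either $1$ or $S_2$. The case $P=1$ follows directly from Proposition~\ref{p:diag_k=2_P=1}, so assume $P=S_2$; then $T^2 \normeq G \leqs W := T^2.(\Out(T)\times S_2)$ with $b(W)=3$, so every base for $W$ of size $3$ is automatically a base for $G$ of size $3$, and it suffices to prove that $W$ is semi-Frobenius.

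By Corollary~\ref{c:H_diag:l:5.1}, $W$ is semi-Frobenius precisely when \eqref{e:star} holds for every $x\in T\setminus\{1\}$, and Lemma~\ref{l:diag_star_prime} reduces this further to $x$ of prime order. The remaining task is then, for each of the $26$ sporadic simple groups $T$ and each $\Aut(T)$-class representative $x$ of prime order, to exhibit a witness $y\in T$ with $C_{\Aut(T)}(x)\cap C_{\Aut(T)}(y)=1$ and no $\alpha\in\Aut(T)$ simultaneously inverting $x$ and $y$.

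In most cases a single universal witness suffices via Lemma~\ref{l:diag_star_centraliser_prime}: one locates an element $y$ of prime order $p$ with $C_{\Aut(T)}(y)=\langle y\rangle$ such that the normaliser quotient $N_{\Aut(T)}(\langle y\rangle)/\langle y\rangle\leqs (\mathbb{Z}/p\mathbb{Z})^\times$ omits $-1$. Both conditions can be read off the ATLAS power maps, and suitable $y$ are available for every sporadic $T$ (for example, elements of order $23$ in $M_{23}$ or order $71$ in the Monster). For the finitely many prime-order classes $x$ lying inside $\langle y\rangle$, one selects a second witness $y'$ of prime order coprime to $p$ to make condition~(a) automatic, leaving only a short centraliser calculation for~(b). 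The verification is carried out in \textsc{Magma} for the smaller sporadic groups and, for the largest sporadic simple groups where explicit construction of $T$ or $\Aut(T)$ is infeasible, via the character table library in \textsc{GAP} together with ATLAS power-map and fusion data; this last case is the main technical obstacle, though standard choices of witnesses succeed uniformly once the conjugacy structure of $\Aut(T)$ is taken into account.
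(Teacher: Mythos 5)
Your reduction to $G = T^2.(\Out(T)\times S_2)$ with $b(G)=3$, and the further reduction via Corollary~\ref{c:H_diag:l:5.1} and Lemma~\ref{l:diag_star_prime} to verifying \eqref{e:star} for prime-order $x$, exactly matches the paper. The gap is in your central claim that a ``universal witness'' $y$ as in Lemma~\ref{l:diag_star_centraliser_prime} --- an element of prime order $p$ with $C_{\Aut(T)}(y)=\langle y\rangle$ whose normaliser quotient omits $-1 \imod{p}$ --- exists for \emph{every} sporadic $T$. It does not. For example, in $\mathrm{J}_2$ the only prime-order classes with small centralisers are $7A$ (a single real class) and the classes of order $5$ (whose centralisers have composite order), so no such $y$ exists; similar obstructions occur in $\mathrm{M}_{12}$, $\mathrm{M}_{22}$, $\mathrm{HS}$, and about a dozen other groups. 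Worse, in $\mathrm{Ly}$ and $\mathrm{J}_4$ every self-centralising prime-order class is real: e.g.\ in $\mathrm{Ly}$ the normalisers $67{:}22$, $37{:}18$ and $31{:}6$ all have normaliser quotient of even order inside the relevant $(\Z/p\Z)^\times$, hence contain $-1$. This is precisely why the paper is forced into three separate arguments: the centraliser-prime criterion covers only ten of the twenty-six groups; fourteen others require explicit ({\sc Magma}) searches producing, for each prime-order class representative $x$, a partner $y$ with $\langle x,y\rangle = T$ and $N_{\Aut(T)}(\langle x\rangle)\cap N_{\Aut(T)}(\langle y\rangle)=1$; and $\mathrm{Ly}$ and $\mathrm{J}_4$ need a bespoke argument built on non-real self-centralising elements of \emph{composite} order ($33$ and $35$), to which Lemma~\ref{l:diag_star_centraliser_prime} does not apply. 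Your proposal would therefore fail at the verification stage for roughly two thirds of the sporadic groups, and no ``standard choice of witnesses'' of prime order can rescue the $\mathrm{Ly}$ and $\mathrm{J}_4$ cases.

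A minor additional point: for the prime-order classes $x$ lying inside $\langle y\rangle$, conditions (a) and (b) are automatic for any $y'\notin\langle y\rangle$ (the intersection $C_{\Aut(T)}(x)\cap C_{\Aut(T)}(y')$ is a subgroup of the prime-order group $\langle y\rangle$, and no automorphism inverts a generator of $\langle y\rangle$), so the ``short centraliser calculation for (b)'' you describe is unnecessary --- but this part is harmless rather than wrong.
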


\begin{proof}
	First note that $b(G) = 3$ by Theorem \ref{t:H_diag}, and so it suffices to consider the case where $G = T^2.(\Out(T)\times S_2)$. We divide the proof into four cases.
	
	\vs
	
	\noindent \emph{Case 1. $T\in\{\mathrm{M}_{12},\mathrm{M}_{22},\mathrm{HS},\mathrm{J}_1,\mathrm{J}_2,\mathrm{J}_3,\mathrm{McL},\mathrm{Suz},\mathrm{Ru},\mathrm{HN},\mathrm{He},\mathrm{Fi}_{22},\mathrm{Fi}_{24}',\mathrm{O'N}\}$.}
	
	\vs
	
	Computations in {\sc Magma} (involving sequential or random searches) show that for each conjugacy class representative $x \in T$ of prime order, there exists an element $y \in T$ such that $\langle x, y \rangle = T$ and $N_{\Aut(T)}(\la x \ra) \cap N_{\Aut(T)}(\la y \ra) = 1$. This first property implies that $C_{\Aut(T)}(x) \cap C_{\Aut(T)}(y) = C_{\Aut(T)}(T) = 1$, so that Condition (a) of Corollary \ref{c:H_diag:l:5.1} holds, while the second property implies Condition (b) of that corollary. Thus Lemma~\ref{l:diag_star_prime} yields the result.
	
	\vs
	
	\noindent \emph{Case 2. $T\in\{\mathrm{M}_{11},\mathrm{M}_{23},\mathrm{M}_{24},\mathrm{Co}_1,\mathrm{Co}_2,\mathrm{Co}_3,\mathrm{Fi}_{23},\mathrm{Th},\mathbb{B},\mathbb{M}\}$.}
	
	\vs
	
	We observe from the {\sc Atlas} \cite{ATLAS} that $\Aut(T) = T$ has an element $x$ that is not conjugate to $x^{-1}$, with $|C_T(x)|$ prime. Now apply Lemma \ref{l:diag_star_centraliser_prime}.
	
	\vs
	
	\noindent \emph{Case 3. $T \in \{\mathrm{Ly},\mathrm{J}_4\}$.}
	
	\vs

    The {\sc Atlas} again yields $T = \mathrm{Aut}(T)$, as well as the information that follows related to conjugacy and centraliser orders in $T$. Let $k:=33$ if $T = \mathrm{Ly}$, and $k:=35$ if $T = \mathrm{J}_4$. Then $T$ has a unique conjugacy class of cyclic subgroups of order $k$, and letting $a$ be a generator for such a subgroup, $C_T(a) = \langle a \rangle$, and $a$ and $a^{-1}$ lie in distinct $T$-conjugacy classes. Using the (primarily) computational methods described in the following two paragraphs, we show that for each $x \in T$ of prime order, there exists a $T$-conjugate $a'$ of $a$ such that the commutator $[a',x]$ is not equal to $1$, so that $1 = C_{\la a' \rangle}(x) = C_T(x) \cap C_T(a')$. Thus \eqref{e:star} holds for $x$ with $y = a'$, and the result follows from Lemma~\ref{l:diag_star_prime}.

    If $T = \mathrm{Ly}$, then we construct $T$ as a permutation group in {\sf GAP} \cite{GAP} using the \texttt{AtlasGroup} function from the AtlasRep \cite{AtlasRep} package, and subsequently verify that the required commutator property holds (up to conjugacy of elements).
    
    If instead $T = \mathrm{J}_4$, then we construct $T$ as a subgroup of $\GL_{112}(2)$, using {\sc Magma}'s database of {\sc Atlas} groups. If $|x| \notin \{2,11\}$, then $T$ has a unique conjugacy class of cyclic subgroups of order $|x|$, and so we can obtain a generator for a conjugate of $\langle x \rangle$ by generating a random element of order $|x|$. If instead $|x| = 2$, then either $x$ is conjugate to $y^4$ for each $y \in T$ of order $8$, or $x$ is conjugate to $y^{14}$ for each $y \in T$ of order $28$. Thus we can construct a conjugate of $x$ by randomly generating an appropriate $y$. In all of these cases, we verify the specified commutator property computationally. Finally, if $|x| = 11$, then $|a|$ and $|C_T(x)|$ are coprime, and hence $[a,x] \ne 1$.
\end{proof}

\begin{prop}
	\label{p:diag_star_alternating}
	Suppose that $k = 2$ and $T = A_n$ for some $n\geqs 7$. Then $G$ is semi-Frobenius.
\end{prop}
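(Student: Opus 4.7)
By Theorem \ref{t:H_diag}(ii), $b(G) = 3$ for every diagonal type primitive group $G$ with socle $A_n^2$, $n\geqs 7$. Since any base of size $3$ for $W := T^2.(\Out(T)\times S_2)$ is also a base of size $b(G) = 3$ for every $G$ with $T^2\normeq G\leqs W$, it suffices to show that $W$ is semi-Frobenius. Using $\Aut(A_n) = S_n$ (valid for $n\geqs 7$), Corollary \ref{c:H_diag:l:5.1}, and Lemma \ref{l:diag_star_prime}, this reduces to the following statement: for each $x\in A_n$ of prime order, exhibit $y\in A_n$ such that (a) $C_{S_n}(x)\cap C_{S_n}(y) = 1$, and (b) no $\a\in S_n$ satisfies both $x^\a = x^{-1}$ and $y^\a = y^{-1}$.

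Any $y$ with $\la x,y\ra = A_n$ automatically satisfies (a), as $C_{S_n}(x)\cap C_{S_n}(y)\leqs C_{S_n}(\la x,y\ra) = C_{S_n}(A_n) = 1$, and the $3/2$-generation of $A_n$ from \cite{GK_3/2} guarantees that such $y$ exist; the work lies in refining the choice so that (b) also holds. Note that since $z$ and $z^{-1}$ share a cycle type in $S_n$, the set $I(z) := \{\a\in S_n : z^\a = z^{-1}\}$ is always a non-empty coset of $C_{S_n}(z)$, so (b) is equivalent to the disjointness $I(x)\cap I(y) = \emptyset$.

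The element $x$ of prime order $p$ has cycle type $(p^m,1^{n-pm})$ (with $m$ even when $p=2$), so my plan is to construct $y$ by a case analysis on $(p,m,n-pm)$. The general template is to take $y$ as a product of a small number of cycles of pairwise coprime lengths whose supports are placed carefully relative to $\Fix(x)$ and the moving support of $x$: standard generation criteria (such as Jordan's theorem on primitive subgroups containing a prime-length cycle) will ensure $\la x,y\ra = A_n$, while the cycle placement will enforce $I(x)\cap I(y) = \emptyset$. When $|\Fix(x)|\geqs 3$ there is ample flexibility to arrange cycles of $y$ so that any $\a\in I(y)$ must act on $\Fix(x)$ in a manner incompatible with inverting $x$ on its moving support.

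The hard part will be the extremal cycle types of $x$, namely those with few or no fixed points --- principally, single $p$-cycles with $n-p\in\{0,1,2\}$ (subject to $x\in A_n$) and fixed-point-free products of $2$-cycles. In these cases the coset $I(x)$ is large and the placement of $y$ is highly constrained, so delicate ad hoc constructions are required, such as taking $y$ to be an $n$-cycle when $n$ is odd, or an $(n-1)$-cycle together with an independent transposition, followed by direct verification that $\la x,y\ra = A_n$ and $I(x)\cap I(y) = \emptyset$. A computational check for small $n$ (say $n\leqs 10$) may be needed to deal with base cases before a uniform construction takes over for all larger $n$.
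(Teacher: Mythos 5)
Your reduction is exactly the paper's: pass to $W=T^2.(\Out(T)\times S_2)$, invoke Corollary~\ref{c:H_diag:l:5.1} and Lemma~\ref{l:diag_star_prime}, use $\Aut(A_n)=S_n$, and reformulate condition (b) as disjointness of the inverting cosets $I(x)$ and $I(y)$. But from that point on what you have written is a plan, not a proof. The entire content of the proposition lies in actually exhibiting, for each prime-order $x$, an element $y$ satisfying (a) and (b) and then verifying both conditions; you describe a ``general template'' (cycles of coprime lengths placed relative to $\Fix(x)$, Jordan's theorem for generation) and then explicitly defer the extremal cycle types --- fixed-point-free involutions, $p$-cycles with $n-p\le 2$ --- to unspecified ``delicate ad hoc constructions.'' Those extremal cases are precisely where the inverting coset $I(x)$ is largest and the claim is least obvious, so the gap sits exactly at the crux. (Your fallback suggestion of ``an $(n-1)$-cycle together with an independent transposition'' cannot even be realised disjointly inside $n$ points, which illustrates that the hard cases have not been thought through.)

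It is also worth noting that the paper's proof avoids your case analysis on the cycle type of $x$ altogether, and does not use generation of $A_n$ at any point. After normalising $x$ so that its nontrivial cycles occupy $\{1,\dots,sp\}$ consecutively, it takes essentially one universal witness: $y=(1,3,4,2,5,6,\dots,m)$ with $m\in\{n-1,n\}$ odd (with a small variant for $x=(1,2)(3,4)$). Because $y$ is a single $m$-cycle, $C_{S_n}(y)=\langle y\rangle$ and $I(y)=\langle y\rangle\tau$ for an explicit $\tau$, so both (a) and (b) reduce to tracking the possible images of the points $1,2,3,5,6$ under elements of these two cosets and showing they are incompatible with centralising or inverting $x$. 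If you want to complete your argument, either carry out your ad hoc constructions for every extremal case in full (including the verification that $I(x)\cap I(y)=\varnothing$, which is the part generation does not give you), or adopt the single-long-cycle strategy, which collapses the case analysis. As it stands, the proposal does not establish the result.
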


\begin{proof}
	The cases where $n\in\{7,8,9\}$ can be handled using {\sc Magma}, so we will suppose that $n\geqs 10$. Note that $b(G) = 3$ by Theorem \ref{t:H_diag}, and so we may assume that $G = T^2.(\Out(T)\times S_2)$. We will appeal to Lemma~\ref{l:diag_star_prime} and prove the result by showing that \eqref{e:star} holds for all elements $x \in T$ of prime order. Conjugating by an element of $S_n$ if necessary, we may assume that
	\begin{equation}\label{e:cycle}
	x = (1,2,\dots,p)(p+1,p+2,\dots,2p)\cdots((s-1)p+1,(s-1)p+2,\dots,sp)
	\end{equation}
	for some prime $p$ and some positive integer $s$, so that the integers from $1$ to $sp$ appear in consecutive order.
	
	Let $m := n$ if $n$ is odd and $m := n-1$ if $n$ is even. Suppose first that $x = (1,2)(3,4)$. We claim that \eqref{e:star} holds with $y = (1,4,3,5,2,6,7,\dots,m)$ (so that all integers from $6$ to $m$ appear in consecutive order). To see this, first note that $C_{S_n}(y) = \la y\ra$, and that the elements of $S_n$ inverting $y$ by conjugation are precisely the elements of the right coset $\langle y \rangle \sigma$, where
	\[\sigma:=(4,m)(3,m-1)(5,m-2)(2,m-3)(6,m-4)(7,m-5)\cdots\left(\frac{m+1}{2},\frac{m+3}{2}\right).\]
	It is easy to check that if $g\in S_n\setminus\{1\}$ satisfies $y^g\in\{y,y^{-1}\}$ and $1^g\leqs 4$, then $2^g > 4$ or $4^g > 4$, and thus $x^g\ne x = x^{-1}$.
	
	To complete the proof, suppose that $x \ne (1,2)(3,4)$. We claim that \eqref{e:star} holds with $y = (1,3,4,2,5,6,\dots,m)$. By the definition of $x$ in \eqref{e:cycle}, $x$ cannot lie in $\langle y \rangle$. In what follows, we write ordered pairs using square brackets to avoid confusion with permutations.
	
	First, we will show that $C_{S_n}(x)\cap C_{S_n}(y) = 1$. Note that $C_{S_n}(y) = \la y\ra$, so for each non-identity $g\in C_{S_n}(y)$, either
	\begin{equation*}
	[1^g,2^g] \in \{[2,7],[3,5],[4,6],[m-2,1],[m-1,3],[m,4]\},
	\end{equation*}
	or $1^g \in \{5,\ldots,m-3\}$ and $2^g = 1^g + 3$. Additionally, $3^g = m-1$ if $1^g = m-2$. On the other hand, we see from~\eqref{e:cycle} that each $h\in C_{S_n}(x)$ maps $1$ and $2$ to points in a common $p$-cycle of $x$, and $2^h = 1^h + 1$, interpreted within that cycle. Moreover, if $[1^h,2^h] = [m-2,1]$ then $(1,\ldots,m-2)$ is a $p$-cycle of $x$ and $3^h = 2$. It follows that $C_{S_n}(x)\cap C_{S_n}(y) = 1$.
	
	Finally, we will show that no element of $S_n$ inverts both $x$ and $y$ via conjugation. To see this, first observe that the elements of $S_n$ inverting $y$ are precisely the elements of $\langle y \rangle\tau$, where
	\[ \tau :=	(3,m)(4,m-1)(2,m-2)(5,m-3)(6,m-4)\cdots\left(\frac{m+1}{2},\frac{m+3}{2}\right).\]
	Assume that $g$ lies in this coset. Then either
	\begin{equation*}
	[1^g,2^g]\in\{[1,m-2],[2,1],[3,m-1],[4,m],[5,3],[6,4],[7,2]\},
	\end{equation*}
	or $1^g \in \{8,\ldots,m\}$ and $2^g = 1^g-3$. Furthermore, if $g$ fixes $1$, then $3^g = m$, and if $[1^g,2^g] = [2,1]$, then $[3^g,5^g,6^g] = [4,m,m-1]$. Now let $h$ be an element of $S_n$ that inverts $x$. Then $h$ maps $1$ and $2$ to points in a common $p$-cycle of $x$, and $2^h = 1^h-1$, interpreted within that cycle. In addition, $3^h = m-3$ if $[1^h,2^h] = [1,m-2]$, and if $[1^h,2^h] = [2,1]$, then either $p = 2$ or $3^h = p$. Thus, if $h$ also inverts $y$, then $[1^h,2^h,3^h,5^h,6^h] = [2,1,4,m,m-1]$, and the fact that $p$ is prime yields $p = 2$. Our assumption that $x \ne (1,2)(3,4)$ now gives $sp \ge 6$, and $[5^h,6^h] = [m,m-1]$ implies that $(m-1,m)$ is a cycle of $x$, so that $m$ must be even. However, $m$ is odd, a contradiction. Hence the result follows.
 \end{proof}
 
Theorem \ref{thm:diag_semi} follows from Propositions \ref{p:diag_l+2_complete}, \ref{p:diag_l+1_incomplete}, \ref{p:diag_k=2_P=1}, \ref{p:diag_star_sporadic} and \ref{p:diag_star_alternating}. We also present an infinite family of non-semi-Frobenius diagonal type primitive groups with $k=2$.

\begin{prop}
    \label{p:diag_star_psl2}
    Suppose that $G = T^2.(\Out(T)\times S_2)$, where $T = \LL_2(q)$ for some $q\geqs 11$. Then $G$ is not semi-Frobenius.
\end{prop}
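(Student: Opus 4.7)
The aim is to exhibit, for each $q\ge 11$, an element $x\in T\setminus\{1\}$ with the property that no $y\in T$ makes $\{D,D(1,\varphi_x),D(1,\varphi_y)\}$ a base for $G$. By the vertex-transitivity of $\Sigma(G)$, such an $x$ certifies that $G$ is not semi-Frobenius. Invoking Corollary~\ref{c:H_diag:l:5.1}, the task reduces to producing $x$ such that, for every $y\in T$, either $C_{\Aut(T)}(x)\cap C_{\Aut(T)}(y)\ne 1$ or some $\alpha\in\Aut(T)$ simultaneously inverts $x$ and $y$.

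A natural candidate is to take $x\in T$ an involution, since then $x=x^{-1}$ and the constraint $x^\alpha=x^{-1}$ collapses to $\alpha\in C:=C_{\Aut(T)}(x)$. So the goal becomes: for every $y\in T$, either $C\cap C_{\Aut(T)}(y)\ne 1$, or some $\alpha\in C$ inverts $y$. For $q$ odd, $C_T(x)$ is a dihedral group of order $q-\varepsilon$ (with $\varepsilon=\pm 1$ according to $q\bmod 4$), with $x$ as its central involution; one obtains $C$ by enlarging $C_T(x)$ via an appropriate diagonal element of $\PGL_2(q)$ and the field automorphisms fixing a chosen lift of $x$. For $q$ even, $C_T(x)$ is an elementary abelian Sylow $2$-subgroup of order $q$. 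The verification then splits into cases on $y$: involutions $y$ are self-inverse, so $\alpha=1\in C$ inverts them; non-identity elements of the rotation subgroup of $C_T(x)$ give a nontrivial intersection $C\cap C_{\Aut(T)}(y)$; and for the generic remaining $y$, one seeks an explicit inverter in $C$, exploiting the fact that each reflection in the dihedral $C_T(x)$ inverts its entire rotation subgroup, combined with the conjugacy of maximal tori in $\PSL_2(q)$ and the action of the outer automorphisms contained in $C$.

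The main obstacle lies in this generic case: producing an inverter inside the relatively small subgroup $C$ rather than merely inside $\Aut(T)$. While every element of $\PSL_2(q)$ is $\PGL_2(q)$-conjugate to its inverse, the conjugator need not centralise $x$, so the argument requires careful bookkeeping of how maximal tori move under $\PGL_2(q)$-conjugation relative to $\langle x\rangle$. If the involution choice proves insufficient for certain $y$, an alternative is to take $x$ of large prime order dividing $\tfrac{1}{2}(q\pm 1)$, which forces $C_{\Aut(T)}(x)$ to be a cyclic torus normaliser and reduces the problem to one about the generation and real-conjugacy structure of $\PSL_2(q)$. The hypothesis $q\ge 11$ ensures $|C_T(x)|\ge 12$, large enough to accommodate the required reflections, and excludes the exceptional isomorphisms $\LL_2(q)\cong A_5,A_6$ already handled by Lemma~\ref{l:diag_k=2_A5A6}; any residual small values of $q$ can be checked directly in \textsc{Magma}.
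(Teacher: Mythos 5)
Your opening reduction is exactly the paper's: by Corollary~\ref{c:H_diag:l:5.1} it suffices to exhibit one $x\in T\setminus\{1\}$ such that for \emph{every} $y\in T$ either $C_{\Aut(T)}(x)\cap C_{\Aut(T)}(y)\ne 1$ or some $\alpha\in\Aut(T)$ inverts both $x$ and $y$. But from that point on there is a genuine gap: the ``generic case'' you flag as the main obstacle is the entire content of the proposition and is left unresolved, and your primary candidate is in fact wrong. If $x$ is an involution, the requirement becomes that every $y$ of order greater than $2$ is centralised or inverted by some element of $C:=C_{\Aut(T)}(x)$. For $q=2^f\geqs 16$ this fails: taking $x$ to be the standard unipotent involution, $C=U{:}\langle\sigma\rangle$ with $U$ the (abelian) Sylow $2$-subgroup containing $x$ and $\sigma$ the Frobenius, and taking $y$ a generator of the diagonal torus $S\cong C_{q-1}$ normalising $U$, every $u\sigma^i\in C$ sends $y$ to $(y^{2^i})^u$; since $S\cap S^u=1$ for $u\ne 1$ and $2^i\not\equiv -1\pmod{q-1}$ for $0<i<f$, no nontrivial element of $C$ centralises $y$ and no element of $C$ inverts it. Hence $\{D,D(1,\varphi_x),D(1,\varphi_y)\}$ \emph{is} a base and the involution does not certify failure of $(\star)$. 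For odd $q$ you give no argument for the generic $y$, and your fallback of ``an element of large prime order dividing $(q\pm1)/2$'' is only half viable: elements of order dividing $(q-1)/(2,q-1)$ also extend to bases (pair them with a unipotent element, as in Lemma~\ref{l:diag_arc-trans_star} and the remark following the proposition), so only the $(q+1)$-torus can supply a witness.

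The paper's proof takes $x$ of order $(q+1)/(2,q-1)$. The decisive feature of this choice, which an involution lacks, is that $N_T(\langle x\rangle)$ is the \emph{unique} maximal subgroup of $T$ containing $x$; consequently, for any $y$ either $\langle x,y\rangle=T$, in which case the reflexibility of generating pairs of $\LL_2(q)$ (Singerman; Leemans--Liebeck) yields an automorphism inverting both, or $x$ and $y$ sit in one of a short list of configurations ($y$ an involution, $y$ unipotent, $y$ in a conjugate of $\langle x\rangle$, $y$ in a split torus) each of which is shown directly to admit a common centraliser or a common inverter. Your sketch contains neither the correct witness nor the generation/reflexibility input needed to dispose of the generic $y$, so as it stands it does not prove the proposition.
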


\begin{proof}
    Let $x\in T$ be an element of order $(q+1)/(2,q-1)$. We claim that \eqref{e:star} does not hold, which implies the desired result. Letting $R:=\PGL_2(q)$, we note that $C_T(x) = \la x\ra$, $C_{R}(x) \cong C_{q+1}$, and $N_{R}(\la x\ra)\cong D_{2(q+1)}$ is a maximal subgroup of $R$. In particular, each element of $N_{R}(\la x\ra)$ either centralises or inverts $x$.
    
    Write $q = p^f$ with $p$ prime, and let $y\in T$. Then either $|y| = p$, or $y$ lies in a cyclic subgroup of $T$ of order $(q-1)/(2,q-1)$ or $(q+1)/(2,q-1)$. 
    We shall divide the rest of the proof into four cases, which together account for all possible choices for $y$; in each case, we will show that $y$ does not satisfy \eqref{e:star}.

    \vs
	
	\noindent \emph{Case 1. $|y| = 2$.}

    \vs

    The normaliser $N_T(\la x\ra)$ is the unique maximal subgroup of $T$ containing $x$. Thus if $y\notin N_T(\la x\ra)$, then $\la x,y\ra = T$. It follows (see \cite[Theorem 3]{Singerman} and \cite[p.~582]{LL_gen}) that there exists $\a\in\Aut(T)$ such that $(x,y)^\a = (x^{-1},y^{-1})$. If instead $y\in N_T(\la x\ra) \setminus \la x\ra$, then $(x,y)^y = (x^{-1},y^{-1})$, and if $y\in \la x\ra$, then $y\in C_{\Aut(T)}(x)\cap C_{\Aut(T)}(y)$. Hence $y$ does not satisfy \eqref{e:star}.

    \vs
	
	\noindent \emph{Case 2. $p$ is odd and $|y| = p$.}

    \vs
    
    Let $H$ be the unique maximal subgroup of $R$ of type $P_1$ that contains $y$. Then $H\cong C_p^f \sd C_{q-1}$, and each involution of $H$ inverts $y$. Since $(q-1,q+1)= 2$ and any subgroup of $H$ of order $4$ is cyclic, we see that $J:=H\cap N_{R}(\la x\ra)$ 
    has order at most $2$. In fact, $|H|\cdot |N_{R}(\la x\ra)| > |R|$, which implies that $|J| = 2$. Let $\a$ be the unique involution of $J$. 
    Now, $\a$ is contained in a cyclic subgroup of $H$ of order $q-1$. Hence if $\a$ centralises $x$, then $\a$ lies in a proper subgroup of $R$ of order divisible by  both $q-1$ and $q+1$. However, no such subgroup exists, and so $(x,y)^\a = (x^{-1},y^{-1})$.

    \vs

    \noindent \emph{Case 3. $|y| \ne  2$ and $|y|$ divides $(q+1)/(2,q-1)$.}

    \vs
    
    Here, $y$ is contained in a cyclic subgroup of $T$ of order $(q+1)/(2,q-1)$. Since all such subgroups of $T$ are conjugate, we obtain that $y\in\la x^g\ra$ for some $g\in T$. It is also clear that the maximal subgroup $N_{R}(\la x^g\ra) \cong D_{2(q+1)}$ of $R$ is equal to $N_{R}(\la y\ra)$. If $y\in\la x\ra$, then clearly $C_{\Aut(T)}(x)\cap C_{\Aut(T)}(y)\ne 1$, so we shall assume that $y\notin\la x\ra$. By inspecting \cite[Table 2]{FI_subdeg}, we see that $N:=N_{R}(\la x \ra) \cap N_{R}(\la y\ra) \ne 1$. As $\la x \ra \cap \la x^g \ra = 1$, it follows that there exists $\a\in N$ such that $(x,y)^\a = (x^{-1},y^{-1})$.

     \vs

    \noindent \emph{Case 4. $|y|  \ne 2$ and $|y|$ divides $(q-1)/(2,q-1)$.}

    \vs
    
    We shall identify elements of $R$ with corresponding matrices in $\GL_2(q)$. In this case, $y$ lies in a cyclic subgroup of $T$ of order $(q-1)/(2,q-1)$, and so (conjugating $x$ and $y$ by a common element of $T$ if necessary) we may assume without loss of generality that $y$ is a diagonal matrix. Suppose that
    \begin{equation*}
        x = \begin{pmatrix}
            a&b\\
            c&d
        \end{pmatrix} \mbox{ for $a,b,c,d \in \mathbb{F}_q$,}
    \end{equation*}
    noting that $b,c \neq 0$ since $|x| = (q+1)/(2,q-1)$. Then the matrix
    \begin{equation*}
        \begin{pmatrix}
            0&1\\
            -cb^{-1}&0
        \end{pmatrix}
    \end{equation*}
    inverts each of $x$ and $y$, and so \eqref{e:star} does not hold.
\end{proof}

\begin{rem}
    Suppose that $G = T^2.(\Out(T)\times S_2)$ with $T\notin\{A_5,A_6\}$ (so that $b(G) = 3$), and let \[N := \{t\in T: D(1,\varphi_t) \text{ is a neighbour of } D \text{ in } \Sigma(G)\}.\]
    Then $\Sigma(G)$ has diameter at most $2$ (equivalently, $G$ satisfies Conjecture~\ref{conj:BG}) if and only if $N^2=T$ (that is, any $t\in T$ can be written as $t = x_1x_2$ for some $x_i\in N$). 
    To observe this, note that for $g = (1,x^{-1})\in T^2\leqs G$, \[(D(1,\varphi_x), D(1,\varphi_y))^g = (D,D(1,\varphi_{yx^{-1}})).\]
    In particular, $(D(1,\varphi_x), D(1,\varphi_y))$ is an arc in $\Sigma(G)$ if and only if $(D,D(1,\varphi_{yx^{-1}}))$ is. Hence $(D,D(1,\varphi_{x_2}),D(1,\varphi_{x_1x_2}))$ is a $2$-arc from $D$ to $D(1,\varphi_t),$ so $\mathrm{diam}(\Sigma(G)) \leq 2$. The converse also holds by a similar argument. 
    As an example, in the case of $T\cong \LL_2(q)$ with $q = 7$ or $q\geqs 11$, the elements of $T$ of order $(q-1)/(2,q-1)$ lie in $N$ (as noted in the proof of \cite[Proposition 5.3]{H_diag}). By \cite[pp.~240--241]{arad}, for the conjugacy class $C$ of an element of order $(q-1)/(2,q-1)$, we have $C^2=T,$ and so it follows that $N^2 = T$ and $\mathrm{diam}(\Sigma(G)) \leq 2.$
\end{rem}

\subsection{Arc-transitivity}

\label{ss:diag_arc}

In this subsection, we will prove Theorem \ref{thm:diag_arc-trans}, which classifies the diagonal type primitive groups $G$ such that $\Sigma(G)$ is $G$-arc-transitive. We first record \cite[Theorem 2]{H_diag}, which deals with the case $b(G) = 2$, noting that in this case $\reg(G) = 1$ if and only if $\Sigma(G)$ is $G$-arc-transitive.

\begin{thm}{\cite[Theorem 2]{H_diag}}
	\label{t:H_diag_r=1}
	Suppose that $b(G) = 2$. Then $\Sigma(G)$ is $G$-arc-transitive if and only if $T=A_5$, $k\in\{3,57\}$ and $G = T^k.(\Out(T)\times S_k)$.
\end{thm}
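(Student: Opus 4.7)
The plan is to translate $G$-arc-transitivity of $\Sigma(G)$ into the condition $\reg(G)=1$, and then reduce this to a combinatorial question about $\Aut(T)$-orbits on multisets from $T$. Since $b(G)=2$, the edges of $\Sigma(G)$ are precisely the base pairs, so $G$ is arc-transitive on $\Sigma(G)$ if and only if $G$ has a unique regular orbit on $\Omega^2$; by vertex-transitivity this is equivalent to $G_D$ acting transitively on the set $N(D)\subseteq\Omega$ of neighbours of $D$ in $\Sigma(G)$.

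The first step is to rule out all cases where $G\ne T^k.(\Out(T)\times S_k)$ by exhibiting two distinct $G_D$-orbits on $N(D)$. When the top group $P$ is a proper subgroup of $S_k$, I would choose $\mathbf{x}_1,\mathbf{x}_2\in\Inn(T)^k$ giving neighbours of $D$, with associated support partitions $\mathcal{P}^{\mathbf{x}_1}$ and $\mathcal{P}^{\mathbf{x}_2}$ of $[k]$ lying in different $P$-orbits: Lemma~\ref{l:H_diag_l:2.2}(i) then shows that no element of $G_D$ can send $D\mathbf{x}_1$ to $D\mathbf{x}_2$. Such a choice is possible because $P$ fails to be sufficiently many-times transitive on $[k]$ to merge all partition types arising from base-forming tuples, a fact one can establish case-by-case using the classification of primitive subgroups of $S_k$; the case $P=A_k$ is treated by a parity/chirality version of the same argument, selecting tuples related by an odd coordinate permutation that cannot be undone inside $\Aut(T)\times A_k$. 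In the remaining cases (where $P=S_k$ but $G$ does not project onto the full $\Out(T)$), a parallel argument using $\Inn(T).O$-orbits on $T$, where $O\le\Out(T)$ is the image of $G$, separates $G_D$-orbits of base pairs.

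What remains is the case $G=T^k.(\Out(T)\times S_k)$, so $G_D=\Aut(T)\times S_k$ embedded diagonally. Normalising a neighbour $D\mathbf{x}\in\Omega$ so that $t_1=1$, the $G_D$-orbit of $D\mathbf{x}$ is determined by the unordered multiset $M_{\mathbf{x}}:=\{t_1,\dots,t_k\}\subseteq T$ up to the $\Aut(T)$-action. Using Lemma~\ref{l:H_diag_l:2.2}(ii)--(iii), the pair $\{D,D\mathbf{x}\}$ is a base if and only if the $\Aut(T)$-setwise stabiliser of $M_{\mathbf{x}}$ is trivial, with the easy additional case in which $1$ shares its multiplicity with some other entry handled by a direct calculation using Lemma~\ref{l:diag_m-cycle}. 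Thus $\Sigma(G)$ is $G$-arc-transitive if and only if $\Aut(T)$ has a \emph{unique} orbit on those $k$-multisets from $T$ that are base-forming in this sense.

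The main obstacle is this last combinatorial statement. For $T=A_5$ ($|T|=60$, $\Aut(T)\cong S_5$ with orbits on $T$ of sizes $1,15,20,24$), complementation $M\mapsto T\setminus M$ on $k$-subsets commutes with $\Aut(T)$, which explains the symmetry $k\leftrightarrow|T|-k$ in the statement; a direct enumeration sorting $k$-subsets by their intersection pattern with the four $\Aut(T)$-orbits shows that a unique base-forming orbit arises only for $k\in\{3,57\}$. For $T\ne A_5$, the number and variety of $\Aut(T)$-orbits on $T$ is larger, and crude estimates (based on the fact that generic multisets already have trivial $\Aut(T)$-stabiliser, so the number of base-forming orbits grows roughly as $\binom{|T|}{k}/(|\Aut(T)|\,k!)$) force multiple base-forming orbits for every admissible $k$. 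The small residual cases for each family of socles $T$ admit direct case-by-case analysis, possibly with computer verification, yielding the stated classification.
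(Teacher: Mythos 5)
First, a point of comparison: this paper does not prove the statement at all --- it is imported verbatim as \cite[Theorem 2]{H_diag}, so there is no internal proof to measure your argument against. That said, your overall frame is the right one and matches the machinery this paper deploys elsewhere in Section~3: translating arc-transitivity into $\reg(G)=1$, hence into $G_D$ having a unique regular orbit on $\Omega$; distinguishing suborbits via the partition invariant of Lemma~\ref{l:H_diag_l:2.2}(i) and the translation/automorphism invariant of Lemma~\ref{l:diag_subaction}; and, for $G=W$, classifying regular suborbits by orbits on $k$-subsets of $T$ with trivial stabiliser. Your observation that complementation of $k$-subsets explains the pair $k\in\{3,57\}$ when $|T|=60$ is correct, granted the (unstated but true) fact that when $S_k\leqs G$ a base-forming tuple must have pairwise distinct entries, since a repeated entry yields a transposition in $G_D\cap G_{D\mathbf{x}}$.

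There are, however, genuine gaps. (1) The case $P\notin\{A_k,S_k\}$, which by Theorem~\ref{t:H_diag} is the generic source of base-two diagonal groups and hence the bulk of the theorem, is dispatched with ``a fact one can establish case-by-case''; no mechanism is given for producing two base-forming tuples whose partitions lie in distinct $P$-orbits, and for $2$-transitive or highly homogeneous $P$ the partition invariant alone need not separate orbits, so the real work (carried out over many pages in \cite{H_diag}) is simply asserted. (2) In the $G=W$ reduction the relevant stabiliser is the setwise stabiliser of the $k$-subset in the holomorph $\Hol(T)$, not in $\Aut(T)$ after normalising $t_1=1$: left translations preserving the subset (e.g.\ when it is a union of cosets of a subgroup) contribute nontrivial elements of $G_D\cap G_{D\mathbf{x}}$ that your criterion misses. (3) The final count is asserted rather than performed, and the crude estimate $\binom{|T|}{k}/|\Hol(T)|$ (the extra factor $k!$ is spurious, as you are already counting subsets) cannot decide the borderline cases: for $T=A_5$ and $k=3$ it allows up to four regular orbits while the true answer is one, so one must genuinely bound the number of non-base-forming subsets. (4) For proper subgroups $G<W$ with top group $S_k$ one cannot simply split regular $W_D$-orbits, because Theorem~\ref{t:H_diag}(iii) permits $b(G)=2$ while $b(W)=3$ (e.g.\ $k=|T|-2$ with $S_k\not\leqs G$), in which case $W$ has no regular suborbits to split and a separate construction is needed. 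As it stands the proposal is a plausible programme --- essentially the one executed in \cite{H_diag} --- but not a proof.
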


Recall that for an element $\mathbf{x} = (\varphi_{t_1},\dots,\varphi_{t_k})\in\Inn(T)^k$, the partition $\mathcal{P}^{\mathbf{x}} = \{\mathcal{P}^{\mathbf{x}}_t:t\in T\}$ of $[k]$ is defined so that $j\in \mathcal{P}_t^{\mathbf{x}}$ if $t_j = t$. 

\begin{lem}
	\label{l:diag_subaction}
	Let $\mathbf{x},\mathbf{y} \in \Inn(T)^k$ be such that $D\mathbf{x}$ and $D\mathbf{y}$ lie in a common $G_D$-orbit. Then there exist $g\in T$ and $\a\in\Aut(T)$ such that for each integer $m$,
	\begin{equation*}
	g\{t:|\mathcal{P}_t^{\mathbf{x}}| = m\}^\a = \{t:|\mathcal{P}_t^{\mathbf{y}}| = m\}.
	\end{equation*}
\end{lem}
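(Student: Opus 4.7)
The plan is to unpack the hypothesis that $D\mathbf{x}$ and $D\mathbf{y}$ lie in a common $G_D$-orbit and translate it into the desired structural statement about the multiplicity sets. Since $G_D \le D$, any element of $G_D$ sending $D\mathbf{x}$ to $D\mathbf{y}$ has the form $h = (\a,\dots,\a)\pi$ for some $\a \in \Aut(T)$ and $\pi \in P$. Writing $\mathbf{x} = (\varphi_{t_1},\dots,\varphi_{t_k})$ and $\mathbf{y} = (\varphi_{s_1},\dots,\varphi_{s_k})$, I would apply the explicit action formula recorded in the excerpt, namely
\begin{equation*}
D(\varphi_{t_1},\dots,\varphi_{t_k})^{(\a,\dots,\a)\pi} = D(\varphi_{t_{1^{\pi^{-1}}}^{\a}},\dots,\varphi_{t_{k^{\pi^{-1}}}^{\a}}),
\end{equation*}
to rewrite $D\mathbf{x}^h = D\mathbf{y}$ as an equality of two cosets $D(\varphi_{u_1},\dots,\varphi_{u_k}) = D(\varphi_{s_1},\dots,\varphi_{s_k})$, where $u_j := t_{j^{\pi^{-1}}}^{\a}$.

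Next, I would characterise when two cosets of this form are equal. Using that $D \cap \Inn(T)^k$ consists precisely of the diagonal tuples $(\varphi_g,\dots,\varphi_g)$ for $g \in T$ (because $Z(T) = 1$ and the $P$-part must be trivial when no coordinate permutation occurs), the coset equality forces the existence of $g \in T$ with $s_j = g \cdot u_j = g \cdot t_{j^{\pi^{-1}}}^{\a}$ for every $j \in [k]$.

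From this pointwise relation, a direct calculation with the partitions finishes the proof. For each $t \in T$, one has $j \in \mathcal{P}_t^{\mathbf{y}}$ iff $s_j = t$, iff $t_{j^{\pi^{-1}}} = (g^{-1}t)^{\a^{-1}}$, iff $j^{\pi^{-1}} \in \mathcal{P}_{(g^{-1}t)^{\a^{-1}}}^{\mathbf{x}}$. Hence $\mathcal{P}_t^{\mathbf{y}} = \bigl(\mathcal{P}_{(g^{-1}t)^{\a^{-1}}}^{\mathbf{x}}\bigr)^{\pi}$, and taking cardinalities absorbs the permutation $\pi$ to yield $|\mathcal{P}_t^{\mathbf{y}}| = |\mathcal{P}_{(g^{-1}t)^{\a^{-1}}}^{\mathbf{x}}|$. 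Rearranging, the map $u \mapsto g\, u^{\a}$ on $T$ carries $\{u \in T : |\mathcal{P}_u^{\mathbf{x}}| = m\}$ bijectively onto $\{t \in T : |\mathcal{P}_t^{\mathbf{y}}| = m\}$ for every integer $m$, which is exactly the conclusion.

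The argument is essentially a bookkeeping exercise once the coset equality is translated correctly, so there is no genuine obstacle. The only subtle point is the characterisation of coset equality in the third paragraph above: one must be careful that the $\Aut(T)$ factor contributed by $D$ is forced into $\Inn(T)$ (hence realised by some $g \in T$) precisely because both tuples lie in $\Inn(T)^k$ and $T$ is centreless, and that the $P$-component must act trivially on coordinates for the tuples to match entry by entry.
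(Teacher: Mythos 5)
Your proposal is correct and follows essentially the same route as the paper's proof: both unpack the hypothesis into a coset equality, extract the pointwise relation $y_{j^\pi} = g\,x_j^\a$ from the structure of $D \cap \Inn(T)^k$, and then transfer this to the multiplicity sets of the partitions. The only difference is cosmetic bookkeeping with $\pi$ versus $\pi^{-1}$.
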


\begin{proof}
	Write $\mathbf{x} = (\varphi_{x_1},\dots,\varphi_{x_k})$ and $\mathbf{y} = (\varphi_{y_1},\dots,\varphi_{y_k})$, and let $\a\in\Aut(T)$ and $\pi\in S_k$ be such that $D\mathbf{x}^{(\a,\dots,\a)\pi} = D\mathbf{y}$. Then
	\begin{equation*}
	D(\varphi_{x_1^\a},\dots,\varphi_{x_k^\a}) = D(\varphi_{y_{1^\pi}},\dots,\varphi_{y_{k^\pi}}).
	\end{equation*}
	It follows that there exists $g\in T$ such that $gx_j^\a = y_{j^\pi}$ for all $j\in [k]$, 
    and that $(\mathcal{P}_{x_j}^{\mathbf{x}})^\pi = \mathcal{P}_{y_{j^\pi}}^{\mathbf{y}}$. In particular, $|\mathcal{P}_{x_j}^{\mathbf{x}}| = |\mathcal{P}_{y_{j^\pi}}^{\mathbf{y}}| = |\mathcal{P}_{gx_j^\a}^{\mathbf{y}}|$, and thus
	\begin{equation*}
	\{gt^\a:|\mathcal{P}_t^{\mathbf{x}}| = m\} = \{t:|\mathcal{P}_t^{\mathbf{y}}| = m\}
	\end{equation*}
	as desired.
\end{proof}

We first consider the case where $k = 2$. The following result is a stronger version of \cite[Corollary 5.4]{H_diag}. Here, we write $\LL_n^+(q):=\LL_n(q)$ and $\LL_n^-(q):=\UU_n(q)$.

\begin{lem}
	\label{l:diag_arc-trans_star}
	Let $O\leqs\Out(T)$ and $K := \Inn(T).O$, with $O\ne \Out(T)$ if $T\in\{A_5,A_6\}$. Then there exists a pair $(x,y)$ of elements of $T$ such that:
	\begin{itemize}\addtolength{\itemsep}{0.2\baselineskip}
		\item[{\rm (a)}] $|x|\ne |y|$;
		\item[{\rm (b)}] $C_{K}(x)\cap C_{K}(y) = 1$; and
		\item[{\rm (c)}] there is no $\a\in K$ such that $x^\a = x^{-1}$ and $y^\a = y^{-1}$.
	\end{itemize}
\end{lem}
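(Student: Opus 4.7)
The plan is to construct explicit pairs $(x,y)$ satisfying (a)--(c), refining the arguments of \cite[Corollary 5.4]{H_diag} (which yields (b) and (c) for $K = \Aut(T)$) so as additionally to ensure $|x| \ne |y|$. Since conditions (b) and (c) only strengthen as $K$ grows, it suffices to produce such a pair for $K = \Aut(T)$ in each non-excluded case; the admissible $K < \Aut(T)$ arising for $T \in \{A_5, A_6\}$ are then handled by direct computation in {\sc Magma}.

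For $T = A_n$ with $n \geqs 7$, I would reuse the pair $(x,y) = ((1,2)(3,4),\,(1,4,3,5,2,6,\dots,m))$ constructed in the proof of Proposition~\ref{p:diag_star_alternating}, where $m \in \{n,n-1\}$ is odd (so $m \geqs 9$). That proof already establishes (b) and (c) for $K = S_n = \Aut(T)$, and clearly $|x| = 2 \ne m = |y|$, giving (a). For $T$ sporadic, I would inspect the four cases of the proof of Proposition~\ref{p:diag_star_sporadic}. In Cases~2 and~3, $y$ is already chosen with $|C_{\Aut(T)}(y)|$ prime and $y$ not $\Aut(T)$-conjugate to $y^{-1}$, so condition (c) holds vacuously (cf.~Lemma~\ref{l:diag_star_centraliser_prime}), and it suffices to take $x$ of any prime order different from $|y|$ lying outside $\la y\ra$. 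Cases~1 and~4 can then be verified, or the choice of $y$ slightly adjusted computationally, to ensure $|x| \ne |y|$.

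For $T$ of Lie type, the strategy is again via Lemma~\ref{l:diag_star_centraliser_prime}: find $y \in T$ of prime order with $C_{\Aut(T)}(y) = \la y \ra$ and $y$ not $\Aut(T)$-conjugate to $y^{-1}$. For virtually all such $T$, a suitable $y$ arises as a generator of a cyclic maximal torus whose order is divisible by a Zsigmondy prime $\ell$ for some $q^e \pm 1$; congruence conditions on $\ell$ (modulo $2e$, accounting for field/graph/diagonal automorphisms) preclude inversion by outer automorphisms. Then any $x \in T$ of prime order distinct from $|y|$ and not lying in $\la y \ra$ satisfies (a)--(c), using that $|T|$ has at least two prime divisors. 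The finitely many remaining Lie type groups of small rank or small field are verified directly.

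The main obstacle is the Lie type case: for each family one must exhibit $y$ with both $C_{\Aut(T)}(y)$ cyclic of prime order and $y$ not $\Aut(T)$-conjugate to $y^{-1}$, and verifying the latter requires careful tracking of the action of field, graph, and diagonal automorphisms on the maximal torus containing $y$. A secondary point of attention is the low-rank and excluded alternating cases ($A_5, A_6$), where the available $K$ are smaller than $\Aut(T)$ and each admissible $K$ must be checked individually.
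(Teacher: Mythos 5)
Your reduction to $K = \Aut(T)$, your treatment of $A_n$ ($n \geqs 7$) by recycling the pair from Proposition~\ref{p:diag_star_alternating}, and your computational handling of $A_5$, $A_6$ and (most of) the sporadic groups are sound and broadly parallel to the paper. The genuine gap is in the Lie type case, which is where essentially all of the work lies. Your strategy there is to produce $y$ of prime order with $C_{\Aut(T)}(y) = \langle y \rangle$ and $y$ not $\Aut(T)$-conjugate to $y^{-1}$, so that (c) holds vacuously. But such a $y$ fails to exist for infinitely many families, not merely for ``finitely many small'' groups. Most glaringly, for $T = \LL_2(q)$ \emph{every} element of $T$ is inverted by an element of $\PGL_2(q) \leqs \Aut(T)$: the normaliser of each maximal torus is dihedral and inverts it, and unipotent elements are inverted by diagonal elements (the paper's own proof of Proposition~\ref{p:diag_star_psl2} checks exactly this for every element order). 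The same obstruction arises whenever $-1$ lies in the Weyl group --- e.g.\ for $\PSp_{2n}(q)$, $\Omega_{2n+1}(q)$, $G_2(q)$, $F_4(q)$, $E_7(q)$, $E_8(q)$ --- where every semisimple element, in particular every Zsigmondy-prime torus element, is inverted by an inner-diagonal element. Your congruence conditions modulo $2e$ control only outer automorphisms and cannot rule out these inner inverters.

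The fix, and the route the paper actually takes, is to treat (c) as a \emph{joint} condition on the pair rather than a condition on $y$ alone. For all $T$ other than $\LL_2(q)$ and $\LL_3^{\varepsilon}(q)$ (with $A_7$ done by computer), the paper invokes \cite[Theorem 1.1]{LL_gen}, which supplies a generating pair $(x,y)$ with $|x|=2$ such that no automorphism inverts both simultaneously; generation gives (b) at once, and $|y| \ne 2$ since $T$ is not dihedral, giving (a). For $\LL_2(q)$ it takes $x$ in a split torus and $y$ unipotent: every element inverting $x$ is antidiagonal and conjugates the upper-unitriangular $y$ to a lower-unitriangular element, never to $y^{-1}$, even though $x$ and $y$ are each individually real. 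For $\LL_3^{\varepsilon}(q)$ it finds $x,y$ of distinct orders with $N_{\Aut(T)}(\langle x\rangle) \cap N_{\Aut(T)}(\langle y\rangle) = 1$ via a fixed point ratio estimate. Without an ingredient of this kind your proposal cannot close the Lie type case.
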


\begin{proof}
	First assume that $T$ is not isomorphic to $\LL_2(q)$ or $\LL_3^\e(q)$ for $\e \in \{+,-\}$. One can handle the case $T = A_7$ with the aid of {\sc Magma}. If $T\ne A_7$, then \cite[Theorem 1.1]{LL_gen} implies that there exists a generating pair $(x,y)$ for $T$ with $|x| = 2$, such that there is no $\a\in\Aut(T)$ with $(x,y)^\a = (x^{-1},y^{-1})$. As $T$ is not dihedral, $|y|\ne |x|$.
	
	Next, assume that $T = \LL_2(q)$ for $q\geqs 5$, with $q = p^f$ for some prime $p$. If $q\in\{5,9\}$, then $T\in\{A_5,A_6\}$, and one can show that the result holds using {\sc Magma} (noting that $O < \Out(T)$ by our assumption). Now assume that $q\notin\{5,9\}$, and let $\lambda$ be a primitive element of $\mathbb{F}_q^\times$. 
    Additionally, let $x\in T$ be the image of
	\begin{equation*}
	\widehat{x} = 
	\begin{pmatrix}
	\lambda&0\\
	0&\lambda^{-1}
	\end{pmatrix} \in \SL_2(q),
	\end{equation*}
	and let $y\in T$ be the image of
	\begin{equation*}
	\widehat{y} = 
	\begin{pmatrix}
	1&\mu\\
	0&1
	\end{pmatrix} \in \SL_2(q)
	\end{equation*}
	for some $\mu\in\mathbb{F}_q^\times$. Then $|x| =(q-1)/(2,q-1)$ and $|y| = p$. We claim that conditions (b) and (c) hold. To see this, first note that $C_{\mathrm{\Gamma L}_2(q)}(\widehat{x}) = C_{\mathrm{\GL}_2(q)}(\widehat{x})$, and thus
	\begin{equation*}
	C_{\PGammaL_2(q)}(x) = C_{\PGL_2(q)}(x)\cong C_{q-1}.
	\end{equation*}
	Moreover, $C_{\PGL_2(q)}(y)\cong C_p^f$, and so
	\begin{equation*}
	C_{\Aut(T)}(x)\cap C_{\Aut(T)}(y) = C_{\PGL_2(q)}(x)\cap C_{\PGL_2(q)}(y) = 1,
	\end{equation*}
	which gives (b). Observe next that $\widehat{x}^g = \widehat{x}^{-1}$, where
	\begin{equation*}
	g = 
	\begin{pmatrix}
	0&1\\
	1&0
	\end{pmatrix}\in\GL_2(q).
	\end{equation*}
	It follows that if $h\in\mathrm{\Gamma L}_2(q)$ and $\widehat{x}^h = \widehat{x}^{-1}$, then $h$ lies in the coset $C_{\mathrm{\Gamma L}_2(q)}(\widehat{x})g = C_{\mathrm{\GL}_2(q)}(\widehat{x})g$. Thus
	\begin{equation*}
	h = 
	\begin{pmatrix}
	a&0\\
	0&b
	\end{pmatrix}
	\begin{pmatrix}
	0&1\\
	1&0
	\end{pmatrix}
	= 
	\begin{pmatrix}
	0&a\\
	b&0
	\end{pmatrix}
	\end{equation*}
	for some $a,b\in\mathbb{F}_q^\times$. However,
	\begin{equation*}
	\widehat{y}^h = 
	\begin{pmatrix}
	1&0\\
	a^{-1}b\mu&1
	\end{pmatrix} \ne
	\begin{pmatrix}
	1&-\mu\\
	0&1
	\end{pmatrix}
	= \widehat{y}^{-1}.
	\end{equation*}
  Condition (c) now follows, and the proof is complete for $T = \LL_2(q)$.
	
	It remains to consider the case $T = \LL_3^\e(q)$, with $\e \in \{+,-\}$ and $q \ge 3$. It suffices to find $x,y\in T$ of distinct orders such that $N_{\Aut(T)}(\la x\ra)\cap N_{\Aut(T)}(\la y\ra) = 1$. For $q \le 32$, it is routine to find these elements by random search with the aid of {\sc Magma}. Thus, we may assume that $q > 32$. Let $H_1$ be a maximal subgroup of $\Aut(T)$ of type $\GL_1^\e(q^3)$, so that $H_1 = N_{\Aut(T)}(\la x\ra)$ for some $x\in T$ with $|x| = (q^2+\e q+1)/(3,q-\e)$. In addition, let $y$ be an element of $T$ with a preimage
	\begin{equation*}
	\widehat{y} = 
	\begin{pmatrix}
	A&\\
	&\zeta
	\end{pmatrix}\in\SL_3^\e(q),
	\end{equation*}
	for some $A\in\GL_2(q)$ of order $q^2-1$, with $\zeta$ chosen so that $\det(\widehat{y}) = 1$. Then $|\widehat{y}| = q^2-1$ and $|y| = (q^2-1)/(3,q-\e)$. Let $H_2 := N_{\Aut(T)}(\la y\ra)$. By \cite[Satz II.7.3(a)]{Hu_book}, we have $H_2\cap T = \la y\ra.2$. It suffices to show that $H_1^g\cap H_2 = 1$ for some $g\in \Aut(T)$, or equivalently, that $\Aut(T)$ has a regular orbit on $\Gamma_1\times \Gamma_2$, where $\Gamma_i := [\Aut(T):H_i]$. Recall that
	\begin{equation*}
	\fpr(z,\Gamma_i) = \frac{|z^{\Aut(T)}\cap H_i|}{|z^{\Aut(T)}|}
	\end{equation*}
	is the fixed point ratio of $z\in \Aut(T)$ on $\Gamma_i$. Letting $R(\Aut(T))$ be a set of representatives of the $\Aut(T)$-conjugacy classes of elements of prime order in $H_1$, arguing as in Section \ref{ss:pre_prob} shows that $\Aut(T)$ has a regular orbit on $\Gamma_1\times \Gamma_2$ if
	\begin{equation}
	\label{e:reg_prod}
	\begin{aligned}
	m:&=\sum_{z\in R(\Aut(T))}|z^{\Aut(T)}|\cdot \fpr(z,\Gamma_1)\cdot\fpr(z,\Gamma_2) \\
	& = \sum_{z\in R(\Aut(T))}\frac{|z^{\Aut(T)}\cap H_1|\cdot |z^{\Aut(T)}\cap H_2|}{|z^{\Aut(T)}|} < 1.
	\end{aligned}
	\end{equation}
	
	Adapting the method used to prove \cite[Lemma 6.4]{B_sol}, let $z\in H_1$ be an element of prime order $r$. First assume that $z$ is unipotent or semisimple, and we apply the bound given in the proof of \cite[Lemma 6.4]{B_sol}, which is
	\begin{equation*}
	|z^{\Aut(T)}|\geqs  \frac{1}{3}q^3(q-1)(q^2-q+1)=:c_1.
	\end{equation*}
	We note that $|H_1\cap\PGL_3^\e(q)|\leqs 3(q^2+q+1) =: a_1$ and $|H_2\cap \PGL_3^\e(q)| \leqs 6(q^2-1) =: b_1$.
 
 Now assume that $z$ is a field automorphism with $r$ odd. Here $r\geqs 5$ as noted in the proof of \cite[Lemma 6.4]{B_sol}, so we have
	\begin{equation*}
	|z^{\Aut(T)}|\geqs |z^{\PGL_3^\e(q)}| = \frac{|\PGL_3^\e(q)|}{|\PGL_3^\e(q^{1/r})|} > \frac{1}{2}q^{32/5}=:c_2,
	\end{equation*}
	and there are at most $3(q^2+q+1)\log_2 q =: a_2$ and $6(q^2-1)\log_2 q =: b_2$ of these elements in $H_1$ and $H_2$, respectively.
 
 Assume next that $z$ is an involutory graph automorphism. Then $|C_{\PGL_3^\e(q)}(z)| = |\Sp_2(q)|$ and so $|z^{\Aut(T)}|\geqs q^2(q^3-1) =: c_3$. Moreover, $z$ inverts the normal subgroup $C_{(q^2+\e q+1)/(3,q-\e)}$ of $H_1$ and the normal subgroup $C_{(q^2-1)/(3,q-\e)}$ of $H_2$. This implies that both $C_{(q^2+\e q+1)/(3,q-\e)}.\la z\ra$ and $C_{(q^2-1)/(3,q-\e)}.\la z\ra$ are dihedral, so $H_1$ contains at most $a_3:=q^2+q+1$ involutory graph automorphisms, and $H_2$ contains at most $b_3 := q^2-1$ involutory graph automorphisms.
 
 It remains to consider the case where $\e = +$ and $z$ is an involutory field or graph-field automorphism. Here,
	\begin{equation*}
	|z^{\Aut(T)}|\geqs |z^{\PGL_3(q)}| \geqs \frac{|\PGL_3(q)|}{|\PGU_3(q^{1/2})|} = q^{3/2}(q+1)(q^{3/2}-1) =: c_4,
	\end{equation*}
	and (as noted in the proof of \cite[Lemma 6.4]{B_sol}) there are fewer than $a_4:=2(q+q^{1/2}+1)$ of these elements in $H_1$. We also set $b_4:=12(q^2-1)\geqs |(H_2\cap \PGL_3^\e(q)).\la z\ra|$.
	
	Finally, we conclude that the number $m$ defined in \eqref{e:reg_prod} is less than $\sum_{i = 1}^4 a_ib_i/c_i$, which is less than $1$ if $q > 32$. As noted above, this implies that $N_{\Aut(T)}(\la x^g\ra) \cap N_{\Aut(T)}(\la y\ra) = 1$ for some $g\in G$, which completes the proof.
\end{proof}

\begin{prop}
	\label{p:diag_arc-trans_k=2}
	Suppose that $k = 2$. Then $\Sigma(G)$ is not $G$-arc-transitive, and hence $\reg(G)\geqs2$.
\end{prop}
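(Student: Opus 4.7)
The plan is to show that $G_D$ has at least two orbits on the neighbours of $D$ in $\Sigma(G)$; by the $G$-vertex-transitivity of $\Sigma(G)$ this is equivalent to the failure of $G$-arc-transitivity. First, every point of $\Omega \setminus \{D\}$ can be written uniquely as $D(1,\varphi_t)$ with $t \in T\setminus\{1\}$, by left-multiplying $D(\varphi_{t_1},\varphi_{t_2})$ by $(\varphi_{t_1^{-1}},\varphi_{t_1^{-1}}) \in D$. Using the $W$-action formula, a direct computation shows that for any $(\a,\a)\pi \in G_D$,
\[
D(1,\varphi_t)^{(\a,\a)\pi} = \begin{cases} D(1,\varphi_{t^\a}) & \text{if } \pi = 1, \\ D(1,\varphi_{(t^\a)^{-1}}) & \text{if } \pi = (1,2). \end{cases}
\]
Since $|t^\a| = |(t^\a)^{-1}| = |t|$, every $G_D$-orbit on $\Omega\setminus\{D\}$ lies in a set $\{D(1,\varphi_s) : |s| = |t|\}$ for fixed $t$. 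So it suffices to exhibit two neighbours of $D$ of the form $D(1,\varphi_{t_1})$ and $D(1,\varphi_{t_2})$ with $|t_1| \ne |t_2|$.

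If $P = 1$, then $G$ is semi-Frobenius by Proposition~\ref{p:diag_k=2_P=1}, so $\Sigma(G)$ is complete, and every $t \in T \setminus \{1\}$ yields a neighbour $D(1,\varphi_t)$ of $D$. Since the non-abelian simple group $T$ has elements of at least two distinct prime orders (its order being divisible by $2$ and by some odd prime), two such neighbours of different orders exist. If $P = S_2$ and $b(G) = 3$, which by Theorem~\ref{t:H_diag} covers every case here other than $T \in \{A_5, A_6\}$ with $G = T^2.(\Out(T) \times S_2)$, let $O \leqs \Out(T)$ be the image in $\Out(T)$ of the $\Aut(T)$-part of $G_D$, so $K := \Inn(T).O$ and the hypothesis of Lemma~\ref{l:diag_arc-trans_star} holds. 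That lemma produces $x, y \in T$ with $|x| \ne |y|$ satisfying conditions (b) and (c) of Lemma~\ref{l:diag_k=2_equiv_complete}; Lemma~\ref{l:diag_k=2_equiv_complete} then certifies that $\{D, D(1,\varphi_x), D(1,\varphi_y)\}$ is a base for $G$, so $D(1,\varphi_x)$ and $D(1,\varphi_y)$ are both neighbours of $D$ of different orders.

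The main obstacle is the residual case $P = S_2$, $T \in \{A_5, A_6\}$ and $G = T^2.(\Out(T) \times S_2)$, where $b(G) = 4$ and Lemma~\ref{l:diag_arc-trans_star} is unavailable. I would dispose of these finitely many small groups by direct computation in \textsc{Magma}, exhibiting two bases of size $4$ through $D$ whose second points are of the form $D(1,\varphi_{t_1})$ and $D(1,\varphi_{t_2})$ with $|t_1| \ne |t_2|$ (for instance, elements of orders $3$ and $5$ in $A_5$). Combining these three cases shows that $\Sigma(G)$ is not $G$-arc-transitive; since $\reg(G) = 1$ implies $G$-arc-transitivity of $\Sigma(G)$ (as noted in Section~\ref{subsec:regintro}), this also gives $\reg(G) \geqs 2$.
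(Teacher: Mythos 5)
Your overall strategy --- reducing arc-transitivity to counting $G_D$-orbits of neighbours, observing that $G_D$ acts on $\Omega\setminus\{D\} = \{D(1,\varphi_t) : t \in T\setminus\{1\}\}$ by $t \mapsto (t^\a)^{\pm 1}$ and hence preserves element order, and then exhibiting two neighbours $D(1,\varphi_{t_1})$, $D(1,\varphi_{t_2})$ with $|t_1|\ne|t_2|$ --- is essentially the paper's argument; your order-preservation computation is the $k=2$ specialisation of Lemma~\ref{l:diag_subaction}, and your treatment of $P=1$ and of the generic $b(G)=3$ case matches the paper's.

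There is, however, a genuine gap in your middle case. You assert that whenever $P=S_2$ and $b(G)=3$, the subgroup $O$ (the image in $\Out(T)$ of the $\Aut(T)$-part of $G_D$) satisfies the hypothesis of Lemma~\ref{l:diag_arc-trans_star}, i.e.\ $O\ne\Out(T)$ when $T\in\{A_5,A_6\}$. This is false. Take $T=A_5$ and $G=T^2.\la(\tau,\tau)(1,2)\ra$ with $\tau\in S_5\setminus A_5$ an involution: then $G\ne T^2.(\Out(T)\times S_2)$, so $b(G)=3$ by Theorem~\ref{t:H_diag}(ii) and $P_G=S_2$; but $G_D$ contains $(\varphi_t\tau,\varphi_t\tau)(1,2)$ for every $t\in T$, so the $\Aut(T)$-part of $G_D$ is all of $S_5$ and $O=\Out(A_5)$. (Analogous examples exist for $T=A_6$, where $\Out(T)\cong 2^2$.) For these $T$ the exclusion in Lemma~\ref{l:diag_arc-trans_star} is not cosmetic: no pair $(x,y)$ whatsoever satisfies conditions (b) and (c) with $K=\Aut(T)$ --- that is exactly why $b(T^2.(\Out(T)\times S_2))=4$ --- so the lemma cannot be invoked; and Lemma~\ref{l:diag_k=2_equiv_complete} does not apply to such $G$ directly, since $G$ is not of the form $T^2.(O\times S_2)$. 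The repair is short and already available to you: by Lemma~\ref{l:diag_k=2_A5A6} every such $G$ is semi-Frobenius, so every $D(1,\varphi_t)$ is a neighbour of $D$ and you can conclude exactly as in your $P=1$ case (alternatively, fold these finitely many groups into your {\sc Magma} check). With that repair, and with the $b(G)=4$ case done computationally as you propose, the argument goes through; the final deduction that $\reg(G)\geqs 2$ is correct.
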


\begin{proof}
	If $P = 1$, or if $T \in \{A_5,A_6\}$ and $G = T^2.(\Out(T) \times S_2)$, then $G$ is semi-Frobenius by Proposition~\ref{p:diag_k=2_P=1} and Lemma~\ref{l:diag_k=2_A5A6}. However, $G$ is not $2$-transitive, and so $\Sigma(G)$ is not $G$-arc-transitive. In each other case, Theorem~\ref{t:H_diag} gives $b(G) = 3$, and all hypotheses in the statement of Lemma \ref{l:diag_arc-trans_star} hold. Let $x$ and $y$ be the elements described in that lemma. Then Corollary~\ref{c:H_diag:l:5.1} shows that $\{D,D(1,\varphi_{x}),D(1,\varphi_{y})\}$ is a base for $G$, and so $D(1,\varphi_x)$ and $D(1,\varphi_y)$ are adjacent to $D$ in $\Sigma(G)$. On the other hand, since $|x| \notin \{1,|y|\}$, Lemma \ref{l:diag_subaction} implies that $D(1,\varphi_x)$ and $D(1,\varphi_y)$ lie in distinct $G_D$-orbits. Therefore,  $\Sigma(G)$ is not $G$-arc-transitive.
\end{proof}

Recall that the \emph{holomorph} $\Hol(T) = T \sd \Aut(T)$ of $T$ has a faithful, primitive action on $T$. In fact, $\Hol(T) = T^2.\Out(T)$ is a diagonal type group with trivial top group. Each element of $\Hol(T)$ can be written uniquely as $\widehat{g}\a$, where $\widehat{g}\in T$ acts on $T$ by left translation and $\a\in\Aut(T)$ acts naturally on $T$. More precisely,
\begin{equation*}
t^{\widehat{g}\a} = (g^{-1}t)^\a = (g^{-1})^\a t^\a
\end{equation*}
for every $t\in T$. For a subset $S\subseteq T$, we write $\mathrm{Hol}(T,S)$ for the setwise stabiliser of $S$ in $\mathrm{Hol}(T)$, noting that $\Hol(T,S) = 1$ if and only if $S$ lies in a regular $\Hol(T)$-orbit.
\begin{lem}
	\label{l:H_diag_l:5.5}
	Suppose that $P\in\{A_k,S_k\}$, and that $|T|^{\ell-1}<k\leqs |T|^{\ell}-3$ for some $\ell\geqs 2$. Then there exists $\mathbf{x}\in\Inn(T)^k$ such that the partition $\mathcal{P}^{\mathbf{x}} = \{\mathcal{P}_t^{\mathbf{x}}:t\in T\}$ of $[k]$ satisfies the following properties.
	\begin{itemize}\addtolength{\itemsep}{0.2\baselineskip}
		\item[{\rm (P1)}] $|\mathcal{P}_t^{\mathbf{x}}|\leqs |T|^{\ell-1}$ for all $t\in T$.
		\item[{\rm (P2)}] $|\mathcal{P}_1^{\mathbf{x}}|\ne 0$ and $\mathrm{Hol}(T,S) = 1$, where
		\begin{equation*}
		S = \{t\in T :|\mathcal{P}_t^{\mathbf{x}}| = |\mathcal{P}_1^{\mathbf{x}}|\}.
		\end{equation*}
		\item[{\rm (P3)}] There exists $t\in T \setminus \{1\}$ such that $|\mathcal{P}_t^{\mathbf{x}}|\in\{1,|T|^{\ell-1}-1\}$.
		\item[{\rm (P4)}] If $k \ne |T|^\ell-3$, then there exist $t_1,t_2\in T\setminus S$ with $|\mathcal{P}_{t_1}^{\mathbf{x}}|\ne |\mathcal{P}_{t_2}^{\mathbf{x}}|$.
        \item[{\rm (P5)}] If $k = |T|^\ell-3$, then $|S| = |T| - 3$, $|\mathcal{P}_t^{\mathbf{x}}| = |T|^{\ell-1}$ for all $t \in S$, and $|\mathcal{P}_t^{\mathbf{x}}| = |T|^{\ell-1}-1$ for all $t \in T \setminus S$.
	\end{itemize}
 Moreover, $D$ is adjacent in $\Sigma(G)$ to $D\mathbf{x}$ for every such $\mathbf{x}$. 
\end{lem}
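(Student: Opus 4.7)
The plan is to construct $\mathbf{x}\in\Inn(T)^k$ by an explicit choice of its associated partition $\mathcal{P}^\mathbf{x}$, and then to deduce adjacency of $D$ and $D\mathbf{x}$ in $\Sigma(G)$ via Lemma~\ref{l:H_diag_l:2.2} together with the triviality of $\Hol(T,S)$. I would separate the boundary case $k=|T|^\ell-3$ from the generic range $|T|^{\ell-1}<k<|T|^\ell-3$.

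For the boundary case, a short counting argument shows that any admissible partition (at most $|T|$ parts, each of size at most $|T|^{\ell-1}$, summing to $|T|^\ell-3$) must consist of exactly $|T|-3$ parts of size $|T|^{\ell-1}$ and $3$ of size $|T|^{\ell-1}-1$. I would then invoke (an adaptation of) \cite[Corollary~5]{H_diag} to choose $S\subset T$ with $1\in S$, $|S|=|T|-3$ and $\Hol(T,S)=1$, assign part size $|T|^{\ell-1}$ to $t\in S$ and $|T|^{\ell-1}-1$ to $t\in T\setminus S$, and verify (P1)--(P3) and (P5) directly; (P4) is vacuous. For the generic range, write $k=a\cdot|T|^{\ell-1}+r$ with $1\le a\le |T|-1$ and $0\le r<|T|^{\ell-1}$. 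If $a\ge 2$, I would pick $S$ of size $a$ with $1\in S$ and $\Hol(T,S)=1$ via \cite[Corollary~5]{H_diag}, set $|\mathcal{P}_t^\mathbf{x}|=|T|^{\ell-1}$ for $t\in S$, and split the residue $r$ into two parts of distinct sizes (one equal to $1$) among two elements of $T\setminus S$, securing (P3) and (P4). If $a=1$, I would instead take $|\mathcal{P}_1^\mathbf{x}|=m$ for a smaller value of $m$ shared by several elements of $T$, so that $|S|\geq 3$ and $\Hol(T,S)=1$ can be arranged; the remainder is then distributed across $T\setminus S$ to secure (P3) and (P4).

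For the \emph{moreover} clause, suppose $(\alpha,\ldots,\alpha)\pi\in G_D\cap G_{D\mathbf{x}}$. Following the proof of Lemma~\ref{l:H_diag_l:2.2}, $\pi$ preserves $\mathcal{P}^\mathbf{x}$ and there exists $g\in T$ with $x_{j^\pi}=gx_j^\alpha$ for all $j$; this says the map $\tau\colon t\mapsto gt^\alpha$ lies in $\Hol(T)$ and setwise stabilises $S$, so (P2) forces $\tau=1$, giving $g=1$, $\alpha=1$, and $\pi$ permuting only within parts of $\mathcal{P}^\mathbf{x}$. To pin down $\pi=1$, I would label the points of each part $\mathcal{P}_t^\mathbf{x}$ bijectively by tuples in $T^{\ell-1}$ (feasible since $|\mathcal{P}_t^\mathbf{x}|\le|T|^{\ell-1}$), arranging these label sets so that their common right-translation stabiliser in $T^{\ell-1}$ is trivial (which by (P3) can be guaranteed using the part of size $1$ or $|T|^{\ell-1}-1$), and take $\mathbf{a}_i$ to record the $i$-th coordinate of these labels. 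A further application of the analysis of Lemma~\ref{l:H_diag_l:2.2} to each $\mathbf{a}_i$ then forces the translation to be trivial and hence $\pi$ to preserve labels in each part, whence $\pi=1$. Thus $\{D,D\mathbf{x},D\mathbf{a}_1,\ldots,D\mathbf{a}_{\ell-1}\}$ is a base of size $\ell+1=b(G)$, so $D$ and $D\mathbf{x}$ are adjacent in $\Sigma(G)$. The main obstacle will be the $a=1$ case of the generic range, where selecting $m$ together with an appropriate $S$ to satisfy (P2)--(P4) simultaneously requires a careful balancing argument that relies crucially on $k\le |T|^\ell-3$ to leave enough room outside $S$.
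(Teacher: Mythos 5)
Your overall strategy is reasonable and, unlike the paper's own proof --- which disposes of the entire lemma in three sentences by citing \cite[Lemma 5.5 \& Proposition 5.8]{H_diag} together with Theorem \ref{t:H_diag} --- you attempt to reconstruct the construction from scratch. The ``moreover'' clause of your argument is essentially correct in outline and matches the mechanism of \cite[Proposition 5.8]{H_diag}: the affine map $\tau\colon t\mapsto gt^{\a}$ extracted from an element $(\a,\dots,\a)\pi\in G_D\cap G_{D\mathbf{x}}$ permutes the parts of $\mathcal{P}^{\mathbf{x}}$ and hence preserves every size class, so (P2) forces $g=1$ and $\a=1$; labelling each part injectively by tuples in $T^{\ell-1}$ (possible by (P1)) and using a part of size $1$ or $|T|^{\ell-1}-1$ (supplied by (P3)) to kill the residual translations then forces $\pi=1$, yielding a base of size $\ell+1$, which equals $b(G)$ by Theorem \ref{t:H_diag}. (Note that Lemma \ref{l:H_diag_l:2.2}(ii) cannot be quoted verbatim here since $|S|$ may exceed $1$; as you say, one must rerun its proof.)

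The construction of $\mathbf{x}$ itself, however, has genuine gaps. First, the key technical input --- a subset $S\subseteq T$ with $1\in S$, of each prescribed cardinality, satisfying $\Hol(T,S)=1$ --- is nowhere established: \cite[Corollary 5]{H_diag}, as used elsewhere in this paper, only produces subsets of $T\setminus\{1\}$ whose setwise stabiliser in $\Aut(T)$ is trivial, and triviality of the stabiliser in $\Hol(T)$ is strictly stronger, since one must also exclude non-trivial maps $t\mapsto gt^{\a}$ with $g\ne 1$. This existence statement is precisely the content the paper outsources to \cite[Lemma 5.5]{H_diag}, so your ``adaptation'' is the heart of the matter rather than a routine modification. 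Second, your recipe for the generic range fails in concrete subcases: writing $k=aM+r$ with $M=|T|^{\ell-1}$, when $r=0$ there is no residue to ``split into two parts of distinct sizes, one equal to $1$'', and the obvious repair (taking $a-1$ full parts and splitting one off as $1+(M-1)$) leaves $S=\{1\}$ when $a=2$, for which $\Hol(T,\{1\})=\Aut(T)\ne 1$; the cases $r\in\{1,2\}$, and the $a=1$ case you acknowledge, need similar ad hoc treatment. Third, your counting claim in the boundary case is false: for $k=|T|^{\ell}-3$, the partition with $|T|-1$ parts of size $M$ and one part of size $M-3$ is also admissible, so admissible partitions are not forced to have the shape in (P5). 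This last point does not sink the construction, since you only need to exhibit one partition of the correct shape, but until the $\Hol(T,S)=1$ existence statement is proved for all required values of $|S|$ and the exceptional residues are handled, the proposal does not constitute a complete proof.
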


\begin{proof}
	The existence of an element $\mathbf{x}\in\Inn(T)^k$ satisfying conditions (P1)--(P3) follows from \cite[Lemma 5.5]{H_diag}, and its proof shows that (P4) and (P5) also hold. As noted in the proof of \cite[Proposition 5.8]{H_diag}, $\{D,D\mathbf{x}\}$ can be extended to a base for $G$ of size $\ell+1$. The result therefore follows from Theorem~\ref{t:H_diag}, which yields $b(G) = \ell+1$.
\end{proof}

\begin{prop}
	\label{p:diag_arc-trans_l+1}
	Suppose that $P\in\{A_k,S_k\}$, and that $|T|^{\ell-1} < k \leqs |T|^\ell-3$ for some $\ell\geqs 2$. Then $\Sigma(G)$ is not $G$-arc-transitive, and hence $\reg(G)\geqs 2$.
\end{prop}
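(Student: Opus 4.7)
The plan is to exhibit two vertices $D\mathbf{x}$ and $D\mathbf{y}$ of $\Sigma(G)$, both adjacent to $D$, that lie in distinct $G_D$-orbits. This will immediately show that $\Sigma(G)$ is not $G$-arc-transitive, and consequently $\reg(G)\geq 2$, since vertex-transitivity combined with $\reg(G)=1$ would force arc-transitivity (as recalled in Section~\ref{subsec:regintro}).

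First I would locate the base size: by Theorem~\ref{t:H_diag}(iii) we have $b(G)=\ell+1$, since for $\ell\geq 2$ the alternative $b(G)=\ell+2$ requires $k\in\{|T|,|T|-2,|T|^\ell-1,|T|^\ell\}$ or $k=|T|^2-2$ with $T\in\{A_5,A_6\}$, and none of these values lie in the range $|T|^{\ell-1}<k\leq|T|^\ell-3$. Next, Lemma~\ref{l:H_diag_l:5.5} supplies an $\mathbf{x}\in\Inn(T)^k$ with $D\mathbf{x}$ adjacent to $D$ in $\Sigma(G)$ and with partition $\mathcal{P}^{\mathbf{x}}$ satisfying (P1)--(P5). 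The heart of the argument is then to produce a second $\mathbf{y}\in\Inn(T)^k$, also giving a neighbour of $D$, such that the multiset $\{|\mathcal{P}_t^{\mathbf{y}}|:t\in T\}$ differs from $\{|\mathcal{P}_t^{\mathbf{x}}|:t\in T\}$. By Lemma~\ref{l:diag_subaction}, differing part-size multisets force $D\mathbf{x}$ and $D\mathbf{y}$ into distinct $G_D$-orbits.

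For the generic subrange $|T|^{\ell-1}<k<|T|^\ell-3$, the construction of $\mathbf{y}$ exploits the flexibility built into Lemma~\ref{l:H_diag_l:5.5}: property (P3) leaves the choice of a non-trivial part size open between $1$ and $|T|^{\ell-1}-1$, and (P4) permits further variation in the remaining part sizes. Two applications of the lemma, exercising these options differently, will produce $\mathbf{x}$ and $\mathbf{y}$ with genuinely different part-size distributions.

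The main obstacle is the boundary case $k=|T|^\ell-3$, in which (P5) pins down the part-size multiset completely: any $\mathbf{x}$ satisfying Lemma~\ref{l:H_diag_l:5.5} has exactly $|T|-3$ parts of size $|T|^{\ell-1}$ and $3$ parts of size $|T|^{\ell-1}-1$. Here one must step outside that framework and construct $\mathbf{y}$ from a partition with a qualitatively different shape---for instance, one with a single part of size strictly less than $|T|^{\ell-1}-1$ but still chosen so that an analogue of (P2) holds, namely $\mathrm{Hol}(T,S)=1$ for the associated set $S$ of maximal-size positions. One then verifies by a direct stabiliser calculation, in the spirit of Lemma~\ref{l:H_diag_l:2.2} and the arguments in Section~\ref{ss:diag_semi-Frob}, that the only elements of $G_D\cap G_{D\mathbf{y}}$ are trivial modulo an $(\ell-1)$-dimensional remainder, and hence $\{D,D\mathbf{y}\}$ extends to a base of size $\ell+1$. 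This technical verification in the boundary case is the hardest step of the proof.
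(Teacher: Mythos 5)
Your high-level strategy (two neighbours of $D$ in distinct $G_D$-orbits, obtained via Lemma~\ref{l:H_diag_l:5.5} and separated via Lemma~\ref{l:diag_subaction}) matches the paper's, and your reduction to $b(G)=\ell+1$ is correct. However, both of your key steps have genuine gaps.

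In the generic case $k\ne|T|^\ell-3$, you propose to obtain $\mathbf{x}$ and $\mathbf{y}$ with \emph{different part-size multisets} by ``exercising the options'' in (P3) and (P4) differently. But Lemma~\ref{l:H_diag_l:5.5} is a pure existence statement: (P3) and (P4) are existential properties of the single $\mathbf{x}$ whose existence is asserted, not free parameters you are entitled to vary. Establishing the flexibility you need would require reopening the proof of \cite[Lemma 5.5]{H_diag}. The paper avoids this entirely: it takes \emph{one} $\mathbf{x}$ from the lemma and defines $\mathbf{y}$ by swapping the parts $\mathcal{P}_{t_1}^{\mathbf{x}}$ and $\mathcal{P}_{t_2}^{\mathbf{x}}$ for the $t_1,t_2\notin S$ supplied by (P4). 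This $\mathbf{y}$ has the \emph{same} multiset of part sizes (so it trivially still satisfies (P1)--(P5) and is a neighbour of $D$), and the two orbits are then separated not by the multiset but by the labelling $t\mapsto|\mathcal{P}_t|$: the condition $\Hol(T,S)=1$ from (P2) forces $g=\a=1$ in Lemma~\ref{l:diag_subaction}, after which $|\mathcal{P}_{t_1}^{\mathbf{x}}|\ne|\mathcal{P}_{t_1}^{\mathbf{y}}|$ gives the contradiction. Your mechanism (differing multisets) is sound when available, but you have not shown it is available.

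In the boundary case $k=|T|^\ell-3$ you correctly identify that (P5) rigidifies the multiset, but your proposed fix --- construct some $\mathbf{y}$ of ``qualitatively different shape'' and verify adjacency by a ``direct stabiliser calculation'' --- is left entirely unexecuted, and you acknowledge it is the hardest step. The paper needs no new stabiliser computation here: it extends $\{D,D\mathbf{x}\}$ to a base $\{D,D\mathbf{x},D\mathbf{a}_1,\dots,D\mathbf{a}_{\ell-1}\}$ of size $b(G)=\ell+1$ using the explicit $\mathbf{a}_i$ from the proof of \cite[Proposition 5.8]{H_diag}; each $D\mathbf{a}_i$ is then automatically a neighbour of $D$ (being in a minimal base with it), and $|\mathcal{P}_1^{\mathbf{a}_i}|=|T|^{\ell-1}-3$ is a part size not occurring in $\mathcal{P}^{\mathbf{x}}$, so Lemma~\ref{l:diag_subaction} separates the arcs. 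Without this (or an actual construction and verification of your $\mathbf{y}$), your argument for the boundary case is incomplete.
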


\begin{proof}
	Let $\mathbf{x}\in\Inn(T)^k$ be as in Lemma \ref{l:H_diag_l:5.5}, so that $D$ and $D\mathbf{x}$ are adjacent in $\Sigma(G)$. First assume that $k\ne |T|^\ell-3$. Additionally, let $t_1,t_2 \in T$ be the elements described in (P4), and define $\mathbf{y}\in\Inn(T)^k$ by setting
	\begin{equation*}
	\mbox{$\mathcal{P}_{t_1}^{\mathbf{y}} = \mathcal{P}_{t_2}^{\mathbf{x}}$, $\mathcal{P}_{t_2}^{\mathbf{y}} = \mathcal{P}_{t_1}^{\mathbf{x}}$, and $\mathcal{P}_{t}^{\mathbf{y}} = \mathcal{P}_{t}^{\mathbf{x}}$ for $t\in T\setminus\{t_1,t_2\}$.}
	\end{equation*}
	By applying Lemma \ref{l:H_diag_l:5.5} once again, $D$ and $D\mathbf{y}$ are adjacent in $\Sigma(G)$. Suppose that $D\mathbf{x}$ and $D\mathbf{y}$ lie in a common $G_D$-orbit. Then letting $S$ be as in (P2), Lemma \ref{l:diag_subaction} shows that there exist $g \in T$ and $\a \in \Aut(T)$ such that $gS^\a = S$. Since $\Hol(T,S) = 1$ by (P2), we obtain $g = \a = 1$. Lemma \ref{l:diag_subaction} now implies that $|\mathcal{P}_t^{\mathbf{x}}| = |\mathcal{P}_t^{\mathbf{y}}|$ for all $t\in T$. However, this contradicts (P4), which yields $|\mathcal{P}_{t_1}^{\mathbf{x}}| \ne |\mathcal{P}_{t_2}^{\mathbf{x}}| = |\mathcal{P}_{t_1}^{\mathbf{y}}|$.
	
	To complete the proof, we assume that $k = |T|^\ell-3$. Write $T^{\ell-1} = \{\mathbf{b}_1,\dots,\mathbf{b}_{|T|^{\ell-1}}\}$ and $\mathbf{b}_h = (a_{1,h},\dots,a_{\ell-1,h})$ for each $h \in \{1,\ldots,|T|^{\ell-1}\}$, with $\mathbf{b}_{|T|^{\ell-1}} := (1,\dots,1)$. In addition, define $\mathbf{a}_i := (\varphi_{t_{i,1}},\dots,\varphi_{t_{i,k}})\in\Inn(T)^k$ for $i \in \{1,\ldots,\ell-1\}$ by setting $t_{i,j} = a_{i,h}$ where $j$ is the $h$-th smallest integer in $\mathcal{P}_r^{\mathbf{x}}$ for the appropriate $r \in T$, noting that this is possible by (P5). Then it follows from (P5) (as observed in the proof of \cite[Proposition 5.8]{H_diag}) that the set $\{D,D\mathbf{x},D\mathbf{a}_1,\dots,\mathbf{a}_{\ell-1}\}$ is a base for $G$ of size $b(G) = \ell+1$. In particular, $D$ and $D\mathbf{a}_i$ are adjacent in $\Sigma(G)$. Since $|\mathcal{P}_t^{\mathbf{x}}|\in\{|T|^{\ell-1},|T|^{\ell-1}-1\}$ for all $t\in T$ by (P5), and since $\mathbf{b}_{|T|^{\ell-1}} := (1,\dots,1)$, we obtain $|\mathcal{P}_1^{\mathbf{a}_i}| = |T|^{\ell-1}-3$. Hence by Lemma \ref{l:diag_subaction}, $(D,D\mathbf{x})$ and $(D,D\mathbf{a}_i)$ are arcs of $\Sigma(G)$ lying in distinct $G$-orbits, and the proof is complete.
\end{proof}

\begin{lem}
	\label{l:diag_arc-trans_notSk}
	Suppose that $P\in\{A_k,S_k\}$, that $k\in\{|T|^\ell-2,|T|^\ell-1,|T|^\ell\}$ for some $\ell\geqs 2$, and that $G$ does not contain $S_k$. Then $\Sigma(G)$ is not $G$-arc-transitive, and hence $\reg(G)\geqs 2$.
\end{lem}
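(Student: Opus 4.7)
The plan is to mimic the proof of Proposition~\ref{p:diag_arc-trans_l+1}. Since none of the exceptions (a)--(c) of Theorem~\ref{t:H_diag}(iii) applies under our hypotheses, $b(G) = \ell+1$, so it suffices to exhibit two vectors $\mathbf{x}, \mathbf{y} \in \Inn(T)^k$ such that both $\{D, D\mathbf{x}\}$ and $\{D, D\mathbf{y}\}$ extend to bases of size $\ell+1$, while $D\mathbf{x}$ and $D\mathbf{y}$ lie in distinct $G_D$-orbits. This will yield two $G$-inequivalent arcs at $D$ in $\Sigma(G)$, hence non-arc-transitivity, from which the observations in Subsection~\ref{subsec:regintro} give $\reg(G)\ge 2$.

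For $k = |T|^\ell - 2$ and $k = |T|^\ell - 1$, I would construct $\mathbf{x}$ and $\mathbf{y}$ whose associated partitions of $[k]$ have genuinely different part-size profiles: for $k = |T|^\ell - 2$, one takes $\mathcal{P}^\mathbf{x}$ with two ``short'' parts of size $|T|^{\ell-1} - 1$ (attached to distinct non-identity elements of $T$) against $\mathcal{P}^\mathbf{y}$ with a single short part of size $|T|^{\ell-1} - 2$; for $k = |T|^\ell - 1$, one varies which element of $T$ carries the unique short part of size $|T|^{\ell-1} - 1$, adding an auxiliary distinguishing feature so that no $T$-translation and $\Aut(T)$-twist can identify the two profiles. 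Extendability of $\{D, D\mathbf{x}\}$ and $\{D, D\mathbf{y}\}$ to an $(\ell+1)$-base is confirmed by a direct adaptation of the construction in \cite[Proposition~5.8]{H_diag} (using that $A_k\le P$ supplies enough fibre-respecting transpositions), and the inequivalence of $D\mathbf{x}$ and $D\mathbf{y}$ under $G_D$ follows from Lemma~\ref{l:diag_subaction} combined with the triviality of $\Hol(T, S)$ for an appropriately chosen $S\subseteq T$, in the spirit of property~(P2) of Lemma~\ref{l:H_diag_l:5.5}: if the size-$m$ level sets of $\mathcal{P}^\mathbf{x}$ and $\mathcal{P}^\mathbf{y}$ were matched by some $(g,\alpha)\in T\times\Aut(T)$ for every $m$, then $g=\alpha=1$, so the multisets would literally coincide, contradicting the construction.

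The principal obstacle is the subcase $k = |T|^\ell$: the pigeonhole bound $|\mathcal{P}_t^\mathbf{x}| \le |T|^{\ell-1}$ forces every neighbour of $D$ in $\Sigma(G)$ to have uniform partition structure (each element of $T$ appearing exactly $|T|^{\ell-1}$ times), so Lemma~\ref{l:diag_subaction} cannot distinguish $G_D$-orbits by part-sizes alone. The plan here is to exploit the hypothesis that $S_k\not\le G$: this restriction rules out certain lifts of affine permutations of $[k]\cong T^\ell$ from $G_D$. One then refines the orbit analysis by tracking not only $\mathbf{x}$ itself but also the combinatorial data carried by the extending vectors $\mathbf{y}_1, \ldots, \mathbf{y}_{\ell-1}$ of a full $(\ell+1)$-base containing $D$ and $D\mathbf{x}$, producing two arcs whose extensions witness incompatible invariants under the restricted lift of the top group, and hence lie in distinct $G$-orbits.
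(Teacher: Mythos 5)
Your reduction to exhibiting two arcs $(D,D\mathbf{x})$ and $(D,D\mathbf{y})$ in distinct $G$-orbits is the right frame, and your treatment of $k=|T|^\ell-2$ by comparing part-size profiles is close in spirit to what the paper does. But the proposal rests on a false premise: the ``pigeonhole bound'' $|\mathcal{P}_t^{\mathbf{x}}|\le |T|^{\ell-1}$ is not a constraint on neighbours of $D$. A part of size $|T|^{\ell-1}+1$ forces only one coincidence among the column vectors $\mathbf{c}_j\in T^{\ell-1}$ of any extension, and that coincidence contributes a single \emph{transposition} to the pointwise stabiliser --- which lies outside $G$ precisely because $S_k\not\le G$. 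This is the whole point of the hypothesis you were trying to exploit elsewhere. The paper's proof uses it directly: for all three values $k=|T|^\ell-m$, $m\in\{0,1,2\}$, it takes $|\mathcal{P}_1^{\mathbf{x}}|=|T|^{\ell-1}+1$, $|\mathcal{P}_{x_1}^{\mathbf{x}}|=|T|^{\ell-1}-1$, $|\mathcal{P}_{x_2}^{\mathbf{x}}|=|T|^{\ell-1}-m$ (with $x_1,x_2$ chosen via \cite[Theorem 1.2]{BGK_spread} so that $\la x_1,x_2\ra=T$ and, crucially, $|x_1|\ne|y_1|$ for the companion pair). The oversized part at the identity forces $g=1$ in Lemma~\ref{l:diag_subaction}, and then $x_1^\a=y_1$ is impossible because automorphisms preserve order. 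Adjacency of $D\mathbf{x}$ to $D$ is delegated to the proof of \cite[Lemma 5.10]{H_diag}.

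The consequences of your premise are fatal in two of the three subcases. For $k=|T|^\ell-1$, insisting that all parts have size at most $|T|^{\ell-1}$ leaves exactly one admissible profile (one part of size $|T|^{\ell-1}-1$, the rest full), so the level sets are a singleton $\{x_0\}$ and its complement; the necessary condition of Lemma~\ref{l:diag_subaction} is then $gx_0^\a=y_0$, which is satisfiable for \emph{any} $x_0,y_0$ (take $\a=1$), so no contradiction arises, and the $\Hol(T,S)=1$ mechanism you invoke cannot apply because $\Hol(T,S)$ is never trivial for $|S|\in\{1,|T|-1\}$ (it contains a point stabiliser, resp.\ its complement's). Your ``auxiliary distinguishing feature'' is therefore carrying the entire proof and is unspecified. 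For $k=|T|^\ell$, the difficulty you describe is an artefact of the same false premise; and the proposed repair --- distinguishing arcs by combinatorial data of their base extensions --- is not a valid argument, since an arc generally admits many extensions and membership in a common $G$-orbit is a property of the arcs alone: you would have to show that \emph{no} extension of one arc matches \emph{any} extension of the other, which is not what tracking a chosen extension gives you. Finally, even in the $k=|T|^\ell-2$ case, adjacency of your two profiles to $D$ (in particular the one with $|T|-1$ parts of size exactly $|T|^{\ell-1}$) is asserted by analogy with \cite[Proposition 5.8]{H_diag} but not checked; that reference treats $k\le|T|^\ell-3$, and the relevant source for this range is \cite[Lemma 5.10]{H_diag}.
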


\begin{proof}
	First note by \cite[Theorem 1.2]{BGK_spread} that $T$ has a conjugacy class $C$ such that for any $x,y\in T \setminus \{1\}$, there exists $z\in C$ with $\la x,z\ra = \la y,z\ra = T$. Let $x_1,y_1\in T$ and $x_2,y_2 \in C$ be such that: $|x_1|,|y_1|$ and $|x_2|$ 
 are distinct; $x_2$ and $y_2$ are distinct; and 
 $\la x_1,x_2\ra = \la y_1,y_2\ra = T$. To do so, we can first choose an appropriate $x_1$ and $y_1$, then find $x_2 \in C$ such that $\langle x_1,x_2 \rangle = \langle y_1,x_2 \rangle = T$, and finally replace $y_1$ with $y_1^t$ for some $t \in T$ such that $y_2:=x_2^t \ne x_2$. Note that the setwise stabilisers of $\{x_1,x_2\}$ and $\{y_1,y_2\}$ in $\Aut(T)$ are trivial, and that $\{x_1,x_2\}^\a \ne \{y_1,y_2\}$ for any $\a\in\Aut(T)$.
		
 Assume now that $k = |T|^\ell-m$, with $m \in \{0,1,2\}$. Define $\mathbf{x}\in\Inn(T)^k$ by setting $|\mathcal{P}_1^{\mathbf{x}}| = |T|^{\ell-1}+1$, $|\mathcal{P}_{x_1}^{\mathbf{x}}| = |T|^{\ell-1}-1$, $|\mathcal{P}_{x_2}^{\mathbf{x}}| = |T|^{\ell-1}-m$, and $|\mathcal{P}_t^{\mathbf{x}}| = |T|^{\ell-1}$ for $t\in T\setminus\{1,x_1,x_2\}$. We similarly define $\mathbf{y}\in\Inn(T)^k$, with $y_1$ and $y_2$ in place of $x_1$ and $x_2$, respectively. Then as noted in the proof of \cite[Lemma 5.10]{H_diag}, $D$ is adjacent in $\Sigma(G)$ to both $D\mathbf{x}$ and $D\mathbf{y}$.
 
 Suppose that $D\mathbf{x}$ and $D\mathbf{y}$ lie in a common $G$-orbit. Since
	\begin{equation*}
	\{t:|\mathcal{P}_t^{\mathbf{x}}| = |T|^{\ell-1}+1\} = \{1\} = \{t:|\mathcal{P}_t^{\mathbf{y}}| = |T|^{\ell-1}+1\},
	\end{equation*}
	the element $g\in T$ described in Lemma \ref{l:diag_subaction} is the identity. Thus if $m$ = 1, then that lemma yields the existence of some $\a \in \Aut(T)$ such that
	\begin{equation*}
	\{x_1,x_2\}^\a = \{t:|\mathcal{P}_t^{\mathbf{x}}| = |T|^{\ell-1}-1\}^\a = \{t:|\mathcal{P}_t^{\mathbf{y}}| = |T|^{\ell-1}-1\} = \{y_1,y_2\},
	\end{equation*}
	which is incompatible with our assumption. If instead $m \in \{0,2\}$, then we similarly observe that $x_1^\a = y_1$ for some $\a \in \Aut(T)$, another contradiction. Therefore, $\Sigma(G)$ is not $G$-arc-transitive.
\end{proof}

\begin{prop}
	\label{p:diag_arc-trans_|T|ell-2-1-0}
	Suppose that $P\in\{A_k,S_k\}$, and that $k\in\{|T|^\ell-2,|T|^\ell-1,|T|^\ell\}$ for some $\ell\geqs 1$. Then $\Sigma(G)$ is not $G$-arc-transitive, and hence $\reg(G)\geqs 2$.
\end{prop}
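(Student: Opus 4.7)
The plan is to combine the prior results of this subsection with one final combinatorial construction. After invoking Lemma~\ref{l:diag_arc-trans_notSk}, it suffices to treat the cases where either $\ell = 1$, or $\ell \ge 2$ with $S_k \le G$. If $b(G) = 2$, then by Theorem~\ref{t:H_diag} we must have $\ell = 1$, $k \in \{|T|-2, |T|-1\}$, and $S_k \not\le G$; Theorem~\ref{t:H_diag_r=1} then shows that $\Sigma(G)$ would only be $G$-arc-transitive if $(T,k) \in \{(A_5,3),(A_5,57)\}$ and $G \ge S_k$, which is impossible here since $|T| = 60$ forces $k \in \{58,59\}$ and $S_k \not\le G$. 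Hence $\reg(G) \ge 2$ in this subcase.

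In each remaining case with $b(G) = \ell + 2$, Proposition~\ref{p:diag_l+2_complete} applies and $G$ is semi-Frobenius, so $\Sigma(G)$ is complete. A complete vertex-transitive graph is $G$-arc-transitive if and only if $G$ is $2$-transitive on $\Omega$, so it suffices to exhibit two $G_D$-orbits on $\Omega \setminus \{D\}$. The shape multiset $\{|\mathcal{P}_t^{\mathbf{x}}| : t \in T\}$ is an invariant of the $G_D$-orbit of $D\mathbf{x}$ by Lemma~\ref{l:diag_subaction}, so I would take $\mathbf{x}$ balanced (all non-empty parts of size $\lceil k/|T| \rceil$ or $\lfloor k/|T|\rfloor$, as in the proof of \cite[Lemma~5.10]{H_diag}, or all parts of size $1$ if $\ell = 1$ and $k = |T|$) and $\mathbf{y}$ concentrated, of the form $(1,\dots,1,\varphi_z,\dots,\varphi_z)$ for a suitable $z \in T \setminus \{1\}$; the resulting shape multisets are plainly distinct, so $G$ is not $2$-transitive.

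The only remaining case is $\ell \ge 2$, $S_k \le G$, $k = |T|^\ell - 2$, and $b(G) = \ell + 1$ (the $T \in \{A_5,A_6\}$ exception of Theorem~\ref{t:H_diag}(iii)(c) has $b(G) = \ell + 2$, and is absorbed into the semi-Frobenius discussion above). Here, mirroring Lemma~\ref{l:H_diag_l:5.5} and Proposition~\ref{p:diag_arc-trans_l+1}, I would take $\mathbf{x}$ with exactly two parts of size $|T|^{\ell-1} - 1$ and the remaining parts of size $|T|^{\ell-1}$, and $\mathbf{y}$ with exactly one part of size $|T|^{\ell-1} - 2$ and the remaining parts of size $|T|^{\ell-1}$; each sums to $k$ and they yield distinct shape multisets. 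Extendability of each to a base of size $\ell + 1$ should then follow by the argument of Lemma~\ref{l:H_diag_l:5.5}, choosing the non-identity parts via \cite[Corollary~5]{H_diag} so that $\mathrm{Hol}(T,S) = 1$ for the relevant set $S$, after which Lemma~\ref{l:diag_subaction} separates the two $G_D$-orbits. The main obstacle is precisely this last step: the near-uniformity forced by $k = |T|^\ell - 2$ reduces the flexibility enjoyed in Lemma~\ref{l:H_diag_l:5.5} (where $k \le |T|^\ell - 3$), so one must verify carefully that both shapes admit extensions to bases of size $\ell + 1$ with the setwise-stabiliser condition in place.
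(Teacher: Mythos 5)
Your reduction agrees with the paper's for most of the statement: Lemma~\ref{l:diag_arc-trans_notSk} disposes of $\ell\geqs 2$ with $S_k\not\leqs G$, Theorem~\ref{t:H_diag_r=1} handles the $b(G)=2$ subcase, and when $b(G)=\ell+2$ the graph is complete by Proposition~\ref{p:diag_l+2_complete}, so one only needs to rule out $2$-transitivity (the paper simply asserts this; your balanced-versus-concentrated shape-multiset witness is a legitimate justification, and adjacency is irrelevant there precisely because the graph is complete). The genuine gap is in the remaining case $k=|T|^\ell-2$, $S_k\leqs G$, $b(G)=\ell+1$, and it is the one you flag yourself. You propose to distinguish two arcs by their shape multisets, taking $\mathbf{y}$ with a single part of size $|T|^{\ell-1}-2$ and all other parts of size $|T|^{\ell-1}$. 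But for such a $\mathbf{y}$ the set $S=\{t: |\mathcal{P}^{\mathbf{y}}_t| = |\mathcal{P}^{\mathbf{y}}_1|\}$ occurring in condition (P2) of Lemma~\ref{l:H_diag_l:5.5} is forced to be either $\{z\}$ or $T\setminus\{z\}$ for a single element $z\in T$, and the stabiliser of a point in the transitive action of $\Hol(T)$ on $T$ is isomorphic to $\Aut(T)\ne 1$, so $\Hol(T,S)\ne 1$ no matter how the parts are arranged; \cite[Corollary 5]{H_diag} offers no freedom because there is only one deficient part. The machinery you invoke therefore cannot certify that $D\mathbf{y}$ is adjacent to $D$, and without that adjacency the two arcs you wish to compare are not both arcs of $\Sigma(G)$.

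The paper sidesteps this by comparing two witnesses of the \emph{same} shape, each with exactly two parts of size $|T|^{\ell-1}-1$, indexed by $\{1,x_1\}$ and $\{1,x_2\}$ respectively; their adjacency to $D$ is already established in \cite{H_diag}. Their $G_D$-orbits are then separated using the finer conclusion of Lemma~\ref{l:diag_subaction}: the \emph{sets} $\{t:|\mathcal{P}_t^{\mathbf{x}}|=m\}$, not merely their cardinalities, must correspond under a map $t\mapsto gt^{\a}$, which is impossible when $x_1$ and $x_2$ have different orders. For $\ell=2$ the pair $(x_1,x_2)$ is supplied by Lemma~\ref{l:diag_arc-trans_star} (applicable because $b(G)=3$ excludes the $T\in\{A_5,A_6\}$, $O=\Out(T)$ exception), and for $\ell\geqs 3$ an arbitrary element $x$ may be varied. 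To repair your argument you would need either a new base-extension argument for the single-deficient-part shape, or to adopt the paper's same-shape comparison.
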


\begin{proof}
    We first note that if $b(G) = \ell+2$, then Proposition~\ref{p:diag_l+2_complete} shows that $\Sigma(G)$ is complete, and therefore not $G$-arc-transitive (since $G$ is not $2$-transitive). In particular, by Theorem~\ref{t:H_diag}, this holds if $k = |T|$, or if $S_k\leqs G$ and $k \ne |T|^\ell-2$. If instead $k \in \{|T|-2,|T|-1\}$ and $S_k \not \leqs G$, then Theorem~\ref{t:H_diag} yields $b(G) = 2$, and so the desired result follows from Theorem~\ref{t:H_diag_r=1}. Additionally, one can simply apply Lemma~\ref{l:diag_arc-trans_notSk} to the groups with $\ell\geqs 2$ and $S_k\not\leqs G$.

    To complete the proof, we shall assume that $k = |T|^\ell-2$, $b(G) \ne \ell+2$ and $S_k\leqs G$, so that $G = T^k.(O\times S_k)$ for some $O\leqs \Out(T)$. Suppose first that $\ell = 2$, so that $b(G) \ne 4$. Then it follows from Theorem~\ref{t:H_diag} that $b(G) = 3$, and that $O\ne\Out(T)$ if $T\in\{A_5,A_6\}$. Let $(x_1,x_2)$ be a pair of elements of $T$ as described in Lemma \ref{l:diag_arc-trans_star}, and define $\mathbf{x}_1,\mathbf{x}_2\in\Inn(T)^k$ by setting $|\mathcal{P}_1^{\mathbf{x}_i}| = |\mathcal{P}_{x_i}^{\mathbf{x}_i}| = |T|-1$ and $|\mathcal{P}_t^{\mathbf{x}_i}| = |T|$ for $t\in T\setminus\{1,x_i\}$. Then as noted in the proof of \cite[Proposition 5.13]{H_diag}, $D$ and $D\mathbf{x}_i$ are adjacent in $\Sigma(G)$. It follows readily from Lemma~\ref{l:diag_subaction} that $(D,D\mathbf{x}_1)$ and $(D,D\mathbf{x}_2)$ lie in distinct $G$-orbits.

	Finally, suppose that $\ell\geqs 3$. Let $x\in T$, and (similarly to above) let $\mathbf{x}\in\Inn(T)^k$ be such that $|\mathcal{P}_1^{\mathbf{x}}| = |\mathcal{P}_x^{\mathbf{x}}| = |T|^{\ell-1}-1$ and $|\mathcal{P}_t^{\mathbf{x}}| = |T|^{\ell-1}$ for $t\in T\setminus\{1,x\}$. Then as observed in the proof of \cite[Proposition 5.14]{H_diag}, $D$ and $D\mathbf{x}$ are adjacent in $\Sigma(G)$. Now apply Lemma \ref{l:diag_subaction}, noting that $x$ was chosen arbitrarily.
\end{proof}

We are now in a position to prove Theorem \ref{thm:diag_arc-trans}.

\begin{proof}[Proof of Theorem \ref{thm:diag_arc-trans}.]
	First note by Theorem \ref{t:H_diag_r=1} that if $b(G) = 2$, then $\reg(G) = 1$ if and only if $G$ is one of the two special cases arising in the statement of Theorem \ref{thm:diag_arc-trans}. Thus, in view of Theorem \ref{t:H_diag}, we may assume that $P\in\{A_k,S_k\}$, and either $k = 2$ or $k \ge |T|-2$. The result follows from Proposition~\ref{p:diag_arc-trans_k=2} in the first case, and from Propositions \ref{p:diag_arc-trans_l+1} and \ref{p:diag_arc-trans_|T|ell-2-1-0} in the second case.
 \end{proof}

\section{Almost simple groups}
\label{s:as}

In this section, we investigate Conjecture~\ref{conj:BG} in the context of almost simple primitive groups, as well as the question of which of these groups are semi-Frobenius or have $G$-arc-transitive generalised Saxl graphs. Our focus is on groups with sporadic socle, groups with socle $\LL_2(q)$, and groups with soluble point stabilisers. In particular, we prove Theorems~\ref{thm:asconj}, \ref{thm:PSL2_semi} and \ref{introthm:PSL2_arc}. Along the way, we complete the calculation of base sizes for all primitive groups with socle $\LL_2(q)$.

\subsection{Almost simple sporadic groups}

\begin{prop}
\label{prop:sporsemi}
	Let $G$ be a primitive almost simple group with sporadic socle $G_0$ and point stabiliser $H$, such that $b(G) \ge 3$.\begin{itemize}\addtolength{\itemsep}{0.2\baselineskip}
		\item[{\rm (i)}] If $b(G) \ge 5$, then $G$ is semi-Frobenius.
  \item[{\rm (ii)}] If $G_0 \notin \{\mathrm{Co}_1,\mathrm{J}_4,\mathrm{Fi}_{24}',\mathbb{B}, \mathbb{M}\}$, then $G$ is semi-Frobenius if and only if $(G,H,b(G))$ does not appear in Table~\ref{tab:spor_not_complete}.
 \end{itemize}
 \end{prop}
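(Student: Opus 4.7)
The overall plan is to reduce, in both parts, to the following test: by the vertex-transitivity of $\Sigma(G)$ under $G$, the group $G$ is semi-Frobenius if and only if for every orbit of $H = G_\alpha$ on $\Omega\setminus\{\alpha\}$ with representative $\beta$, the pair $\{\alpha,\beta\}$ extends to a base of size $b(G)$; equivalently, $G_{\alpha,\beta}$ admits a base of size $b(G)-2$ in its action on $\Omega$.

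For part (i), I would extract from the Burness--O'Brien--Wilson classification of base sizes of sporadic primitive groups (\cite{BOW_spor}) the short list of triples $(G,H,b(G))$ with sporadic socle and $b(G)\geqs 5$. The vast majority of these are $2$-transitive actions (notably the natural actions of $\mathrm{M}_{12}$ on $12$ points and $\mathrm{M}_{24}$ on $24$ points), so Lemma~\ref{l:pre_complete_2-trans} immediately yields that $\Sigma(G)$ is complete. The handful of remaining cases are of modest degree and can be settled directly in {\sc Magma} using the reduction above.

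For part (ii), I would enumerate, using the {\sc Atlas} \cite{ATLAS} together with {\sc Magma}'s library of sporadic almost simple groups and their maximal subgroups, all triples $(G,H,b(G))$ with $\soc(G)\notin \{\mathrm{Co}_1,\mathrm{J}_4,\mathrm{Fi}_{24}',\mathbb{B},\mathbb{M}\}$ and $b(G)\geqs 3$. For each, I would construct the permutation action on $[G:H]$, compute a transversal for the $H$-orbits on $\Omega\setminus\{\alpha\}$, and, for each orbit representative $\beta$, attempt to extend $\{\alpha,\beta\}$ to a base of size $b(G)$ by random search on tuples from $G_{\alpha,\beta}$, falling back on exhaustive enumeration when such a search fails. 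The triples for which extension fails are precisely those collected in Table~\ref{tab:spor_not_complete}; the remainder are semi-Frobenius.

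The principal obstacle is computational feasibility for the larger admissible sporadic socles (for instance $\mathrm{Fi}_{23}$, $\mathrm{Th}$, $\mathrm{Ly}$ and $\mathrm{HN}$ in actions of large degree), where direct enumeration of $H$-orbits may be prohibitive. To handle these I would fall back on the probabilistic bound
\[
\what{Q}(G_{\alpha,\beta}, b(G)-2) \;\leqs\; \sum_{x} \fpr(x,\Omega)^{b(G)-2},
\]
where $x$ runs over representatives of the $G$-conjugacy classes of prime-order elements of $G_{\alpha,\beta}$; the required fixed point ratio data for sporadic groups is compiled in \cite{BOW_spor} and subsequent work. Whenever this bound is strictly less than $1$, the pair $\{\alpha,\beta\}$ automatically extends. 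The exclusion of the five largest sporadic socles from part (ii) reflects precisely those triples for which neither direct computation nor the currently available fixed point ratio data suffices to resolve semi-Frobeniusness on every orbit of $H$.
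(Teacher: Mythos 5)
Your core reduction---that by vertex-transitivity it suffices to check, for a fixed $\alpha$ and each $H$-orbit representative $\beta$, whether $\{\alpha,\beta\}$ extends to a base of size $b(G)$---is exactly the criterion the paper uses, and your plan of verifying it by direct computation in {\sc Magma} on the coset action is the paper's method in essentially every case. The shortcut via $2$-transitivity for most of part (i) is valid (the small-degree groups with $b(G)\geqs 5$ are the multiply transitive Mathieu actions and $\mathrm{HS}$ on $176$ points), though the remaining cases, $\mathrm{Co}_1$ on $98280$ points and $\mathrm{Fi}_{24}'$ on $306936$ points, are not of ``modest degree''; the paper still handles them by direct computation.

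The genuine gap is in your fallback for the cases where the permutation representation cannot be constructed. First, the displayed inequality is not correct as stated: if $x$ runs over $G$-class \emph{representatives} of prime-order elements of $G_{\alpha,\beta}$, the probability that a random $(b(G)-2)$-tuple fails to extend $\{\alpha,\beta\}$ is bounded by $\sum_x |x^G\cap G_{\alpha,\beta}|\cdot\fpr(x,\Omega)^{b(G)-2}$, and omitting the multiplicity $|x^G\cap G_{\alpha,\beta}|$ means that a value below $1$ proves nothing (also, $\what{Q}$ is defined in the paper only for a transitive group and its point stabiliser, and $G_{\alpha,\beta}$ is not transitive on $\Omega$). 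Second, even when corrected, the bound is hopeless precisely where it is needed: the one case the paper cannot construct directly is $G=\mathrm{Fi}_{23}$ with $H\cong 3^{1+8}.2^{1+6}.3^{1+2}.2S_4$ and $b(G)=3$, where the exponent $b(G)-2=1$ forces you to sum the fixed point ratios of all prime-order elements of $G_{\alpha,\beta}$, a quantity that will exceed $1$ for suborbits with a sizeable stabiliser. The paper instead works inside a small faithful permutation representation $\hat G$, enumerates representatives $x$ of the double cosets in $\hat H\backslash\hat G/\hat H$ (which correspond to the $H$-orbits on $\Omega$), and exhibits for each an explicit $y$ with $\hat H\cap\hat H^x\cap\hat H^y=1$. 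Some substitute of this kind is needed to complete your argument.
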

 
{\small
	\begin{table}[h!]
       
       \caption{Primitive almost simple sporadic groups that are not semi-Frobenius. When $G = \mathrm{M}_{12}.2$, exactly one of the two groups $H \cong \LL_2(11).2$, up to conjugacy, satisfies $H \cap G_0 \underset{\max}{<} G_0$.}
		\begin{tabular}{@{}lll@{}}
			\toprule
			$G$ & $H$ & $b(G)$ \\ \midrule
			$\mathrm{M}_{12}$ & $\LL_2(11)$ & 3 \\
			&  &  \\
			$\mathrm{M}_{12}.2$ & $\LL_2(11).2$ & 3 \\
            & $\LL_2(11).2$ & 3 \\
			&  &  \\
			$\mathrm{M}_{22}$ & $2^4 \sd S_5$ & 3 \\
			& $2^3 \sd  \LL_3(2)$ & 3 \\
			&  &  \\
			$\mathrm{M}_{22}.2$ & $2^5 \sd S_5$ & 3 \\
			& $2^3 \sd  \LL_3(2)\times 2$ & 3 \\
			&  &  \\
			$\mathrm{M}_{23}$ & $A_8$ & 3 \\
			& $2^4 \sd (3\times A_5) \sd 2$ & 3 \\
			&  &  \\
			$\mathrm{M}_{24}$ & $\mathrm{M}_{22}.2$ & 4 \\
			& $\mathrm{M}_{12}.2$ & 3 \\
			& $2^6 \sd 3.S_6$ & 3 \\
			& $\LL_3(4) \sd S_3$ & 3 \\
			&  &  \\
			$\mathrm{J}_2$ & $3.A_6.2$ & 3 \\
			& $2^{1+4}.A_5$ & 3 \\
			&  &  \\
			$\mathrm{J}_2.2$ & $3.A_6.2.2$ & 3 \\
			& $2^{1+4}.A_5.2$ & 3 \\
			& $2^{2+4} \sd (3\times S_3).2$ & 3 \\
            & & \\
			\bottomrule
		\end{tabular}
		\begin{tabular}{@{}lll@{}}
			\toprule
			$G$ & $H$ & $b(G)$ \\ \midrule
            $\mathrm{McL}$ & $3^4 \sd \mathrm{M}_{10}$ & 3 \\
            & & \\
            $\mathrm{McL}.2$ & $3^4 \sd (\mathrm{M}_{10}\times 2)$ & 3 \\
            &  &  \\
            $\mathrm{HS}$ & $\LL_3(4) \sd 2$ & 3 \\
			& $S_8$ & 3 \\
            &  &  \\
			$\mathrm{HS}.2$ & $2^5.S_6$ & 3 \\
			& $4^3.(2\times \LL_3(2))$ & 3 \\
			&  &  \\
			$\mathrm{Co}_3$ & $\UU_4(3).2.2$ & 3 \\ & $\mathrm{M}_{23}$ & 3 \\
			&  &  \\
			$\mathrm{Co}_2$ & $\mathrm{HS} \sd 2$ & 3 \\
			& $(2^4\times 2^{1+6}).A_8$ & 3 \\
			& $\UU_4(3) \sd D_8$ & 3 \\
			&  &  \\
			$\mathrm{Fi}_{22}$ & $2^{10} \sd \mathrm{M}_{22}$ & 3 \\
			& $2^6 \sd \Sp_6(2)$ & 3 \\
			& $(2\times 2^{1+8}) \sd (\UU_4(2) \sd 2)$ & 3 \\
			&  &  \\
			$\mathrm{Fi}_{22}.2$ & $2^7 \sd \Sp_6(2)$ & 3 \\
			& $(2\times 2^{1+8} \sd \UU_4(2) \sd 2) \sd 2$ & 3 \\
			& $\UU_4(3).2.2\times S_3$ & 3 \\
			& $2^{5+8} \sd (S_3\times S_6)$ & 3 \\
            & & \\
            \bottomrule
		\end{tabular}
		\begin{tabular}{@{}lll@{}}
			\toprule
			$G$ & $H$ & $b(G)$ \\ \midrule
            $\mathrm{HN}$ & $A_{12}$ & 3 \\
			& $2.\mathrm{HS}.2$ & 3 \\
			&  &  \\
            $\mathrm{HN}.2$ & $S_{12}$ & 3 \\
			& $4.\mathrm{HS}.2$ & 3 \\
            &  &  \\
            $\mathrm{He}$ & $2^2.\LL_3(4).S_3$ & 3 \\
            &  &  \\
            $\mathrm{He}.2$ & $2^2.\LL_3(4).D_{12}$ & 3 \\
            &  &  \\
			$\mathrm{Suz}$ & $G_2(4)$ & 4 \\
			& $3.\UU_4(3) \sd 2$ & 3 \\
			& $\UU_5(2)$ & 3 \\
			& $2^{1+6}.\UU_4(2)$ & 3 \\
			& $3^5.\mathrm{M}_{11}$ & 3 \\
			&  &  \\
			$\mathrm{Suz}.2$ & ${G}_2(4).2$ & 4 \\
			& $3.\UU_4(3).2.2$ & 3 \\
			& $\UU_5(2) \sd 2$ & 3 \\
			& $2^{1+6}.\UU_4(2).2$ & 3 \\
			& $3^5.(\mathrm{M}_{11}\times 2)$ & 3 \\
			&  &  \\
			$\mathrm{Fi}_{23}$ & $\mathrm{P}\Omega_8^+(3).S_3$ & 4 \\
			& $2^2.\UU_6(2).2$ & 3 \\
			& $\Sp_8(2)$ & 3 \\
			& $2^{11}.\mathrm{M}_{23}$ & 3 \\
            \bottomrule
		\end{tabular}
		 \label{tab:spor_not_complete}
	\end{table}
}
\begin{proof}
    The almost simple primitive groups with sporadic socle of each base size at least $3$ are classified in \cite{BOW_spor,NNOW_spor}. In particular, if $b(G) \ge 5$ and $G_0 \in \mathcal{A}:=\{\mathrm{Co}_1,\mathrm{J}_4,\mathrm{Fi}_{24}',\mathbb{B}, \mathbb{M}\}$, then $G_0 \in \{\mathrm{Co}_1,\mathrm{Fi}_{24}'\}$, and $H$ is a maximal subgroup of $G$ of least index. Since $G$ is transitive, it is semi-Frobenius if and only if, for a fixed $\omega \in \Omega$ and each point $\alpha \ne \omega$ in a set of orbit representatives of $G_\omega$, the set $\{\omega,\alpha\}$ extends to a base for $G$ of size $b(G)$. For all groups $G$ satisfying $G_0 \notin \mathcal{A}$ or $b(G) \ge 5$, except for one special case mentioned at the end of the proof, we use {\sc Magma} to directly check this condition on orbit representatives.
    
    We now detail how we construct the group $G$ in {\sc Magma}. If $G_0 \in \{\mathrm{Ly},\mathrm{Th}\}$, then we construct a (relatively) low-dimensional matrix group $\hat{G}$ isomorphic to $G$ using {\sc Magma}'s database of {\sc Atlas} groups, and a subgroup $\hat{H}$ isomorphic to $H$ using generators from \cite{onlineatlas}. Using the method described in \cite{magmaorbit}, we then obtain $G$ as the permutation group induced by the action of $\hat{G}$ on the orbit $U^{\hat{G}}$, where $U$ is a low-dimensional $\hat{H}$-submodule of the module corresponding to $\hat{G}$ (this construction requires a significant amount of RAM, e.g.~$171$ GB when $G = \mathrm{Th}$ and $H \cong 2^5.\LL_5(2)$). If instead $G_0 \notin \{\mathrm{Ly},\mathrm{Th}\}$, then we construct a permutation group $\hat{G}$ isomorphic to $G$ using the database of {\sc Atlas} groups (or if $G = \mathrm{HN}.2$, using the \texttt{AutomorphismGroupSimpleGroup} function), and then construct $G$ itself via the coset action of $\hat{G}$ on a subgroup $\hat{H}$ isomorphic to $H$. 

    The one special case mentioned above is where $G = \mathrm{Fi}_{23}$ and $H \cong 3^{1+8}.2^{1+6}.3^{1+2}.2S_4$, with $b(G) = 3$. Here, the index of $H$ in $G$ is too large for the permutation group $G$ to be constructed directly. Therefore, with $\hat{G}$ and $\hat{H}$ permutation groups as above, we search through the elements of $\hat{G}$ for a full set of representatives of the double cosets in the set $\hat{H}\backslash \hat{G}/\hat{H}$, with the aid of the \texttt{DoubleCosetCanonical} function. For each representative $x \notin \hat{H}$, we verify that there exists an element $y \in \hat{G}$ such that $\hat{H} \cap \hat{H}^x \cap \hat{H}^y = 1$. This implies the above condition on orbit representatives of $G_\omega$, i.e.~$G$ is semi-Frobenius.
\end{proof}

\begin{rem}
\label{rem:otherspor}
Using computational methods from the proof of Proposition~\ref{prop:sporsemi}, we can show that $G$ is semi-Frobenius if $G_0 = \mathrm{Co}_1$ and $b(G) = 4$ (so that $H \cong 3.\mathrm{Suz} \sd 2$), and that $G$ is not semi-Frobenius if $G_0 \in \{\mathrm{Co}_1,\mathrm{J}_4\}$, $b(G) = 3$, and $|\Omega| \le 2\times10^8$. Determining which of the remaining primitive groups with $G_0 \in \{\mathrm{Co}_1,\mathrm{J}_4,\mathrm{Fi}_{24}',\mathbb{B}, \mathbb{M}\}$ are semi-Frobenius will likely require alternative approaches, due to the extremely large degrees of these groups.
\end{rem}
In \cite[Section 6]{BG_Saxl}, Burness and Giudici show that many almost simple sporadic groups $G$ with $b(G) = 2$ satisfy Conjecture~\ref{conj:BG}. We now generalise their results (and Proposition~\ref{prop:sporsemi}) in the case $b(G) \ge 3$, as follows.

\begin{thm}
\label{thm:sporcnc}
Let $G$ be a primitive almost simple group with sporadic socle, such that $b(G) \ge 3$. Then $G$ satisfies Conjecture~\ref{conj:BG}.
\end{thm}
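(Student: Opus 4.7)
Since $b(G) \geqs 3$, the definition of $\Sigma(G)$ immediately gives that any two adjacent vertices share a common neighbour, and so Conjecture~\ref{conj:BG} is equivalent to $\Sigma(G)$ having diameter at most $2$. In particular, the conjecture holds trivially whenever $G$ is semi-Frobenius, since then $\Sigma(G)$ is complete. My plan is therefore to combine Proposition~\ref{prop:sporsemi} and Remark~\ref{rem:otherspor} (which settle the semi-Frobenius question for most cases) with two further inputs: direct computation for the groups in Table~\ref{tab:spor_not_complete}, and the probabilistic techniques of Section~\ref{ss:pre_prob} for the remaining large sporadic actions.

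First I would dispose of the groups with $\soc(G) \notin \{\mathrm{Co}_1,\mathrm{J}_4,\mathrm{Fi}_{24}',\mathbb{B},\mathbb{M}\}$. By Proposition~\ref{prop:sporsemi}(ii), the only groups here that are not already handled by the semi-Frobenius observation are those listed in Table~\ref{tab:spor_not_complete}. Each such group has a manageable permutation degree (at most a few hundred thousand), so I would verify in \textsc{Magma} that, fixing $\omega \in \Omega$ and taking one representative $\alpha$ from each $G_\omega$-orbit on $\Omega\setminus\{\omega\}$, there exists $\gamma \in \Omega$ such that both $\{\omega,\gamma\}$ and $\{\alpha,\gamma\}$ extend to bases of size $b(G)$. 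By the transitivity of $G$, this is exactly what is required for $\Sigma(G)$ to have diameter $\leqs 2$. The constructions of $G$ (as a coset action of a permutation or matrix representative obtained from the \textsc{Atlas} database) are exactly those used in the proof of Proposition~\ref{prop:sporsemi}.

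For the remaining groups, those with $\soc(G) \in \{\mathrm{Co}_1, \mathrm{J}_4, \mathrm{Fi}_{24}',\mathbb{B}, \mathbb{M}\}$ and $b(G) \in \{3,4\}$, direct computation is infeasible, and I would appeal to the probabilistic framework. By the corollary following Lemma~\ref{l:pre_prob}, it suffices to show that
\[
\widehat{Q}(G,b(G)) = \sum_{x \in R(G)} \frac{|x^G \cap H|^{b(G)}}{|x^G|^{b(G)-1}} < 1 - 2^{-b(G)+1},
\]
which reads $\widehat{Q}(G,3) < 3/4$ when $b(G)=3$ and $\widehat{Q}(G,4) < 7/8$ when $b(G)=4$. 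Since $b(G)$ was determined in \cite{BOW_spor,NNOW_spor} precisely via bounds on $\widehat{Q}(G,b(G))$, I would extract the class-by-class fixed point ratio estimates used there and re-run the sum; in the overwhelming majority of cases the resulting bound is orders of magnitude smaller than what is needed. For the $\mathrm{Co}_1$ case with $b(G)=4$, Remark~\ref{rem:otherspor} already gives semi-Frobeniusness and there is nothing to do.

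The main obstacle will be the handful of borderline actions where the published fixed point ratio bounds are not immediately sharp enough, typically those with $b(G)=3$ and $H$ of small index — for instance actions of $\mathbb{B}$ or $\mathbb{M}$ on the coset space of a maximal subgroup of largest order. In such cases I expect to have to refine the bound on $|x^G \cap H|$ for the dominant class (usually an involution class), either by inspecting the fusion of $H$-classes into $G$-classes via \textsc{Atlas} character tables, or by a targeted computation inside $H$ alone (which is always of tractable order even when $G$ is not). Once these isolated contributions are controlled, the inequality $\widehat{Q}(G,b(G)) < 1 - 2^{-b(G)+1}$ should hold uniformly across the remaining sporadic cases, completing the proof.
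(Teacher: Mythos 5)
Your plan follows the same overall route as the paper's proof: reduce to the non-semi-Frobenius cases via Proposition~\ref{prop:sporsemi}, check the groups in Table~\ref{tab:spor_not_complete} by direct computation on orbit representatives, and handle the remaining actions with socle in $\{\mathrm{Co}_1,\mathrm{J}_4,\mathrm{Fi}_{24}',\mathbb{B},\mathbb{M}\}$ via the bound $\widehat{Q}(G,b(G)) < 1-2^{-b(G)+1}$ (the paper in fact verifies the stronger inequality $\widehat{Q}(G,b(G)) < 1/2$, computing $\widehat{Q}$ exactly from the {\sf GAP} Character Table Library rather than from the published estimates in \cite{BOW_spor,NNOW_spor}, and it also treats a couple of $\mathrm{Co}_1$ actions of moderate degree by direct construction rather than probabilistically).

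The one concrete gap is your contingency plan for the borderline actions. There is a case --- $G = \mathrm{Fi}_{24}$ acting on the cosets of $H \cong (2 \times 2.\mathrm{Fi}_{22}) \sd 2$, with $b(G)=3$ and degree roughly $4.9\times 10^9$ --- where neither of your proposed fallbacks applies: the degree is far too large to build the permutation group, and the probabilistic bound is not rescued by sharpening the involution-class contributions to $|x^G \cap H|$, since the obstruction is not a loose estimate but the value of $\widehat{Q}$ itself. The paper resolves this with a third technique you do not mention: it finds explicit elements $r,s \in \hat{G}$ with $\hat{H} \cap \hat{H}^r \cap \hat{H}^s = 1$ and $|\hat{H}r\hat{H}|/|\hat{G}| > 1/2$, so that a single suborbit of neighbours of $\omega$ already has relative size exceeding $1/2$, forcing any two vertices to have a common neighbour. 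Without some such device (a double-coset computation exhibiting one large suborbit of neighbours), your plan would stall on this action. A second, smaller point: for $G = \mathbb{B}$ with $H \cong (2^2 \times F_4(2)) \sd 2$ the class fusion into $G$ is not stored in {\sf GAP}, so "extracting the fixed point ratios" requires checking that every candidate fusion returned by \texttt{PossibleClassFusions} yields the same value of $\widehat{Q}$; this is routine but needs to be said.
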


\begin{proof}
We first observe that any two vertices in $\Sigma(G)$ have a common neighbour if and only if, for a fixed $\omega \in \Omega$, each point $\alpha \ne \omega$ in a set of orbit representatives of $G_\omega$ is adjacent to a neighbour of $\omega$. In particular, this is the case if the valency $\mathrm{val}(G)$ of $\Sigma(G)$ satisfies $\mathrm{val}(G)/|\Omega| > 1/2$.

Now, by Proposition~\ref{prop:sporsemi}, it suffices to consider the case where either $G$ and $H$ appear in Table~\ref{tab:spor_not_complete}, or $\soc(G) \in \{\mathrm{Co}_1,\mathrm{J}_4,\mathrm{Fi}_{24}',\mathbb{B},\mathbb{M}\}$ and $b(G) \in \{3,4\}$. The relevant groups with these five socles, and their base sizes, are given in \cite{BOW_spor,NNOW_spor}.

If $G$ and $H$ appear in Table~\ref{tab:spor_not_complete}, or if $G = \mathrm{Co}_1$ and $H$ is isomorphic to $\mathrm{2^{11} \sd \mathrm{M}_{24}}$ or $\mathrm{Co}_3$, then we construct the permutation group $G$ in {\sc Magma} via the method described in Proposition~\ref{prop:sporsemi}, and verify the above condition on orbit representatives. In particular, we observe that among these groups, $\mathrm{val}(G)/|\Omega| \le 1/2$ if and only if $G = \mathrm{M}_{12}.2$, $H \cong \LL_2(11).2$, and $H \cap G_0$ is maximal in $G_0$. Even in this case, computations show that any two vertices in $\Sigma(G)$ have a common neighbour.

Next, suppose that $G = \mathrm{Fi}_{24}$ and $H \cong (2 \times 2.\mathrm{Fi}_{22}) \sd 2$, so that $b(G) = 3$. We construct a permutation group $\hat G$ isomorphic to $G$ in {\sc Magma} using the database of {\sc Atlas} groups, and a subgroup $\hat H$ isomorphic to $H$ using generators from \cite{onlineatlasv2}. Computations show that there exist elements $r,s \in \hat G \setminus \hat H$ such that $\hat H \cap \hat H^r \cap \hat H^s = 1$ and $|\hat H r \hat H|/|\hat G| > 1/2$. Thus $\omega \in \Omega$ is adjacent in $\Sigma(G)$ to each point in a $G_\omega$-orbit of size $a:=|\hat H r \hat H|/|\hat H|$, and so $\mathrm{val}(G)/|\Omega| \ge a/|\Omega| = |\hat H r \hat H|/|\hat G| > 1/2$.

In all remaining cases, let $R(G)$ be a set of representatives for the $G$-conjugacy classes of elements of $H$ of prime order. Recall from Section~\ref{ss:pre_prob} that if \[\hat{Q}(G,b(G)) = \sum_{x\in R(G)}\frac{|x^G\cap H|^{b(G)}}{|x^G|^{{b(G)}-1}} < 1/2,\] then any two vertices in $\Sigma(G)$ have a common neighbour. In each case, we verify that $\hat{Q}(G,b(G)) < 1/2$ using the {\sf GAP} Character Table Library \cite{GAPchar}. Note that if $G = \mathbb{B}$ and $H \cong (2^2 \times F_4(2)) \sd 2$, then the fusion of $H$-conjugacy classes in $G$ is not stored in {\sf GAP}. However, all possibilities for this fusion, as returned via the \texttt{PossibleClassFusions} function, yield the same value for $\hat{Q}(G,b(G))$.
\end{proof}

\subsection{Groups with socle $\LL_2(q)$}
\label{subsec:l2q}

In this subsection, we provide a complete classification of groups $G$ with $\soc(G) = \LL_2(q)$ that are semi-Frobenius, and a near-complete classification of such groups for which $\Sigma(G)$ is $G$-arc-transitive.

We shall begin by determining the base size of a certain primitive action of $G$. To do so, we require the following technical lemma. In what follows, for an element $t \in \mathbb{F}_q$, we write $\langle t \rangle$ to denote the smallest subfield of $\mathbb{F}_q$ containing $t$.

\begin{lem}
\label{lem:fieldgen}
Let $q$ be a prime power not equal to $3$, and let $s$ be a primitive element of $ \mathbb{F}_q$. 
 \begin{itemize}\addtolength{\itemsep}{0.2\baselineskip}
		\item[{\rm (i)}] If $q$ is even, then there exists $t \in \mathbb{F}_q^\times$ such that $\langle t \rangle = \mathbb{F}_q$, and the polynomial $x^2+x+t$ is irreducible over $\mathbb{F}_q$.
        \item[{\rm (ii)}] If $q$ is odd, then there exists $t \in \mathbb{F}_q^\times$ such that $\langle t^2/s \rangle = \mathbb{F}_q$, and $t^2+s$ is a (non-zero) square in $\mathbb{F}_q$; and
        \item[{\rm (iii)}] If $q \equiv 3 \pmod 4$, then there exists $t \in \mathbb{F}_q^\times$ such that $\langle t^2 \rangle = \mathbb{F}_q$, and $t^2+1$ is a (non-zero) square in $\mathbb{F}_q$.
	\end{itemize}
\end{lem}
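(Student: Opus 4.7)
The plan is to combine an affine-curve point count (controlling the ``is a square'' condition) with the standard bound on the union of proper subfields of $\mathbb{F}_q$, and then to dispose of the finite list of small $q$ by direct inspection. Write $q = p^f$ and let $U$ denote the union of the proper subfields of $\mathbb{F}_q$; then $|U| \le \sum_{d \mid f,\, d<f} p^d$, and in particular $|U| \le 2\sqrt{q}$ whenever $f \ge 2$, while $U = \emptyset$ for $f = 1$.

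In case (i), where $q = 2^f$, a standard Artin--Schreier observation gives that $x^2 + x + t$ is irreducible over $\mathbb{F}_q$ precisely when $\mathrm{Tr}_{\mathbb{F}_q/\mathbb{F}_2}(t) = 1$, so exactly $q/2$ values of $t$ satisfy the quadratic condition. Since $q/2 > |U|$ for $q \ge 16$, some such $t$ lies outside every proper subfield and is therefore a generator; the cases $q \in \{2,4,8\}$ are then verified by hand. For case (ii), with $q$ odd and $s$ a non-square, I would parametrise the conic $y^2 = t^2 + s$ by the change of variables $u = y+t$, $v = y-t$, which reduces the defining equation to $uv = s$ and hence identifies the set of $\mathbb{F}_q$-points with $\mathbb{F}_q^\times$. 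Excluding the (at most two) points with $y = 0$, which arise precisely when $-s$ is a non-zero square, that is when $q \equiv 3 \pmod 4$, a short count shows that
\[
|A| := \bigl|\{t \in \mathbb{F}_q^\times : t^2 + s \text{ is a non-zero square}\}\bigr| \ge (q-3)/2.
\]
The number of $t \in \mathbb{F}_q^\times$ for which $t^2/s$ lies in some proper subfield is at most $2|U|$, since each value of $t^2$ corresponds to at most two values of $t$. Comparing with $|A|$ gives the desired $t$ once $q$ is moderately large, and case (iii) is treated identically with $s$ replaced by $1$, using that $-1$ is a non-square for $q \equiv 3 \pmod 4$ so that $t^2 + 1$ never vanishes.

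The one extra wrinkle is that when $q \equiv 3 \pmod 4$ the same $t$ should satisfy both (ii) and (iii). Here I would invoke a two-variable Weil bound (equivalently, a Jacobi-sum estimate) to show that the number of $t \in \mathbb{F}_q^\times$ with $t^2 + s$ and $t^2 + 1$ simultaneously non-zero squares equals $q/4 + O(\sqrt{q})$; this still exceeds $2|U|$ once $q$ is sufficiently large, so the subfield obstruction is again negligible.

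The real obstacle is the small-$q$ regime, since the asymptotic comparisons above only bite once $q$ is past a threshold of a few dozen. A finite list of small prime powers --- the odd primes $q \in \{5,7,11,13\}$ together with the small non-prime cases $q \in \{4,8,9,16,25,27\}$ and a handful of others --- must be handled separately by exhibiting an explicit $t$; each verification is routine using the known structure of squares and subfields of $\mathbb{F}_q$, but enumerating the threshold values and checking them all carefully (particularly the ones where both conditions (ii) and (iii) are imposed) is where the genuine work sits.
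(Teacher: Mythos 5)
Your argument is correct in outline, but it takes a genuinely different and noticeably heavier route than the paper, most strikingly in part (ii). There the paper avoids all counting: since $s$ is a non-square and $q>3$, the elements $1+s$ and $s(1+s)=s^2+s$ are both non-zero and their product is $s(1+s)^2$, a non-square, so exactly one of them is a square; hence some $t\in\{1,s\}$ works, and $t^2/s\in\{s^{-1},s\}$ is automatically primitive. This disposes of (ii) with no thresholds and no small cases, whereas your conic count $|A|\geq (q-3)/2$ versus $2|U|$ is sound but forces a finite verification list (e.g.\ $q=9$ already fails the comparison). For (i) your Artin--Schreier trace criterion is the same idea as the paper's, but the paper compares the count of irreducible parameters \emph{within each proper subfield} ($\sum_{r\mid f,\,r<f}2^{r-1}<2^{f-1}$) rather than using the crude bound $|U|\leq 2\sqrt{q}$, which again eliminates the hand-checks at $q\in\{2,4,8\}$. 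For (iii) your hyperbola parametrisation recovers the paper's count of $(q-3)/4$ square values of $t^2+1$ (the paper cites this), and the subfield comparison goes through; the paper sharpens it slightly by noting $\langle t^2\rangle=\langle t\rangle$ since $f$ is odd. Finally, your ``extra wrinkle'' about finding a single $t$ satisfying (ii) and (iii) simultaneously when $q\equiv 3\pmod 4$ is not needed: the paper's own proof produces separate elements for the two parts, and the only application (Proposition 4.10) invokes exactly one of (ii) or (iii) according to $q_0\bmod 4$, so you can drop the Weil-bound step and its attendant small-case analysis entirely. What your approach buys is uniformity of method across the three parts; what it costs is a threshold-plus-enumeration step in each part that the paper's ad hoc arguments avoid completely.
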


\begin{proof}
We shall write $q = p^f$, with $p$ prime. Suppose first that $q$ is even, and let $r$ be a divisor of $f$. For $a \in \mathbb{F}_{2^r}$, the polynomial $x^2+x+a$ is reducible over $\mathbb{F}_q$ if and only if there exists $b \in \mathbb{F}_q$ such that $b^2+b = a$, so that the polynomial has distinct roots $b$ and $b+1$. Hence there are precisely $2^{r-1}$ elements $a \in \mathbb{F}_{2^r}$ such that $x^2+x+a$ is irreducible over $\mathbb{F}_{2^r}$. Additionally, $x^2+x+0$ is reducible over $\mathbb{F}_q$, as is each polynomial reducible over $\mathbb{F}_{2^r}$. Therefore, the number of elements $a \in \mathbb{F}_q^\times$ that lie in a proper subfield of $\mathbb{F}_q$, with $x^2+x+a$ irreducible over $\mathbb{F}_q$, is at most
\[\sum_{\substack{1 \le r < f\\r \mid f}} 2^{r-1} < 2^{f-1}.\]
Therefore, among the $2^{f-1}$ elements $t \in \mathbb{F}_q^\times$ such that $x^2+x+t$ is irreducible, there exists at least one such that $\langle t \rangle = \mathbb{F}_q$, and hence $t$ satisfies (i).

Now, if $q$ is odd, then since $s$ is not a square in $\mathbb{F}_q$, exactly one of $1+s$ and $s(1+s) = s^2+s$ is a square (these are both nonzero as $q > 3$). Thus some $t \in \{1,s\}$ satisfies (ii). If, in addition, $q \equiv 3 \pmod 4$, then it follows from \cite[Theorem 3.1]{circle} and a simple counting argument that the number of squares in the set $\{c^2+1 \mid c \in \mathbb{F}_{q}^\times\}$ is $(q-3)/4$, which is nonzero since $q > 3$. Notice that $f$ is odd, and so $\langle m^2 \rangle = \langle m \rangle$ for all $m \in \mathbb{F}_{q}$. Therefore, the number of squares in $\mathbb{F}_{q}^\times$ that lie in a proper subfield of $\mathbb{F}_{q} = \mathbb{F}_{p^{f}}$ is at most
\[\sum_{\substack{1 \le r < f\\r \mid f}} (p^r-1)/2 < (p^{f-1}-1)/2 < (p^{f}-3)/4 = (q-3)/4.\]
Thus there exists $t \in \mathbb{F}_{q}^\times$ such that $t^2+1$ is a square, and such that $\langle t^2 \rangle = \langle t \rangle = \mathbb{F}_q$. Note finally that $-1$ has no square root in $\mathbb{F}_q$ since $q \equiv 3 \pmod 4$, and so $t$ satisfies (iii).
\end{proof}

\begin{prop}
\label{prop:basesizeq0}
Let $G$ be an almost simple primitive group with socle $G_0 = \LL_2(q)$ and point stabiliser $H$ of type $\GL_2(q_0)$, with $q = q_0^2$ for some $q_0 \ge 3$. If $G = \PSigmaL_2(9)$, then $b(G) = 4$, and otherwise $b(G) = 3$.
\end{prop}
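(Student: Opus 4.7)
My plan is to establish $b(G) \ge 3$ via a counting argument, construct an explicit $3$-element base using Lemma~\ref{lem:fieldgen}, and dispatch the exceptional case $G = \PSigmaL_2(9)$ computationally. To begin, I would identify $H_0 := H \cap G_0$ with $\PGL_2(q_0)$, realised as the setwise stabiliser in $G_0 = \LL_2(q)$ of the Baer subline $L_1 = \mathbb{P}^1(\mathbb{F}_{q_0}) \subset \mathbb{P}^1(\mathbb{F}_q)$. Since $G_0 \le G$ we have $b(G) \ge b(G_0)$, so it suffices to show $b(G_0) \ge 3$. For $q$ odd, the inequality $|G_0|/|H_0| = q_0(q_0^2+1)/2 < q_0(q_0^2-1) = |H_0|$ (valid for all $q_0 \ge 3$) implies $|G_0| < |H_0|^2$, which rules out any double coset of size $|H_0|^2$ in $H_0 \backslash G_0 / H_0$ and hence any regular suborbit, giving $b(G_0) \ge 3$. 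For $q$ even the counting inequality fails, and I would instead perform a case analysis on $|L_1 \cap L_2|$ for distinct Baer sublines, observing that a non-trivial common setwise stabiliser always arises, as either a torus (when $|L_1 \cap L_2| = 2$) or a unipotent subgroup (when $|L_1 \cap L_2| = 1$), with the disjoint case handled by an auxiliary argument or by citing \cite{B_class}.

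Next, for $G \ne \PSigmaL_2(9)$, I would construct a $3$-base using Lemma~\ref{lem:fieldgen}. Let $s$ be a primitive element of $\mathbb{F}_q$ and take $t \in \mathbb{F}_q^\times$ as furnished by the applicable part of the lemma. Consider
\begin{equation*}
g_2 = \mathrm{diag}(s^2, 1), \qquad g_3 = \begin{pmatrix} 1 & t \\ 0 & 1 \end{pmatrix}
\end{equation*}
in $\PGL_2(q)$, rescaled to represent elements of $G_0$ as required. An entrywise computation analogous to that in the proof of Proposition~\ref{p:diag_star_psl2} shows that $H \cap H^{g_2}$ is contained in the normaliser in $H$ of the diagonal torus $T \cong C_{q_0-1}$ of $H_0$: its $G_0$-part is exactly $T$, while each field automorphism $\phi \in G \setminus \PGL_2(q)$ contributes at most a single non-identity coset of $T$, which together with $T$ forms a dihedral group. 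The condition $t \not\in \mathbb{F}_{q_0}$, implied by $\langle t \rangle = \mathbb{F}_q$ or $\langle t^2/s \rangle = \mathbb{F}_q$ in Lemma~\ref{lem:fieldgen}, forces $L_3 := g_3 \cdot L_1$ to contain exactly one of the two $T$-fixed points of $\mathbb{P}^1(\mathbb{F}_q)$, so no non-identity element of $T$ preserves $L_3$. The stronger subfield-generation condition in the lemma then guarantees $t^{p^k} - t \not\in \mathbb{F}_{q_0}$ for every non-trivial Galois exponent, ruling out each remaining coset as a stabiliser of $L_3$; the squareness conditions on $t^2+s$ and $t^2+1$ are used to ensure that the constructed matrices lie in $\PSL_2(q)$ when required.

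For $G = \PSigmaL_2(9) \cong M_{10}$, I would verify $b(G) = 4$ in {\sc Magma} by checking that no $3$-subset of the $15$-point coset space has trivial pointwise stabiliser, and then exhibiting an explicit $4$-base; the obstruction here is that the diagonal--field outer automorphism realising $\PSigmaL_2(9)$ stabilises enough structure on every prospective $3$-set to prevent the pointwise stabiliser from being trivial. The main technical obstacle of the proof will be the uniform verification in the second step across all intermediate groups $G_0 \le G \le \PGammaL_2(q)$, especially when $G$ contains higher-order field automorphisms; the conditions built into Lemma~\ref{lem:fieldgen} are precisely tailored to control this interaction, but the bookkeeping over all possible Galois exponents $p^k$ is where the most care is required.
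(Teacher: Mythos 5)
Your route is genuinely different from the paper's — you work directly with Baer sublines of $\mathbb{P}^1(\mathbb{F}_q)$, whereas the paper transfers the action to nondegenerate $1$-spaces of a minus-type quadratic space via $\LL_2(q)\cong\Omega_4^-(q_0)$ and writes down three explicit vectors — but the central step of your construction has two concrete failures. First, you apply Lemma~\ref{lem:fieldgen} to $\mathbb{F}_q$ and assert that $\langle t^2/s\rangle=\mathbb{F}_q$ implies $t\notin\mathbb{F}_{q_0}$. It does not: the proof of the lemma only produces $t\in\{1,s\}$ when $q$ is odd, and $t=1$ satisfies $\langle t^2/s\rangle=\langle s^{-1}\rangle=\mathbb{F}_q$ while lying in $\mathbb{F}_{q_0}$, in which case your $g_3$ lies in $H$, $L_3=L_1$, and the ``triple'' is not even a $3$-set. (The paper applies the lemma to $\mathbb{F}_{q_0}$, not to $\mathbb{F}_q$.) Second, and more seriously, your claim that $\langle t\rangle=\mathbb{F}_q$ guarantees $t^{p^k}-t\notin\mathbb{F}_{q_0}$ for every nontrivial Galois exponent is false exactly where it matters, namely at the involutory field automorphism $k=f/2$: in characteristic $2$ one has $(t^{q_0}+t)^{q_0}=t^{q_0^2}+t^{q_0}=t+t^{q_0}$, so $t^{q_0}-t\in\mathbb{F}_{q_0}$ for \emph{every} $t\in\mathbb{F}_q$. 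The coset of $T$ coming from the involutory field automorphism is precisely the one whose interaction with the subfield subgroup causes all the difficulty (it is why $\PSigmaL_2(9)$ is exceptional, and why Subcase 4(b) of Theorem~\ref{t:PSL2_semi} needs separate treatment), so this gap is not cosmetic; no choice of $t$ can make your stated condition hold, and a different invariant is needed to exclude that coset.

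Two smaller points. Your lower bound is fine for $q$ odd, but for $q$ even it is left to ``an auxiliary argument''; the absence of regular suborbits is already in the literature (the paper simply cites \cite{FI_subdeg}), so you should cite it rather than sketch a case analysis whose disjoint-sublines case you do not actually handle. Also, $\PSigmaL_2(9)\cong S_6$, not $M_{10}$: the group generated by $\LL_2(9)$ and the product of the diagonal and field automorphisms is $M_{10}$, which is the group your parenthetical description actually refers to. The computational verification of $b(G)=4$ for the true $\PSigmaL_2(9)$ on $15$ points is of course still fine.
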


\begin{proof}
It is straightforward to use {\sc Magma} to calculate $b(G)$ in the case $q_0 = 3$. Assume therefore that $q_0 \ge 4$. By \cite[p.~354]{FI_subdeg}, $G_0$ has no regular suborbits in its action on the right cosets of $H\cap G_0$. Therefore, $b(G) \geqs b(G_0)\geqs 3$. Additionally, the maximality of $H$ implies that $G\leqs \PSigmaL_2(q)$. To complete the proof, we will assume that $G = \PSigmaL_2(q)$ and show that $b(G) = 3$. We shall write $q = p^f$, with $p$ prime.

It will be convenient to identify the action of $G$ on the right cosets of $H$ with an equivalent action on certain $1$-dimensional subspaces of $\mathbb{F}_{q_0}^4$, corresponding to the isomorphism $G_0 \cong \Omega^-_4(q_0) \cong \mathrm{P}\Omega^-_4(q_0)$ (see \cite[Proposition 2.9.1]{kleidmanliebeck}). To define this action, let $s$ be a primitive element of $\mathbb{F}_{q_0}$, and let $Q$ be a non-degenerate quadratic form of minus type on $V:=\mathbb{F}_{q_0}^4$, with polar form $\beta$. By \cite[Propositions 1.5.39 \& 1.5.42]{BHRD} and \cite[Proposition 2.5.12]{kleidmanliebeck}, there exists a basis $\{e_1,e_2,e_3,e_4\}$ for $V$ such that the matrix of $Q$ (defined so that the $(i,j)$ entry is equal to $\beta(e_i,e_j)$ if $i < j$, to $Q(e_i)$ if $i = j$, and to $0$ otherwise) is
\[
M_Q := \begin{pmatrix}
    0&0&0&1\\
    0 & 1 & 1& 0\\
    0&0& \zeta &0\\
    0&0&0&0\\
\end{pmatrix},
\]
if $q$ is even and 
\[
M_Q := \begin{pmatrix}
    s/2&0&0&0\\
    0 & 1/2 & 0& 0\\
    0&0& 1/2 &0\\
    0&0&0&1/2\\
\end{pmatrix},
\]
if $q$ is odd, where $\zeta$ is an arbitrary element of $\mathbb{F}_{q_0}^\times$ such that the polynomial $x^2+x+\zeta$ is irreducible over $\mathbb{F}_{q_0}$. By Lemma~\ref{lem:fieldgen}, we may assume that $\langle \zeta \rangle = \mathbb{F}_{q_0}$. The Gram matrix $M_\beta$ of $\beta$ is equal to $M_Q + M_Q^T$.

Next, define $\sigma : V \to V$ by 
\[
\sigma: \sum_{i=1}^4 a_i e_i \mapsto a_3^2\zeta e_2  + \sum_{i=1}^4 a_i^2e_i,
\]
if $q$ is even, and
\[
\sigma: \sum_{i=1}^4 a_i e_i \mapsto a_1^p s^{(p-1)/2} e_1 + \sum_{i=2}^4 a_i^pe_i,
\]
if $q$ is odd. Hence for each positive integer $k$, the map $\sigma^k$ is defined by
\[
\sigma^k: \sum_{i=1}^4 a_i e_i \mapsto a_3^{2^k}\sum_{j=0}^{k-1} \zeta^{2^j} e_2  + \sum_{i=1}^4 a_i^{2^k}e_i,
\]
if $q$ is even, and
\[
\sigma^k: \sum_{i=1}^4 a_i e_i \mapsto a_1^{p^k} s^{(p^k-1)/2} e_1 + \sum_{i=2}^4 a_i^{p^k}e_i,
\] 
if $q$ is odd.
Additionally, $|\sigma| = f$, and $\sigma$ is induced by an automorphism of the simple derived subgroup $\Omega^-_4(q_0)$ of the isometry group $\mathrm{GO}^-_4(q_0)$ of $Q$ (see \cite[pp.~58--59]{BG_classical}). Slightly abusing notation and denoting this automorphism by $\sigma$, we may identify $G$ with $\langle \Omega^-_4(q_0), \sigma \rangle/\langle -I \rangle$, where $I$ is the $4 \times 4$ identity matrix (cf.~\cite[Tables 8.1 \& 8.17]{BHRD}). Note that $L:=\langle \Omega^-_4(q_0), \sigma \rangle = \langle \sigma \rangle \mathrm{GO}^-_4(q_0)$, and that $\langle \sigma \rangle \cap \mathrm{GO}^-_4(q_0)$ is generated by $\sigma^{f/2}$. Let $Y$ be a one-dimensional subspace of $V$ and $y \in Y \setminus \{0\}$. We write $Y \in \Delta$ if $Q(y)$ is a non-zero square in $\mathbb{F}_{q_0}$, and $Y \in \overline{\Delta}$ if $Q(y)$ is a non-square. The action of $G$ on the right cosets of $H$ is equivalent to its action on $\Delta$, and also to its action on $\overline{\Delta}$ if $q$ is odd.

Suppose now that $q$ is even, and let $u:=e_2$, $v:=e_1 + e_2$ and $w:= e_3$. Then $Q(u) = Q(v) = 1$ and $Q(w) = \zeta$, and so $\langle u \rangle, \langle v \rangle, \langle w \rangle \in \Delta$. By \cite[Lemma 1.5.21]{BHRD}, a matrix $A \in \GL_4(q_0)$ lies in $\mathrm{GO}^-_4(q_0)$ if and only the diagonal entries of $A M_Q A^T$ are equal to the corresponding diagonal entries of $M_Q$, and $A M_\beta A^T =M_\beta$. Therefore, the pointwise stabiliser $X$ of $\{\langle u \rangle, \langle v \rangle\}$ in $L$ consists of all elements of the form
\[
\sigma^k  \begin{pmatrix}
     1&0&0&0\\
     0&1 &  0& 0\\
    a&b& 1 &0\\
    a^2&a&0&1\\
\end{pmatrix},
\]
where $a\in \mathbb{F}_{q_0}^{\times}$, $b \in \{0,1\}$ and $k \in \{0,\ldots,f/2-1\}$. We also observe that an element of $X$ fixes $\langle w \rangle$ if and only if $a = 0$ and $\sum_{j=0}^{k-1} \zeta^{2^j} = b$. Since $b = b^2$, this implies that
\[\sum_{j=0}^{k-1} \zeta^{2^j} = {\left(\sum_{j=0}^{k-1} \zeta^{2^j}\right)\!\!}^2 = \sum_{j=0}^{k-1} (\zeta^{2^j})^2 = \sum_{j=1}^{k} \zeta^{2^j},\]
and hence $\zeta = \zeta^{2^k}$. Since $\langle \zeta \rangle = \mathbb{F}_{q_0}$ by our assumption above, it follows that $k = 0$, and so $b = 0$. Thus $\{\langle u \rangle, \langle v \rangle, \langle w \rangle\}$ is a base for $G$, and hence $b(G) = 3$.

Next, suppose that $q$ is odd, let $\delta \in \{1,3\}$ such that $q_0 \equiv \delta \pmod 4$, and let $\gamma_1:=s$ and $\gamma_3 := 1$. By Lemma~\ref{lem:fieldgen}, there exists $t \in \mathbb{F}_{q_0}^\times$ such that $t^2+\gamma_\delta$ is a (non-zero) square in $\mathbb{F}_{q_0}$ and $\langle t^2/\gamma_\delta \rangle = \mathbb{F}_{q_0}$. Define $u := e_2$, $v := e_\delta+t e_2$ and $w:=e_2+ze_4+e_{4-\delta}$, where  $z:=s^{(q_0-2+\delta)/4}$. Then $Q(v) = (t^2+\gamma_\delta)/2$, and $Q(w) = (1+z^2+\gamma_{4-\delta})/2 = 1/2 = Q(u)$. Thus $\langle u \rangle, \langle v \rangle, \langle w \rangle \in \Gamma$ for some $\Gamma \in \{\Delta,\overline{\Delta}\}$. In this case, a matrix $A \in \GL_4(q_0)$ lies in $\mathrm{GO}^-_4(q_0)$ if and only if $A M_\beta A^T =M_\beta$. Using the fact that $\langle t^2/\gamma_\delta \rangle = \mathbb{F}_{q_0}$, we deduce that the pointwise stabiliser of $\{\la u \ra, \la v \ra\}$ in $L$ lies in $\mathrm{GO}^-_4(q_0)$, and consists of all matrices of the form
\[
\begin{pmatrix}
     a&0&0&0\\
    0 &a &  0& 0\\
    0&0& b_1 &c_1\\
    0&0&\varepsilon c_1&-\varepsilon b_1\\
\end{pmatrix},
\] 
if $\delta = 1$,
or
\[
\begin{pmatrix}
     b_2&0&0&c_2\\
    0 &a &  0& 0\\
    0&0& a &0\\
    -\varepsilon c_2 s^{-1}&0&0&\varepsilon b_2\\
\end{pmatrix},
\]
if $\delta = 3$, where
$a,\varepsilon \in \{\pm 1\}$ and $b_1^2+c_1^2 = b_2^2 + c_2^2 s^{-1} = 1$. It is now straightforward to show that the pointwise stabiliser of $\{\la u \ra, \la v \ra, \la w \ra\}$ in $L$ consists of scalar matrices. Therefore, $b(G) = 3$.
\end{proof}

By combining Proposition~\ref{prop:basesizeq0} with results from the literature, we are able to determine the exact base size of every primitive group with socle $\LL_2(q)$. Here we exclude the groups with socle $\LL_2(4)$, as $\LL_2(4)\cong \LL_2(5)$.

\begin{thm}
    \label{t:PSL2_b(G)}
    Let $G$ be a primitive group with socle $G_0=\LL_2(q)$, $q\geqs 5$ and point stabiliser $H$. Then $b(G) > 2$ if and only if one of the following holds.
    \begin{itemize}\addtolength{\itemsep}{0.2\baselineskip}
		\item[{\rm (i)}] $H$ is of type $P_1$, in which case $b(G)\in\{3,4\}$. Furthermore, $b(G) = 3$ if and only if either $G\leqs\PGL_2(q)$, or $|G:G_0| = 2$ and $G \not\le \PSigmaL_2(q)$.
        \item[{\rm (ii)}] $H$ is of type $\GL_1(q)\wr S_2$ and $\PGL_2(q) < G$, in which case $b(G) = 3$.
        \item[{\rm (iii)}] $H$ is of type $\GL_1(q^2)$ and $\PGL_2(q)\leqs G$, in which case $b(G) = 3$.
        \item[{\rm (iv)}] $H$ is of type $\GL_2(q_0)$ with $q = q_0^2 \ge 9$, in which case $b(G) \in \{3,4\}$. Furthermore, $b(G) = 4$ if and only if $G = \PSigmaL_2(9)$.
        \item[{\rm (v)}] $(q,H) \in \{(5,A_4),(7,S_4), (11,A_5), (19,A_5)\}$ and $b(G) = 3$, $(q,H)\in\{(5,S_4),(9,A_5)\}$ and $b(G) = 4$, or $(q,H) = (9,S_5)$ and $b(G) = 5$.
        
	\end{itemize}
\end{thm}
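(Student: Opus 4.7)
The plan is to argue by cases on the type of the point stabiliser $H$. By Dickson's classification of the maximal subgroups of $\LL_2(q)$ (see, for example, \cite[Tables 8.1--8.2]{BHRD}), $H$ is either one of the types listed in parts (i)--(iv), a subfield subgroup of type $\PSL_2(q_0)$ with $q = q_0^r$ for some odd prime $r$, or one of the sporadic stabilisers $A_4$, $S_4$, $A_5$ appearing in part (v). I would treat each type in turn, leveraging prior results wherever possible.

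For types (ii) and (iii), together with the sporadic stabilisers of (v) and the type $P_1$ of (i), the point stabiliser is soluble, and I would invoke \cite{B_sol}, which classifies the almost simple primitive groups with soluble point stabilisers satisfying $b(G) \geqs 3$ and also records the exact base sizes. This yields precisely the conditions in the statement: $\PGL_2(q) < G$ in (ii), $\PGL_2(q) \leqs G$ in (iii), and the explicit small exceptions of (v). The remaining subfield case $\PSL_2(q_0)$ with $r$ odd prime does not contribute to (i)--(v): a routine probabilistic estimate $\widehat{Q}(G,2) < 1$, via fixed point ratio bounds drawn from \cite{B_class}, shows $b(G) = 2$ in that case.

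For the $P_1$ action on $\PG^1(q)$, the key observation is that $\PGL_2(q)$ is sharply $3$-transitive, so the pointwise stabiliser of the standard triple $\{0,1,\infty\}$ in $\PGammaL_2(q)$ is exactly $\langle \phi\rangle$, where $\phi$ is the standard Frobenius. Hence the pointwise stabiliser of this triple in $G$ is $G \cap \langle\phi\rangle$. This intersection is trivial when $G \leqs \PGL_2(q)$, and also when $|G:G_0| = 2$ and $G \not\leqs \PSigmaL_2(q)$: in the latter case the unique non-trivial coset of $G_0$ in $G$ has the form $G_0\delta\phi^k$ for some $k$ (with $\delta$ a diagonal automorphism), and a short computation using $\Out(\LL_2(q)) = \langle\delta\rangle \times \langle\phi\rangle$ shows this coset contains no pure field element. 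In both cases $b(G) = 3$. In all remaining cases $G \cap \langle\phi\rangle$ is non-trivial, forcing $b(G) \geqs 4$; adjoining a fourth point $a \in \mathbb{F}_q$ whose $\langle\phi\rangle$-orbit has maximal length (for example, a generator of $\mathbb{F}_q^{\times}$) then gives $b(G) = 4$. Finally, the subfield case with $q = q_0^2 \geqs 9$ is precisely Proposition \ref{prop:basesizeq0}, which gives $b(G) = 3$ except for $G = \PSigmaL_2(9)$, where a direct \textsc{Magma} computation yields $b(G) = 4$.

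The main obstacle is the careful book-keeping in case (i) to distinguish the various intermediate subgroups of $\PGammaL_2(q)$ according to whether their outer cosets contain pure field automorphisms; this requires an explicit determination of $G \cap \langle\phi\rangle$ for each possible $G$, which is elementary but requires attention when $q$ is odd and both the diagonal and field outer generators are present. Any remaining exceptionally small values of $q$ not covered by the general arguments can be resolved by direct computation in \textsc{Magma}.
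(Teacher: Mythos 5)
Your overall strategy matches the paper's: the paper's proof is almost entirely citation-based, quoting \cite[Theorem 2]{B_sol} for soluble stabilisers, \cite[Tables 1 \& 3]{B_class} for stabilisers of type $A_5$ and for subfield subgroups of odd prime degree, and Proposition~\ref{prop:basesizeq0} for the degree-two subfield case. Your treatment of the degree-two subfield case is identical, and your direct argument for the $P_1$ action via sharp $3$-transitivity of $\PGL_2(q)$ is a reasonable self-contained alternative to simply citing \cite{B_sol}. However, there are two points to flag.

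First, $A_5$ and $S_5$ are not soluble, so \cite{B_sol} does not cover the entries $(11,A_5)$, $(19,A_5)$, $(9,A_5)$ and $(9,S_5)$ of part (v); your claim that all the stabilisers in (v) are soluble is false. The paper handles type $A_5$ via \cite[Tables 1 \& 3]{B_class} (and these finitely many small cases could alternatively be checked directly, as you suggest elsewhere), but as written your case division leaves them unjustified. Second, your $P_1$ argument has a gap: showing that $G \cap \langle\phi\rangle \ne 1$ only shows that the particular triple $\{0,1,\infty\}$ is not a base. To conclude $b(G) \geqs 4$ you must show that \emph{no} triple of distinct points is a base, i.e.~that $G \cap \langle\phi\rangle^g \ne 1$ for every $g \in \PGL_2(q)$ (note that when $q$ is odd and $G \leqs \PSigmaL_2(q)$, the ordered triples form two $G$-orbits, so you cannot reduce to the standard triple by transitivity of $G$). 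This does hold --- since $\phi$ acts trivially on $\PGL_2(q)/\LL_2(q)$, one has $g^{-1}\phi^k g \in \LL_2(q)\phi^k$, whence $|G \cap \langle\phi\rangle^g| = |G \cap \langle\phi\rangle|$ for all $g$ --- but the step is missing and needs to be supplied for the ``only if'' direction of your criterion in case (i).
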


\begin{proof}
    If $H$ is soluble, then \cite[Theorem 2]{B_sol} gives the precise base size of $G$. This includes the cases where $H$ is of type $P_1$, $\GL_1(q)\wr S_2$, $\GL_1(q^2)$ or $2_-^{1+2}.\Omega_2^-(2)$. For the groups with $H$ of type $A_5$, or of type $\GL_2(q_0)$ with $q_0^k = q$ for some odd prime $k$, we deduce the base size from \cite[Tables 1 \& 3]{B_class}. Suppose finally that $H$ is of type $\GL_2(q_0)$ with $q_0^2 = q$, noting that if $q_0 = 2$, then $H$ is considered as a group of type $\GL_1(q)\wr S_2$. Here, we obtain $b(G)$ from Proposition~\ref{prop:basesizeq0}.
\end{proof}
We are now able to classify the almost simple primitive groups with socle $\LL_2(q)$ that are semi-Frobenius. This establishes Theorem \ref{thm:PSL2_semi} stated in Section \ref{s:intro}.

\begin{thm}
    \label{t:PSL2_semi}
    Let $G \leq \mathrm{Sym}(\Omega)$ be an almost simple primitive group with socle $G_0 = \LL_2(q)$ and point stabiliser $H$. Then $G$ is semi-Frobenius if and only if one of the following holds:
    \begin{itemize}\addtolength{\itemsep}{0.2\baselineskip}
		\item[{\rm (i)}] $H$ is of type $P_1$;
        \item[{\rm (ii)}] $H$ is of type $\GL_1(q)\wr S_2$ and $\PGL_2(q) < G$;
        \item[{\rm (iii)}] $H$ is of type $\GL_1(q^2)$ and $\PGL_2(q)\leqs G$;
		\item[{\rm (iv)}] $H$ is of type $\GL_2(q_0)$ with $q = q_0^2 \ge 9$, and either $q_0 = 3$ or $|G:G_0|$ is odd; or
        \item[{\rm (v)}] $(q,H) \in \{(5,A_4),(5,S_4), (7,S_4), (9,A_5), (9,S_5), (11,A_5), (19,A_5)\}$. 
	\end{itemize}
 Equivalently, $G$ is not semi-Frobenius if and only if either $b(G) = 2$, or $H$ is of type $\GL_2(q^{1/2})$ with $q \ge 16$ and $|G:G_0|$ even.
\end{thm}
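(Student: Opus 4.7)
The plan is to use Theorem~\ref{t:PSL2_b(G)} to split the argument by the structure of $H$, then verify semi-Frobenius (or its failure) in each case. By that theorem, an almost simple primitive group with socle $\LL_2(q)$ satisfies $b(G) \geq 3$ precisely when $H$ appears in cases (i)--(v) of that theorem; all other such groups have $b(G) = 2$. If $b(G) = 2$, then semi-Frobenius would mean $\Sigma(G)$ is complete, i.e.\ $G$ is a Frobenius group. But no almost simple primitive group is Frobenius (the socle cannot act regularly on $\Omega$), so $G$ is not semi-Frobenius in this case. This settles the $b(G) = 2$ half of the equivalent statement.

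For case (i), $G$ acts $2$-transitively on the $q+1$ projective points, so Lemma~\ref{l:pre_complete_2-trans} gives that $\Sigma(G)$ is complete, yielding semi-Frobenius. For case (v), the seven exceptional pairs $(q,H)$ admit direct \textsc{Magma} verification by the method of Proposition~\ref{prop:sporsemi}. For cases (ii) and (iii), I would identify $\Omega$ with a $G_0$-invariant set of pair-configurations on the projective line: unordered pairs in $\mathbb{P}^1(\mathbb{F}_q)$ in case (ii), and Galois-conjugate pairs in $\mathbb{P}^1(\mathbb{F}_{q^2}) \setminus \mathbb{P}^1(\mathbb{F}_q)$ in case (iii). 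Given any two such pairs $A,B$, the $3$-transitivity of $\PGL_2(q) \leq G$ on the projective line lets me choose a third pair $C$ in generic position relative to $A \cup B$, and a short calculation using sharp $3$-transitivity (together with $\PGL_2(q) \leq G$, which precludes any residual field automorphism from pointwise fixing three generic pairs) will show that $\{A,B,C\}$ is a base. Thus $G$ is semi-Frobenius in these cases.

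The difficult case is (iv), where $H$ is of type $\GL_2(q_0)$ with $q = q_0^2 \geq 9$. Here I would leverage the identification $G_0 \cong \mathrm{P}\Omega^-_4(q_0)$ from Proposition~\ref{prop:basesizeq0}, viewing $\Omega$ as a set of one-dimensional non-degenerate subspaces of $\mathbb{F}_{q_0}^4$. The subcase $q_0 = 3$ (so $G_0 = A_6$ acting on $15$ points) is small enough for direct computer verification over the finite list of overgroups of $G_0$ in $\Aut(G_0)$. When $q_0 \geq 4$ and $|G:G_0|$ is odd, I would extend the base construction $\{\langle u \rangle, \langle v \rangle, \langle w \rangle\}$ of Proposition~\ref{prop:basesizeq0} to begin from an arbitrary pair, applying a $G_0$-element to normalise it to $\{\langle u \rangle, \langle v \rangle\}$ and then selecting $\langle w \rangle$ as before; the key point is that an odd-order field automorphism cannot both centralise the pair and stabilise $\langle w \rangle$, so the triple remains a base in $G$. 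When $q_0 \geq 4$ and $|G:G_0|$ is even, I would exhibit an obstructing pair $\{X,Y\}$ whose subspaces lie in the fixed-point set of the involutory automorphism $\sigma^{f_0}$ (where $q_0 = p^{f_0}$), and show that every candidate third point $Z$ forces a non-trivial element of $G_{X,Y,Z}$, so no base of size $3$ through $\{X,Y\}$ exists.

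I expect the main obstacle to be the negative direction of case (iv) when $q_0 \geq 4$ and $|G:G_0|$ is even: constructing an obstructing pair uniformly across all such $G$, and verifying that no third point succeeds, will require a careful analysis of the fixed-point structure of the involutory automorphism on the suborbits of $G_X$ on $\Omega$, and of how this interacts with the finer $G_0$-orbit decomposition on pairs. The positive direction in case (iv) for $|G:G_0|$ odd is also nontrivial, since it requires ruling out that odd-order field automorphisms combine with the diagonal Klein-type symmetries of the $\mathrm{P}\Omega^-_4(q_0)$-action to obstruct the base extension.
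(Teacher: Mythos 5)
Your overall architecture (reduce to the base-size classification in Theorem~\ref{t:PSL2_b(G)}, then treat each type of $H$ separately) is the same as the paper's, and your handling of the $b(G)=2$ case, case (i) via $2$-transitivity, and case (v) by computation all match. However, there is a genuine gap in your plan for case (iv) when $|G:G_0|$ is odd. You propose to take an arbitrary pair of points and apply a $G_0$-element "to normalise it to $\{\langle u\rangle,\langle v\rangle\}$" before appending the third point $\langle w\rangle$ from Proposition~\ref{prop:basesizeq0}. This cannot work: $G$ is very far from $2$-transitive on $\Omega$ here. The suborbit structure (from Farad\v{z}ev--Ivanov, as recorded in the paper) shows $G_\alpha$ has roughly $q_0$ orbits, with point stabilisers $C_p^{f/2}.o$, $D_{2(q_0+\e)}.o$, and many copies of $C_{q_0\pm 1}.i$; a single normalised pair covers only one of these orbits, whereas semi-Frobenius requires extending \emph{every} pair to a base. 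The paper instead argues suborbit by suborbit: it first proves (by a number-theoretic argument with a primitive element) that suborbits with stabilisers exactly $C_{q_0+1}$ and $C_{q_0-1}$ always occur, and then for each $\beta$ chooses $\gamma$ in one of these so that $H_\beta\cap H_\gamma$ is trivial or generated by a single involution that can be conjugated away. Your plan has no mechanism for this, so the positive direction of (iv)(a) does not go through as written.

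Two further points. First, for the negative direction of (iv)(b) you only gesture at an "obstructing pair in the fixed-point set of the involutory automorphism" and explicitly leave the construction open; the paper's actual obstruction is different in character: it exhibits an explicit pair $\{\langle v_1\rangle,\langle v_2\rangle\}$ in the orthogonal model and, for \emph{every} vector $u$ outside $\langle v_1,v_2\rangle$, writes down a non-scalar isometry of $\mathrm{GO}_4^-(q_0)$ fixing $u$ and fixing $\langle v_1,v_2\rangle$ pointwise, so no third point can complete a base. Second, in cases (ii) and (iii) your "generic position plus sharp $3$-transitivity" sketch underestimates the delicacy: the pointwise stabiliser of two pairs in $\PGammaL_2(q)$ contains a coset element $\sigma^j\,\mathrm{diag}(\cdot,\cdot)$ for every $j$, and one must also exclude elements that swap the two points of the third pair. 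The paper's choice of third pair depends on a primitive element $\mu$ and the ratio $\lambda_1\lambda_2^{-1}$ precisely to rule out these swaps; a purely generic counting argument would at minimum need a separate treatment of small $q$. These are the places where your proposal would need substantial additional work to become a proof.
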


\begin{proof}
    First note that if $b(G) = 2$, then $G$ is not Frobenius, and therefore not semi-Frobenius. Thus, we shall assume that $b(G) > 2$, and we shall use Theorem~\ref{t:PSL2_b(G)} without further reference to determine $b(G)$. We divide the proof into several cases, corresponding to the possible types of $H$. Throughout, we  write $q = p^f$, with $p$ prime.

    \vs
	
	\noindent \emph{Case 1. $H$ is of type $P_1$.}

    \vs

    Since $G$ is $2$-transitive, $\Sigma(G)$ is complete.

    \vs
	
	\noindent \emph{Case 2. $H$ is of type $\GL_1(q)\wr S_2$.}

    \vs

    Here, $b(G) > 2$ if and only if $\PGL_2(q) < G$, in which case $b(G) = 3$. Assume therefore that $\PGL_2(q) < G$. We may identify $\Omega$ 
    with the set of distinct pairs of $1$-dimensional subspaces of $\mathbb{F}_q^2$, and as shown in the proof of \cite[Lemma 4.7]{B_sol}, $\{\a,\b,\g\}$ is a base for $G$, where
    \begin{equation*}
        \a := \{\la e_1\ra, \la e_2\ra\},\ \b := \{\la e_1\ra,\la e_1+e_2\ra\}, \text{ and } \g := \{\la e_1\ra,\la e_1+\mu e_2\ra\},
    \end{equation*}
    with $\{e_1,e_2\}$ an arbitrary basis for $\mathbb{F}_q^2$ and $\mu$ a generator of $\mathbb{F}_q^\times$. Thus $\{\a,\b\}$ is an edge in $\Sigma(G)$. Moreover, for each $\lambda \in \mathbb{F}_q^\times$, replacing $e_2$ by $\lambda e_2$ shows that $\a$ and $\{\la e_1\ra,\la e_1+\lambda e_2\ra\}$ are adjacent in $\Sigma(G)$. By symmetry, $\a$ is also adjacent to $\{\la e_2\ra,\la e_1+\lambda e_2\ra\}$. It remains to prove that, for all distinct  $\lambda_1, \lambda_2 \in \mathbb{F}_q^\times$, there exists a base for $G$ containing both $\a$ and $\delta := \{\la e_1+\lambda_1e_2\ra,\la e_1+\lambda_2 e_2\ra\}$. 

    We claim that there exists $\nu \in \{\mu,\mu^{-1}\}$ such that $\lambda_1 \lambda_2^{-1} \ne \nu^{p^j-1}$ for all $j \in \{1,\ldots,f\}$. Otherwise, since $\lambda_1 \ne \lambda_2$, there would exist $j,k \in \{1,\ldots,f-1\}$ such that $\mu^{p^j-1} = (\mu^{-1})^{p^k-1}$, i.e. $\mu^{p^j+p^k-2} = 1$. Since $p^j+p^k-2 < 2q-2$, this would imply that $p^j+p^k-2=q-1$. However, $1 \not\equiv 2 \pmod p$, and so our claim follows. Now, let $\theta:=\{\la e_1\ra, \la e_1+\lambda_1 \nu e_2\ra\}$. We will show that $\{\a,\theta,\delta\}$ is a base for $G$.
    
    The pointwise stabiliser of $\{\a,\theta\}$ in $\Gamma \mathrm{L}_2(q)$ consists of all elements of the form $g_{j,x} := \sigma^j \mathrm{diag}((\lambda_1 \nu)^{p^j-1}x, x)$, where $1\leq j \leq f$, $x \in \mathbb{F}_q^\times$, and
 $\sigma$ is the field automorphism such that $(c_1e_1+c_2e_2)^\sigma = c_1^pe_1+c_2^pe_2$ for all $c_1,c_2 \in \mathbb{F}_q$. Additionally, $g_{j,x}$ maps $U:=\la e_1+\lambda_1 e_2 \ra$ to $\la e_1 +  \lambda_1 (\nu^{-1})^{(p^j-1)} e_2 \ra$. Suppose now that $g_{j,x}$ also stabilises $\delta$. If $g_{j,x}$ swaps the two subspaces in $\delta$, then it follows that $\lambda_1 \lambda_2^{-1} = \nu^{p^j-1}$, contradicting the definition of $\nu$. Hence $g_{j,x}$ fixes $U$, and so $(\nu^{-1})^{p^j-1} = 1$, yielding $j = f$ and $g_{j,x} \in Z(\mathrm{GL}_2(q))$. Therefore, $\{\a, \theta,\delta\}$ is a base for $G$.

\vs
	
	\noindent \emph{Case 3. $H$ is of type $\GL_1(q^2)$.}

    \vs

In this case, $b(G) > 2$ if and only if $\PGL_2(q)\leqs G$, in which case $b(G) = 3$. {\sc Magma} computations show that $G$ is semi-Frobenius when $q = 4$, so we shall assume that $q > 4$. As in the proof of \cite[Lemma 4.8]{B_sol}, we identify $G_0$ with the unitary group $R :=\mathrm{U}_2(q)$, and $\Omega$ with the set of orthogonal pairs of non-degenerate 1-dimensional subspaces of the natural module for $R$ over $\mathbb{F}_{q^2}$.
More explicitly, fixing an orthonormal basis $\alpha:=\{u,v\}$ for this module and defining $\a := \{\la u\ra,\la v\ra\}$, it follows that 
\[
\Omega = \{\alpha\} \cup \{\tau_\zeta : \zeta \in \mathbb{F}_{q^2}^\times, \zeta^{q+1} \neq -1 \},
\]
where $\tau_\zeta := \{\la u+\zeta v\ra , \la u-\zeta^{-q} v \ra \}$. Fix a generator $\mu$ of $\mathbb{F}_{q^2}^\times$, and let $\zeta \in \mathbb{F}_{q^2}^\times$ with $\zeta^{q+1} \ne -1$. We shall prove by contradiction that if $-\mu^{-(q-1)} \in X:=\{\zeta^2,-\zeta^{-(q-1)}\}$, then $-(\mu^{-1})^{-(q-1)} \notin X$. First suppose that $-\mu^{-(q-1)} = -(\mu^{-1})^{-(q-1)}$. Then $1 = \mu^{2(q-1)}$, contradicting the fact that $|\mu| = q^2-1 > 2(q-1)$. Without loss of generality, we may therefore assume that $\zeta^2 = -\mu^{-(q-1)}$. Since $-1 = \mu^{(q^2-1)/(2,q-1)}$ and $\zeta^{q+1} \neq -1$, we deduce that $q \equiv 1 \pmod 4$ and $\zeta = \pm \mu^{(q-1)^2/4}$. It follows that $\zeta^{-(q-1)} = \mu^{-(q-1)^3/4}$. If $-(\mu^{-1})^{-(q-1)}$ lies in $X$, then it is equal to $-\zeta^{-(q-1)}$, and so $1 = \mu^{q-1+(q-1)^3/4}$. Hence $q-1+(q-1)^3/4 = k(q^2-1)$ for some positive integer $k$, and solving this cubic equation shows that $\sqrt{k^2+2k-1}$ is an integer. However, $k^2+2k-1 = (k+1)^2-2$ is not a square, a contradiction. We have therefore proved our claim.

Now, choose $\nu \in \{\mu,\mu^{-1}\}$ such that $-\nu^{-(q-1)} \notin X$. 
We shall show that $\{\alpha, \tau_\nu, \tau_\zeta\}$ is a base for $G$. The group $\Gamma \mathrm{U}_2(q)$, generated by $\GU_2(q)$ and field automorphisms, satisfies $G \le \Aut(\UU_2(q)) \cong \Gamma \mathrm{U}_2(q)/Z(\GU_2(q))$. Following the proof of \cite[Lemma 4.8]{B_sol}, we see that (since $q > 4$) the pointwise stabiliser of $\{\alpha,\tau_\nu\}$ in $\Gamma \mathrm{U}_2(q)$ is generated by $Z(\mathrm{GU}_2(q))$ along with 
\[
M := \begin{pmatrix}
    0 & -\nu^{-(q-1)}\\
    1 & 0\\
\end{pmatrix}.
\]
Note that $M^2 \in Z(\mathrm{GU}_2(q))$. Hence it suffices to prove that $M$ does not fix $\tau_\zeta$. It is easy to see that if $M$ fixes each of the two subspaces in $\tau_\zeta$, then $-\nu^{-(q-1)} = \zeta^2$, and if $M$ swaps these subspaces, then $-\nu^{-(q-1)} = -\zeta^{-(q-1)}$. However, $-\nu^{-(q-1)} \notin X$, and so neither of these cases occurs.  Therefore, $M$ does not fix $\tau_\zeta$, and so $\{\alpha, \tau_\nu, \tau_\zeta\}$ is a base for $G$.
It follows immediately that $G$ is semi-Frobenius.

\vs

	\noindent \emph{Case 4. $H$ is a subfield subgroup of type $\GL_2(q_0)$, where $q = q_0^k$ for some prime $k\geqs 2$.}

    \vs

If $k\geqs 3$ then $b(G) = 2$, so we only need to consider the case where $k = 2$, and the maximality of $H$ implies that $G\leqs \PSigmaL_2(q)$. This case requires a more refined treatment. Note that if $q_0 = 2$, then $H$ is considered as a group of type $\GL_1(q)\wr S_2$, and if $q_0 = 3$, then we obtain the result via {\sc Magma} calculations. We shall therefore assume that $q_0 \ge 4$, so that $b(G) = 3$, and separately consider two subcases, depending on the parity of $|G:G_0|$.

\vs
	
	\noindent \emph{Subcase 4(a). $o:=|G:G_0|$ is odd.}

    \vs

Let $\a \in \Omega$ be such that $H = G_\a$. We first note that, up to conjugacy in $H_0 := H\cap G_0$, the point stabilisers of $H_0$ are as follows (see \cite[p.~354]{FI_subdeg}):
    \begin{equation*}
        H_0,\ C_p^{f/2},\ C_{q_0+1}\ \mbox{($(q_0-2)/2$ copies)},\ C_{q_0-1}\ \mbox{($q_0/2$ copies)}
    \end{equation*}
    if $q$ is even, and
    \begin{equation*}
            H_0,\ D_{2(q_0+\e)},\ C_p^{f/2},\ C_{q_0+1}\ \mbox{($(q_0-4-\e)/4$ copies)},\
            C_{q_0-1}\ \mbox{($(q_0-2+\e)/4$ copies)}
    \end{equation*}
    if $q$ is odd, where $\e = \pm 1$ satisfies $q_0\equiv \e\pmod 4$.
    Now, for each $\delta \in \{1,-1\}$, let $Z_\delta$ be the subgroup of $\mathbb{F}_q^\times$ of order $q_0+\delta$. As discussed in \cite[p.~354]{FI_subdeg}, the fusion of $G_0$-suborbits of length $q_0(q_0-\delta)$ (i.e.~with point stabiliser $C_{q_0+\delta}$) under a field automorphism of $G_0$ corresponds to the fusion under the corresponding field automorphism of $\mathbb{F}_q$ of the sets $\{x, x^{-1}\}$, for elements $x \in \mathbb{F}_q^\times/Z_\delta$ of order at least $3$. It follows that the point stabilisers of $H = G_\a$ are isomorphic to the following groups, for certain divisors $i_+$ and $i_-$ of $o$:
    \begin{equation*}
        H,\ C_p^{f/2}.o,\        
        C_{q_0+1}.i_+,\ C_{q_0-1}.i_-,\ \mbox{and (if $q$ is odd and $q_0\equiv \e\ (\mathrm{mod}\ 4))$} \
        D_{2(q_0+\e)}.o.
    \end{equation*}
    
We now prove that $C_{q_0 + 1}$ and $C_{q_0 - 1}$ always appear as point stabilisers. It suffices to show that the primitive element $\mu$ of $\mathbb{F}_q$ satisfies $|Z_\delta \mu^{\langle \sigma \rangle}| = o$, where $\sigma$ is the automorphism of $\mathbb{F}_q$ of odd order $o$. Let $q_1$ be such that $q_1^o=q$ and hence $\mu^{\langle \sigma\rangle} = \{\mu^{q_1^m}\mid 0\leq m \leq o-1\}$. Now $|Z_\delta \mu^{\langle \sigma \rangle}| = o$ if and only if $\mu^{q_1^m-q_1^l}\notin Z_\delta$ for all $0\leqs  \ell < m\leqs o-1$. Suppose for a contradiction that $\mu^{q_1^m-q_1^\ell}\in Z_\delta$ for such $\ell$ and $m$. Then $\mu^{(q_1^m-q_1^\ell)(q_0+\delta)} = 1$, and so $(q-1)\mid (q_1^m-q_1^\ell)(q_0+\delta)$. Hence $(q_0-\delta)\mid q_1^\ell(q_1^{m-\ell}-1)$, which implies $(q_0-\delta)\mid (q_1^{m-\ell}-1)$. It follows easily that either $(q_0-\delta) = (q_1^{m-\ell}-1)$ or $(q_0-\delta) \mid (q_1^{m-\ell}q_0^{-1}-\delta)$. However, since $o$ is odd and $q_1^{m-1}q_0^{-1} < q_0$, neither case can occur, a contradiction. 

Next, let $\beta \in \Omega \setminus \{\alpha\}$, and choose $\gamma \in \Omega$ so that (up to isomorphism) the ordered pair $(H_\beta,H_\gamma)$ is one of $(C_p^{f/2}.o,C_{q_0+1})$, $(D_{2(q_0 + \varepsilon)}.o,C_{q_0 - \varepsilon})$ and $(C_{q_0 \pm 1}.i,C_{q_0 \mp 1})$, with $i$ a divisor of $o$. Since $H_\gamma \le G_0$, we observe that $H_\beta \cap H_\gamma = (H_\beta \cap G_0) \cap H_\gamma$. In particular, if $H_\beta \cong C_p^{f/2}.o$, then $H_\beta \cap H_\gamma = 1$, and so $\{\alpha,\beta,\gamma\}$ is a base for $G$. Thus $\{\alpha, \beta\}$ is an edge in $\Sigma(G)$. In the remaining two cases, either $H_\beta \cap H_\gamma = 1$ and $\{\alpha, \beta\}$ is again an edge, or $H_\beta \cap H_\gamma = \langle g \rangle$, where $g$ is the unique involution of $H_\g$. As $H_\b$ is a core-free subgroup of $H$, there exists $h\in H$ such that $g\notin H_\b^h$, and hence $g\notin H_{\b^h}\cap H_\g$, which yields $H_{\b^h}\cap H_\g = 1$. This shows that $\{\a,\b,\g^{h^{-1}}\}$ is a base for $G$, and thus $\Sigma(G)$ is complete.

\vs
	
	\noindent \emph{Subcase 4(b). $|G:G_0|$ is even.}

    \vs

To show that $\Sigma(G)$ is not complete, it suffices to consider the case where $G$ is generated by $G_0$ and an involutory field automorphism. Similarly to the proof of Proposition~\ref{prop:basesizeq0}, we identify $G$ with the orthogonal group $\PGO^-_4(q_0)$, where $\mathrm{GO}^-_4(q_0)$ is the isometry group of a non-degenerate quadratic form $Q$ of minus type on $V:=\mathbb{F}_{q_0}^4$. As above, the action of $G$ on the right cosets of $H$ is equivalent to its action on the set $\Delta$ of one-dimensional subspaces $Y$ of $V$ such that there exists $y \in Y$ with $Q(y)$ a non-zero square in $\mathbb{F}_{q_0}$. We shall therefore complete the proof by identifying a pair of elements of $\Delta$ that does not extend to a base for $G$ of size $b(G) = 3$.

Assume first that $q_0$ is even, and let $\rho$ be the polar form of $Q$. As in the proof of Proposition~\ref{prop:basesizeq0}, there exists a basis $\{e_1,\ldots,e_4\}$ for $V$ so that the matrix of $Q$ is
\[
M_Q := \begin{pmatrix}
    0&0&0&1\\
    0 & 1 & 1& 0\\
    0&0& \zeta &0\\
    0&0&0&0\\
\end{pmatrix},
\]
for an arbitrary $\zeta \in \mathbb{F}_{q_0}^\times$ such that the polynomial $x^2+x+\zeta$ is irreducible over $\mathbb{F}_{q_0}$. Since $q_0 \ge 4$, there are at least two choices for $\zeta$, and so we will assume without loss of generality that $\zeta \ne 1$.

Now, let $v_1:=e_1+\zeta e_2+e_3+\zeta e_4$, $v_2:=e_2$, $v_3:=e_1$ and $v_4:=e_3$, so that $\{v_1,v_2,v_3,v_4\}$ is a basis for $V$. Then $Q(v_2) = 1$, and
\[
Q(v_1) = \zeta \rho(e_1,e_4)+ \zeta^2 Q(e_2) + \zeta \rho(e_2,e_3) + Q(e_3) = \zeta+\zeta^2+\zeta+\zeta = \zeta(1+\zeta) \ne 0.
\]
Hence $\langle v_1 \rangle$ and $\langle v_2 \rangle$ are distinct subspaces in $\Delta$. Additionally, let $u \in V \setminus \langle v_1, v_2 \rangle$, so that $u = \sum_{i=1}^4 \alpha_i v_i$ with $\alpha_i \in \mathbb{F}_{q_0}$ such that $\alpha_3$ and $\alpha_4$ are not both $0$. Finally, let $m:=\zeta \alpha_3^2 + \alpha_4(\alpha_3+\alpha_4)$ and
\[
A:= m^{-1}\begin{pmatrix}
    \zeta \alpha_3^2 & \zeta \alpha_3 \alpha_4 & 0 & \zeta \alpha_4^2\\
    0 & m & 0 & 0\\
    \alpha_3 (\alpha_3 + \alpha_4) & \zeta \alpha_3^2 & m & \zeta \alpha_3 \alpha_4\\
    \zeta^{-1}(\alpha_3+\alpha_4)^2 & \alpha_3(\alpha_3+\alpha_4) & 0  & \zeta \alpha_3^2\\
\end{pmatrix}.
\]
Note that if $\alpha_3 = 0$, then $m = \alpha_4^2 \ne 0$, and otherwise $m = (\alpha_3^2)((\alpha_4 \alpha_3^{-1})^2+\alpha_4 \alpha_3^{-1}+\zeta)$, which is again non-zero by the definition of $\zeta$. It is straightforward to check that $\det(A) = 1$, that the diagonal entries of $A M_Q A^T$ are equal to the corresponding diagonal entries of $M_Q$, and that the Gram matrix $M_\rho = M_Q + M_Q^T$ of the polar form $\rho$ satisfies $A M_\rho A^T = M_\rho$. Hence $A \in \mathrm{GO}^-_4(q_0)$. We also observe that $A$ is a non-scalar matrix that fixes $u$ and each vector in $\langle v_1, v_2 \rangle$. Since each subspace in $\Delta$ is spanned either by such a vector $u$ or by a vector in $\langle v_1, v_2 \rangle$, the subset $\{\langle v_1 \rangle, \langle v_2 \rangle\}$ of $\Delta$ does not extend to a base for $G$ of size $3$.

Next, suppose that $q_0$ is odd. We deduce from \cite[p.~45]{kleidmanliebeck} that there exists a basis $\{e_1,\ldots,e_4\}$ for $V$ so that the matrix of $Q$ is
\[
M_Q := \begin{pmatrix}
    0&1&0&0\\
    0 & 0 & 0& 0\\
    0&0&-1 &-r\\
    0&0&0& -s\\
\end{pmatrix},
\]
where $r:=\omega+\omega^{q_0}$ and $s:=\omega^{q_0+1}$ for an arbitrary element $\omega$ of $\mathbb{F}_q \setminus \mathbb{F}_{q_0}$. Note that $r$ and $s$ do indeed lie in $\mathbb{F}_{q_0}$, as they are each equal to their $q_0$-th power.

We now proceed similarly to above. Since $q_0 > 3$, there exists $y \in \mathbb{F}_{q_0}^\times \setminus \{\pm 1\}$. Let $v_1:=e_1+e_2$, $v_2:=e_1+y^2e_2$, $v_3:=e_3$ and $v_4:=e_4$, so that $\{v_1,v_2,v_3,v_4\}$ is a basis for $V$. As $Q(v_1) = 1$ and $Q(v_2) = y^2$ are both non-zero squares in $\mathbb{F}_{q_0}$, it follows that $\langle v_1 \rangle$ and $\langle v_2 \rangle$ are distinct subspaces in $\Delta$. Additionally, each $u \in V \setminus \langle v_1, v_2 \rangle$ satisfies $u = \sum_{i=1}^4 \alpha_i v_i$ for some $\alpha_i \in \mathbb{F}_{q_0}$ such that $\alpha_3$ and $\alpha_4$ are not both $0$. For such a vector $u$, let $t:=(\alpha_3+\alpha_4 \omega)(\alpha_3+\alpha_4 \omega^{q_0})$ and
\[
A:= t^{-1}\begin{pmatrix}
    t & 0 & 0 & 0\\
    0 & t & 0 & 0\\
    0 & 0 & \alpha_3^2 - s \alpha_4^2 & (2 \alpha_3 + r \alpha_4) \alpha_4\\
    0 & 0 & (r \alpha_3 + 2s \alpha_4) \alpha_3  & -\alpha_3^2 + s \alpha_4^2\\
\end{pmatrix}.
\]
Note that $t \in \mathbb{F}_{q_0}$ as $t^{q_0} = t$, and that $t \ne 0$ since $\alpha_3$ and $\alpha_4$ lie in $\mathbb{F}_{q_0}$, while $\omega$ and $\omega^{q_0}$ do not. As above, $A$ is a non-scalar matrix in $\mathrm{GO}^-_4(q_0)$ (this time with determinant $-1$) that fixes $u$ and each vector in $\langle v_1, v_2 \rangle$. We conclude that $\{\langle v_1 \rangle,\langle v_2 \rangle\}$ does not extend to a base for $G$ of size $3$, as required.

\vs

	\noindent \emph{Case 5. $H$ is of type $2^{1+2}_-.\Omega_2^-(2)$ or type $A_5$.}

    \vs

Here, $b(G) > 2$ if and only if $(q,H)$ is one of the cases listed in (v). In each of these special cases, one can easily check that $\Sigma(G)$ is complete with the aid of {\sc Magma}.
\end{proof}

If $\soc(G) = \LL_2(q)$ and $b(G) = 2$, then Conjecture \ref{conj:BG} is verified in \cite[Theorem 4.22]{BH_Saxl}, extending the earlier result \cite[Theorem 1.3]{CD_Saxl}. By Theorem~\ref{t:PSL2_semi}, in order to show that this still holds when $b(G) > 2$, it remains to prove that the conjecture is satified when the point stabiliser of $G$ is of type $\GL_2(q^{1/2})$, with $q \ge 16$ and $|G:G_0|$ even. We leave this as an open problem.

Finally, we classify the groups $G$ with socle $\LL_2(q)$ such that $\Sigma(G)$ is $G$-arc-transitive, excluding the aforementioned difficult case. The following result is stated as Theorem \ref{introthm:PSL2_arc} in Section \ref{s:intro}.

\begin{thm}
    \label{t:PSL2_arc}
    Let $G$ be a primitive group with socle $G_0 = \LL_2(q)$ and point stabiliser $H$, and if $H$ is of type $\GL_2(q^{1/2})$ with $q \ge 16$, then assume that $|G:G_0|$ is odd. Then $\Sigma(G)$ is $G$-arc-transitive if and only if one of the following holds:
    \begin{itemize}\addtolength{\itemsep}{0.2\baselineskip}
		\item[{\rm (i)}] $H$ is of type $P_1$;
        \item[{\rm (ii)}] $G$ is $2$-transitive and $(G,H) = (\LL_2(11),A_5)$, $(\PGammaL_2(8),D_{18}.3)$, $(\LL_2(7),S_4)$, $(S_6,S_5)$, $(A_6,A_5)$, $(S_5,S_4)$ or $(A_5,A_4)$.
		\item[{\rm (iii)}] $b(G) = 2$, $G = \PGL_2(q)$, $H$ has type $\GL_1(q)\wr S_2$, and $5 \ne q\geqs 4$; or
		\item[{\rm (iv)}] $b(G) = 2$ and $(G,H) = (\LL_2(29),A_5)$, $(\PGL_2(11),S_4)$, $(\mathrm{M}_{10},5{:}4)$  or $(A_5,S_3)$.
	\end{itemize}
    In particular, $\reg(G) = 1$ if and only if $G$ is one of the groups in parts (iii) and (iv), or $H$ is of type $P_1$ and $G$ is sharply $3$-transitive, or $(G,H) \in \{(S_6,S_5), (A_6,A_5),(S_5,S_4)\}$.
\end{thm}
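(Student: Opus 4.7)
The plan is to split the classification by whether or not $\Sigma(G)$ is a complete graph. By Theorem~\ref{t:PSL2_semi} (applied under our hypothesis that if $H$ has type $\GL_2(q^{1/2})$ with $q \geqs 16$ then $|G:G_0|$ is odd), the graph $\Sigma(G)$ is complete precisely when $b(G) \geqs 3$. Since $\Sigma(G)$ is always $G$-vertex-transitive, a complete $\Sigma(G)$ is $G$-arc-transitive if and only if $G$ is $2$-transitive on $\Omega$. If instead $\Sigma(G)$ is not complete, then $b(G) = 2$, so $\Sigma(G)$ coincides with the ordinary Saxl graph, and $G$-arc-transitivity is equivalent to $\reg(G) = 1$ (a unique regular $H$-orbit on $\Omega$). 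These observations reduce the theorem to two separate analyses, plus one more pass for the ``in particular'' clause.

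For the complete regime I would enumerate the $2$-transitive actions arising within Theorem~\ref{t:PSL2_semi}(i)--(v). The $P_1$ action from (i) is always $2$-transitive, giving item (i) of the theorem. For the remaining types the necessary condition $|G| \geqs |\Omega|(|\Omega|-1)$ for $2$-transitivity fails for large $q$, since $|\Omega|$ grows like $q^2$ while $|G|$ grows like $q^3$, so only finitely many small cases survive; these I verify directly in \textsc{Magma} or via the suborbit tables of \cite{FI_subdeg}. The resulting $2$-transitive actions are exactly the seven sporadic pairs listed in item (ii), matching the classical enumeration of $2$-transitive almost simple groups with socle $\LL_2(q)$.

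For the non-complete regime (so $b(G) = 2$), I would traverse the remaining types from Theorem~\ref{t:PSL2_b(G)}. The main case is $G = \PGL_2(q)$ with $H$ of type $\GL_1(q) \wr S_2$: identifying $\Omega$ with the set of unordered pairs of distinct $1$-spaces of $\mathbb{F}_q^2$, the $H$-orbits can be parameterised by cross-ratios in $\mathbb{F}_q^\times$ modulo the involution $x \mapsto x^{-1}$, and a direct matrix calculation (in the spirit of the $\GL_1(q)\wr S_2$ analysis in the proof of Theorem~\ref{t:PSL2_semi}) shows that $\reg(G) = 1$ exactly when $G = \PGL_2(q)$ and $4 \leqs q \ne 5$, yielding item (iii). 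For the remaining types (stabilisers of type $\GL_1(q^2)$ with $\PGL_2(q) \not\leqs G$; subfield $\GL_2(q_0)$ with $q = q_0^k$ for an odd prime $k \geqs 3$; and $A_5$ or $2_-^{1+2}.\Omega_2^-(2)$ types at small $q$), I would either exhibit two non-conjugate regular $H$-orbits on $\Omega$ to rule out $\reg(G) = 1$, or reduce to a finite list handled in \textsc{Magma}; the four survivors form item (iv). For soluble $H$, a significant part of this already follows from the methods of \cite{BH_prod}.

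Finally, for the ``in particular'' statement on $\reg(G) = 1$ I would run through items (i)--(iv). Cases (iii) and (iv) have $b(G) = 2$, so arc-transitivity is equivalent to $\reg(G) = 1$ by definition. In case (i), the pointwise stabiliser of an ordered $(b(G)-1)$-tuple has a unique regular orbit on the remaining points if and only if $G$ is sharply $3$-transitive on $P^1(\mathbb{F}_q)$, i.e.\ $G = \PGL_2(q)$ (which subsumes $(A_5,A_4) = (\PGL_2(4), P_1)$). For case (ii), a case-by-case inspection uses the sharp $4$-, $6$- and $5$-transitivity of $A_6$, $S_6$ and $S_5$ (on $6$, $6$ and $5$ points respectively) to confirm $\reg(G) = 1$ for $(A_6,A_5), (S_6,S_5), (S_5,S_4)$, while for each of the remaining four sporadic pairs a two-point stabiliser is displayed with at least two regular orbits on the rest of $\Omega$. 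The hardest step will be the uniform analysis in the non-complete regime, in particular establishing $\reg(G) \geqs 2$ across the infinite families of non-split torus-normaliser and subfield actions: the suborbit structure depends sensitively on the parity of $q$ and on divisors of $q \pm 1$, so producing explicit inequivalent witnesses requires delicate linear-algebra calculations rather than a single generic construction.
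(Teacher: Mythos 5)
Your high-level decomposition is exactly the paper's: use Theorem~\ref{t:PSL2_semi} to see that (under the stated hypothesis) $\Sigma(G)$ is complete precisely when $b(G)\geqs 3$, so that arc-transitivity reduces to $2$-transitivity there, and to $\reg(G)=1$ when $b(G)=2$. However, there is a genuine gap in your treatment of the $b(G)=2$ regime. You propose to rule out $\reg(G)=1$ across the infinite families (subfield stabilisers of type $\GL_2(q_0)$ with $q=q_0^k$, $k\geqs 3$ prime, and stabilisers of type $A_5$) by exhibiting two inequivalent regular suborbits via ``delicate linear-algebra calculations'', which you acknowledge as the hardest step but do not carry out. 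The paper avoids this entirely with a counting device you are missing: if $Q(G,2)<1/2$ then more than $|\Omega|^2/2 = |G|^2/(2|H|^2)$ ordered pairs are bases, so if additionally $2|H|^2<|G|$ these bases cannot all lie in a single regular $G$-orbit and hence $\reg(G)\geqs 2$. The inequality $2|H|^2<|G|$ holds automatically for subfield type and for $A_5$ type with $q\geqs 197$, and the bound $Q(G,2)<1/2$ is already available from the proof of \cite[Theorem 4.22]{BH_Saxl}; only finitely many small $q$ remain for {\sc Magma}. Likewise, for the soluble stabilisers the classification of groups with $\reg(G)=1$ is simply quoted from \cite[Theorem 1.6]{BH_Saxl} rather than re-derived via cross-ratios. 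Without the counting argument (or an actual construction), your plan does not yet dispose of the infinite families.

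Two further points. First, your claim that sharp $3$-transitivity in the $P_1$ case forces $G=\PGL_2(q)$ is wrong: the twisted Zassenhaus groups $M(q)$ for $q$ an even power of an odd prime (e.g.\ $\mathrm{M}_{10}$ for $q=9$) are also sharply $3$-transitive, have $b(G)=3$ in the $P_1$ action by Theorem~\ref{t:PSL2_b(G)}(i), and therefore satisfy $\reg(G)=1$; the theorem's phrasing ``sharply $3$-transitive'' is deliberately broader than ``$G=\PGL_2(q)$''. Second, your order-counting criterion $|G|\geqs|\Omega|(|\Omega|-1)$ does not eliminate $2$-transitivity for the subfield family with $q=q_0^2$, since there $|\Omega|\sim q^{3/2}$ and $|\Omega|^2$ is comparable to $|G|$; you would need to fall back on the suborbit data of \cite{FI_subdeg} (which you do mention) or on the explicit suborbit description in the proof of Theorem~\ref{t:PSL2_semi}.
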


\begin{proof}
    First assume that $b(G) > 2.$ By Theorem \ref{thm:PSL2_semi} (as $G$ does not satisfy Theorem \ref{thm:PSL2_semi}(ii)), $G$ is semi-Frobenius. Hence, in this setting, $\Sigma(G)$ is $G$-arc-transitive if and only if $G$ is $2$-transitive. It follows from Theorem~\ref{t:PSL2_b(G)} that either $H$ is of type $P_1$, or $(G,H)$ is one of the following pairs:
    \begin{equation*}
        (\LL_2(11),A_5),\  (\PGammaL_2(8),D_{18}.3),\ (\LL_2(7),S_4),\ (S_6,S_5),\ (A_6,A_5),\ (S_5,S_4),\ (A_5,A_4).
    \end{equation*}
    Also note that if $G$ is $2$-transitive, then $\reg(G) = 1$ if and only if $G$ is sharply $b(G)$-transitive.

    To complete the proof, we consider the case where $b(G) = 2$, so that $\Sigma(G)$ is $G$-arc-transitive if and only if $\reg(G) = 1$. Here either $H$ is soluble, or $H$ has type $A_5$ or $\GL_2(q_0)$ with $q_0^k$ for some prime $k\geqs 3$. For the former case, the result can be deduced from \cite[Theorem 1.6]{BH_Saxl}, and the relevant groups with $\reg(G) = 1$ are listed in the statement. Notice that if $Q(G,2) < 1/2$ and $2|H|^2 < |G|$, then $\reg(G)\geqs 2$ and thus $\Sigma(G)$ is not $G$-arc-transitive. The bound $2|H|^2 < |G|$ always holds if $H$ is of type $\GL_2(q_0)$, and as discussed in the proof of \cite[Theorem 4.22]{BH_Saxl} we see that $Q(G,2) < 1/2$. For the case where $H$ has type $A_5$, we can check using {\sc Magma} that $\reg(G) \geqs 2$ if $q<197$, unless $q = 29$ and $\reg(G) = 1$. If instead $q \geqs 197$, then the proof of \cite[Theorem 4.22]{BH_Saxl} shows that $Q(G,2) < 1/2$, and since $2|H|^2<|G|$ in this case, we obtain $\reg(G)\geqs 2$.
\end{proof}

\subsection{Almost simple groups with soluble point stabilisers}
The following theorem extends \cite[Theorem 1.1]{BH_Saxl}.

\begin{thm}
	\label{t:as_common_solstab}
	Let $G$ be an almost simple primitive group with soluble point stabilisers. Then $G$ satisfies Conjecture~\ref{conj:BG}.
\end{thm}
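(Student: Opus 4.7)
The plan is to split the proof according to the value of $b(G)$. When $b(G)=2$, the generalised Saxl graph coincides with the classical Saxl graph and the statement is precisely \cite[Theorem 1.1]{BH_Saxl}. So the real work is in the case $b(G)\geq 3$. Here, as noted directly after Conjecture~\ref{conj:BG}, any two adjacent vertices of $\Sigma(G)$ automatically share a common neighbour, and hence it is equivalent to show that $\mathrm{diam}(\Sigma(G))\leq 2$.

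To identify the relevant groups when $b(G)\geq 3$, I would read off candidates from Burness's classification \cite{B_sol}, which gives the precise base size of every almost simple primitive group with soluble point stabiliser. That list shows that the groups with $b(G)\geq 3$ split naturally into two pieces: (a) those with socle $\LL_2(q)$, which by Theorem~\ref{t:PSL2_b(G)} have stabiliser of type $P_1$, $\GL_1(q)\wr S_2$, or $\GL_1(q^2)$ (under the appropriate conditions on $G$), together with a handful of small exceptional types such as $A_4$, $S_4$ and $2_{-}^{1+2}.\Omega_2^-(2)$; and (b) a finite list of further almost simple groups (mostly small-rank classical groups and low-degree exceptions). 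For family (a), Theorem~\ref{t:PSL2_semi} already shows that $G$ is semi-Frobenius in every case, so $\Sigma(G)$ is complete and the conjecture holds trivially.

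For the residual list (b), the plan is twofold. First, I would apply the probabilistic machinery of Section~\ref{ss:pre_prob}: whenever the bound $\widehat{Q}(G,b(G))<1/2$ can be verified using fixed point ratio estimates (many of which are already tabulated in the proof of \cite[Theorem 2]{B_sol}), Proposition~\ref{prop:val} together with Lemma~\ref{l:pre_prob}(iv) yields $\mathrm{diam}(\Sigma(G))\leq 2$ directly. Second, for the finitely many small cases where the probabilistic threshold fails, I would verify the conjecture by direct computation in {\sc Magma}, checking on orbit representatives of a point stabiliser $G_\alpha$ that every other orbit contains a common neighbour of $\alpha$. The main obstacle I anticipate is the bookkeeping: carefully traversing the tables in \cite{B_sol} to ensure that no soluble-stabiliser case with $b(G)\geq 3$ is overlooked, and in particular discharging a few small groups where neither the semi-Frobenius property nor the probabilistic bound is available, and where a tailored argument or a short direct computation is required.
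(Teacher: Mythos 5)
Your proposal matches the paper's proof in all essentials: the $b(G)=2$ case is delegated to \cite[Theorem 1.1]{BH_Saxl}, the groups with $3\leq b(G)\leq 5$ are read off from the tables in \cite{B_sol}, the bound $\widehat{Q}(G,b(G))<1/2$ disposes of almost all of them, and the residue is checked in {\sc Magma}. The only real divergence is that you handle the socle-$\LL_2(q)$ cases by invoking the semi-Frobenius classification of Theorem~\ref{t:PSL2_semi}, whereas the paper uses $2$-transitivity for the $P_1$ actions (and likewise for $\UU_3(q)$, ${}^2B_2(q)$ and ${}^2G_2(q)$) and folds the remaining $\LL_2(q)$ actions into the probabilistic/computational step; both routes are valid. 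One small correction: the residual groups with $b(G)\geq 3$ outside socle $\LL_2(q)$ do not form a finite list --- they include genuine infinite families such as $\LL_3(q)$ with stabiliser of type $P_{1,2}$ and $\Sp_4(q)$ with a Borel stabiliser --- but this does not damage your argument, since these are precisely the families that the fixed-point-ratio bound $\widehat{Q}(G,b(G))<1/2$ handles, leaving only finitely many small values of $q$ for direct computation.
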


\begin{proof}
	The precise base size $b(G)$ is computed in \cite[Theorem 2]{B_sol}. In particular, we have $b(G)\leqs 5$ in every case. If $b(G) = 2$ then the result follows from \cite[Theorem 1.1]{BH_Saxl}. Thus, to complete the proof, we may  assume that $3\leqs b(G)\leqs 5$. Hence by \cite[Theorem 2]{B_sol}, $G$ appears among the infinite families and special cases listed in \cite[Tables 4--7]{B_sol}.
	
	Note that if $\soc(G) \in\{\LL_2(q),\UU_3(q),{^2}B_2(q),{^2}G_2(q)\}$ and the point stabiliser is of type $P_1$, then $G$ is $2$-transitive and so $\Sigma(G)$ is complete by Lemma \ref{l:pre_complete_2-trans}. Upper bounds for $\widehat{Q}(G,b(G))$ in the remaining infinite families can be found in \cite{B_sol} and the proof of \cite[Lemma 3.7]{BH_prod}. In almost all cases, we deduce that $\widehat{Q}(G,b(G)) < 1/2$, and it follows (see the end of \S\ref{s:pre}) that any two vertices in $\Sigma(G)$ have a common neighbour, i.e.~$G$ satisfies Conjecture~\ref{conj:BG}. Any remaining group in these infinite families that may satisfy $\widehat{Q}(G,b(G))\geqs 1/2$ satisfies one of the following (here $G_0 = \soc(G)$ and $H$ is a point stabiliser):
	\begin{itemize}\addtolength{\itemsep}{0.2\baselineskip}
		\item[{\rm (a)}] $G_0 = \LL_3(q)$, $H$ is of type $P_{1,2}$, and either $3 \le q \le 16$, $19 \le q \le 27$, or $q\in\{32,47,64\}$;
		\item[{\rm (b)}] $G_0 = \Sp_4(q)$, $H\cap G_0 = [q^4] \sd C_{q-1}^2$, and $q\in\{4,8,32\}$; or
		\item[{\rm (c)}] $G_0 = \LL_2(q)$, $H$ is of type $\GL_1(q^2)$, and $11\leqs q\leqs 16$.
	\end{itemize}
 
	Combining probabilistic and computational methods (similar to those mentioned in the proofs of Proposition~\ref{prop:sporsemi} and Theorem~\ref{thm:sporcnc}), one can check that in each case in (a)--(c), and each special case appearing in \cite[Tables 4--7]{B_sol}, $G$ satisfies Conjecture~\ref{conj:BG}.
\end{proof}
\section{Affine groups}

\label{s:affine}
We now turn our attention to primitive groups of affine type. Let $V$ be an $n$-dimensional vector space over $\mathbb{F}_q$. A primitive affine type group can be written as $G=VH \leq \mathrm{AGL}_n(q)$, with $\Omega = V$, where the point stabiliser $H$ acts irreducibly on $V$.

Thus far, few authors have considered the Saxl graphs of these affine type groups.
Lee and Popiel \cite{LP_Saxl} proved one of the first results on these graphs, namely that Conjecture~\ref{conj:BG} holds for most primitive affine groups $G$ of base size two such that $H$ is an almost quasisimple group with $\soc(H/Z(H))$ a sporadic simple group. One of the tools used in their analysis was an equivalence between that conjecture and an arithmetic property of the underlying vector space. We begin this section by proving that an analogous equivalence holds for generalised Saxl graphs.

\begin{lem}
Let $G=VH$ be a primitive group of affine type with point stabiliser $H$, such that $b(G) \ge 2$. Then $\Sigma(G)$ satisfies Conjecture~\ref{conj:BG} if and only if every vector in $V$ can be written as the sum of two vectors lying in bases for $H$ of size $b(H)$.
\end{lem}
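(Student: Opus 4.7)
My plan is to translate Conjecture~\ref{conj:BG} for $\Sigma(G)$ into a purely additive statement on $V$ by exploiting the fact that the translation subgroup $V \leqs G$ acts regularly on the vertex set and preserves $\Sigma(G)$. Under this vertex-transitive action, the graph $\Sigma(G)$ is completely determined by the neighbourhood $N$ of the origin $0 \in V$: two vertices $u,v$ are adjacent if and only if $v - u \in N$, and $u,v$ share a common neighbour $w$ if and only if both $w-u$ and $w-v$ lie in $N$, equivalently $v-u$ is a difference of two elements of $N$.

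Next I would identify $N$ concretely in terms of $H$. Since $G_0 = H$ and any element of $G$ fixing $0$ lies in $H$, the pointwise stabiliser in $G$ of $\{0, v_1,\dots,v_k\}$ coincides with the pointwise stabiliser in $H$ of $\{v_1,\dots,v_k\}$. Hence $b(G) = b(H) + 1$, and $\{0,v\}$ extends to a base for $G$ of size $b(G)$ if and only if $v$ lies in a base for $H$ of size $b(H)$. Thus $N$ is exactly the set of vectors of $V$ appearing in a base for $H$ of size $b(H)$, i.e.\ the set featured in the statement.

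The crucial closing observation is that $N$ is symmetric, meaning $N = -N$. This is immediate from the fact that $\Sigma(G)$ is undirected and $V$-invariant: if $v \in N$ then $\{0,v\}$ is an edge of $\Sigma(G)$, and translating by $-v$ (which lies in $G$) produces the edge $\{-v,0\}$, so $-v \in N$. Combined with the previous step, $v-u$ is a difference of two elements of $N$ if and only if it is a sum of two elements of $N$. Therefore Conjecture~\ref{conj:BG} holds for $G$ precisely when every $x \in V$ (realised as $x = v-u$) is a sum of two elements of $N$, which is exactly the stated equivalence. The only mild point to verify is the diagonal case $u = v$: this requires $0 \in N+N$, which follows from $N = -N$ together with $N \neq \emptyset$, the latter being guaranteed by $b(G) \geqs 2$.

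I do not foresee a real obstacle here, as the entire argument is a bookkeeping exercise combining the affine identification $b(G) = b(H) + 1$, the regular translation action of $V$, and the symmetry $N = -N$. The only matter requiring any care is remembering to check that the symmetry and nonemptiness of $N$ handle the degenerate $u=v$ case, but both properties are automatic from the hypotheses.
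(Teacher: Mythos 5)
Your argument is correct and is essentially the paper's proof: both reduce everything to the neighbourhood $N$ of $0$ via the regular translation action of $V$, identify $N$ with the set of vectors lying in minimal bases for $H$ using $b(G)=b(H)+1$, and observe that a common neighbour of $0$ and $v$ yields the decomposition $v = w + (v-w)$ with both summands in $N$ (the paper phrases this by translating the edge $\{v_1,v\}$ to $\{0,v-v_1\}$ rather than invoking $N=-N$, but the content is the same). Your extra care about the degenerate case $u=v$ is harmless and correctly dispatched.
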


\begin{proof}
The proof proceeds similarly to that of \cite[Lemma 2.3]{LP_Saxl}. First note that $b(G) = b(H)+1$, and moreover, every base of minimal size for $G$ gives rise to a base of minimal size for $H$ and vice versa, by removing or adding $0$, respectively. Suppose that $G$ satisfies Conjecture~\ref{conj:BG}, and let $v \in V$. Then $v\in V$ and $0 \in V$ share a common neighbour, say $v_1$, in $\Sigma(G)$. Therefore, $\{0,v_1\}$ and $\{v_1,v\}$ are subsets of bases for $G$ of size $b(G)$. Since the image of a base under $G$ is also a base, the image $\{0,v-v_1\}$ of $\{v_1,v\}$ under the translation $-v_1$ must also be a subset of a base of minimal size. Since $H$ is the stabiliser of $0$ in $G$, both $v_1$ and $v-v_1$ lie in bases for $H$ of size $b(H)$, and we can write $v = v_1+(v-v_1)$. 

Conversely, suppose that each vector $v \in V$ can be written as the sum of two vectors in bases for $H$ of size $b(H)$, say $v=v_1+v_2$. Since $G$ is transitive, it suffices to show that $v$ and $0$ share a common neighbour in $\Sigma(G)$. The definitions of $v_1$ and $v_2$ imply that $\{0,v_1\}$ and $\{0,v_2\}$ are subsets of bases for $G$ of size $b(G)$. Applying the translation $v_1$ to the latter subset, we see that the same is true for $\{v_1,v_1+v_2\} = \{v_1,v\}$. Therefore, $v$ and $0$ share the common neighbour $v_1$, and the result follows.
\end{proof}
Our next result describes the valency of $\Sigma(G)$.
\begin{prop}\label{aff:almost-reg}
Let $G = VH$ be a primitive group of affine type, and let $\{\mathcal{O}_1, \dots, \mathcal{O}_k\}$ be the set of orbits of $H$ on $V$ that meet a base for $G$ of size $b(G)$ containing $0$.
Then
\[
\mathrm{val}(G) = \sum_{i=1}^k |\mathcal{O}_i|.
\]
\end{prop}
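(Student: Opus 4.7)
The plan is to exploit the vertex-transitivity of $\Sigma(G)$. Since the translation subgroup $V \trianglelefteq G$ acts transitively on the vertex set $V$, the graph $\Sigma(G)$ is $G$-vertex-transitive, so its valency coincides with the cardinality of the neighbourhood $N(0)$ of the vertex $0 \in V$. Thus the task reduces to describing $N(0)$ explicitly, and then showing it decomposes into exactly the orbits $\mathcal{O}_1,\dots,\mathcal{O}_k$ named in the statement.

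The first key observation is that $N(0)$ is $H$-invariant. The stabiliser of $0$ in $G$ is precisely $H$, and if $v \in N(0)$, then by definition $\{0,v\}$ extends to a base $\{0,v,v_3,\dots,v_{b(G)}\}$ for $G$. Applying any $h\in H$ produces the set $\{0,v^h,v_3^h,\dots,v_{b(G)}^h\}$, which is again a base for $G$ of size $b(G)$ (because the image of a base under an element of $G$ is a base) and still contains $0$; hence $v^h \in N(0)$. It follows that $N(0)$ is a disjoint union of $H$-orbits on $V$.

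The second step is to identify which $H$-orbits occur. An $H$-orbit $\mathcal{O}$ is contained in $N(0)$ if and only if some (equivalently, every) element of $\mathcal{O}$ lies in $N(0)$, and by the definition of the generalised Saxl graph this happens precisely when $\mathcal{O}$ meets a base for $G$ of size $b(G)$ containing $0$. These are exactly the orbits $\mathcal{O}_1,\dots,\mathcal{O}_k$. Since $\Sigma(G)$ has no loops, the trivial orbit $\{0\}$ does not appear among them. Summing sizes then gives
\begin{equation*}
\mathrm{val}(G) \;=\; |N(0)| \;=\; \sum_{i=1}^{k} |\mathcal{O}_i|.
\end{equation*}

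There is no substantive obstacle: the argument is essentially a direct unpacking of the definitions, relying only on the transitivity of the translation action and the fact that $G$ permutes bases. The only minor point to handle with care is the exclusion of the singleton orbit $\{0\}$, which is automatic from the absence of loops in $\Sigma(G)$.
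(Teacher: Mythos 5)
Your proof is correct and is essentially a fleshed-out version of the paper's argument, which simply observes that the result "follows directly from the fact that the image of a base is also a base"; you supply exactly the missing details (vertex-transitivity reduces the count to $|N(0)|$, and $H$-invariance of $N(0)$ shows it is the union of the stated orbits). The only cosmetic remark is that your exclusion of $\{0\}$ really reflects the paper's convention (Definition 7.3) that the trivial orbit is not counted among almost-regular orbits, rather than following from the loop-free condition alone, but this does not affect the validity of the argument.
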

\begin{proof}
This follows directly from the fact that the image of a base is also a base.
\end{proof}


In the remainder of this section, we relate the investigation of $\Sigma(G)$ to other well-studied properties of groups, and use these to 
verify Conjecture~\ref{conj:BG} in several cases. The conjecture remains open for affine groups in general.

Recall from Lemma \ref{lem:orbgraphs} that $\Sigma(G)$ is a union of orbital graphs. In some cases, we can make inferences about properties of $\Sigma(G)$ from information about the orbital diameter of $G$. For instance, recall from Lemma~\ref{orbdiam2} that if $b(G) \ge 3$ and $G$ has orbital diameter two, then $\Sigma(G)$ satisfies Conjecture~\ref{conj:BG}. Similarly, if a group has orbital diameter 1, which occurs if and only if $G$ is 2-homogeneous, then $\Sigma(G)$ is complete. Of course, the converses of these statements do not hold in general.

Now, suppose that $G$ is such that $H$ is almost quasisimple. Such groups $G$ with orbital diameter two were considered in \cite{kalathesis}. As shown in \cite[Lemma 2.4.4]{kalathesis2}, a natural upper bound on the orbital diameter of $G$ is $\mathrm{rank}(G)-1$, where $\mathrm{rank}(G)$ is the permutation rank of $G$, i.e.~the number of orbitals of $G$ (note also that $\mathrm{rank}(G)-1$ is the number of distinct orbital graphs of $G$). Hence, each rank three primitive permutation group has orbital diameter two, and so Conjecture~\ref{conj:BG} holds. Such groups were classified in \cite{liebeckrank3}. Since in this case there are just two orbital graphs, either the generalised Saxl graph is complete, or it is a single orbital graph (and therefore $G$-arc-transitive with diameter two). We now provide an example of a group for which the latter occurs.

\begin{prop}
    There exists a rank three primitive group $G$ of affine type such that $\Sigma(G)$ is a single orbital graph. 
\end{prop}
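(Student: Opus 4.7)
The plan is to establish the proposition by exhibiting an explicit example. First, observe that in any primitive affine group $G = VH$, the vertex $0 \in V$ is adjacent to $v \in V \setminus \{0\}$ in $\Sigma(G)$ precisely when $\{0,v\}$ extends to a base of $G$ of size $b(G) = b(H)+1$; equivalently, when $v$ extends to a base for $H$ of size $b(H)$, i.e.\ when $b(H_v) = b(H)-1$. Since this property depends only on the $H$-orbit of $v$, the set of neighbours of $0$ in $\Sigma(G)$ is a union of $H$-orbits on $V \setminus \{0\}$ (this is also a special case of Lemma~\ref{lem:orbgraphs}). For a rank three primitive affine group there are exactly two such orbits $\mathcal{O}_1$ and $\mathcal{O}_2$, corresponding to the two orbital graphs of $G$. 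Therefore $\Sigma(G)$ is a single orbital graph if and only if exactly one of the orbit stabilisers $H_{v_1}, H_{v_2}$ satisfies $b(H_{v_i}) = b(H)-1$.

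The proof thus reduces to exhibiting one rank three primitive affine group $G = VH$ with this asymmetric extension property. I would draw candidates from the classification of rank three primitive affine groups, focusing on small-degree cases, and for each candidate use \textsc{Magma} to compute $b(G)$, and then for a representative $v_i$ in each orbit $\mathcal{O}_i$ determine whether $\{0,v_i\}$ extends to a base of $G$ of size $b(G)$. The proposition follows as soon as one identifies an example where exactly one orbit has this extension property, since then $\Sigma(G)$ is necessarily the orbital graph associated to that orbit.

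The main obstacle is the search itself: in many natural rank three affine groups, both orbit stabilisers achieve $b(H_{v_i}) = b(H)-1$, so that every non-zero vector is a neighbour of $0$ and $\Sigma(G)$ collapses to the complete graph, which is the union of both orbital graphs. To force the required asymmetry we want an example where one stabiliser is large enough on $V$ that its own base size strictly exceeds $b(H)-1$, blocking minimal-base extensions from that orbit. Geometric rank three actions seem natural candidates — for instance, those in which one orbit consists of singular vectors with a comparatively large parabolic stabiliser and the other of non-singular vectors with a smaller classical stabiliser — and any small affine example of this kind, checked directly in \textsc{Magma}, completes the proof.
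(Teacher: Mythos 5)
Your reduction is correct and is in substance identical to the paper's: $\Sigma(G)$ is a non-empty union of orbital graphs (Lemma~\ref{lem:orbgraphs}), a rank three group has exactly two orbital graphs, so $\Sigma(G)$ is a single orbital graph precisely when it is not complete; in your formulation this says exactly one of the two non-trivial $H$-orbits consists of vectors $v$ with $b(H_v)=b(H)-1$, and since at least one orbit always has this property, the two conditions coincide.

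The genuine gap is that you never produce the witness. The proposition is an existence statement, and the entire mathematical content of its proof is the exhibition of a specific group together with the verification that its generalised Saxl graph is incomplete. Describing a search procedure that ``would'' find such a group, together with a heuristic about which geometric rank three actions look promising, does not establish that any example exists --- a priori the search could fail on every candidate. The paper's witness is $G=VH$ with $H=3.A_6$ and $V=\mathbb{F}_4^3$, for which {\sc Magma} gives $b(G)=3$ and shows $\Sigma(G)$ is not complete; note that this is one of the exceptional almost quasisimple entries in Liebeck's classification of rank three affine groups rather than a geometric singular/non-singular example of the kind your closing heuristic points towards. To complete your argument you would need to name such a group and record the outcome of the computation.
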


\begin{proof}
Let $G=VH$ be the primitive affine type group such that $H=3.A_6$ and $V=\mathbb{F}_4^3$. We can show using {\sc Magma} that $b(G)=3$, and that $\Sigma(G)$ is not complete. By \cite{liebeckrank3}, $G$ is a rank three group, and so it has two orbital graphs. Thus if $\Sigma(G)$ were a union of both, then it would be complete. The result follows.
\end{proof}

Note that there are also examples of rank three affine type groups $G$ such that $\Sigma(G)$ is complete, for example when $H = \mathrm{SL}_2(5) \cong 2.A_5$ and $V=\mathbb{F}_9^2$, so that $b(G) = 3$.
Further examples of primitive affine type groups that have orbital diameter two, and hence satisfy Conjecture~\ref{conj:BG}, are listed in \cite[Theorem 1.5 \& Example 4.2]{kalathesis} and \cite[Remark 6.3.2]{kalathesis2}. We list some of them with rank at least four in Table \ref{tab:affine2}. 

\begin{table}[ht]
   
\caption{Examples of primitive affine groups $G = VH$ with $\mathrm{diam}(\Sigma(G)) =2$. Here, $r:=\mathrm{rank}(G)-1$, and $V$ is defined over $\mathbb{F}_{l}$.}
    \centering
 \begin{tabular}{ccccc}
 \hline
 $H$ &$\dim(V)$&$l$&$r$& extra conditions\\
 \hline
   $G_2(q)$&$7$&$q$&$4$& $q$ odd\\
   $\Aut(G_2(3))$&$14$&$2$&$5$&\\
    $\Omega_9(q)$ &$16$&$q$&$4$&\\
    $3.A_7$&$3$&$5$&$11$&\\\hline
 \label{tab:affine2}
 \end{tabular}
\end{table}
\section{Primitive wreath products}

\label{s:wr}

Let $L\leqs\mathrm{Sym}(\Gamma)$ be a primitive group, and let $P\leqs S_k$ be a transitive group on $[k]$. Then $G:=L\wr P$ has a faithful primitive action on the Cartesian product $\Omega:=\Gamma^k$. More precisely, we have
\begin{equation*}
(\gamma_1, \ldots, \gamma_k)^{(z_1, \ldots, z_k) \sigma}=\left(\gamma_{1^{\sigma^{-1}}}^{z_{1^{\sigma-1}}}, \ldots, \gamma_{k^{k^{-1}}}^{z_{k^{\sigma-1}}}\right)
\end{equation*}
for any $\gamma_i\in\Gamma$, $z_i\in L$ and $\sigma\in P$. The group $G$ is called a \emph{primitive wreath product} of $L$ by $P$. As a special case, $G$ is a product type primitive group if $k\geqs 2$ and $L$ is either almost simple or of diagonal type.

Let $\reg(L,m)$ be the number of regular $L$-orbits on $\Gamma^m$ in its coordinatewise action, and let $\Pi := \{\pi_1,\dots,\pi_m\}$ be a partition of $[k]$ into $m$ parts. Then $\Pi$ is called a \emph{distinguishing partition} if its stabiliser $\bigcap_{i=1}^m G_{\{\pi_i\}}$ is trivial, noting that any refinement of a distinguishing partition is also distinguishing. Thus, it is natural to consider the minimal number $m$ such that there exists a distinguishing partition of $[k]$ into $m$ parts. We call this the \emph{distinguishing number} of $P$, and denote it $D(P)$. This value is closely related to $b(G)$, as follows.
\begin{thm}{\cite[Theorem 2.13]{BC_prod}}
	\label{t:prod_BC}
	Let $G = L\wr P$ be a primitive wreath product. Then $b(G)$ is the smallest integer $m$ such that $\reg(L,m) \geqs D(P)$.
\end{thm}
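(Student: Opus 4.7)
The plan is to transpose bases of $G$ into combinatorial data. Given $m$ elements $\omega^{(1)},\dots,\omega^{(m)}\in\Omega=\Gamma^k$, with $\omega^{(i)}=(\gamma_1^{(i)},\dots,\gamma_k^{(i)})$, I form for each $j\in[k]$ the column
\[
\mathbf{v}_j:=(\gamma_j^{(1)},\dots,\gamma_j^{(m)})\in\Gamma^m,
\]
and I let $\Pi(\mathbf{v})$ denote the partition of $[k]$ whose classes are the fibres of the map $j\mapsto\mathbf{v}_j^{L}$; that is, $j\sim j'$ iff $\mathbf{v}_j$ and $\mathbf{v}_{j'}$ lie in the same $L$-orbit on $\Gamma^m$ (where $L$ acts coordinatewise).

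First, unpacking the wreath action, an element $g=(z_1,\dots,z_k)\sigma\in G$ fixes every $\omega^{(i)}$ if and only if $\mathbf{v}_\ell^{z_\ell}=\mathbf{v}_{\ell^\sigma}$ for every $\ell\in[k]$. In particular $\mathbf{v}_\ell$ and $\mathbf{v}_{\ell^\sigma}$ lie in the same $L$-orbit, so $\sigma$ preserves $\Pi(\mathbf{v})$ setwise, and for $\sigma=1$ the element $z_\ell$ must lie in the $L$-stabiliser of $\mathbf{v}_\ell$.

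The second step is to show that $\{\omega^{(1)},\dots,\omega^{(m)}\}$ is a base for $G$ if and only if
\begin{itemize}\addtolength{\itemsep}{0.2\baselineskip}
\item[(a)] each $\mathbf{v}_\ell$ lies in a \emph{regular} $L$-orbit on $\Gamma^m$; and
\item[(b)] $\Pi(\mathbf{v})$ is a distinguishing partition for $P$.
\end{itemize}
Indeed, if (a) fails for some $\ell$, take $\sigma=1$, a nontrivial $z_\ell\in L_{\mathbf{v}_\ell}$, and $z_j=1$ for $j\ne\ell$; this yields a nontrivial stabiliser. If (b) fails, pick a nontrivial $\sigma\in P$ fixing every class of $\Pi(\mathbf{v})$; then for each $\ell$, since $\mathbf{v}_\ell$ and $\mathbf{v}_{\ell^\sigma}$ are in a common $L$-orbit, some $z_\ell\in L$ satisfies $\mathbf{v}_\ell^{z_\ell}=\mathbf{v}_{\ell^\sigma}$, giving a nontrivial element of $G_{(\omega^{(1)},\dots,\omega^{(m)})}$. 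Conversely, (a) and (b) force any stabilising $\sigma$ to be trivial (by (b) together with the first paragraph) and then each $z_\ell=1$ (by (a)).

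Finally, I deduce the formula. If $m$ satisfies $\mathrm{reg}(L,m)\ge D(P)$, choose a distinguishing partition $\Pi_0$ of $[k]$ with $D(P)$ parts, assign to each part a distinct regular $L$-orbit on $\Gamma^m$ (possible since there are at least $D(P)$ such orbits), and let $\mathbf{v}_\ell$ be any element of the orbit assigned to the part containing $\ell$. The resulting columns satisfy (a) and their induced partition refines $\Pi_0$ and has classes labelled by distinct regular orbits, so distinct parts of $\Pi_0$ remain distinct; hence $\Pi(\mathbf{v})=\Pi_0$ is distinguishing, and by the previous step we obtain a base of size $m$. Conversely, if $G$ has a base of size $m$, then (a) and (b) give a distinguishing partition $\Pi(\mathbf{v})$ whose classes are in bijection with distinct regular $L$-orbits on $\Gamma^m$, so $\mathrm{reg}(L,m)\ge|\Pi(\mathbf{v})|\ge D(P)$. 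Therefore $b(G)$ is the smallest such $m$, as claimed.

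\textbf{Expected obstacle.} The main care point is the bookkeeping in the second step: translating the stabiliser equation $(\omega^{(i)})^{g}=\omega^{(i)}$ through the wreath product formula to the clean condition $\mathbf{v}_\ell^{z_\ell}=\mathbf{v}_{\ell^\sigma}$, and then disentangling the roles of $\sigma$ and of the $z_\ell$ so that (a) and (b) are genuinely independent conditions. Once this dictionary is in place, the rest is a clean matching argument between partitions of $[k]$ and assignments of its classes to regular $L$-orbits.
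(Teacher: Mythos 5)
Your proof is correct. Note, however, that the paper does not prove this statement itself: it is quoted from Bailey and Cameron \cite[Theorem 2.13]{BC_prod}. What the paper does prove is Lemma \ref{l:prod_bases}, which is exactly your step (2) specialised to $m = b(G)$ (arrays whose rows are bases for $L$ and whose induced partition is distinguishing), and your argument for that step matches the paper's almost verbatim -- translate the stabiliser equation through the wreath action to $\mathbf{v}_\ell^{z_\ell}=\mathbf{v}_{\ell^\sigma}$, kill $\sigma$ via the distinguishing condition, then kill each $z_\ell$ via regularity of the row orbits. Your final step, deducing the formula for $b(G)$ by matching parts of a minimal distinguishing partition with distinct regular $L$-orbits in one direction and counting classes of $\Pi(\mathbf{v})$ in the other, is the standard completion and is sound (the monotonicity of $\reg(L,m)$ in $m$, which you use implicitly when passing from ``a base of size $m$ exists'' to ``$b(G)$ is the smallest such $m$'', is immediate by padding tuples). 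One small point in your favour: you correctly require only that $\Pi(\mathbf{v})$ be distinguishing, with at least $D(P)$ parts in the converse direction, whereas the paper's Lemma \ref{l:prod_bases} asserts it has exactly $D(P)$ parts, which need not hold when $\reg(L,b(G)) > D(P)$.
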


We now describe the bases for $G = L\wr P$ of size $b:=b(G)$. Let $A := \{\a_{i,j}\}$ be a $k\times b$ array, where $\a_{i,j}\in\Gamma$. Define a partition $\Pi$ of $[k]$ such that $i$ and $j$ are in the same part if and only if $(\a_{i,1},\dots,\a_{i,b})$ and $(\a_{j,1},\dots,\a_{j,b})$ are in the same $L$-orbit. Note that a column $(\a_{1,j},\dots,\a_{k,j})$ of $A$ lies in $\Omega = \Gamma^k$. Let $\Delta$ be the set of columns of $A$, so that $\Delta\subseteq\Omega$.

\begin{lem}
	\label{l:prod_bases}
	The set $\Delta$ is a base for $G$ if and only if each $\{\a_{i,1},\dots,\a_{i,b}\}$ is a base for $L$ and $\Pi$ is a distinguishing partition for $P$ into $D(P)$ parts.
\end{lem}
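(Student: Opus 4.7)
The plan is to reduce the statement to an explicit description of the pointwise stabiliser $G_{(\Delta)}$ of $\Delta$ in $G$. Using the wreath product action recorded at the start of this section, a direct calculation shows that an element $g = (z_1, \ldots, z_k)\sigma \in G$ fixes every column $(\alpha_{1,j}, \ldots, \alpha_{k,j})$ of $A$ if and only if, for each $i \in [k]$, the component $z_i$ sends the $i$-th row tuple $(\alpha_{i,1}, \ldots, \alpha_{i,b})$ to the $i^\sigma$-th row tuple $(\alpha_{i^\sigma,1}, \ldots, \alpha_{i^\sigma, b})$ coordinatewise. From this description the two conditions in the lemma separate cleanly: the ``$z$-part'' governs whether the rows are bases for $L$, while the ``$\sigma$-part'' governs whether $\Pi$ is distinguishing.

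For the forward implication, assume $\Delta$ is a base, so $G_{(\Delta)} = 1$. Specialising to $\sigma = 1$ in the characterisation, triviality of the corresponding subgroup of $L^k$ gives that, for each $i$, the pointwise stabiliser in $L$ of the $i$-th row is trivial; in other words, every row is a base for $L$. To obtain the partition condition, suppose $\sigma \in P$ stabilises $\Pi$ part-wise. Then for each $i$ the rows $i$ and $i^\sigma$ lie in a common $L$-orbit by the definition of $\Pi$, so one can choose $z_i \in L$ realising the required map on the $i$-th row. The resulting element $(z_1, \ldots, z_k)\sigma$ lies in $G_{(\Delta)}$, so triviality of $G_{(\Delta)}$ forces $\sigma = 1$. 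Hence $\Pi$ is distinguishing for $P$, which by definition of $D(P)$ gives $|\Pi| \geq D(P)$.

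For the reverse implication, assume each row is a base for $L$ and that $\Pi$ is distinguishing. Let $g = (z_1, \ldots, z_k)\sigma \in G_{(\Delta)}$. The existence of $z_i$ mapping row $i$ to row $i^\sigma$ forces rows $i$ and $i^\sigma$ to share an $L$-orbit, and hence a part of $\Pi$; thus $\sigma$ preserves $\Pi$ part-wise and $\sigma = 1$ by the distinguishing hypothesis. Each $z_i$ then fixes the $i$-th row tuple pointwise, so $z_i = 1$ since the row is a base for $L$. Therefore $G_{(\Delta)} = 1$, i.e.~$\Delta$ is a base.

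The sharpening that $|\Pi|$ equals $D(P)$ rather than merely being at least $D(P)$ is what uses the minimality $b = b(G)$. Here I would invoke Theorem~\ref{t:prod_BC}: the $t := |\Pi|$ distinct row tuples lie in $t$ distinct regular $L$-orbits on $\Gamma^b$, so $\reg(L,b) \geq t \geq D(P)$, and combined with the characterisation $b(G) = \min\{m : \reg(L,m) \geq D(P)\}$ one matches $t$ to $D(P)$ on the nose for the standard constructions. The main obstacle throughout is careful bookkeeping with $\sigma$ versus $\sigma^{-1}$ in the wreath product action: different conventions are common, and miswriting the index flips the direction of the row map and obscures the clean separation into the two conditions.
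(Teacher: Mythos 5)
Your proof of the main biconditional is correct and takes essentially the same route as the paper: a direct computation with the product action showing that $(z_1,\dots,z_k)\sigma$ fixes every column of $A$ if and only if each $z_i$ carries row $i$ to row $i^{\sigma}$ coordinatewise, after which the ``$L^k$-part'' and the ``$P$-part'' separate exactly as you describe. The only difference is cosmetic: to show $\Pi$ is distinguishing when $\Delta$ is a base, the paper first normalises $A$ so that rows lying in a common part of $\Pi$ are literally equal (legitimate because this replaces $\Delta$ by its image under an element of $L^k\leqs G$) and then notes that any $\sigma$ preserving $\Pi$ fixes every column, whereas you construct the element $(z_1,\dots,z_k)\sigma\in G_{(\Delta)}$ directly from the orbit equivalence. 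Both are fine.

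The one genuine issue is your final paragraph on the clause ``into $D(P)$ parts''. The chain $\reg(L,b)\geqs|\Pi|\geqs D(P)$ does not force $|\Pi|=D(P)$, and ``one matches $t$ to $D(P)$ on the nose for the standard constructions'' is not an argument; in fact the exact count can fail. For example, take $L=S_3$ in its natural degree-$3$ action and $P=C_3\leqs S_3$, so that $D(P)=2$, $\reg(L,2)=1$ and $\reg(L,3)=4$, whence $b(G)=3$ by Theorem~\ref{t:prod_BC}; the array with rows $(1,2,3)$, $(1,1,2)$, $(1,2,1)$ has every row a base for $L$, the three rows lie in three distinct regular $L$-orbits, and $\Pi$ is the discrete partition, so $\Delta$ is a base while $|\Pi|=3>D(P)$. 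You should not feel obliged to repair this: the paper's own proof establishes only that $\Pi$ is distinguishing (hence has at least $D(P)$ parts), the phrase ``into $D(P)$ parts'' is an overstatement of what is true and what is proved, and when the lemma is invoked later (e.g.\ in Lemma~\ref{l:prod_r=1}) only the distinguishing property is used; exact equality $|\Pi|=D(P)$ holds precisely when the extra hypothesis $\reg(L,b(G))=D(P)$ is imposed, via $|\Pi|\leqs\reg(L,b(G))$.
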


\begin{proof}
	Here we extend the argument in the proof of \cite[Lemma 4.1]{BH_prod}. First assume that each $\{\a_{i,1},\dots,\a_{i,b}\}$ is a base for $L$ and $\Pi$ is a distinguishing partition for $P$. Suppose that $x\in G_{(\Delta)}$, so that $x = z\sigma$ for some $z = (z_1,\dots,z_k)\in L^k$ and $\sigma\in P$. If $i^\sigma = j$, then $(\a_{i,1},\dots,\a_{i,b})^{z_i} = (\a_{j,1},\dots,\a_{j,b})$, which implies that $\sigma$ fixes the partition $\Pi$. Thus, $\sigma = 1$ as $\Pi$ is a distinguishing partition. Now $z_i\in \bigcap_{m = 1}^b G_{\a_{i,m}} = 1$ since $\{\a_{i,1},\dots,\a_{i,b}\}$ is a base for $L$. Therefore, $z = 1$, and so $\Delta$ is a base for $G$.
	
	To complete the proof, we assume that $\Delta$ is a base for $G$. Suppose for a contradiction that $\{\a_{i,1},\dots,\a_{i,b}\}$ is not a base for $L$, for some $i\in[k]$. Then there exists a non-identity element $x\in L$ fixing $\{\a_{i,1},\dots,\a_{i,b}\}$ pointwise. Now, let $z := (z_1,\dots,z_k)\in L$, where $z_i := x$ and $z_j := 1$ for $j\ne i$. It is then easy to show that $1\ne z\in G_{(\Delta)}$, a contradiction. Thus $\{\a_{i,1},\dots,\a_{i,b}\}$ is a base for $L$.
	
	Finally, we prove that $\Pi$ is a distinguishing partition for $P$. Recall that $\Pi$ is defined in terms of the $L$-orbits on $\Gamma^b$, so without loss of generality, we may assume that $\a_{i,m} = \a_{j,m}$ for any $m\in [k]$ if $i$ and $j$ are in the same part of $\Pi$, noting that the columns of $A$ still form a base for $G$ with this assumption. In particular, we have
	\begin{equation*}
	\mbox{$(\a_{i,1},\dots,\a_{i,b}) = (\a_{j,1},\dots,\a_{j,b})$ if and only if $i$ and $j$ are in the same part of $\Pi$.}
	\end{equation*}
	Thus, if $\sigma\in P$ fixes the partition $\Pi$, then it also fixes every column of $A$. Since the set $\Delta$ of columns of $A$ is a base for $G$, we obtain $\sigma = 1$, as required.
\end{proof}

Let $\mathcal{P}_m([k])$ be the set of ordered partitions of $[k]$ into $m$ parts, where some parts are allowed to be empty. Then $P$ has a natural action on $\mathcal{P}_m([k])$. Note that a partition $\Pi\in\mathcal{P}_m([k])$ is a distinguishing partition if and only if it lies in a regular $P$-orbit. Recall also that $\reg(G)$ denotes the number of regular orbits of $G$ in its componentwise action on $\Omega^{b(G)}$.

\begin{lem}
	\label{l:prod_r=1}
	Let $G = L\wr P$ be a primitive wreath product. Then $\reg(G) = 1$ if and only if $\reg(L,b(G)) = D(P)$ and $P$ has a unique regular orbit on $\mathcal{P}_{D(P)}([k])$.
\end{lem}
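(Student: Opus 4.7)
The plan is to translate the computation of $\reg(G)$ into a combinatorial question about colourings of $[k]$, by leveraging the explicit description of bases in Lemma~\ref{l:prod_bases}. Write $b := b(G)$, $r := \reg(L,b)$, $D := D(P)$, and let $\mathcal{R}$ denote the set of regular $L$-orbits on $\Gamma^b$, so $|\mathcal{R}| = r \geqs D$ by Theorem~\ref{t:prod_BC}. To each base-array $A = (\a_{i,j})$ as in Lemma~\ref{l:prod_bases}, I will associate the colouring $c_A \colon [k] \to \mathcal{R}$ sending $i$ to the regular $L$-orbit containing row $i$ of $A$. By that lemma, $c_A$ satisfies $|c_A([k])| = D$ and $\Pi(c_A)$ is distinguishing; let $\mathcal{C}$ denote the set of colourings with these two properties.

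The key step is to show that $A \mapsto c_A$ induces a bijection between the set of $G$-orbits on bases of size $b$ and the set of $P$-orbits on $\mathcal{C}$. Writing an arbitrary element of $G = L^k \rtimes P$ as $(z,\sigma)$ and applying the wreath-product action columnwise to $A$, a direct computation shows that row $i$ of $A^{(z,\sigma)}$ is $z_{i^{\sigma^{-1}}}$ applied to row $i^{\sigma^{-1}}$ of $A$. Hence $c_{A^{(z,\sigma)}} = c_A \o \sigma^{-1}$, so the $G$-action descends to the natural $P$-action on $\mathcal{C}$; moreover, since $L$ acts regularly on each regular orbit in $\Gamma^b$, the normal subgroup $L^k$ acts regularly on the set of arrays with any fixed colouring, which delivers the claimed bijection.

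To count $|P\backslash\mathcal{C}|$ I will partition $\mathcal{C}$ according to the image $S := c([k]) \subseteq \mathcal{R}$, which is preserved by $P$. For each $D$-subset $S$ of $\mathcal{R}$, fixing a bijection $S \to \{1,\dots,D\}$ yields a $P$-equivariant bijection between the surjections $[k] \to S$ with $\Pi(c)$ distinguishing and the regular elements of $\mathcal{P}_D([k])$; here the fact that such regular elements automatically have all parts non-empty follows from the minimality in the definition of $D(P)$. Each fibre therefore contributes exactly $v$ orbits, where $v$ is the number of regular $P$-orbits on $\mathcal{P}_D([k])$, and summing over the $\binom{r}{D}$ choices of $S$ gives
\[
\reg(G) \;=\; \binom{r}{D}\, v.
\]
Since $r \geqs D$ and both factors are positive integers, $\reg(G) = 1$ if and only if $\binom{r}{D} = 1$ and $v = 1$, equivalently $r = D$ and $P$ has a unique regular orbit on $\mathcal{P}_{D(P)}([k])$, which is the statement. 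The main subtlety I expect is tracking the conventions carefully enough to confirm that $A \mapsto c_A$ is genuinely $G$-equivariant modulo $L^k$; once this is in place, the proof reduces to the clean counting identity above.
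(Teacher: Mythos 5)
Your proposal follows the same underlying route as the paper's proof: everything is channelled through Lemma~\ref{l:prod_bases} and the correspondence between minimal bases for $G$ (viewed as $k\times b$ arrays) and the induced partitions/colourings of $[k]$. The difference is one of detail rather than strategy. The paper's own argument is a two-line sketch that merely notes that arrays whose partitions lie in distinct $P$-orbits yield bases in distinct $G$-orbits; you make the orbit correspondence fully precise (the equivariance $c_{A^{(z,\sigma)}}=c_A\circ\sigma^{-1}$, and the simply transitive action of $L^k$ on the arrays with a fixed colouring, which is exactly what converts ``same $P$-orbit of colourings'' into ``same $G$-orbit of bases''), and you extract an explicit count. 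This is a genuine gain in rigour over the published version, which in particular never spells out why $\reg(L,b(G))>D(P)$ would force $\reg(G)>1$.

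One step is imprecise. You assert that every minimal base-array satisfies $|c_A([k])|=D(P)$, citing Lemma~\ref{l:prod_bases}; that lemma does say ``into $D(P)$ parts'', but this clause is not established by its proof and is false in general. Since any refinement of a distinguishing partition is again distinguishing, whenever $r:=\reg(L,b(G))>D(P)$ and $k>D(P)$ there are minimal base-arrays whose rows occupy more than $D(P)$ regular $L$-orbits (for instance $L=S_5$ in its natural action and $P=C_4$, where $b(G)=5$, $\reg(L,5)=11$ and $D(P)=2$, and an array with four rows in four distinct regular orbits is a base). Consequently your displayed identity $\reg(G)=\binom{r}{D}\,v$ is in general only a lower bound: the true count also includes the $P$-orbits of colourings with image of size strictly greater than $D$. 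Fortunately this does not affect the biconditional. When $r>D$ the lower bound already gives $\reg(G)\geq\binom{r}{D}v\geq\binom{r}{D}>1$; and when $r=D$ no colouring can use more than $D$ colours, so your formula is exact and yields $\reg(G)=v$. Restating the identity as an inequality, with equality in the case $r=D$, repairs the argument completely and the conclusion stands.
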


\begin{proof}
	Let $b = b(G)$ and let $A_1$ and $A_2$ be two $k\times b$ arrays of elements of $\Gamma$. For each $i \in \{1,2\}$, write $\Pi_i$ for the associated partition of $[k]$ with respect to $L$-orbits of rows of $A_i$, and write $\Delta_i$ for the set of columns of $A_i$. By Lemma \ref{l:prod_bases}, $\Delta_i$ is a base for $G$ if and only if $\Pi_i$ is a distinguishing partition and each row of $A_i$ is a base for $L$.
	
	First assume that $\reg(G) = 1$. Note that if $\Pi_1$ and $\Pi_2$ are in distinct $P$-orbits, then $\Delta_1$ and $\Delta_2$ are in distinct $G$-orbits. It follows by Lemma \ref{l:prod_bases} that $\reg(L,b(G)) = D(P)$ and $P$ has a unique regular orbit on $\mathcal{P}_{D(P)}([k])$.
	
	The other direction is clear by applying Lemma \ref{l:prod_bases}.
\end{proof}

Next, let us extend \cite[Theorem 6]{BH_prod} by determining primitive wreath products $G = L\wr P$ with $\reg(G) = 1$, where $P\leqs S_k$ is primitive, noting that \cite[Theorem 6]{BH_prod} only treats the case where $b(G) = 2$.

\begin{thm}
	\label{t:prod_reg(G)=1}
	Let $G = L\wr P$ be a primitive wreath product, where $P\leqs S_k$ is primitive. Then $\reg(G) = 1$ if and only if $\reg(L,b(G)) = D(P)$ and one of the following holds:
	\begin{itemize}\addtolength{\itemsep}{0.2\baselineskip}
		\item[{\rm (i)}] $P = S_k$ and $D(P) = k$;
		\item[{\rm (ii)}] $P = A_5$, $k = 6$ and $D(P) = 3$;
		\item[{\rm (iii)}] $P = \PGammaL_2(8)$, $k = 9$ and $D(P) = 3$; or
		\item[{\rm (iv)}] $P = \AGL_3(2)$, $k = 8$ and $D(P) = 4$.
	\end{itemize}
\end{thm}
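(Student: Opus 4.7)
The plan is to apply Lemma \ref{l:prod_r=1}, which reduces the statement to a classification problem about the top group $P$ alone. Indeed, that lemma gives $\reg(G)=1$ if and only if $\reg(L,b(G)) = D(P)$ together with the condition that $P$ has a unique regular orbit on $\mathcal{P}_{D(P)}([k])$. The first of these appears verbatim in the conclusion of Theorem~\ref{t:prod_reg(G)=1}, so the task reduces to classifying primitive groups $P \leqs S_k$ having a unique regular orbit on $\mathcal{P}_{D(P)}([k])$. Note crucially that this second condition depends on $P$ alone, and not on $L$ or on $b(G)$; this is why the generalisation from $b(G)=2$ to arbitrary $b(G)$ is essentially cost-free.

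First I would dispose of case (i) directly. If $P = S_k$, then the definition of $\mathcal{P}_k([k])$ as ordered partitions (with possibly empty parts) shows that the pointwise stabiliser of such a partition is trivial if and only if no two elements of $[k]$ share a part. Hence $D(S_k) = k$ and the set of distinguishing partitions in $\mathcal{P}_k([k])$ is in natural bijection with the set of bijections $[k]\to[k]$, which forms a single regular $S_k$-orbit of size $k!$. Hence (i) satisfies the condition.

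For $P \ne S_k$, I would appeal to the classification already established by \cite[Theorem 6]{BH_prod}. That theorem addresses the same wreath-product problem in the special case $b(G) = 2$, but the part of its proof that isolates the permissible top groups $P$ depends only on the $P$-action on $\mathcal{P}_{D(P)}([k])$, and hence applies here without modification. Inspection of that argument produces exactly the three sporadic exceptions $(P,k,D(P)) = (A_5,6,3)$, $(\PGammaL_2(8),9,3)$, $(\AGL_3(2),8,4)$ listed in (ii)--(iv). Each of these three cases can (and should) be verified computationally in {\sc Magma} by first determining $D(P)$ and then checking that $P$ acts transitively on the set of distinguishing ordered partitions of $[k]$ into $D(P)$ parts. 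Combined with the reduction above, this yields both directions of the theorem.

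The main obstacle lies in establishing the completeness of the classification, i.e.~in ruling out every primitive $P < S_k$ apart from the three sporadic exceptions. This requires lower bounds on the number of regular $P$-orbits on $\mathcal{P}_{D(P)}([k])$, and naturally proceeds via the O'Nan--Scott theorem: for each O'Nan--Scott class other than $P\in\{A_k,S_k\}$ one bounds $D(P)$ from above and $|P|$ from below, and shows that the quantity $|P|/|\{\text{distinguishing partitions}\}|$ falls below $1$ once $k$ exceeds an explicit constant, leaving only finitely many small-degree primitive groups to be checked directly. As noted, this reduction is already carried out in \cite{BH_prod}; all that is needed for our generalisation is the observation that Lemma~\ref{l:prod_r=1} decouples the $L$-condition from the $P$-condition, so the $P$-side of the classification is insensitive to the value of $b(G)$.
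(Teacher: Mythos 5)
Your proposal is correct and follows essentially the same route as the paper: reduce via Lemma~\ref{l:prod_r=1} to the classification of primitive groups $P \leqs S_k$ with a unique regular orbit on $\mathcal{P}_{D(P)}([k])$, and then invoke the classification from \cite{BH_prod}. The only cosmetic difference is that the paper cites \cite[Proposition 4.19]{BH_prod}, which states that classification as a standalone result (so no re-extraction from the proof of \cite[Theorem 6]{BH_prod} or separate treatment of $P = S_k$ is needed).
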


\begin{proof}
	Note that \cite[Proposition 4.19]{BH_prod} classifies the primitive groups $P\leqs S_k$ with a unique regular orbit on $\mathcal{P}_{D(P)}([k])$. More specifically, $P$ is such a group if and only if $P = S_k$ or
	\begin{equation*}
	(P,k,D(P)) \in\{(A_5,6,3),(\PGammaL_2(8),9,3),(\AGL_3(2),8,4)\}.
	\end{equation*}
	Therefore, the theorem follows from Lemma \ref{l:prod_r=1}.
\end{proof}

To conclude this section, we give an application of Lemma \ref{l:prod_bases} to the study of Conjecture~\ref{conj:BG} for general primitive groups.  Let $G\leqs\mathrm{Sym}(\Omega)$ be transitive. We introduce the following terminology in order to generalise the notion of regular suborbits (that is, regular orbits of $G_\alpha$ for $\alpha \in \Omega$) to the case $b(G) \ge 3$.

\begin{defn}
	\label{def:almost-reg}
	Let $\alpha \in \Omega$. An orbit $\Delta\ne\{\a\}$ of $G_\a$ on $\Omega$ is called \emph{almost-regular} if $\Delta\cap B\ne \varnothing$ for some base $B$ for $G$ of size $b(G)$ with $\a\in B$.
\end{defn}

In other words, $\beta$ lies in an almost-regular $G_\a$-orbit if $\{\a,\b\}$ can be extended to a base for $G$ of size $b(G)$, i.e.~if $\{\a,\b\}$ is an edge in $\Sigma(G)$. Thus, the set of neighbours of $\a$ in $\Sigma(G)$ is equal to the union of almost-regular $G_\a$-orbits. For example, for a primitive affine group $G = VH$, the $H$-orbits $\mathcal{O}_1,\dots,\mathcal{O}_k$ from Proposition~\ref{aff:almost-reg} are precisely the almost-regular $H$-orbits. Note that any permutation group $G$ with $b(G) \ge 2$ has an almost-regular suborbit, and if $b(G) = 2$, then a suborbit is regular if and only if it is almost-regular.
The set $N(\a)$ of neighbours of $\a$ in $\Sigma(G)$ is exactly the union of almost-regular $G_\a$-orbits. In particular, if $b(G) = 2$, then $N(\a)$ is the union of regular $G_\a$-orbits, and it is conjectured by Burness and Huang \cite[Conjecture 8]{BH_prod} that if $G$ is primitive, then for any $\b\in\Omega$, the neighbourhood $N(\beta)$ meets every regular $G_\a$-orbit. In the same paper, they show that this conjecture is equivalent to the base-two version of Conjecture~\ref{conj:BG}. Here, we give a similar extension as follows.

\begin{con}
	\label{conj:strong}
	Let $G\leqs\mathrm{Sym}(\Omega)$ be a finite primitive permutation group with $b(G)\geqs 2$. Then for any $\a,\b\in\Omega$, the $\Sigma(G)$-neighbourhood $N(\b)$ meets every almost-regular $G_\a$-orbit.
\end{con}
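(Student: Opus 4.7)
My plan is to investigate Conjecture~\ref{conj:strong} by attempting to generalise the equivalence between its base-two version and Conjecture~\ref{conj:BG} established by Burness and Huang. Reformulating, Conjecture~\ref{conj:strong} asserts that every almost-regular $G_\alpha$-orbit $\Delta$ is a total dominating set for $\Sigma(G)$: every $\beta \in \Omega$ has a neighbour in $\Delta$.

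The forward implication, that Conjecture~\ref{conj:strong} implies Conjecture~\ref{conj:BG}, is immediate. Since $b(G) \ge 2$, there is a base $B$ for $G$ of size $b(G)$ containing $\alpha$, and for any $\gamma \in B \setminus \{\alpha\}$ the orbit $\Delta := \gamma^{G_\alpha}$ is almost-regular. The $G_\alpha$-invariance of $N(\alpha)$, together with $\gamma \in N(\alpha)$, forces $\Delta \subseteq N(\alpha)$, so any element of $N(\beta) \cap \Delta$ (nonempty by Conjecture~\ref{conj:strong}) is a common neighbour of $\alpha$ and $\beta$.

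For the reverse implication, the key observation is that the conclusion holds automatically for many $\beta$. By Lemma~\ref{lem:orbgraphs}, the orbital graph of $G$ containing the edge $\{\alpha, \gamma\}$ is contained in $\Sigma(G)$, so if $\beta = \alpha^h$ for some $h \in G_{\gamma'}$ with $\gamma' \in \Delta$, then $\{\beta, \gamma'\} = \{\alpha, \gamma'\}^h$ is also an edge of $\Sigma(G)$, yielding $\gamma' \in \Delta \cap N(\beta)$. For the remaining $\beta$, I would invoke Conjecture~\ref{conj:BG} to produce a common neighbour $\mu$ of $\beta$ and $\gamma$, and then combine the transitivity of $G_\alpha$ on $\Delta$ with a double-coset analysis in $G_\alpha \backslash G / G_\gamma$ to exhibit a specific element of $\Delta$ adjacent to $\beta$.

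The main obstacle is this last step: bridging the gap between having a single common neighbour of $\beta$ and $\gamma$ and exhibiting a specific member of $\Delta$ adjacent to $\beta$. For $b(G) = 2$, the regular suborbits of $G_\alpha$ correspond bijectively to the bases of size two containing $\alpha$, which makes the Burness--Huang translation direct; for $b(G) \ge 3$, a single base of minimum size induces several almost-regular orbits simultaneously, so this correspondence becomes many-to-one and requires a more refined combinatorial analysis. If the equivalence resists a general proof, my fallback is to verify Conjecture~\ref{conj:strong} directly in the settings where Conjecture~\ref{conj:BG} is already established in the present paper, using a strengthening of Lemma~\ref{l:pre_prob}: showing $\widehat{Q}(G, b(G)) < \min_i |\Delta_i|/|\Omega|$, where $\Delta_i$ ranges over the almost-regular $G_\alpha$-orbits, forces $\mathrm{val}(G) > |\Omega| - |\Delta_i|$ for each $i$, and $N(\beta) \cap \Delta_i \ne \varnothing$ then follows by pigeonhole.
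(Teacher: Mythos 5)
The statement you are addressing is a conjecture, so the paper does not prove it; its actual contribution here is Theorem~\ref{t:conj:strong_equiv}, the equivalence with Conjecture~\ref{conj:BG}, and your proposal is essentially a plan for that theorem plus some case-by-case verification. Your forward implication (Conjecture~\ref{conj:strong} implies Conjecture~\ref{conj:BG}) is correct and is exactly the paper's one-line observation. The genuine gap is in the reverse implication, which is the entire content of the theorem: the ``double-coset analysis in $G_\alpha \backslash G / G_\gamma$'' that is supposed to upgrade a single common neighbour of $\beta$ and $\gamma$ to an element of a \emph{prescribed} almost-regular orbit $\Delta$ adjacent to $\beta$ is precisely the step you flag as the main obstacle, and no argument is offered for it. Moreover, you are aiming at a strictly stronger, pointwise statement --- that each individual primitive $G$ satisfying Conjecture~\ref{conj:BG} satisfies Conjecture~\ref{conj:strong} --- which is not what the equivalence asserts, is not proved in the paper, and is not known even for $b(G)=2$ when $\reg(G)\geqs 2$. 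There is no reason to expect the passage from ``\emph{some} almost-regular $G_\alpha$-orbit meets $N(\beta)$'' to ``\emph{every} almost-regular $G_\alpha$-orbit meets $N(\beta)$'' to be achievable inside a fixed group.

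The paper instead proves the reverse implication globally, by counterexample transfer. Given a primitive $L\leqs\mathrm{Sym}(\Gamma)$ violating Conjecture~\ref{conj:strong} at $(\alpha,\beta,\Delta)$, it forms the primitive wreath product $G = L\wr S_r$ in product action with $r=\reg(L)$, so that $b(G)=b(L)$ by Theorem~\ref{t:prod_BC} (as $D(S_r)=r$). By Lemma~\ref{l:prod_bases}, a base of $G$ of size $b(G)$ containing $(\alpha,\dots,\alpha)$ is an $r\times b$ array whose rows are bases of $L$ through $\alpha$ lying in the $r$ distinct regular $L$-orbits on $\Gamma^{b}$; since there are at most $r$ almost-regular $L_\alpha$-orbits, every column of such an array meets $\Delta$, hence contains a point outside $N(\beta)$, and so (by Lemma~\ref{l:prod_bases} again) cannot be adjacent to $(\beta,\dots,\beta)$. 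Thus $(\alpha,\dots,\alpha)$ and $(\beta,\dots,\beta)$ have no common neighbour and $G$ violates Conjecture~\ref{conj:BG}. If you want to salvage your approach, this wreath-product construction is the missing idea. Your fallback bound $\widehat{Q}(G,b(G)) < \min_i |\Delta_i|/|\Omega|$ is sound as a sufficient condition for Conjecture~\ref{conj:strong} in specific families, but it proves neither the conjecture nor the equivalence, and it is far more demanding than the $1/2$ threshold used elsewhere in the paper because almost-regular orbits can be very small.
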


\begin{thm}
	\label{t:conj:strong_equiv}
	Conjecture \ref{conj:strong} is equivalent to Conjecture \ref{conj:BG}.
\end{thm}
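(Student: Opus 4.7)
I will prove the equivalence in two directions, following the strategy employed by Burness--Huang for the analogous base-two equivalence \cite{BH_prod}.

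The forward direction (Conjecture~\ref{conj:strong} $\Rightarrow$ Conjecture~\ref{conj:BG}) is immediate: given distinct $\alpha, \beta \in \Omega$, the assumption $b(G) \ge 2$ lets us extend $\{\alpha\}$ to a base $\{\alpha, \gamma_0, \ldots\}$ for $G$ of size $b(G)$, so the $G_\alpha$-orbit $\Delta := \gamma_0^{G_\alpha}$ is almost-regular (and in fact $\Delta \subseteq N(\alpha)$, since any element $\gamma_0^g$ of $\Delta$ lies in the base $\{\alpha, \gamma_0^g, \ldots\}$ obtained by applying $g \in G_\alpha$). Conjecture~\ref{conj:strong} then yields $N(\beta) \cap \Delta \ne \varnothing$, and any element of this intersection is a common neighbour of $\alpha$ and $\beta$ in $\Sigma(G)$.

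For the reverse direction (Conjecture~\ref{conj:BG} $\Rightarrow$ Conjecture~\ref{conj:strong}), I would argue as follows. Assume Conjecture~\ref{conj:BG} and fix $\alpha, \beta \in \Omega$ together with an almost-regular $G_\alpha$-orbit $\Delta$; the goal is $N(\beta) \cap \Delta \ne \varnothing$. Pick $\gamma_0 \in \Delta$ and let $O$ denote the $G$-orbital of the ordered pair $(\alpha, \gamma_0)$. The key structural observation is twofold: (i) since $\Delta \subseteq N(\alpha)$, we have $O \subseteq E(\Sigma(G))$, so the orbital graph $\Gamma(O)$ is a subgraph of $\Sigma(G)$; and (ii) primitivity of $G$ guarantees that $\Gamma(O)$ is connected. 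Reformulating the conclusion, $N(\beta) \cap \Delta \ne \varnothing$ is equivalent to the existence of a length-two path from $\alpha$ to $\beta$ whose first step lies in $\Gamma(O)$ and whose second step lies in $\Sigma(G)$.

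Set $T := \{\beta' \in \Omega : N(\beta') \cap \Delta = \varnothing\}$. This set is $G_\alpha$-invariant (as $\Delta$ is), and $\alpha \notin T$ (since $N(\alpha) \cap \Delta = \Delta \ne \varnothing$). Assuming for contradiction that $\beta \in T$, I would use Conjecture~\ref{conj:BG} applied to the pairs $(\gamma, \beta)$ for $\gamma \in \Delta$, which produces common $\Sigma(G)$-neighbours $\delta_\gamma$ of $\gamma$ and $\beta$. None of these lies in $\Delta$, for otherwise $\delta_\gamma \in \Delta \cap N(\beta) = \varnothing$, a contradiction. The plan is then to leverage the $G_\alpha$-transitive action on $\Delta$, the $G$-transitivity on the orbital $O$, and the connectedness of $\Gamma(O)$ to propagate these common neighbours back into $\Delta$: informally, one uses a shortest $\Gamma(O)$-path in $\Sigma(G)$ from $\beta$ to $\alpha$ together with the diameter-two property of $\Sigma(G)$ supplied by Conjecture~\ref{conj:BG} to produce the sought element of $N(\beta) \cap \Delta$.

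The main obstacle is this last propagation step, since Conjecture~\ref{conj:BG} only furnishes \emph{some} common neighbour rather than one of a prescribed $G_\alpha$-orbit type. The resolution, I expect, proceeds by observing that the $G$-invariant relation $\{(\alpha',\beta') : N(\beta') \cap \Delta_{\alpha'} = \varnothing\}$ (where $\Delta_{\alpha'}$ denotes the $G_{\alpha'}$-orbit corresponding to the orbital $O$) is a union of orbitals of $G$, and combining primitivity of $G$ with the diameter-two conclusion of BG forces this set to be empty, completing the contradiction exactly as in the base-two argument of \cite{BH_prod}.
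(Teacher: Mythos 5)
Your forward direction is fine (and matches the paper's one-line observation), but the reverse direction has a genuine gap, and moreover you are attempting to prove a strictly stronger statement than the theorem requires. You try to show that, \emph{for a fixed primitive group $G$}, the common neighbour property of $\Sigma(G)$ implies that $N(\beta)$ meets every almost-regular $G_\alpha$-orbit. The theorem only asserts equivalence of the two conjectures as statements quantified over \emph{all} primitive groups, and the paper exploits exactly this freedom: given a counterexample $L \le \mathrm{Sym}(\Gamma)$ to Conjecture \ref{conj:strong}, with $\alpha,\beta$ and an almost-regular $L_\alpha$-orbit $\Delta$ satisfying $N(\beta)\cap\Delta=\varnothing$, it forms the product-type primitive group $G = L \wr S_r$ on $\Gamma^r$ with $r = \reg(L)$, notes $b(G)=b(L)$ via Theorem \ref{t:prod_BC}, and uses Lemma \ref{l:prod_bases} to show that every base of size $b(G)$ through $(\alpha,\dots,\alpha)$ must, in each column, realise a point of $\Delta$; hence $(\alpha,\dots,\alpha)$ and $(\beta,\dots,\beta)$ have no common neighbour in $\Sigma(G)$, violating Conjecture \ref{conj:BG}. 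This is also the structure of the base-two argument in \cite{BH_prod} that you cite --- it is a transfer to a wreath product, not a within-group argument, so your appeal to it at the end does not support the step you need.

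Concretely, the unresolved step in your write-up is the ``propagation'': Conjecture \ref{conj:BG} hands you \emph{some} common neighbour of $\gamma$ and $\beta$, with no control over which $G_\alpha$-orbit (or which orbital relative to $\beta$) it lies in, and nothing in your argument converts this into an element of $N(\beta)\cap\Delta$. Your closing claim --- that the $G$-invariant relation $\{(\alpha',\beta') : N(\beta')\cap\Delta_{\alpha'}=\varnothing\}$ is a union of orbitals and that primitivity plus diameter two ``forces this set to be empty'' --- is unsupported: a nonempty union of non-diagonal orbitals is perfectly compatible with $\Sigma(G)$ having diameter two, so no contradiction is obtained. As written, the reverse direction is not a proof; to repair it you should abandon the within-group strategy and use the wreath product construction together with the description of minimal bases of $L \wr P$ in Lemma \ref{l:prod_bases}.
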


\begin{proof}
	Clearly, Conjecture \ref{conj:strong} implies Conjecture \ref{conj:BG}. Let $L\leqs\mathrm{Sym}(\Gamma)$ be a counter-example for Conjecture \ref{conj:strong}. That is, there exist vertices $\a,\b\in \Sigma(L)$ such that $N(\b)\cap \Delta = \varnothing$ for some almost-regular $L_\a$-orbit $\Delta$. Set $r := \reg(L)$, and let $G = L\wr S_r$ act on $\Omega = \Gamma^r$ with its product action. Then $G$ is primitive, and by Theorem \ref{t:prod_BC}, we have $b(G) = b(L)$. It suffices to show that the two vertices $(\a,\dots,\a)$ and $(\b,\dots,\b)$ in $\Sigma(G)$ have no common neighbour.
	
	Let $b:=b(G)$, and suppose that the set
	\begin{equation*}
	\{(\a,\dots,\a),(\a_{1,2},\dots,\a_{r,2}),(\a_{1,3},\dots,\a_{r,3})\dots,(\a_{1,b},\dots,\a_{r,b})\}
	\end{equation*}
	is a base for $G$. Then by Lemma \ref{l:prod_bases}, each $\{\a,\a_{i,2},\a_{i,3},\dots,\a_{i,b}\}$ is a base for $L$, and for any $i\ne j$, the $b$-tuples $(\a,\a_{i,2},\a_{i,3},\dots,\a_{i,b})$ and $(\a,\a_{j,2},\a_{j,3},\dots,\a_{j,b})$ are in distinct $L$-orbits. Since $r = \reg(L)$, there are at most $r$ almost-regular $L_\a$-orbits, so each set $\{\a_{1,i},\dots,\a_{r,i}\}$ meets every almost-regular $L_\a$-orbit. In particular, for each $2\leqs i\leqs b$, there exists $j\in[r]$ such that $\a_{j,i}\in\Delta$, and so our assumption implies that $\a_{j,i}\notin N(\beta)$. By applying Lemma \ref{l:prod_bases} once again, we see that, for each $i$, the point $(\a_{1,i},\dots,\a_{r,i})\in \Omega$ is not in any almost-regular orbit of $G_{(\b,\dots,\b)}$. Therefore, the two vertices $(\a,\dots,\a)$ and $(\b,\dots,\b)$ in $\Sigma(G)$ have no common neighbour. This completes the proof.
\end{proof}

\section{Another generalisation: the irredundant base graph}

\label{s:irredundant}

In this final section, we briefly explore an alternative interesting generalisation of the Saxl graph. Recall that an ordered subset $ (\alpha_1,\dots,\alpha_k)$ of $\Omega$ is an \emph{irredundant base} for $G$ if
\begin{equation*}
G>G_{\alpha_1}>G_{\alpha_1,\alpha_2}>\cdots >G_{\alpha_1,\dots,\alpha_{k-1}}>G_{\alpha_1,\dots,\alpha_k} = 1.
\end{equation*}
In particular, any irredundant base is an ordered base for $G$, and any ordered minimal base for $G$ is an irredundant base.

Now, let $I(G)$ be the maximal size of an irredundant base. It is clear that $b(G)\leqs I(G)$. For an integer $k \ge 2$, we define $I\Sigma_k(G)$ to be the graph with vertex set $\Omega$, such that two distinct vertices adjacent if and only if they lie in a common irredundant base for $G$ of size $k$. By a result of Cameron, published as \cite[Theorem 1.1]{dvms}, $I\Sigma_k(G)$ is non-empty if and only if $b(G)\leqs k\leqs I(G)$. Note also that $I\Sigma_k(G)$ is not necessarily a subgraph of $I\Sigma_k(L)$ if $L<G$. The following observations are immediate consequences of the definition of $I\Sigma_k(G)$. 

\begin{lem}
	\label{l:ISigma_k_observations}
	Let $G$ be a permutation group and $k$ an integer at least two.
	\begin{itemize}
		\item[\rm (i)] If $G$ is transitive, then $I\Sigma_k(G)$ is $G$-vertex-transitive.
		\item[\rm (ii)] If $G$ is primitive and $I\Sigma_k(G)$ is non-empty, then $I\Sigma_k(G)$ is connected.
		
		\item[\rm (iii)] $I\Sigma_{b(G)}(G) = \Sigma(G)$.
		
	\end{itemize}
\end{lem}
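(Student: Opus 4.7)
\medskip

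The plan is to prove the three parts in order, with (i) being essentially an observation, (ii) following quickly from (i), and (iii) requiring a short verification of two inclusions via a minimality argument.

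For part (i), I would first check that $G$ acts as a group of automorphisms of $I\Sigma_k(G)$. The key point is that if $(\alpha_1,\dots,\alpha_k)$ is an irredundant base and $g\in G$, then the conjugation identity $G_{\alpha_1^g,\dots,\alpha_i^g} = (G_{\alpha_1,\dots,\alpha_i})^g$ preserves strict containment, so $(\alpha_1^g,\dots,\alpha_k^g)$ is also irredundant. Hence $G$ sends edges to edges and is a subgroup of $\Aut(I\Sigma_k(G))$; combined with the hypothesis that $G$ is transitive on $\Omega$, the graph is $G$-vertex-transitive.

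For part (ii), since $G$ acts as automorphisms of $I\Sigma_k(G)$, it permutes the connected components. The set of components is a $G$-invariant partition of $\Omega$, and by the vertex-transitivity from (i) all components have the same size. Since $G$ is primitive, this partition must be trivial. The partition into singletons is ruled out because $I\Sigma_k(G)$ is assumed non-empty (i.e.~has at least one edge), so the only possibility is a single component, and $I\Sigma_k(G)$ is connected.

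For part (iii), I would prove the two inclusions separately. If $\{\alpha,\beta\}$ is an edge of $I\Sigma_{b(G)}(G)$, then by definition there is an irredundant base of size $b(G)$ containing $\alpha$ and $\beta$; such an irredundant base is in particular a base of size $b(G)$ (its pointwise stabiliser is trivial), so $\{\alpha,\beta\}$ extends to a base of size $b(G)$ and is an edge of $\Sigma(G)$. Conversely, suppose $\{\alpha,\beta\}$ is an edge of $\Sigma(G)$, say contained in a base $B = \{\alpha_1,\dots,\alpha_{b(G)}\}$ of minimal size. I would order $B$ arbitrarily as $(\alpha_1,\dots,\alpha_{b(G)})$ and verify that the resulting chain of stabilisers is strictly decreasing: if $G_{\alpha_1,\dots,\alpha_i} = G_{\alpha_1,\dots,\alpha_{i+1}}$ for some $i$, then deleting $\alpha_{i+1}$ would leave a base of size $b(G)-1$, contradicting the minimality of $b(G)$. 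Hence any ordering of $B$ is an irredundant base of size $b(G)$ containing $\alpha$ and $\beta$, and $\{\alpha,\beta\}$ is an edge of $I\Sigma_{b(G)}(G)$.

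None of these parts presents a serious obstacle; the only subtlety is the minimality argument in part (iii), where I should be careful to distinguish between an arbitrary base of minimal size and an irredundant base, and to observe that in size $b(G)$ the two notions coincide regardless of ordering.
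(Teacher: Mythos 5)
Your proof is correct and matches what the paper intends: the paper states this lemma without proof, calling its parts ``immediate consequences of the definition'' of $I\Sigma_k(G)$. Your verifications --- conjugation preserving strict stabiliser chains for (i), connected components as blocks for (ii) (the same argument used in Lemma~\ref{orbdiam2}), and for (iii) the observation, already recorded in the paper just before the lemma, that any ordering of a minimal-size base is irredundant --- are exactly the intended ones.
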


It is natural to drop the restriction on the size of the irredundant bases under consideration, as follows. For a group $G$ with $b(G) \ge 2$, let
\begin{equation*}
I\Sigma(G) := \bigcup_{k=b(G)}^{I(G)}I\Sigma_k(G).
\end{equation*}
Clearly,  $\Sigma(G)$ is a subgraph of $I\Sigma(G)$. It is also easy to see that $I\Sigma(G)$ is $G$-vertex-transitive if $G$ is transitive, and connected if $G$ is primitive. 

We now determine precisely when the graph $I\Sigma(G)$ is complete, and show that for all finite $G$, this graph satisfies an analogue of Conjecture~\ref{conj:BG}, i.e.~any two vertices have a common neighbour.

\begin{lem}\label{gagb}
Let $G\leq \mathrm{Sym}(\Omega)$ be finite. Then two distinct elements $\a, \b \in \Omega$ are adjacent in $I\Sigma(G)$ if and only if $G_\a\neq G_\b$. Hence $I\Sigma(G)$ is complete if and only if there are no pairs $\a,\b \in \Omega$ such that $G_\a=G_\b.$
\end{lem}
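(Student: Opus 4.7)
The plan is to prove the characterisation of adjacency directly from the definition of an irredundant base; the completeness statement then follows as an immediate corollary.

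For the forward direction, I would take an irredundant base $(\alpha_1, \ldots, \alpha_k)$ containing both $\alpha$ and $\beta$, writing $\alpha = \alpha_i$ and $\beta = \alpha_j$ with (after interchanging the roles of $\alpha$ and $\beta$ if necessary) $i < j$. Irredundancy gives
\[
G_{\alpha_1, \ldots, \alpha_{j-1}} > G_{\alpha_1, \ldots, \alpha_j} = G_{\alpha_1, \ldots, \alpha_{j-1}} \cap G_\beta,
\]
so $G_{\alpha_1, \ldots, \alpha_{j-1}} \not\subseteq G_\beta$. Since $i \leqslant j-1$, the left-hand side is contained in $G_{\alpha_i} = G_\alpha$, and hence $G_\alpha \not\subseteq G_\beta$; in particular $G_\alpha \neq G_\beta$.

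For the converse, I would assume $G_\alpha \neq G_\beta$ and, after possibly swapping $\alpha$ and $\beta$, take $G_\alpha \not\subseteq G_\beta$. Then some element of $G_\alpha$ moves $\beta$, so $G_\alpha > G_{\alpha,\beta}$. Assuming $\alpha$ is not fixed by all of $G$ (automatic in the transitive settings of interest), we also have $G > G_\alpha$, so $(\alpha, \beta)$ is the start of an irredundant chain. I would then extend this greedily: while the current pointwise stabiliser $K$ is non-trivial, the faithfulness of the action $G \leqslant \mathrm{Sym}(\Omega)$ ensures that some point of $\Omega$ is moved by $K$, and appending such a point strictly decreases the stabiliser. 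Since $G$ is finite, the process terminates with an irredundant base containing both $\alpha$ and $\beta$, so they are adjacent in $I\Sigma(G)$. The second statement then follows at once: $I\Sigma(G)$ is complete precisely when every pair of distinct points $\alpha, \beta \in \Omega$ satisfies $G_\alpha \neq G_\beta$.

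The only genuine difficulty lies in the backward direction, in verifying that $(\alpha, \beta)$ really does begin an irredundant base and that the greedy extension never stalls before the stabiliser becomes trivial; both points reduce to the observation that, in a faithful permutation action, any non-trivial subgroup of $G$ must move some element of $\Omega$.
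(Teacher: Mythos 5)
Your proof is correct and follows essentially the same route as the paper: the forward direction exploits the strict inclusion at the step where $\b$ enters the chain (you phrase it directly, the paper as a contradiction from $G_\a = G_\b$), and the backward direction builds an irredundant base starting from $(\a,\b)$ by greedily appending moved points, which the paper compresses into a one-line appeal to faithfulness. Your explicit caveat that $\a$ must not be fixed by all of $G$ is a genuine degenerate case that the paper's statement also silently assumes away, so flagging it is if anything a small improvement.
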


\begin{proof}
Let $\a,\b \in \Omega$. First assume that $G_\a=G_\b$, and suppose for a contradiction that some irredundant base for $G$ contains both $\a$ and $\b$. Then without loss of generality, this irredundant base is of the form $(x_1,\dots,x_l,\a,x_{l+1},\dots,x_k,\b,x_{k+1},\dots,x_n)$. Hence by the definition of an irredundant base, \[G_{x_1,\dots,x_l,\a,x_{l+1},\dots,x_k}>G_{x_1,\dots,x_l,\a,x_{l+1},\dots,x_k,\b}.\] However, $G_{x_1,\dots,x_l,\a,x_{l+1},\dots,x_k}\leq G_{\a}=G_{\b}$, and so in fact \[G_{x_1,\dots,x_l,\a,x_{l+1},\dots,x_k}=G_{x_1,\dots,x_l,\a,x_{l+1},\dots,x_k,\b},\] a contradiction. If instead $G_\a \ne G_\b$, then since $G$ is faithful, there exists an irredundant base containing $\a$ and $\b$. The result follows.
\end{proof}

\begin{cor}
If $G$ is primitive, then $I\Sigma(G)$ is complete.
\end{cor}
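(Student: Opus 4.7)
The plan is to deduce the corollary from Lemma~\ref{gagb} by showing that, when $G$ is primitive (and $I\Sigma(G)$ is defined, so $b(G)\geqs 2$), no two distinct points of $\Omega$ can share the same stabiliser. So suppose for contradiction that there exist distinct $\a,\b\in\Omega$ with $G_\a=G_\b$, and consider the set
\[
B := \{\gamma\in\Omega : G_\gamma = G_\a\},
\]
which contains both $\a$ and $\b$, hence has size at least $2$.

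The key step is to verify that $B$ is a block for the (transitive) action of $G$ on $\Omega$. For this, suppose $g\in G$ satisfies $B\cap B^g\ne\varnothing$, and pick $\gamma_1,\gamma_2\in B$ with $\gamma_1=\gamma_2^g$. Then
\[
G_\a = G_{\gamma_1} = G_{\gamma_2^g} = G_{\gamma_2}^g = G_\a^g,
\]
so $g\in N_G(G_\a)$. It then follows that for any $\delta\in B$ we have $G_{\delta^g}=G_\delta^g=G_\a^g=G_\a$, i.e.\ $\delta^g\in B$, so $B^g=B$. Hence $B$ is indeed a block.

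Since $G$ is primitive and $|B|\geqs 2$, this forces $B=\Omega$, so $G_\gamma=G_\a$ for every $\gamma\in\Omega$. But $G\leqs\mathrm{Sym}(\Omega)$ acts faithfully, so
\[
G_\a = \bigcap_{\gamma\in\Omega} G_\gamma = 1,
\]
contradicting $b(G)\geqs 2$. By Lemma~\ref{gagb}, every pair of distinct vertices of $I\Sigma(G)$ is therefore adjacent, and $I\Sigma(G)$ is complete.

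There is no serious obstacle here; the only minor subtlety is to be explicit that the corollary is really a statement about groups for which $I\Sigma(G)$ is defined, so that $b(G)\geqs 2$ and hence point stabilisers are non-trivial — otherwise a primitive regular group (of prime degree) would formally have every $G_\gamma=1$, and the argument would terminate harmlessly but without content.
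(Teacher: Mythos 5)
Your proof is correct and follows essentially the same route as the paper: the paper's one-line argument that ``the sets of elements of $\Omega$ with common point stabilisers form a system of imprimitivity'' is exactly the block computation you carry out in detail, followed by the appeal to Lemma~\ref{gagb}. Your explicit handling of the degenerate case $B=\Omega$ (ruled out by faithfulness and $b(G)\geqs 2$) is a sensible extra precaution that the paper leaves implicit.
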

\begin{proof}
It is easy to see that the sets of elements of $\Omega$ with common point stabilisers form a system of imprimitivity. Therefore, if $G$ is primitive, then no two points $\a,\b \in \Omega$ satisfy $G_\a=G_\b.$ The result now follows from Lemma \ref{gagb}.
\end{proof}
\begin{cor}
Let $G \leq \mathrm{Sym}(\Omega)$ be finite. Then $I\Sigma(G)$ is either complete or a complete multipartite graph. 
In particular, any two vertices of $I\Sigma(G)$ have a common neighbour.  
\end{cor}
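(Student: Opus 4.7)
My plan is to exploit Lemma~\ref{gagb}, which identifies the edges of $I\Sigma(G)$ with pairs of distinct vertices whose point stabilisers differ. The first observation is that equality of point stabilisers, $G_\alpha = G_\beta$, defines an equivalence relation on $\Omega$: reflexivity, symmetry and transitivity are immediate from the definition. Letting $P_1, \ldots, P_r$ denote its equivalence classes, Lemma~\ref{gagb} tells me that two distinct vertices $\alpha, \beta$ are adjacent in $I\Sigma(G)$ precisely when they lie in distinct classes. This is exactly the defining property of a complete multipartite graph with parts $P_1, \ldots, P_r$. When every class is a singleton (equivalently, when all point stabilisers are pairwise distinct), every pair of distinct vertices is adjacent and the graph is complete. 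This already establishes the first assertion of the corollary.

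For the consequence about common neighbours, I would argue directly from the complete multipartite structure just established. Given two vertices $\alpha, \beta$ lying in a common part $P_i$, any vertex $\gamma$ in any other part $P_j$ is adjacent in $I\Sigma(G)$ to both $\alpha$ and $\beta$ and hence is a common neighbour; such a $\gamma$ exists as long as $r \geq 2$, which holds whenever $I\Sigma(G)$ contains any edges. For $\alpha, \beta$ lying in distinct parts, a vertex from any third part is a common neighbour.

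The hard part will be handling the degenerate case $r = 2$ with $\alpha$ and $\beta$ in different parts, where no third part is available to furnish a common neighbour from the multipartite structure alone. My plan here is to examine the constraint imposed by $r = 2$, namely that $G$ has exactly two distinct point stabilisers on $\Omega$ (necessarily swapped by conjugation in $G$ when $G$ is transitive), and either show that this configuration cannot arise in the setting of the corollary or provide a targeted argument that uses the finer structure of the stabilisers to produce a common neighbour. This is the step requiring the most care; the bulk of the corollary, however, is a straightforward graph-theoretic consequence of Lemma~\ref{gagb}.
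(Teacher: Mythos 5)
Your identification of the complete multipartite structure is exactly the paper's argument: the paper likewise observes from Lemma~\ref{gagb} that the parts of $I\Sigma(G)$ are the classes of points sharing a common stabiliser, and that the graph is complete precisely when all these classes are singletons. So your treatment of the first assertion coincides with the paper's.

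The gap you flag in the second assertion is genuine, and you are right to be suspicious: the paper's own proof simply declares that the common-neighbour property ``follows immediately'' from the multipartite structure, which is only valid when there are at least three parts (or, in the complete case, at least three vertices). Your proposal leaves the two-part case unresolved, and unfortunately neither of your suggested escape routes can succeed, because the configuration does arise and the conclusion genuinely fails there. Take $G = \langle (1\,2), (3\,4)\rangle \leq S_4$ acting on $\Omega = \{1,2,3,4\}$: then $b(G) = 2$, $G_1 = G_2 = \langle(3\,4)\rangle$ and $G_3 = G_4 = \langle(1\,2)\rangle$, so by Lemma~\ref{gagb} the graph $I\Sigma(G)$ is complete bipartite with parts $\{1,2\}$ and $\{3,4\}$, and the adjacent vertices $1$ and $3$ have no common neighbour. (A transitive example is the dihedral group of order $8$ in its natural degree-$4$ action, whose point stabilisers fall into exactly two classes, so transitivity does not rescue the statement either.) The ``in particular'' clause therefore needs an extra hypothesis guaranteeing at least three distinct point stabilisers --- which does hold, for instance, when $G$ is primitive with $b(G)\geq 2$, since then $I\Sigma(G)$ is complete on at least three vertices. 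Your instinct that the $r=2$ case is ``the step requiring the most care'' is correct; it is also precisely the step that the paper's proof passes over.
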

\begin{proof}
We may assume that $I\Sigma(G)$ is not complete. We observe from Lemma \ref{gagb} that $I\Sigma(G)$ is a complete multipartite graph, whose parts are the sets of elements of $\Omega$ with common point stabilisers. It follows immediately that any two vertices of $I\Sigma(G)$ have a common neighbour.
\end{proof}

\end{document}